\colorlet{myblue}{cyan!50}
\colorlet{myorange}{orange!50}
\newcommand{\Yred}{\Yfillcolour{red}}
\newcommand{\Ycyan}{\Yfillcolour{myblue}}
\newtheorem{thm}{Theorem}[section]
\newtheorem{lem}[thm]{Lemma}
\newtheorem{cor}[thm]{Corollary}
\newtheorem{prop}[thm]{Proposition}
\theoremstyle{definition}
\newtheorem{defn}[thm]{Definition}
\newtheorem{ex}[thm]{Example}
\newtheorem{rem}[thm]{Remark}
\newcommand{\ds}{\displaystyle}
\newcommand{\bb}[1]{\mathbb{#1}}
\newcommand{\mb}[1]{\mathbf{#1}}
\newcommand{\x}{\mathbf{x}}
\newcommand{\y}{\mathbf{y}}
\newcommand{\wh}[1]{\widehat{#1}}
\newcommand{\mc}[1]{\mathcal{#1}}
\newcommand{\ov}[1]{\overline{#1}}
\newcommand{\tb}[1]{\textbf{#1}}
\newcommand{\U}{\mathcal{U}}
\newcommand{\D}{\mathcal{D}}
\DeclareMathOperator{\nw}{nw}
\newcommand{\id}{{\rm id}}
\newcommand{\Ret}{\mathbf{R}}
\newcommand{\Set}{\mathbf{S}}
\DeclareMathOperator{\shift}{shift}
\newcommand{\SYT}{{\rm SYT}}
\newcommand{\SSYT}{{\rm SSYT}}
\newcommand{\la}{\lambda}
\newcommand{\vp}{\varphi}
\newcommand{\dRSK}{qt{\rm RSK}^*}
\newcommand{\RS}{{\rm RS}}
\newcommand{\RSK}{{\rm RSK}}
\newcommand{\qrst}{q{\rm RS}t}
\newcommand{\col}{{\rm col}}
\newcommand{\row}{{\rm row}}
\newcommand{\hl}[1]{h_{#1}^\ell}
\newcommand{\ha}[1]{h_{#1}^a}
\newcommand{\lacell}{
\begin{tikzpicture}[scale=0.4]
\draw[thick, magenta, pattern=horizontal lines, pattern color=magenta] (0,0) rectangle (1,1);
\end{tikzpicture}
}
\newcommand{\mucell}{
\begin{tikzpicture}[scale=0.4]
\draw[thick, orange, fill=myorange] (0,0) rectangle (1,1);
	\node at (.5,.5) {$-$};
\end{tikzpicture}
}
\newcommand{\nucell}{
\begin{tikzpicture}[scale=0.4]
\draw[thick, blue, fill=myblue] (0,0) rectangle (1,1);
	\node at (.5,.5) {$+$};
\end{tikzpicture}
}
\newcommand{\rhocell}{
\begin{tikzpicture}[scale=0.4]
\draw[thick, green!70!black, pattern=vertical lines, pattern color=green!70!black] (0,0) rectangle (1,1);
\end{tikzpicture}
}
\newcommand{\deff}[1]{\emph{\color{cyan!80!blue!80!black} #1}} 
\newcommand{\circled}[1]{
\begin{tikzpicture}[scale=0.4]
\draw[thick, green!70!black, pattern=vertical lines, pattern color=green!70!black] (0,0) rectangle (1,1);
\end{tikzpicture}
}
\newcommand\mathcircled[1]{
  \mathpalette\@mathcircled{#1}
}
\newcommand\@mathcircled[2]{%
  \tikz[baseline=(math.base)] \node[draw,circle,inner sep=1pt] (math) {$\m@th#1#2$};%
}
\title[$\dRSK$: A probabilistic dual RSK for Macdonald polynomials]{$\dRSK$: A probabilistic dual RSK correspondence for Macdonald polynomials}
\author[Frieden]{Gabriel Frieden}
\address[G.~Frieden]{McGill University, Montr\'eal, Canada}
\email{gabriel.frieden@mcgill.ca}
\author[Schreier-Aigner]{Florian Schreier-Aigner}
\address[F.~Schreier-Aigner]{University of Vienna, Austria}
\email{florian.schreier-aigner@univie.ac.at}
\begin{document}

\begin{abstract}
We introduce a probabilistic generalization of the dual\linebreak Robinson--Schensted--Knuth correspondence, called $\dRSK$, depending on two parameters $q$ and $t$. This correspondence extends the $\qrst$ correspondence, recently introduced by the authors, and allows the first tableaux-theoretic proof of the dual Cauchy identity for Macdonald polynomials. By specializing $q$ and $t$, one recovers the row and column insertion version of the classical dual RSK correspondence as well as of $q$- and $t$-deformations thereof which are connected to $q$-Whittaker and Hall--Littlewood polynomials. When restricting to Jack polynomials and $\{0,1\}$-matrices corresponding to words, we prove that the insertion tableaux obtained by $\dRSK$ are invariant under swapping letters in the input word.
 Our approach is based on Fomin's growth diagrams and the notion of probabilistic bijections. 
\end{abstract}

\maketitle

\tableofcontents
\section{Introduction}

The Robinson--Schensted--Knuth (RSK) correspondence is a bijection between matrices of nonnegative integers with finite support and pairs of semistandard Young tableaux of the same shape. It was introduced by Knuth \cite{Knuth70} and generalizes the Robinson-Schensted (RS) correspondence which was introduced by Robinson \cite{Robinson38} for permutations and independently by Schensted \cite{Schensted61} for words\footnote{We refer by ``RS'' to the special case of RSK for permutations and by ``RS for words'' to the special case of RSK for words.}. Both correspondences have many applications in combinatorics, representation theory, probability theory and algebraic geometry. In particular RSK gives a bijective proof of the Cauchy identity
\begin{equation}
\label{eq:cauchy}
\sum_\la s_\la(\x)s_\la(\y) = \prod_{i,j \geq 1} \frac{1}{1-x_iy_j},
\end{equation}
where the sum is over all partitions $\la$ and $s_\la(\mb{z})$ denotes the Schur function in the variables $\mb{z}=(z_1,z_2,\ldots)$. A closely related bijection is the dual RSK correspondence ($\RSK^*$) introduced by Knuth \cite{Knuth70}  which yields a bijective proof of the dual Cauchy identity
\begin{equation}
\label{eq:dual cauchy}
\sum_{\la}s_\la(\x) s_{\la^\prime}(\y) = \prod_{i,j \geq 1} (1+x_iy_j),
\end{equation}
where the sum is again over all partitions $\la$.

All of the above mentioned correspondences have been extended into various directions throughout the last decades. Among others several randomized generalizations of RS, RSK and $\RSK^*$ were introduced in
\cite{BorodinPetrov16, BufetovMatveev18, BufetovPetrov15, MatveevPetrov17, OConnellPei13, Pei14, Pei17}. These generalizations associate to each permutation or nonnegative integer matrix respectively a distribution on pairs of (semi)standard Young tableaux depending on a parameter $q$ or $t$ and thereby giving a proof of the (dual) Cauchy identity for $q$-Whittaker or Hall--Littlewood symmetric functions. Similar to the classical RSK algorithm, these randomized generalizations have many applications to probabilistic models, compare for example with \cite{BorodinPetrov16,BufetovMatveev18,BufetovPetrov15,MatveevPetrov17}. 

In a previous paper \cite{AignerFrieden22}, the authors introduced a randomized generalization for RS called $\qrst$ depending on two parameter $q$ and $t$. This generalization was designed to prove the squarefree part of the Cauchy identity for the Macdonald symmetric functions $P_\la(\x;q,t)$ and $Q_\la(\x;q,t)$. Analogously to Macdonald symmetric functions, which specialize to $q$-Whittaker, Hall--Littlewood and Schur symmetric functions, the $\qrst$ correspondence specializes to the corresponding randomized variations of RS for $q=0$,$t=0$, $q\rightarrow \infty$ or $t\rightarrow \infty$, and to the row- or column-insertion of RS itself for $q=t=0$ or $q=t\rightarrow \infty$ respectively.

The aim of this paper is to give a unifying generalization of both $\qrst$ and $\RSK^*$, called $\dRSK$, and thereby giving the first tableaux theoretic proof of the dual Cauchy identity for Macdonald polynomials
\begin{equation}
\label{eq:dual cauchy mac}
\sum_\la P_\la(\x;q,t) P_{\la^\prime}(\y;t,q) = \prod_{i,j}(1+x_iy_j).
\end{equation}
Although $\dRSK$ specializes to the dual row-insertion and dual column-insertion it is not a simple generalization of both algorithms but a ``superposition''. In particular the insertion of a sequence of integers $i_1 < \ldots < i_r$ is done in ascending order for dual column-insertion, in descending order for dual row-insertion, and so-to-speak simultaneously.

%

\subsection{Bijectivizing an algebraic proof}

A very concise proof of the classical (dual) Cauchy identity for Schur symmetric functions is based on the commutation relation of the up and (dual) down operators on Young's lattice. The framework of Fomin's growth diagrams \cite{Fomin95, Fomin86} allows one to translate a bijective proof of the commutation relation easily into a bijective proof of the (dual) Cauchy identity.
By defining the Macdonald polynomials $P_\la(\x;q,t)$ through their monomial expansion
\begin{equation}
\label{eq:macdonald monomial}
P_\la (\x; q,t) = \sum_{T \in \SSYT(\la)} \psi_T(q,t) \tb{x}^T,
\end{equation}
where $\psi_T(q,t)$ is a rational function on $q$ and $t$,
the (dual) Cauchy identity is again an immediate consequence of the commutation relation
\begin{equation}
\label{eq:dual commutation relation}
D^*_y(q,t) U_x(q,t) = (1+xy) U_x(q,t) D^*_y(q,t),
\end{equation}
where $ U_x(q,t)$ and $D^*_y(q,t)$ are the $(q,t)$-weighted extensions of the up and dual down operators on Young's lattice.
By restricting the above to the coefficient of $xy$, we obtain the commutation relation
\[
D_{q,t}U_{q,t}= \frac{1-t}{1-q}\id + U_{q,t}D_{q,t} \qquad \Leftrightarrow \qquad D_{q,t}U_{q,t}-U_{q,t}D_{q,t}= \frac{1-t}{1-q}\id,
\]
which was the fundamental commutation relation in the authors previous paper \cite{AignerFrieden22}. By further setting $q=t=0$ the above becomes the equation 
\[
 DU-UD=\id,
\]
which is the defining property for Young's lattice to be a differential poset, see \cite{Stanley88}.

  It is clear that no bijective proof of the commutation relation \eqref{eq:dual commutation relation} which preserves the $(q,t)$-weight can exist since it would imply a weight-preserving bijective proof of the dual Cauchy identity for Macdonald polynomials. However, as we see in Table~\ref{tab:explicit bijection} this is not possible. Our approach is nevertheless to prove \eqref{eq:dual commutation relation} ``as bijectively as possible'' by using the notion of \emph{probabilistic bijections} introduced in \cite{BufetovMatveev18, BufetovPetrov19}.
Let $X,Y$ be sets together with weight functions  $\omega$ and $\ov{\omega}$ respectively which satisfy
\begin{equation}
\label{eq:weighted gen fct}
\sum_{x \in X}\omega(x) = \sum_{y \in Y} \ov{\omega}(y).
\end{equation}
Instead of one weight preserving bijection between $X$ and $Y$, a probabilistic bijection consists of two families of ``probability distributions''\footnote{Note that the term ``probability distribution'' is used in a more general setting allowing them to have values in $\mathbb{Q}(q,t)$ instead of real values in $[0,1]$; nevertheless the sum of the entries of the ``probability distribution'' has to be $1$.}. For each $x \in X$ we have a ``forward'' probability distribution $\mc{P}(x \rightarrow y)$, which can be thought of as the probability of mapping $x$ to $y$, and for each $y \in Y$ a ``backward'' probability distribution $\ov{\mc{P}}(x \leftarrow y)$, the probability of mapping $y$ back to $x$. Probability distributions satisfying the equation
\begin{equation}
\label{eq:comp condition}
\omega(x) \mc{P}(x \rightarrow y) = \ov{\mc{P}}(x \leftarrow y) \ov{\omega}(y),
\end{equation}
are called \emph{probabilistic bijections}. It is immediate, however very significant, that the above compatibility condition between the probabilistic distributions and the weights implies \eqref{eq:weighted gen fct}.

Our combinatorial proof of the commutation relation \eqref{eq:dual commutation relation} boils down to finding, for partitions $\la,\rho$, a probabilistic bijection between two sets $\U^k(\la,\rho)$, $\D^k(\la,\rho)\cup \D^{k-1}(\la,\rho)$ of partitions together with weights coming from the definition of the $(q,t)$-up and $(q,t)$-dual down operator.
The main guiding principle for finding the explicit formulas for both the forward and backward probabilities 
$\mc{P}(x \rightarrow y)$ and $\ov{\mc{P}}(x \leftarrow y)$ in this setting was to use \eqref{eq:comp condition} as a defining relation and to distribute the factors in $\frac{\omega(x)}{\omega(y)}$ evenly in a minimal way such that both $\mc{P}$ and $\ov{\mc{P}}$ are homogeneous rational functions in certain parameters. The actual definition of the probabilities however is based on an intuitive geometric approach using the \emph{Quebecois notation} for Young diagrams, a variation of both the French and English notation, 
for which it takes a decent amount of work to  prove \eqref{eq:comp condition}.
On the other hand, our definition of the probabilities is not only very concise, but also naturally led us to rediscovering an extension of Lagrange interpolation for symmetric polynomials by Chen and Louck \cite{ChenLouck96} which immediately implies that the forward and backward probabilities are indeed probability distributions.

Using Fomin's growth diagrams allows us to lift our probabilistic bijection of the commutation relation to a probabilistic bijection of the dual Cauchy identity. The next table shows the forward probabilities for matrices and pairs of tableaux corresponding to the coefficient of $x_1^2x_2 y_1y_2y_3$ in \eqref{eq:dual cauchy mac}.  We can observe easily in this example that $\mc{P}(A \rightarrow P,Q)$ is a probability distribution since the sum over the probabilities in each row yields $1$.
\begin{table}[h]
\[
\begin{array}{l|ccc}
& \young(11,2)\; , \; \young(12,3) & \young(11,2)\; , \; \young(13,2) & \young(112)\; , \; \young(123)
 \\[12pt]
 \hline \\[-6pt]
\begin{pmatrix}
1&1&0 \\ 0&0&1
\end{pmatrix} &  t \dfrac{1-q^2}{1-q^2t} & 0 & \dfrac{1-t}{1-q^2 t}
 \\[18pt]
\begin{pmatrix}
1&0&1 \\ 0&1&0
\end{pmatrix} & \dfrac{(1-q)(1-t)}{(1-q t)(1-q^2t)} & t\dfrac{1-q}{1-q t} & q \dfrac{1-t}{1-q^2t}
 \\[18pt]
\begin{pmatrix}
0&1&1 \\ 1&0&0
\end{pmatrix} & q \dfrac{(1-q)(1-t)}{(1-q t)(1-q^2 t)} & \dfrac{1-q}{1-qt} & q^2 \dfrac{1-t}{1-q^2t} \\[12pt]
\hline \\[-6pt]
& \dfrac{(1-q^2)(1-q t^2)}{(1-q t)(1-q^2t)} & \dfrac{(1-q)(1-t^2)}{(1-t)(1-qt)} & \dfrac{(1-t)(1-q^3)}{(1-q)(1-q^2t)}
\end{array}
\]
\caption{\label{tab:explicit bijection}
The forward probabilities $\mc{P}(A \rightarrow P,Q)$ where all matrices have weight $x_1^2x_2 y_1y_2y_3$ and the weight of the pair of tableaux are $x_1^2x_2 y_1y_2y_3$ times the according entry in the last row. 
}
\end{table}

\subsection{Properties of the $\dRSK$ correspondence}
Our randomized \linebreak $\dRSK$ correspondence can be specialized in two different ways: one can specialize the parameters $q,t$ or restrict the correspondence to a smaller family of matrices.

For $q,t \in [0,1)$ or $q,t \in (1,\infty)$ the forward and backward probabilities of $\dRSK$ become actual probabilities, i.e., they have values in $[0,1]$.
By setting $t=0$ or $t\rightarrow\infty$ we obtain the \emph{$q$-Whittaker dual row insertion} or \emph{$q$-Whittaker dual column insertion} respectively for $q$-Whittaker polynomials which were first defined by Matveev and Petrov \cite[\S5.1, \S5.4]{MatveevPetrov17}. By a symmetry property of $\dRSK$ these two specializations are connected to setting $q=0$ or $q\rightarrow\infty$, i.e., $t$-deformations of the row and column version of $\RSK^*$  for Hall-Littlewood polynomials. Finally for $q=t=0$ or $q=t\rightarrow \infty$ we obtain the row or column insertion version of $\RSK^*$; this can be checked easily in the example presented in Table~\ref{tab:explicit bijection}. 

Another interesting specialization of $\dRSK$ comes from setting $q=t^\alpha$ and taking the limit $t \rightarrow 1$; this corresponds to specializing Macdonald polynomials to Jack polynomials. By restricting to matrices with at most one nonzero entry in each column, the insertion tableau obtained by the $\dRSK$ correspondence is invariant with respect to interchanging columns of the input matrix. In Table~\ref{tab:explicit bijection Jack} this can be observed by the fact that the sum of the probabilities in the first two columns as well as the probability in last column are equal for all three input matrices.\medskip
\begin{table}[h]
\[
\begin{array}{l|ccc}
& \young(11,2)\; , \; \young(12,3) \quad & \quad  \young(11,2)\; , \; \young(13,2)\quad  & \quad  \young(112)\; , \; \young(123)
 \\[12pt]
 \hline \\[-6pt]
\begin{pmatrix}
1&1&0 \\ 0&0&1
\end{pmatrix} &  \dfrac{2\alpha}{2\alpha+1} & 0 & \dfrac{1}{2\alpha+1}
 \\[18pt]
\begin{pmatrix}
1&0&1 \\ 0&1&0
\end{pmatrix} & \dfrac{\alpha}{(\alpha+1)(2\alpha+1)} & \dfrac{\alpha}{\alpha+1} &  \dfrac{1}{2\alpha+1}
 \\[18pt]
\begin{pmatrix}
0&1&1 \\ 1&0&0
\end{pmatrix} & \dfrac{\alpha}{(\alpha+1)(2\alpha+1)} & \dfrac{\alpha}{\alpha+1} & \dfrac{1}{2\alpha+1} 
\end{array}
\]
\caption{\label{tab:explicit bijection Jack} The forward probabilities $\mc{P}(A \rightarrow P,Q)$ of Table~\ref{tab:explicit bijection} in the Jack limit.
}
\end{table}

By restricting the input of $\dRSK$ to $\{0,1\}$-matrices with at most one entry equal to $1$ in each column we obtain a $(q,t)$-deformation of RS for words. By further restricting to permutation matrices we obtain $\qrst$. These specializations are summarized in the next table.

\[
\begin{array}{lp{2.7cm}l|p{2.5cm}}
\text{Corresp.} & \text{Domain} & \text{Image} & \text{Macdonald case}\\
\hline
\text{RS} & $\pi \in S_n$ & \ds \bigcup\limits_{\la \vdash n} \SYT_{\la} \times \SYT_{\la} & \text{\cite[Def 4.6.5]{AignerFrieden22}} \\[18pt]
\text{RS for words} & $w \in \mathbb{Z}_{>0}^*$ & \ds \bigcup\limits_{\la} \SSYT_{\la} \times \SYT_{\la} 
& \text{Definition~\ref{def:qrst insertion}} \\[18pt]
\RSK^* & $\{0,1\}$\text{-matrices} & \ds \bigcup\limits_{\la} \SSYT_{\la} \times \SSYT^*_{\la}
& \text{ Theorem~\ref{thm:dRSK is prob bij},} \text{Definition~\ref{def:qtRSK insertion}} \\[18pt]
\RSK & nonnegative integer matrices & \ds \bigcup\limits_{\la} \SSYT_{\la} \times \SSYT_{\la} & \text{open}
\end{array}
\]

Similarly to the classical dual RSK, the $\dRSK$ correspondence also yields a tableaux theoretic proof of the skew dual Cauchy identity for Macdonald polynomials. The dual Pieri rule for Macdonald polynomials
\begin{equation}
\label{eq:Pieri rule}
P_\mu(\x;q,t) e_r(\x)=\sum_{\substack{\la \succ^\prime \mu, \\ |\la/\mu|=r}} \varphi^*_{\la/\mu} P_\la(\x;q,t).
\end{equation}
can be obtained as the coefficient of $y_1^r$ of the skew dual Cauchy identity \eqref{eq:skew dual cauchy} in the special case
\begin{multline*}
P_\mu(\x;q,t) \prod_{i}(1+x_i y_1) = \sum_{\la} P_\la(\x;q,t) P_{\la^\prime/\mu^\prime}(y_1;t,q)\\
=\sum_{\la \succ^\prime \mu}  P_\la(\x;q,t) \varphi^*_{\la/\mu} y_1^{|\la/\mu|},
\end{multline*}
where the last equality is obtained by using the definition of $P_{\la^\prime/\mu^\prime}(y_1;t,q)$ and the fact that $P_{\la^\prime/\mu^\prime}(y_1;t,q) =0$ unless $\la/\mu$ is a vertical strip. Hence it is immediate that $\dRSK$ yields a tableaux theoretic proof of the dual Pieri rule for Macdonald polynomials. In particular, for each semistandard Young tableau $T$ of shape $\mu$ and sequence $(a_i)$ with $r$ entries equal to $1$ and all other entries $0$ we obtain by $\dRSK$  a probability distribution on semistandard Young tableaux $\wh{T}$ of shape $\la \succ^\prime \mu$ by filling the following growth diagram according to our probabilistic local growth rules.

\begin{center}
\begin{tikzpicture}
\node at (0,0) {$\emptyset$};
\node at (0,-1.5) {$T^{(1)}$};
\node at (0,-2*1.5) {$T^{(2)}$};
\node at (0,-3*1.5) {$T^{(m-1)}$};
\node at (0,-4*1.5) {$T^{(m)}$};

\node at (1.7,0) {$\emptyset$};
\node at (1.7,-1.5) {$\wh T^{(1)}$};
\node at (1.7,-2*1.5) {$\wh T^{(2)}$};
\node at (1.9,-3*1.5) {$\wh T^{(m-1)}$};
\node at (1.7,-4*1.5) {$\wh T^{(m)}$};

\foreach \x in {0,...,4}{
		\draw (0.6,-1.5*\x) -- (1.1,-1.5*\x);
	}
	
\draw (0,-0.45) -- (0,.45-1.5*1);
\draw (0,-0.45-1.5*1) -- (0,.45-1.5*2);
\draw (0,-0.45-1.5*3) -- (0,.45-1.5*4);
\draw (1.5,-0.4) -- (1.5,.45-1.5*1);
\draw (1.5,-0.4-1.5*1) -- (1.5,.45-1.5*2);
\draw (1.5,-0.4-1.5*3) -- (1.5,.45-1.5*4);

\draw [dotted, thick] (1.5*0.5,-2*1.5-.3) -- (1.5*0.5,-1.5*3+.3);
\node at (1.5*0.5,-1.5*0.5) {$a_{1}$};
\node at (1.5*0.5,-1.5*1.5) {$a_{2}$};
\node at (1.5*0.5,-1.5*3.5) {$a_{m}$};
\end{tikzpicture}
\end{center}

\subsection{Future work}
One immediate question which we want to study in the future is whether there is an analogue extension of $\qrst$ to a $(q,t)$-variation of RSK. There already exists a $q$-deformation of row insertion and a $t$-variation of column insertion by \cite{MatveevPetrov17, BufetovMatveev18}, which a hoped for $(q,t)$-variation of RSK would specialize to. Another direction is to provide a combinatorial proof for the fact that Macdonald functions, when defined by \eqref{eq:macdonald monomial}, are symmetric with respect to the $\x$-variables. In the case of Schur functions such a proof is obtained by either using the Bender--Knuth involution or the Lascoux--Sch\"utzenberger symmetric group action on semistandard Young tableaux. Again one would hope for a probabilistic variation generalizing one or both of these bijections.

For two weighted sets with equal sum of weights the forward and backward probabilities have to satisfy simple algebraic equations. For a generic situation this implies that there exist many probabilistic bijections between these two sets. The probabilities presented in this paper were obtained by regarding small examples,  choosing the ``reasonable'' solution for these equations in each example, and then guessing a general formula. It was crucial that for all regarded examples there existed exactly one ``reasonable'' solution, where reasonable meant that the solution factorised into a minimal number of irreducible factors (this property is closely related to the notion of height functions in Diophantine geometry).
An open problem is to identify an additional property, e.g. the above described one, for probabilistic bijections and prove that the probabilistic bijections presented in this paper are unique with respect to this property.

\subsection{Outline of the paper}

In \S \ref{sec:dual cauchy}, we introduce basic notions and show several approaches for proving the dual Cauchy identity \eqref{eq:dual cauchy}. First we present the column version of dual RSK, continue with an algebraic proof using up and dual down operators on Young's lattice, and finally show how Fomin's growth diagrams can be used to ``bijectivize'' such an algebraic proof leading to a family of insertion algorithms generalizing dual RSK.

In \S \ref{sec:Mac}, we review basic facts about Macdonald polynomials and define $(q,t)$-up and dual down operators.
We introduce in \S \ref{sec:prob bij} the notion of a probabilistic bijection and present in \S \ref{sec:probabilities definition} the forward and backward probabilities which yield a proof of the commutation relation of the $(q,t)$-up and dual down operators. In \S \ref{sec:recovering dual RSK} we give a second formulation of our probabilities which allows us to show that they actually generalize the classical dual RSK. In particular we obtain a lattice path description for the ``maximal'' monomial in $q$ and $t$ of the probabilities.
In \S \ref{sec:dual qtRSK} we use probabilistic growth rules to define the $\dRSK$ correspondence.

In \S \ref{sec:weights proof}, we prove that the forward and backward probabilities actually form a probabilistic bijection. The fact that these probabilities form probability distributions follows by a version of Lagrange interpolation presented in \S \ref{sec:interpolation}, while the compatibility condition is shown by introducing and analyzing a third formulation of the forward and backward probabilities  in \S \ref{sec:prob via hook-lengths}-\ref{sec:weights formula}.

In \S \ref{sec:degenerations}, we consider various degenerations of $\dRSK$ by specializing $q$ or $t$ respectively; in particular we prove an interchanging property of the correspondence in the Jack limit. In \S \ref{sec:words} we restrict $\dRSK$ to $\{0,1\}$-matrices with at most one $1$ entry per column and obtain a $(q,t)$-version of RS for words.

\subsection*{Acknowledgments}
GF was partially supported by the Canada Research Chairs program. FSA acknowledges the financial support from the Austrian Science  Fund (FWF) \href{https://dx.doi.org/10.55776/J4387}{doi: 10.55776/J4387}, \href{https://dx.doi.org/10.55776/P34931}{doi: 10.55776/P34931} and \href{https://dx.doi.org/10.55776/P36863}{doi: 10.55776/P36863}.

.

\section{Dual Cauchy identity, up and down operators, and growth diagrams}
\label{sec:dual cauchy}

\subsection{Schur functions and the dual Cauchy identity}
\label{sec: insertion algorithms}

A \deff{partition} $\lambda=(\lambda_1,\ldots,\lambda_k)$ is a weakly decreasing sequence of nonnegative integers. We say that $\lambda$ is a partition of the integer $|\lambda|=\lambda_1+\cdots + \lambda_k$ which is denoted by $\lambda \vdash |\lambda|$. We identify the partition $\la$ with its \deff{Young diagram} which is, following french convention, a collection of left-justified boxes consisting of $\lambda_i$ boxes in the $i$-th row from bottom. Starting in \S \ref{sec:probabilities} we denote Young diagrams using \deff{Quebecois convention}, in which  the boxes are right-justified instead of left-justified. The boxes, or \deff{cells}, of a Young diagram are indexed by positive Cartesian coordinates, i.e., the $x$-th cell in the $y$-th row from bottom is $c=(x,y)$. We denote by $\lambda^\prime$ the \deff{conjugate} of $\la$, which is obtained by reflecting the Young diagram of $\la$ in the diagonal $x=y$.

We define for a cell $c=(x,y) \in \la$ its \deff{arm-length} $a_\lambda(c)$ and its \deff{leg-length} $\ell_\lambda(c)$ by
\[
a_\lambda(c)= \lambda_y-x, \qquad \qquad
\ell_\lambda(c)=\lambda_x^\prime-y.
\]
The \deff{hook-length} of $c$ is defined as $h_\lambda(c)=a_\lambda(c)+\ell_\lambda(c)+1$. The cell $c$ as in Figure \ref{fig: Young diagram} has arm-length $a_\la(c) = 5$, leg-length $\ell_\la(c) = 3$, and hook-length $h_\la(c) = 9$.

\begin{figure}[h]
\begin{center}
\begin{tikzpicture}
\Yboxdim{16 pt}
\Ylinecolour{lightgray}
\tyng(0,0,7,6,3,2,1,1)
\Ylinecolour{black}
\tgyoung(0pt,0pt,:;)
\node at (24pt,8pt) {$c$};
\node at (72pt,8pt) {$a_\lambda(c)$};
\draw[->] (86pt,8pt) -- (106pt,8pt);
\draw[->] (58pt,8pt) -- (38pt,8pt);
\node at (24pt,40pt) {$\ell_\lambda(c)$};
\draw[<-] (24pt,20pt) -- (24pt,30pt);
\draw[->] (24pt,50pt) -- (24pt,60pt);
\end{tikzpicture}
\end{center}
\caption{\label{fig: Young diagram} The Young diagram of the partition $\la = (7,6,3,2,1,1)$ for which the cell $c=(2,1)$ is marked.}
\end{figure}
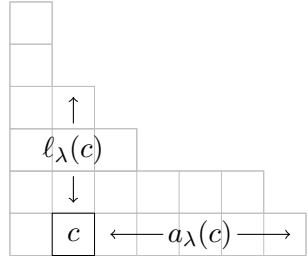

Let $\lambda$ be a partition. A filling of the cells of $\la$ with positive integers is called a
\begin{itemize}
\item \deff{semistandard Young tableau} (SSYT) of shape $\la$, if the rows are weakly increasing and columns are strictly increasing,
\item \deff{dual semistandard Young tableau} of shape $\la$, if the rows are strictly increasing and columns are weakly increasing,
\item \deff{partial standard Young tableau} of shape $\la$, if the rows and columns are strictly increasing and each entry appears at most once.
\end{itemize}
It is immediate, that semistandard Young tableaux of shape $\la$ are in bijection to dual semistandard Young tableaux of shape $\la^\prime$. We denote by $\SSYT(\lambda)$ the set of all semistandard Young tableaux of shape $\lambda$, by $\SSYT^*(\lambda)$ the set of all dual semistandard Young tableaux of shape $\la$ and by $\SYT^{\leq n}(\la)$ the set of all partial standard Young tableaux of shape $\la$ and entries at most $n$. The \deff{content} of an SSYT $T$ is the sequence $(\mu_1,\mu_2,\ldots)$, where $\mu_i$ is the number of entries in $T$ equal to $i$. The content of a dual semistandard Young tableau is defined analogously.

\begin{ex}
The following is a semistandard Young tableau of shape $(6,5,3)$ and content $(2,2,2,3,2,3)$.
\begin{center}
\young(112356,24446,356)
\end{center}
\end{ex}

Let $\mb{x} = (x_1, x_2, \ldots)$ be an infinite sequence of indeterminates. For a (dual) semistandard Young diagram $T$ with content $\mu$, we set $\mb{x}^T= \prod_{i\geq 1}x_i^{\mu_i}$.  The \deff{Schur function} $s_\lambda(\mb{x})$ associated to the partition $\lambda$ is defined by
\[
s_\lambda(\mathbf{x}) = \sum_{T \in \SSYT(\lambda)} \mathbf{x}^T.
\]
The next two theorems are fundamental results in the theory of symmetric functions.

\begin{thm}[Cauchy identity]
\label{thm: Schur Cauchy}
For two sequences of indeterminates $\mb{x} = (x_1, x_2, \ldots)$ and $\mb{y} = (y_1, y_2, \ldots)$, we have
\[
\prod_{i,j \geq 1}\frac{1}{1-x_iy_j} = \sum_\lambda s_\lambda(\mathbf{x})s_\lambda(\mathbf{y}),
\]
where the sum is over all partitions $\lambda$.
\end{thm}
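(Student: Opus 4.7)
The plan is to prove the identity bijectively via the Robinson--Schensted--Knuth correspondence. First I would expand both sides as weighted sums over combinatorial objects. By definition of the Schur function, the right-hand side becomes
\[
\sum_\la s_\la(\x) s_\la(\y) = \sum_{\la} \sum_{(P,Q) \in \SSYT(\la) \times \SSYT(\la)} \x^P \y^Q,
\]
while expanding each geometric factor $\frac{1}{1-x_iy_j} = \sum_{a\geq 0}(x_iy_j)^a$ in the left-hand side yields
\[
\prod_{i,j \geq 1}\frac{1}{1-x_iy_j} = \sum_{A} \x^{r(A)}\y^{c(A)},
\]
where $A = (a_{ij})$ ranges over matrices with nonnegative integer entries and finite support, and $r(A)_i := \sum_j a_{ij}$, $c(A)_j := \sum_i a_{ij}$ are the row- and column-sum vectors. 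The identity then reduces to the claim that the two monomial expansions coincide term by term after a suitable matching of indexing sets.

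The key step is to construct a bijection $A \longleftrightarrow (P,Q)$ between such matrices and pairs of semistandard Young tableaux of the same shape, satisfying $\x^P = \x^{r(A)}$ and $\y^Q = \y^{c(A)}$. One concrete realization is through the two-line array associated with $A$ read in lexicographic order: I would successively row-insert the bottom letters into the insertion tableau $P$ while recording the positions of the newly created cells in the recording tableau $Q$, labeled with the corresponding top letters. Proving this yields a well-defined bijection requires verifying that row insertion preserves semistandardness of $P$ at each step, that the cells added during the insertion of a block of equal top-letters form a horizontal strip so $Q$ is also semistandard, and that the whole process can be inverted by reverse bumping from the cell of largest label in $Q$.

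The main obstacle is this invertibility and semistandardness analysis, which comes down to a careful case distinction on how a bumped entry interacts with the existing rows. Given the paper's emphasis on Fomin's growth diagrams, a cleaner alternative, which also sets up the approach generalized later in the paper, is to introduce weighted up and down operators $U_x$, $D_y$ on Young's lattice, defined by $U_x \la = \sum_\mu x^{|\mu/\la|}\mu$ and $D_y \mu = \sum_\la y^{|\mu/\la|}\la$, where the sums are over partitions $\mu \supset \la$ (resp.\ $\la \subset \mu$) with horizontal strip difference. One proves the single commutation relation $D_y U_x = (1-xy)^{-1} U_x D_y$ by a local bijection on pairs $(\la,\rho)$ with $\la,\rho$ sharing a common cover or cocover, and then computes $\langle \emptyset \mid \prod_j D_{y_j} \prod_i U_{x_i} \mid \emptyset \rangle$ in two ways. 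Fomin's growth diagrams bijectivize this algebraic argument by filling a grid with partitions according to the local rule, thereby reconstructing the RSK correspondence and placing the proof in exactly the framework that the paper extends to Macdonald polynomials.
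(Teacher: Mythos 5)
Your proposal is correct and matches the paper's treatment: the paper states this Cauchy identity as a classical fact, notes it is proven combinatorially by RSK exactly as you outline, and carries out the up/down-operator argument you sketch in \S 2.2 for the dual Cauchy identity, remarking that the Cauchy case follows analogously from $D_y U_x = \frac{1}{1-xy}U_xD_y$. The only caveat is that bijectivizing this non-dual commutation relation via growth diagrams requires the extension to nonnegative integer matrices, which the paper itself defers to van Leeuwen rather than proving.
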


The Cauchy identity can be rewritten in the form
\begin{equation}
\label{eq_Cauchy_bijective}
\sum_{A=(a_{ij})} \prod_{i,j \geq 1}(x_iy_j)^{a_{ij}} = \sum_{(P,Q)} \mb{x}^P \mb{y}^Q,
\end{equation}
where the sum on the left-hand side runs over all nonnegative integer matrices with finite support (i.e., only finitely many nonzero entries), and the sum on the right-hand side runs over all ordered pairs of SSYTs of the same shape.

\begin{thm}[Dual Cauchy identity]
\label{thm: Schur Dual Cauchy}
For two sequences of indeterminates $\mb{x} = (x_1, x_2, \ldots)$ and $\mb{y} = (y_1, y_2, \ldots)$, we have
\[
\prod_{i,j \geq 1}(1+x_iy_j) = \sum_\lambda s_\lambda(\mathbf{x})s_{\lambda^\prime}(\mathbf{y}),
\]
where the sum is over all partitions $\lambda$.
\end{thm}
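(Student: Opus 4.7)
The plan is to prove the dual Cauchy identity by giving an explicit weight-preserving bijection between combinatorial objects enumerated by the two sides. First I would expand the left-hand side: since each factor $(1+x_iy_j)$ contributes either $1$ or $x_iy_j$, we have
\[
\prod_{i,j \geq 1}(1+x_iy_j) = \sum_{A} \prod_{i,j \geq 1}(x_iy_j)^{a_{ij}},
\]
where $A = (a_{ij})$ ranges over $\{0,1\}$-matrices with finite support. For the right-hand side, I would use the conjugation bijection $\SSYT(\lambda') \to \SSYT^*(\lambda)$ to rewrite $s_{\lambda'}(\mathbf{y}) = \sum_{Q \in \SSYT^*(\lambda)} \mathbf{y}^Q$, so that
\[
\sum_\lambda s_\lambda(\mathbf{x})s_{\lambda'}(\mathbf{y}) = \sum_\lambda \sum_{\substack{P \in \SSYT(\lambda) \\ Q \in \SSYT^*(\lambda)}} \mathbf{x}^P \mathbf{y}^Q.
\]
The theorem will follow if one produces a bijection $A \mapsto (P,Q)$ of the same shape such that the row-sum sequence of $A$ is the content of $P$ and the column-sum sequence of $A$ is the content of $Q$.

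To construct this bijection I would use the column-insertion version of dual RSK. Starting from $A$, form a biword by listing, for each pair $(i,j)$ with $a_{ij}=1$, the column $\binom{i}{j}$; order these columns lexicographically by $i$ increasing, breaking ties by $j$ \emph{decreasing} (so that equal row-indices appear with strictly decreasing column-indices). Then process the biword left to right: at step $k$, column-insert $j_k$ into the current $P$-tableau and place $i_k$ into $Q$ in the cell newly created. The bumping rule to use is the ``dual'' one, in which an entry $x$ bumps the smallest entry of the current column that is $\geq x$; the key consequence of this rule, combined with the decreasing tie-breaking on $j_k$, is that when equal $i_k$'s occur consecutively their new cells form a horizontal strip, so $Q$ remains strictly increasing along rows and weakly increasing along columns.

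The verification then breaks into three parts: (i) at every stage $P$ is an SSYT and $Q$ is a dual SSYT of the same shape; (ii) the weights $\mathbf{x}^P \mathbf{y}^Q$ equal $\prod (x_iy_j)^{a_{ij}}$ because $i$ is added to $Q$ once for each $1$ in row $i$ and $j$ is inserted into $P$ once for each $1$ in column $j$; and (iii) the map is invertible by running the insertion backwards using the positions of the maximal entry of $Q$ (in reverse reading order) to unbump letters from $P$, recovering the biword and hence the matrix.

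The main obstacle, and the subtlest point, is pinning down the correct ordering convention on the biword and the correct ``strict vs.\ weak'' choice in the bumping rule so that the recording tableau $Q$ lies in $\SSYT^*(\lambda)$ rather than $\SSYT(\lambda)$. Once that is set up correctly, everything else is a standard check. An alternative, and arguably cleaner, route which the paper itself will pursue is to invoke Fomin's growth diagrams: one proves a local bijection that realizes the commutation relation $D^*_y U_x = (1+xy) U_x D^*_y$ of the up and dual down operators on Young's lattice (which encodes the identity $\sum_\mu \mathbbm{1}_{\mu \succ \lambda} \mathbbm{1}_{\mu \succ' \nu} = (1+xy)\sum_\kappa \mathbbm{1}_{\kappa \subset \lambda} \mathbbm{1}_{\kappa \subset' \nu}$ locally at each lattice point), and then tiles an $\infty \times \infty$ grid by these local bijections to obtain the global bijection between $\{0,1\}$-matrices and pairs $(P,Q) \in \SSYT(\lambda) \times \SSYT^*(\lambda)$. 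The present paper is written in this growth-diagram framework, so this second approach is the one that will generalize directly to the Macdonald setting later on.
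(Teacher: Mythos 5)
Your proposal is correct and follows essentially the same route as the paper: expand both sides over $\{0,1\}$-matrices and pairs $(P,Q)\in\SSYT(\la)\times\SSYT^*(\la)$, and exhibit the weight-preserving bijection via the column-insertion version of dual RSK (the paper also records the growth-diagram/commutation-relation reformulation you mention as the alternative). The only blemish is an internal convention swap --- you insert the $j_k$ into $P$ and record the $i_k$ in $Q$, which makes the content of $P$ the \emph{column}-sum sequence rather than the row-sum sequence you announced --- but since the identity is invariant under transposing $A$ and exchanging $(\x,\la)\leftrightarrow(\y,\la')$, this is harmless.
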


Analogously to above, the dual Cauchy identity can be rewritten as
\begin{equation}
\label{eq: Dual Cauchy bijective}
\sum_{A=(a_{ij})} \prod_{i,j \geq 1}(x_iy_j)^{a_{ij}} = \sum_{(P,Q)} \mb{x}^P \mb{y}^Q,
\end{equation}
where the sum on the left is over all finitely supported $\{0,1\}$-matrices, and the sum on the right is over all SSYTs $P$ and dual SSYTs $Q$ of the same shape.\bigskip

The Cauchy and dual Cauchy identities can be proven combinatorially by the \deff{Robinson--Schensted--Knuth (RSK) correspondence} and the \deff{dual RSK correspondence (RSK${}^*$)}, respectively. Each of these bijections has a \deff{row insertion version} and a \deff{column insertion version}. In the remainder of this section, we recall the column insertion version of dual RSK. 

The basic building block of the correspondence is the \deff{insertion} of a number $k$ into a column of a semistandard Young tableau: if $k$ is larger than all entries in the regarded column, add $k$ to the top of the column. Otherwise, let $z$ be the bottom-most entry in the column which is larger than or equal to $k$, and replace $z$ with $k$. We say that $z$ is \deff{bumped} out of the column.
Next, define the \deff{column insertion} of a number $k$ into an SSYT $P$, denoted $k \rightarrow P$, by the following process.
\begin{itemize}
\item Insert $k$ into the first (left-most) column of $P$. If no entry is bumped, the process terminates.
\item If $z$ is bumped out of column $i$, insert $z$ into column $i+1$. Continue in this manner until no bumping occurs.
\end{itemize}
Note that the shape of $k \rightarrow P$ differs from the shape of $P$ by the addition of a single cell. \bigskip

Let $A$ be a finitely supported $\{0,1\}$-matrix, and denote by $\omega_A$ the \deff{biword} obtained as follows. We read the columns of $A$ from top to bottom, starting with the first column, and append $\begin{pmatrix} j \\ i \end{pmatrix}$ to $\omega_A$ if the entry in row $i$ and column $j$ is $1$. Given the biword
\[
\omega_A = \begin{pmatrix}
j_1 & j_2 & \cdots & j_N \\
i_1 & i_2 & \cdots & i_N
\end{pmatrix},
\]
we define $P$ as the SSYT obtained by the sequence of column insertions $i_N \rightarrow i_{N-1} \rightarrow \cdots \rightarrow i_1 \rightarrow \emptyset$. We define $Q$ as the dual SSYT which records the growth of $P$ by placing the number $j_k$ in the cell that was added during the insertion of $i_k$. It is straightforward to see that this is indeed a dual SSYT. The tableaux $P$ and $Q$ are called the \deff{insertion tableau} and \deff{recording tableau} of $A$, respectively. One can show that these two tableaux provide enough information to reverse the insertion procedure and recover $A$. The map $A \mapsto (P,Q)$ is therefore a bijection between finitely supported $\{0,1\}$-matrices and pairs of tableaux of the same shape, where the first tableau is semistandard and the second is dual semistandard. We refer to this bijection as the \deff{column insertion version of the dual RSK}.

\begin{ex}
\label{ex: dual RSK insertion}
Let $A$ be the matrix
\[
A=\begin{pmatrix}
1 & 0 & 0 & 0 & 1 \\ 
1 & 1 & 0 & 1 & 0 \\ 
0 & 1 & 1 & 0 & 0
\end{pmatrix}.
\]
The corresponding biword is 
\[
\omega_A=\begin{pmatrix}
1 & 1 & 2 & 2 & 3 & 4 & 5 \\
1 & 2 & 2 & 3 & 3 & 2 & 1
\end{pmatrix}.
\]
The process of constructing $P$ and $Q$ by the successive insertions
\[
1 \rightarrow 2 \rightarrow 3 \rightarrow 3 \rightarrow 2 \rightarrow 2 \rightarrow 1 \rightarrow \emptyset 
\]
is shown next.
\[
\begin{array}{cccccccccccccccc}
 \emptyset & \young(1) & \young(1,2) & \young(12,2) & \young(12,2,3) & \young(12,23,3) & \young(122,23,3) & \young(1122,23,3) & = P \\ \\
 \emptyset & \young(1) & \young(1,1) & \young(12,1) & \young(12,1,2) & \young(12,13,2) & \young(124,13,2) & \young(1245,13,2) & = Q
\end{array}
\]
\end{ex}

Note that the \deff{row insertion version of dual RSK} is defined analogously with the difference that we use row insertion and that we construct the biword by reading the columns from bottom to top, starting with the first column.

\subsection{Up and down operators}
\label{sec: up and down operators}

In this section we explain an algebraic approach for proving the (dual) Cauchy identity which is based on linear operators on Young's lattice. For more insights on these operators compare for example to \cite{Fomin95} and \cite[§2.2-3.1]{vanLeeuwen05}.\bigskip

Let $\la, \mu$ be partitions. We write $\mu \subseteq \la$ if the Young diagram of $\mu$ is contained in the Young diagram of $\la$. Denote by $\la \cup \mu$ (resp., $\la \cap \mu$) the partition obtained by the union (resp., intersection) of the Young diagrams of $\la$ and $\mu$. \deff{Young's lattice} is the partial order on partitions defined by the inclusion relation $\subseteq$; its meet and join are given by $\cap$ and $\cup$, respectively. 
For $\mu \subseteq \la$ we denote by $\la/\mu$ the \deff{skew diagram} which consists of the cells in $\la$ which are not included in $\mu$. We say that $\la/\mu$ is a \deff{horizontal strip} (resp., vertical strip) if no two cells of $\la / \mu$ are in the same column (resp., row), where we use the notation $\mu \prec \la$ (resp., $\mu \prec^\prime \la$). 
Denote by $\bb{Y}$ the set of all partitions and by $\bb{Q}\bb{Y}$ the vector space generated by partitions, i.e.,
\[
\bb{Q}\bb{Y}:=\bigoplus_{\la \in \bb{Y}}\bb{Q}\la.
\]
We define $\bb{Y}\llbracket x_1,\ldots,x_m\rrbracket$ as the $\bb{Q}$-vector space of formal power series in the variables $x_1,\ldots,x_m$ with coefficients in $\bb{Q}\bb{Y}$, i.e., the coefficient of each monomial $x^iy^j$ is a finite formal sum over partitions. We define the \deff{up operator} $U_x$, the \deff{down operator} $D_y$ and \deff{dual down operator} $D_y^*$ as $\bb{Q}$-linear maps on $\bb{Y}\llbracket x,y\rrbracket$ via
\[
U_x \lambda = \sum_{\nu \succ \la} x^{|\nu / \la|} \nu, \qquad \qquad
D_y \lambda = \sum_{\mu \prec \la} y^{|\la / \mu|} \mu, \qquad \qquad
D_y^* \lambda = \sum_{\mu \prec^\prime \la} y^{|\la / \mu|} \mu.
\]

\begin{thm}
\label{thm: Schur commutation relation}
The up and (dual) down operator satisfy the commutation relations
\begin{align}
\label{eq: Schur commutation relation}
D_y U_x = \frac{1}{1-xy} U_x D_y ,\\
D_y^* U_x = (1+xy) U_x D_y^*.
\label{eq: Schur dual commutation relation}
\end{align}
\end{thm}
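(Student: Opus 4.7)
The plan is to apply each commutation relation to an arbitrary partition $\lambda$ and compare the coefficient of each target partition $\pi$ on the two sides. For the dual case, this reduces the operator identity to the scalar identity
\[
\sum_{\nu:\,\lambda\prec\nu,\;\pi\prec^\prime\nu} x^{|\nu/\lambda|}y^{|\nu/\pi|}
\;=\;(1+xy)\sum_{\mu:\,\mu\prec^\prime\lambda,\;\mu\prec\pi} x^{|\pi/\mu|}y^{|\lambda/\mu|},
\]
while the non-dual case becomes the analogous identity with $\prec^\prime$ replaced by $\prec$ throughout and the prefactor $(1+xy)$ replaced by $\tfrac{1}{1-xy}$.

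The first observation is that both sides vanish unless the ``minimal'' candidates $\mu=\lambda\cap\pi$ and $\nu=\lambda\cup\pi$ satisfy the required strip conditions, since these conditions reduce to requirements on the skew shapes $\lambda/(\lambda\cap\pi)$ and $\pi/(\lambda\cap\pi)$. Assuming the conditions hold, every valid $\nu$ contains $\lambda\cup\pi$ and every valid $\mu$ is contained in $\lambda\cap\pi$. Writing $E=\nu\setminus(\lambda\cup\pi)$ and $F=(\lambda\cap\pi)\setminus\mu$, the equality $|\nu/\lambda|=|\pi/(\lambda\cap\pi)|+|E|$ (and its three companions) lets one factor out a common base weight $w_0=x^{|\pi/(\lambda\cap\pi)|}\,y^{|\lambda/(\lambda\cap\pi)|}$, reducing the identity to
\[
\sum_{E\text{ admissible}}(xy)^{|E|}\;=\;C(xy)\sum_{F\text{ admissible}}(xy)^{|F|},
\]
with $C(z)=1+z$ in the dual case and $C(z)=\tfrac{1}{1-z}$ in the non-dual case.

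The final step is a combinatorial analysis of admissible extras $E$ and removals $F$. In the dual case, the vertical and horizontal strip conditions decouple column by column: each column of $\lambda\cup\pi$ admits at most one extra cell (at an outer corner not blocked by the vertical strip $\lambda/(\lambda\cap\pi)$ nor by the horizontal strip $\pi/(\lambda\cap\pi)$), and each inner corner of $\lambda\cap\pi$ can be removed under the parallel local condition. The two sides thus factor as $(1+xy)^{a}$ and $(1+xy)^{r}$, and the identity $a=r+1$ -- a refinement of the elementary fact that any Young diagram has exactly one more outer corner than inner corner -- produces the desired factor $1+xy$. In the non-dual case, both $\nu/\lambda$ and $\nu/\pi$ are horizontal strips, so the admissibility analysis is row by row rather than column by column, and extras can be appended unboundedly far in one "free" trailing row; summing the resulting geometric series yields $\tfrac{1}{1-xy}$.

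The main obstacle will be verifying that the admissible choices really do decouple in the presence of the partition constraint and the strip conditions, and that the refined corner count $a=r+1$ (or its row-wise analogue in the non-dual case) holds uniformly. The cleanest way to handle this bookkeeping is through Fomin's local growth rules, which package the entire analysis as a deterministic local map $(\mu,\lambda,\pi)\mapsto\nu$ together with a single free choice whose generating function is precisely $C(xy)$; this framework is exactly what will be needed later to lift the scalar identity to the full growth-diagram construction underlying $\dRSK$.
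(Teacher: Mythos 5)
Your proposal is correct and follows essentially the same route as the paper: after reducing to matrix coefficients, you identify the admissible $\nu$ with subsets of the addable outer corners of $\la\cup\pi$ and the admissible $\mu$ with subsets of the removable inner corners of $\la\cap\pi$, and conclude from the count ``one more addable outer corner than removable inner corner,'' which is exactly the paper's argument (phrased there as $|\U^k|=|\D^k|+|\D^{k-1}|$ via the binomial identity rather than as $(1+xy)^{a}=(1+xy)(1+xy)^{r}$). The only difference is that you also sketch the non-dual relation via a row-by-row geometric series, which the paper states but does not prove combinatorially; that sketch is sound.
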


Before explaining the proof of the second commutation relation, we show how it implies the dual Cauchy identity for Schur polynomials; the Cauchy identity follows analogously. Denote by $\langle \cdot,\cdot\rangle$ the inner product on $\bb{Q}\bb{Y}$ defined by $\langle \lambda, \mu \rangle = \delta_{\lambda,\mu}$, for all $\la,\mu \in \bb{Y}$ which we extend linearly to a $\bb{Q}\llbracket x_1,\ldots,x_m,y_1,\ldots,y_n\rrbracket$-bilinear function from $\bb{Y}\llbracket x_1,\ldots,x_m,y_1,\ldots,y_n\rrbracket \times \bb{Q}\bb{Y}$ to $\bb{Q}\llbracket x_1,\ldots,x_m,y_1,\ldots,y_n\rrbracket$. On the one hand, we have 
\[
\left\langle  D^*_{y_n} \cdots D^*_{y_1}  U_{x_m} \cdots U_{x_1} \emptyset, \emptyset \right\rangle = \sum_{\la} \langle  U_{x_m} \cdots U_{x_1}\emptyset, \la \rangle \; \langle D^*_{y_n} \cdots D^*_{y_1}  \la, \emptyset \rangle.
\]
A semistandard Young tableau $P$ of shape $\la$ can be seen as a chain
\[
 \emptyset = P^{(0)} \prec P^{(1)} \prec \cdots \prec P^{(m)} = \lambda,
\]
in Young's lattice, where $P^{(i)}$ denotes the shape of the subtableau of $P$ consisting of entries less than or equal to $i$. Further we have $\mb{x}^P = \linebreak \prod_{i=1}^m x_i^{|P^{(i)} / P^{(i-1)}|}$. This implies that $\langle  U_{x_m} \cdots U_{x_1}\emptyset, \la \rangle = s_\la(x_1,\ldots,x_m)$. Analogously, a dual semistandard Young tableau $Q$ can be seen as a chain 
\[
 \emptyset = Q^{(0)} \prec^\prime Q^{(1)} \prec^\prime \cdots \prec^\prime Q^{(n)} = \lambda.
\]

On the other hand, we can repeatedly use the commutation relation to move a $D_{y_j}^*$ past all $U_{x_i}$'s and obtain
\begin{multline*}
D^*_{y_j}U_{x_m} \cdots U_{x_1}  = (1+x_m y_j) U_{x_m} D^*_{y_j}U_{x_{m-1}} \cdots U_{x_1} \\= \prod_{i=1}^m (1+x_iy_j) \cdot  U_{x_m} \cdots U_{x_1} D^*_{y_j}.
\end{multline*}
Since $D^*_{y}\emptyset = \emptyset$, we obtain by induction
\begin{multline*}
\left\langle  D^*_{y_n} \cdots D^*_{y_1}  U_{x_m} \cdots U_{x_1} \emptyset, \emptyset \right\rangle =
\prod\limits_{\substack{ 1 \leq i \leq m \\ 1 \leq j \leq n}}(1+x_i y_j) \left\langle U_{x_m} \cdots U_{x_1} \emptyset, \emptyset \right\rangle \\ =
\prod\limits_{\substack{ 1 \leq i \leq m \\ 1 \leq j \leq n}}(1+x_i y_j) ,
\end{multline*}
which proves  Theorem \ref{thm: Schur Dual Cauchy} when restricting to the variables $x_1,\ldots,x_m$ and $y_1,\ldots,y_n$. \bigskip

Note that both commutation relations in Theorem~\ref{thm: Schur commutation relation} are special cases of the skew version of the (dual) Cauchy identity for Schur polynomials, compare to \cite[Ch. 1, Eq. (5.8 a)]{Mac}.
 In the remainder of the section we present a combinatorial proof of the commutation relation \eqref{eq: Schur dual commutation relation} for the up and dual down operator.

Define the sets $\U^k(\la,\rho):=\{ \nu | \nu \succ^\prime \la, \nu \succ \rho, |\nu / (\la \cup \rho)|=k\}$ and $\D^k(\la,\rho) := \{ \mu | \mu \prec \la, \mu \prec^\prime \rho, |(\la \cap \rho)/\mu|=k\}$. In order to prove \eqref{eq: Schur dual commutation relation} it suffices to show
\begin{equation}
\label{eq: up-down on sets}
|\U^k(\la,\rho)| = |\D^k(\la,\rho)| + |\D^{k-1}(\la,\rho)|.
\end{equation}
for all partitions $\la,\rho$ and nonnegative integers $k$.  \bigskip

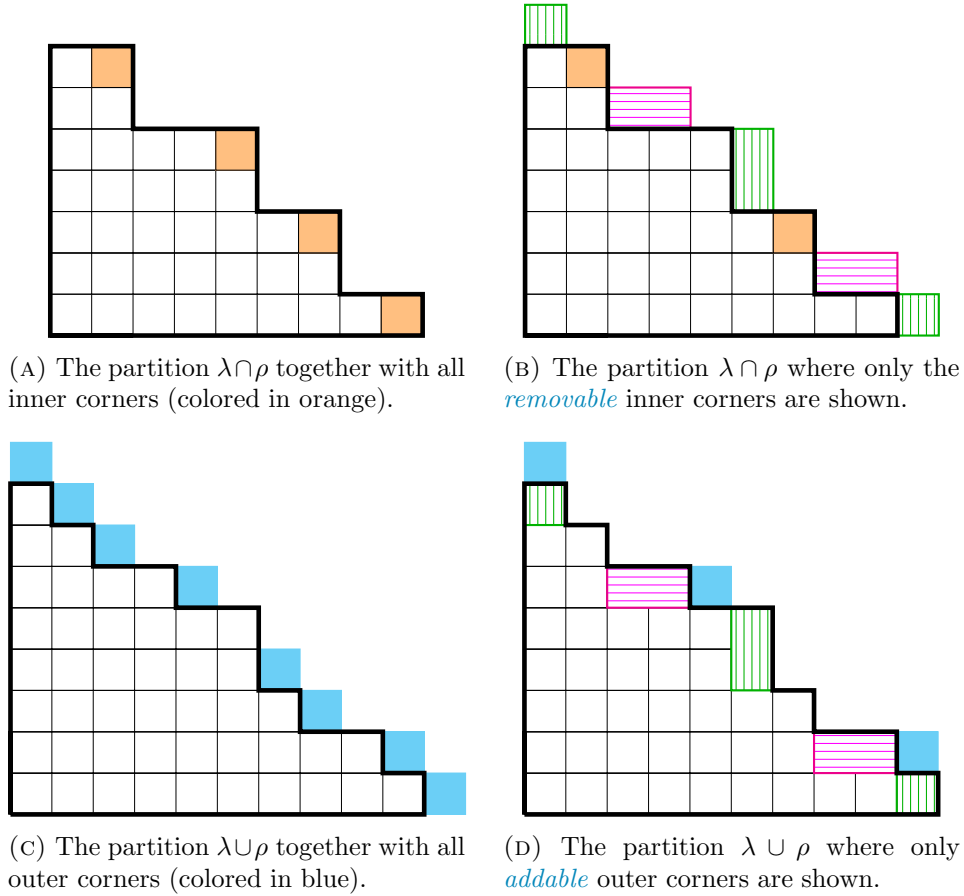
\begin{figure}[h]
\begin{center}
\begin{subfigure}[t]{.48\textwidth}
\centering
\begin{tikzpicture}[scale=1.1]
\tyng(0,0,9,7,7,5,5,2,2)
\draw[fill, color=myorange] (4.5,.5) rectangle (4,0);
\draw (4.5,.5) rectangle (4,0);
\draw[fill, color=myorange] (3.5,1.5) rectangle (3,1);
\draw (3.5,1.5) rectangle (3,1);
\draw[fill, color=myorange] (2.5,2.5) rectangle (2,2);
\draw (2.5,2.5) rectangle (2,2);
\draw[fill, color=myorange] (.5,3.5) rectangle (1,3);
\draw (.5,3.5) rectangle (1,3);
\draw[line width=1.75pt] (0,0) -- (4.5,0) -- (4.5,.5) -- (3.5,.5) -- (3.5,1.5) -- (2.5,1.5) -- (2.5,2.5) -- (1,2.5) -- (1,3.5) -- (0,3.5) -- (0,0) -- (1,0);
\end{tikzpicture}
\caption{The partition $\la \cap \rho$ together with all inner corners (colored in orange).}
\end{subfigure} \hfill
\begin{subfigure}[t]{0.48\textwidth}
\centering
\begin{tikzpicture}[scale=1.1]
\tyng(0,0,9,7,7,5,5,2,2)
\draw[thick, magenta, pattern=horizontal lines, pattern color=magenta] (3.5,.5) rectangle (4.5,1);
\draw[thick, magenta, pattern=horizontal lines, pattern color=magenta] (1,2.5) rectangle (2,3);
\draw[thick, green!70!black, pattern=vertical lines, pattern color=green!70!black] (4.5,.5) rectangle (5,0);
\draw[thick, green!70!black, pattern=vertical lines, pattern color=green!70!black] (3,1.5) rectangle (2.5,2.5);
\draw[thick, green!70!black, pattern=vertical lines, pattern color=green!70!black] (.5,3.5) rectangle (0,4);
\draw[fill, color=myorange] (3.5,1.5) rectangle (3,1);
\draw[fill, color=myorange] (.5,3.5) rectangle (1,3);
\draw (3.5,1.5) rectangle (3,1);
\draw (.5,3.5) rectangle (1,3);
\draw[line width=1.75pt] (0,0) -- (4.5,0) -- (4.5,.5) -- (3.5,.5) -- (3.5,1.5) -- (2.5,1.5) -- (2.5,2.5) -- (1,2.5) -- (1,3.5) -- (0,3.5) -- (0,0) -- (1,0);
\end{tikzpicture}
\caption{The partition $\la \cap \rho$ where only the \deff{removable} inner corners are shown. \bigskip}
\end{subfigure}
\begin{subfigure}[t]{.48\textwidth}
\centering
\begin{tikzpicture}[scale=1.1]
\tyng(0,0,10,9,7,6,6,4,2,1)
\draw[fill, color=myblue] (5.5,.5) rectangle (5,0);
\draw[fill, color=myblue] (4.5,.5) rectangle (5,1);
\draw[fill, color=myblue] (3.5,1.5) rectangle (4,1);
\draw[fill, color=myblue] (3.5,1.5) rectangle (3,2);
\draw[fill, color=myblue] (2.5,2.5) rectangle (2,3);
\draw[fill, color=myblue] (1.5,3.5) rectangle (1,3);
\draw[fill, color=myblue] (.5,3.5) rectangle (1,4);
\draw[fill, color=myblue] (.5,4.5) rectangle (0,4);
\draw[line width=1.75pt] (0,0) -- (5,0) -- (5,.5) -- (4.5,.5) -- (4.5,1) -- (3.5,1) -- (3.5,1.5) -- (3,1.5) -- (3,2.5) -- (2,2.5) -- (2,3) -- (1,3) -- (1,3.5) -- (.5,3.5) -- (.5,4) -- (0,4) -- (0,0) -- (0,1);
\end{tikzpicture}
\caption{The partition $\la \cup \rho$ together with all outer corners (colored in blue).}
\end{subfigure}
\hfill
\begin{subfigure}[t]{.48\textwidth}
\centering
\begin{tikzpicture}[scale=1.1]
\tyng(0,0,9,7,7,5,5,2,2)
\draw[thick, magenta, pattern=horizontal lines, pattern color=magenta] (3.5,.5) rectangle (4.5,1);
\draw[thick, magenta, pattern=horizontal lines, pattern color=magenta] (1,2.5) rectangle (2,3);
\draw[thick, green!70!black, pattern=vertical lines, pattern color=green!70!black] (4.5,.5) rectangle (5,0);
\draw[thick, green!70!black, pattern=vertical lines, pattern color=green!70!black] (3,1.5) rectangle (2.5,2.5);
\draw[thick, green!70!black, pattern=vertical lines, pattern color=green!70!black] (.5,3.5) rectangle (0,4);
\draw[fill, color=myblue] (4.5,.5) rectangle (5,1);
\draw[fill, color=myblue] (2.5,2.5) rectangle (2,3);
\draw[fill, color=myblue] (.5,4.5) rectangle (0,4);
\draw[line width=1.75pt] (0,0) -- (5,0) -- (5,.5) -- (4.5,.5) -- (4.5,1) -- (3.5,1) -- (3.5,1.5) -- (3,1.5) -- (3,2.5) -- (2,2.5) -- (2,3) -- (1,3) -- (1,3.5) -- (.5,3.5) -- (.5,4) -- (0,4) -- (0,0) -- (0,1);
\end{tikzpicture}
\caption{The partition $\la \cup \rho$ where only \deff{addable} outer corners are shown.}
\end{subfigure}
\end{center}
\caption{\label{fig: inner outer corner} The (removable) inner corners of $\la \cap \rho$ and the (addable) outer corners of $\la \cup \rho$ for the partitions $\la=(9,9,7,5,5,4,2)$ and $\rho=(10,7,7,6,6,2,2,1)$. We shaded the cells of $\la/(\la \cap \rho)$ horizontally in magenta and the cells of $\rho /(\la \cap \rho)$ vertically in green.}
\end{figure}

An \deff{inner corner} of a partition $\lambda$ is a cell $c \in \la$ such that $\la/\mu = \{c\}$ for a partition $\mu \prec \la$. An \deff{outer corner} of $\la$ is a cell $c \notin \la$ such that $\nu/\la =\{c\}$ for a partition $\nu$ with $\la \prec \nu$. We call an inner corner $c$ of $\la \cap \rho$ \deff{removable} with respect to $(\la,\rho)$ if $\la/\mu$ is a horizontal strip and $\rho/\mu$ is a vertical strip, where $(\la \cap \rho)/\mu=\{c\}$.
Analogously we call an outer corner $c$ of $\la \cup \rho$ \deff{addable} with respect to $(\la,\rho)$ if $\nu/\la$ is a vertical strip and $\nu/\rho$ is a horizontal strip, where $\nu/(\la \cup \rho)=\{c\}$.
For an example see Figure \ref{fig: inner outer corner}. For both removable and addable corners we omit referring to $(\la,\rho)$ whenever the partitions $\la,\rho$ are clear from context. \bigskip

Each partition $\nu$ in $\U^k(\la,\rho)$ corresponds to a $k$-subset of the addable outer corners of $\la \cup \rho$ and each partition $\mu$ in $\D^k(\la,\rho)$ corresponds to a $k$-subset of the removable inner corners of $\la \cap \rho$. It is not difficult to prove that there is one more addable outer corner of $(\la \cup \rho)$ than removable inner corner of $(\la \cap \rho)$. This implies immediately the assertion on the sizes of the sets in \eqref{eq: up-down on sets}. As we will see in the next subsection, it turns out to be fruitful to prove \eqref{eq: up-down on sets} explicitly bijective.
For each $\la,\rho$ and $k$, we choose a bijection 
\[
F_{\la,\rho,k}: \D^{k-1}(\la,\rho) \cup \D^k(\la,\rho) \rightarrow \U^k(\la,\rho).
\]
Two of the many possible bijections $F_{\la,\rho,k}$ are very natural: the \deff{dual row insertion bijection}  $F_{\la,\rho,k}^\row$ and the \deff{dual column insertion bijection} $F_{\la,\rho,k}^\col$. For $k=1$ the dual row (resp., column) insertion bijection maps a removable inner corner to the next addable outer corner in a row above (resp., column to the right) and sends the empty set to the lowest (resp., left-most) addable outer corner. Figure \ref{fig: row and col insertion map} illustrates this case. For $k> 1$ both maps are defined recursively by
\begin{equation}
\label{eq:recursion for F bijection}
F^\bullet_{\la,\rho,k} (X) = \begin{cases}
\bigcup\limits_{x \in X} F^\bullet_{\la,\rho,1}(\{x\}) \qquad &|X|=k, \\
F^\bullet_{\la,\rho,1}(\emptyset) \cup \bigcup\limits_{x \in X} F^\bullet_{\la,\rho,1}(\{x\})  &|X|=k-1,
\end{cases}
\end{equation}
where $F^\bullet_{\la,\rho,k}$ stands for $F^\row_{\la,\rho,k}$ or $F^\col_{\la,\rho,k}$ respectively.

\begin{figure}
\begin{center}
\begin{tikzpicture}[scale=1.1]
\tyng(0,0,10,9,7,6,6,4,2,1)
\draw[fill, color=myblue] (4.5,.5) rectangle (5,1);
\draw[fill, color=myblue] (2.5,2.5) rectangle (2,3);
\draw[fill, color=myblue] (.5,4.5) rectangle (0,4);
\draw[fill, color=myorange] (3.5,1.5) rectangle (3,1);
\draw[fill, color=myorange] (.5,3.5) rectangle (1,3);
\draw (3.5,1.5) rectangle (3,1);
\draw (.5,3.5) rectangle (1,3);
\draw[line width=1.75pt] (0,0) -- (5,0) -- (5,.5) -- (4.5,.5) -- (4.5,1) -- (3.5,1) -- (3.5,1.5) -- (3,1.5) -- (3,2.5) -- (2,2.5) -- (2,3) -- (1,3) -- (1,3.5) -- (.5,3.5) -- (.5,4) -- (0,4) -- (0,0) -- (0,1);
\node at (2,-.5) {$F^\row_{\la,\rho,1}$};
\draw [->,line width=1.3pt] (3.25,1.25) to [out=90,in=0] (2.25,2.75);
\draw [->,line width=1.3pt] (.75,3.25) to [out=90,in=0] (.25,4.25);

\begin{scope}[xshift=6cm]
\tyng(0,0,10,9,7,6,6,4,2,1)
\draw[fill, color=myblue] (4.5,.5) rectangle (5,1);
\draw[fill, color=myblue] (2.5,2.5) rectangle (2,3);
\draw[fill, color=myblue] (.5,4.5) rectangle (0,4);
\draw[fill, color=myorange] (3.5,1.5) rectangle (3,1);
\draw[fill, color=myorange] (.5,3.5) rectangle (1,3);
\draw (3.5,1.5) rectangle (3,1);
\draw (.5,3.5) rectangle (1,3);
\draw[line width=1.75pt] (0,0) -- (5,0) -- (5,.5) -- (4.5,.5) -- (4.5,1) -- (3.5,1) -- (3.5,1.5) -- (3,1.5) -- (3,2.5) -- (2,2.5) -- (2,3) -- (1,3) -- (1,3.5) -- (.5,3.5) -- (.5,4) -- (0,4) -- (0,0) -- (0,1);
\draw [->,line width=1.3pt] (3.25,1.25) to [out=0,in=90] (4.75,.75);
\draw [->,line width=1.3pt] (.75,3.25) to [out=0,in=90] (2.25,2.75);
\node at (2,-.5) {$F^\col_{\la,\rho,1}$};
\end{scope}
\end{tikzpicture}
\end{center}
\caption{\label{fig: row and col insertion map} The two maps $F^\row_{\la,\rho,1}$ (left) and $F^\col_{\la,\rho,1}$ (right) for $\la=(9,9,7,5,5,4,2) $, $\rho=(10,7,7,6,6,2,2,1)$. The removable inner corners (colored in orange) and the addable outer corners (colored in blue) are obtained in Figure~\ref{fig: inner outer corner}.}
\end{figure}
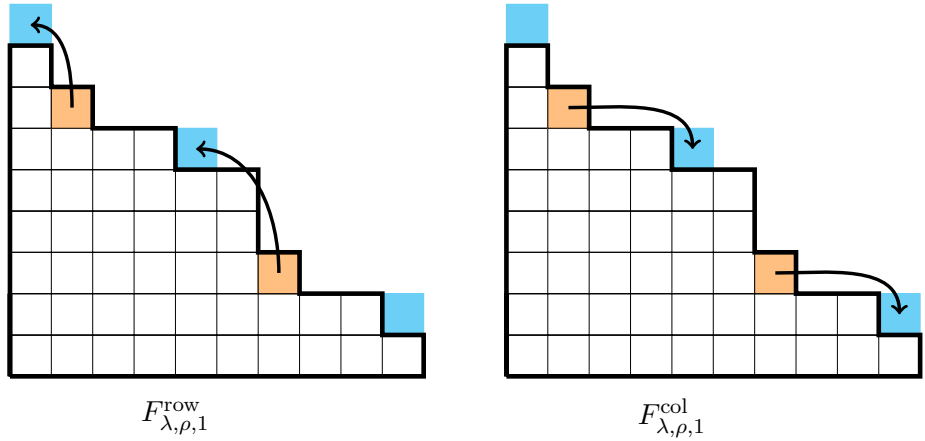

\subsection{Growth diagrams}
\label{sec: growth diagrams}

In this section we review Fomin's growth diagrams \cite{Fomin86, Fomin_DGG_2} which provide a mechanism for turning a bijective proof of the commutation relations in Theorem~\ref{thm: Schur commutation relation} into a bijective proof of the (dual) Cauchy identity. We explain how each bijective proof arising in this way can be interpreted as an insertion algorithm, with the row and column versions of the dual Robinson--Schensted correspondence as special cases. We only consider growth diagrams for $\{0,1\}$-matrices; for the generalization to nonnegative integer matrices and the RSK and Burge correspondences, we refer the reader to \cite[\S 2.2, \S3.1-3.2]{vanLeeuwen05}.

Let $A$ be an $m \times n$ $\{0,1\}$-matrix. We view $A$ as an $m \times n$ grid of squares, and consider labelings of the vertices of this grid with partitions. We index the squares of the grid by $(i,j) \in [m] \times [n]$, and the vertices by $(i,j) \in [0,m] \times [0,n]$; these indices are interpreted as matrix coordinates, rather than Cartesian coordinates.

\begin{defn}
Fix an $m \times n$ $\{0,1\}$-matrix $A$. A \deff{dual growth associated with $A$} is an assignment $\Lambda = (\Lambda_{ij})$ of partitions to the vertices $(i,j) \in [0,m] \times [0,n]$, such that
\begin{itemize}
\item if $i < m$, $\Lambda_{i+1,j}/\Lambda_{i,j}$ is a horizontal strip,
\item if $j < n$, $\Lambda_{i,j+1}/\Lambda_{i,j}$ is a vertical strip,
\item $|\Lambda_{ij}|$ is equal to the number of $1$'s in $A$ to the northwest of the vertex $(i,j)$, that is,
\[
|\Lambda_{i,j}| = \sum_{1 \leq k \leq i \atop 1 \leq \ell \leq j} A_{k,\ell}.
\]
\end{itemize}
\end{defn}

It is clear that the matrix $A$ is determined by the partitions $\Lambda_{i,j}$, so we will often omit reference to $A$ and simply refer to $\Lambda$ as a \deff{dual growth}. This definition can be traced back to the notion of ``two-dimensional growth'' introduced by Fomin \cite{Fomin86}.

\begin{ex}
\label{ex: dual growths}
There are three dual growths associated with the matrix
$
A = \begin{pmatrix}
0 & 1 & 0 \\
1 & 0 & 1 \\
1 & 1 & 1
\end{pmatrix}
$:

\begin{center}
\begin{tikzpicture}[scale=1.075]

	\node at (1.5,-0.5) {X};
	\node at (0.5,-1.5) {X};
	\node at (2.5,-1.5) {X};
	\node at (0.5,-2.5) {X};
	\node at (1.5,-2.5) {X};
	\node at (2.5,-2.5) {X};
	
	\draw (0.15,0) -- (0.85,0);
	\draw (1.15,0) -- (1.85,0);
	\draw (2.15,0) -- (2.85,0);
	\draw (0.15,-1) -- (0.85,-1);
	\draw (1.15,-1) -- (1.8,-1);
	\draw (2.2,-1) -- (2.8,-1);
	\draw (0.15,-2) -- (0.8,-2);
	\draw (1.2,-2) -- (1.7,-2);
	\draw (2.3,-2) -- (2.6,-2);
	\draw (0.15,-3) -- (0.8,-3);
	\draw (1.2,-3) -- (1.7,-3);
	\draw (2.3,-3) -- (2.6,-3);
	
	\draw (0,-0.2) -- (0,-0.8);
	\draw (0,-1.2) -- (0,-1.8);
	\draw (0,-2.2) -- (0,-2.8);
	\draw (1,-0.2) -- (1,-0.8);
	\draw (1,-1.2) -- (1,-1.8);
	\draw (1,-2.2) -- (1,-2.7);
	\draw (2,-0.2) -- (2,-0.8);
	\draw (2,-1.2) -- (2,-1.8);
	\draw (2,-2.2) -- (2,-2.7);
	\draw (3,-0.2) -- (3,-0.8);
	\draw (3,-1.2) -- (3,-1.8);
	\draw (3,-2.2) -- (3,-2.7);

	\footnotesize
	\foreach \x in {0,...,3}
		\node at (\x,0) {$\emptyset$};
	
	\node at (0,-1) {$\emptyset$};	
	\node at (1,-1) {$\emptyset$};
	\node at (2,-1) {$\ytableaushort{\empty}$};
	\node at (3,-1) {$\ytableaushort{\empty}$};
	
	\node at (0,-2) {$\emptyset$};	
	\node at (1,-2) {$\ytableaushort{\empty}$};
	\node at (2,-2) {$\ytableaushort{\empty\empty}$};	
	\node at (3,-2) {$\ytableaushort{\empty\empty\empty}$};	
	
	\node at (0,-3) {$\emptyset$};	
	\node at (1,-3) {$\ytableaushort{\empty,\empty}$};
	\node at (2,-3) {$\ytableaushort{\empty\empty,\empty\empty}$};
	\node at (3,-3) {$\ytableaushort{\empty\empty\empty,\empty\empty\empty}$};

	\begin{scope}[xshift=4cm]
	\normalsize
	\node at (1.5,-0.5) {X};
	\node at (0.5,-1.5) {X};
	\node at (2.5,-1.5) {X};
	\node at (0.5,-2.5) {X};
	\node at (1.5,-2.5) {X};
	\node at (2.5,-2.5) {X};
	
	\draw (0.15,0) -- (0.85,0);
	\draw (1.15,0) -- (1.85,0);
	\draw (2.15,0) -- (2.85,0);
	\draw (0.15,-1) -- (0.85,-1);
	\draw (1.15,-1) -- (1.8,-1);
	\draw (2.2,-1) -- (2.8,-1);
	\draw (0.15,-2) -- (0.8,-2);
	\draw (1.2,-2) -- (1.7,-2);
	\draw (2.3,-2) -- (2.7,-2);
	\draw (0.15,-3) -- (0.8,-3);
	\draw (1.2,-3) -- (1.7,-3);
	\draw (2.3,-3) -- (2.6,-3);
	
	\draw (0,-0.2) -- (0,-0.8);
	\draw (0,-1.2) -- (0,-1.8);
	\draw (0,-2.2) -- (0,-2.8);
	\draw (1,-0.2) -- (1,-0.8);
	\draw (1,-1.2) -- (1,-1.8);
	\draw (1,-2.2) -- (1,-2.7);
	\draw (2,-0.2) -- (2,-0.8);
	\draw (2,-1.2) -- (2,-1.8);
	\draw (2,-2.2) -- (2,-2.7);
	\draw (3,-0.2) -- (3,-0.8);
	\draw (3,-1.2) -- (3,-1.7);
	\draw (3,-2.25) -- (3,-2.6);

	\footnotesize
	\foreach \x in {0,...,3}
		\node at (\x,0) {$\emptyset$};
	
	\node at (0,-1) {$\emptyset$};	
	\node at (1,-1) {$\emptyset$};
	\node at (2,-1) {$\ytableaushort{\empty}$};
	\node at (3,-1) {$\ytableaushort{\empty}$};
	
	\node at (0,-2) {$\emptyset$};	
	\node at (1,-2) {$\ytableaushort{\empty}$};
	\node at (2,-2) {$\ytableaushort{\empty\empty}$};	
	\node at (3,-2) {$\ytableaushort{\empty,\empty\empty}$};	
	
	\node at (0,-3) {$\emptyset$};	
	\node at (1,-3) {$\ytableaushort{\empty,\empty}$};
	\node at (2,-3) {$\ytableaushort{\empty\empty,\empty\empty}$};
	\node at (3,-3) {$\ytableaushort{\empty,\empty\empty,\empty\empty\empty}$};
	\end{scope}

	\begin{scope}[xshift=8cm]
	\normalsize
	\node at (1.5,-0.5) {X};
	\node at (0.5,-1.5) {X};
	\node at (2.5,-1.5) {X};
	\node at (0.5,-2.5) {X};
	\node at (1.5,-2.5) {X};
	\node at (2.5,-2.5) {X};
	
	\draw (0.15,0) -- (0.85,0);
	\draw (1.15,0) -- (1.85,0);
	\draw (2.15,0) -- (2.85,0);
	\draw (0.15,-1) -- (0.85,-1);
	\draw (1.15,-1) -- (1.8,-1);
	\draw (2.2,-1) -- (2.8,-1);
	\draw (0.15,-2) -- (0.8,-2);
	\draw (1.2,-2) -- (1.8,-2);
	\draw (2.2,-2) -- (2.7,-2);
	\draw (0.15,-3) -- (0.8,-3);
	\draw (1.2,-3) -- (1.7,-3);
	\draw (2.25,-3) -- (2.6,-3);
	
	\draw (0,-0.2) -- (0,-0.8);
	\draw (0,-1.2) -- (0,-1.8);
	\draw (0,-2.2) -- (0,-2.8);
	\draw (1,-0.2) -- (1,-0.8);
	\draw (1,-1.2) -- (1,-1.8);
	\draw (1,-2.2) -- (1,-2.7);
	\draw (2,-0.2) -- (2,-0.8);
	\draw (2,-1.2) -- (2,-1.7);
	\draw (2,-2.3) -- (2,-2.6);
	\draw (3,-0.2) -- (3,-0.8);
	\draw (3,-1.2) -- (3,-1.7);
	\draw (3,-2.3) -- (3,-2.6);

	\footnotesize
	\foreach \x in {0,...,3}
		\node at (\x,0) {$\emptyset$};
	
	\node at (0,-1) {$\emptyset$};	
	\node at (1,-1) {$\emptyset$};
	\node at (2,-1) {$\ytableaushort{\empty}$};
	\node at (3,-1) {$\ytableaushort{\empty}$};
	
	\node at (0,-2) {$\emptyset$};	
	\node at (1,-2) {$\ytableaushort{\empty}$};
	\node at (2,-2) {$\ytableaushort{\empty,\empty}$};	
	\node at (3,-2) {$\ytableaushort{\empty,\empty\empty}$};	
	
	\node at (0,-3) {$\emptyset$};	
	\node at (1,-3) {$\ytableaushort{\empty,\empty}$};
	\node at (2,-3) {$\ytableaushort{\empty,\empty,\empty\empty}$};
	\node at (3,-3) {$\ytableaushort{\empty,\empty\empty,\empty\empty\empty}$};
	\end{scope}

\end{tikzpicture}
\end{center}

\noindent Above we represent the partitions $\Lambda_{i,j}$ by their Young diagrams. Also, for purposes of readability, we represent the 1's in $A$ with $X$'s and omit the 0's.
\end{ex}

Given a dual growth $\Lambda$, let $P(\Lambda)$ be the semistandard tableau of shape $\la = \Lambda_{m,n}$ determined by the last column
\[
\emptyset = \Lambda_{0,n} \prec \Lambda_{1,n} \prec \cdots \prec \Lambda_{m,n} = \la,
\]
and let $Q(\Lambda)$ be the dual semistandard tableau of shape $\la$ determined by the last row
\[
\emptyset = \Lambda_{m,0} \prec' \Lambda_{m,1} \prec' \cdots \prec' \Lambda_{m,n} = \la.
\]
For example, the second dual growth in Example \ref{ex: dual growths} has
\[
P(\Lambda) = \young(123,23,3) \,, \qquad\qquad Q(\Lambda) = \young(123,12,3) \,.
\]
If $\Lambda$ is associated with $A$ and $P = P(\Lambda), Q = Q(\Lambda)$, we write
\[
\Lambda : A \rightarrow (P,Q),
\]
and say that ``$\Lambda$ is a dual growth from $A$ to $(P,Q)$.'' In general, there may be multiple dual growths from $A$ to $(P,Q)$.

By definition, each square in a dual growth has the form
\begin{equation}
\label{eq: dual growth square}
%
\begin{tikzpicture}[baseline=-3.5ex]
\draw (-0.2,-.1) --node[left]{\rotatebox{-90}{$\prec$}} (-0.2,-.9);
\draw (1.2,-.1) --node[right]{\rotatebox{-90}{$\prec$}} (1.2,-.9);
\draw (0.1,0.2) --node[above]{$\prec'$} (.9,0.2);
\draw (.1,-1.2) --node[below]{$\prec'$} (.9,-1.2);
\node at (-0.2,0.2) {$\mu$};
\node at (1.2,0.2) {$\rho$};
\node at (-0.2,-1.15) {$\la$};
\node at (1.2,-1.2) {$\nu$};
\node at (.5,-.5) {$a$};
\end{tikzpicture}
\quad \text{with} \quad a \in \{0,1\}, \quad \nu \in \mc{U}^k(\la,\rho), \quad \mu \in \mc{D}^{k-a}(\la,\rho),
\end{equation}
where $k = |\nu/(\la \cup \rho)| = |(\la \cap \rho)/\mu| + a$.

\begin{defn}
A set of \deff{local dual growth rules} $F_\bullet$ is a choice of bijections
\[
F_{\lambda,\rho,k}: \D^{k-1}(\la,\rho) \cup \D^k(\la,\rho) \rightarrow \U^k(\la,\rho),
\]
for all ordered pairs of partitions $(\la,\rho)$, and all $k \geq 0$. A dual growth $\Lambda$ is an \deff{$F_\bullet$-dual growth diagram} if each square satisfies $\nu = F_{\la,\rho,|(\la \cap \rho)/\mu| + a}(\mu)$, where $a,\mu,\la,\rho,\nu$ are as in \eqref{eq: dual growth square}.
\end{defn}

The key property of local dual growth rules is that they make the process of constructing dual growths deterministic. More precisely, if $\Lambda$ is an $F_\bullet$-dual growth diagram, then for each square, the partition $\nu$ is determined by the partitions $\mu,\la,\rho$ and the value of $a$. Conversely, the pair $(a,\mu)$ is determined by the partitions $\la,\rho,\nu$ 
\begin{equation}
\label{eq: mu a}
\mu = F_{\la,\rho,|\nu/(\la \cup \rho)|}^{-1}(\nu), \qquad\quad a = |\nu| + |\mu| - |\la| - |\rho|.
\end{equation}
This implies that the set of $F_\bullet$-dual growth diagrams is in bijection with both $\{0,1\}$-matrices and pairs $(P,Q)$ of tableaux of the same shape, where $P$ is semistandard and $Q$ is dual semistandard. Given a $\{0,1\}$-matrix $A$, one constructs the unique $F_\bullet$-dual growth diagram associated with $A$ by filling the top row and left column of the grid with the empty partition, and using the bijections $F_{\la,\rho,k}$, along with the positions of 1's in the matrix $A$, to recursively fill in the rest of the grid. Similarly, given a semistandard tableau $P$ and a dual semistandard tableau $Q$ of the same shape, one fills the right and bottom edges of the grid with the chains in Young's lattice corresponding to $P$ and $Q$, respectively, and then recursively fills in the rest of the grid (and the entries of the matrix) using \eqref{eq: mu a}. Thus, each set of local dual growth rules induces a different bijective proof of the dual Cauchy identity.

\begin{rem}
In effect, the framework of dual growth diagrams transforms a bijective proof of the commutation relation \eqref{eq: Schur dual commutation relation} into a bijective proof of the dual Cauchy identity by ``bijectivizing'' the algebraic argument given in \S \ref{sec: up and down operators}.
\end{rem}

We denote by $\RSK^*_{F_\bullet}$ the bijection $A \mapsto (P,Q)$ induced by the local dual growth rules $F_\bullet$. We now explain how to translate $\RSK^*_{F_\bullet}$ into an insertion algorithm. Let $A$ be a $\{0,1\}$-matrix, and $\Lambda$ the $F_\bullet$-dual growth diagram associated with $A$. Suppose that the $j$th column of $\Lambda$ looks like

\begin{center}
\begin{tikzpicture}
\node at (0,0) {$T^{(0)}$};
\node at (0,-1.5) {$T^{(1)}$};
\node at (0,-2*1.5) {$T^{(m-1)}$};
\node at (0,-3*1.5) {$T^{(m)}$};

\node at (1.7,0) {$\wh T^{(0)}$};
\node at (1.7,-1.5) {$\wh T^{(1)}$};
\node at (1.9,-2*1.5) {$\wh T^{(m-1)}$};
\node at (1.7,-3*1.5) {$\wh T^{(m)}$};

\foreach \x in {0,...,3}{
		\draw (0.6,-1.5*\x) -- (1.1,-1.5*\x);
	}
	
\draw (0,-0.45) -- (0,.45-1.5*1);
\draw (0,-0.45-1.5*2) -- (0,.45-1.5*3);
\draw (1.5,-0.4) -- (1.5,.45-1.5*1);
\draw (1.5,-0.4-1.5*2) -- (1.5,.45-1.5*3);

\draw [dotted, thick] (1.5*0.5,-1.5-.3) -- (1.5*0.5,-1.5*2+.3);
\node at (1.5*0.5,-1.5*0.5) {$A_{1,j}$};
\node at (1.5*0.5,-1.5*2.5) {$A_{m,j}$};
\end{tikzpicture}
\end{center}
for semistandard tableaux $T$ and $\wh{T}$. Let $\{i_1 < i_2 < \cdots < i_r\}$ be the set of rows $i$ for which $A_{i,j} = 1$.

\begin{lem}
\label{lem: growth insertion}
In the situation described above, the tableau $\wh{T}$ is obtained as follows:
\begin{itemize}
\item Initialize the \deff{insertion queue} as the multiset\footnote{Note that at the beginning the insertion queue is actually a set, however it may contain elements with multiplicity at a later point of the algorithm.} $\{i_1,\ldots,i_r\}$.
\item Let $i$ be the smallest integer of the insertion queue. Denote by $C$ the set of cells of $F_{\la,\rho,k}(\mu)/(\la\cup\rho)$ where $\la=T^{(i)}$, $\rho=\wh{T}^{(i-1)}$, $\mu=T^{(i-1)}$ and $k=|(\la\cap\rho)/\mu|+A_{i,j}$. Place all $i$'s of the insertion queue into the cells of $C$, delete all $i$'s from the insertion queue and add all entries which have been replaced (bumped) in the current step to the insertion queue.
\item Repeat the previous step until the insertion queue is empty.
\end{itemize}
\end{lem}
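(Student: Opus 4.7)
The plan is to proceed by strong induction on the row index $i = 1, 2, \ldots, m$, maintaining two invariants just before the algorithm attempts to process row $i$. The \emph{state invariant} says that the current state, viewed as a filling of cells by positive integers, has shape $\wh{T}^{(i-1)} \cup T^{(m)}$ (union of Young diagrams), and agrees with the tableau $\wh{T}^{(i-1)}$ on the cells of $\wh{T}^{(i-1)}$ and with $T$ on the cells of $T^{(m)} \setminus \wh{T}^{(i-1)}$. The \emph{queue invariant} says that the insertion queue contains exactly $k_i := |(T^{(i)} \cap \wh{T}^{(i-1)})/T^{(i-1)}| + A_{i,j}$ copies of $i$, together with some values strictly greater than $i$. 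The state is initialized to $T$ and the queue to $\{i_1,\ldots,i_r\}$, so the base case $i=1$ is immediate from $\wh{T}^{(0)} = \emptyset$ and $k_1 = A_{1,j}$.

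For the inductive step I would use the two identities $|C| = k_i$ and $\wh{T}^{(i)} = \wh{T}^{(i-1)} \cup T^{(i)} \cup C$, with $C$ disjoint from $\wh{T}^{(i-1)} \cup T^{(i)}$. Any cell of $C$ lying in $T^{(m)}$ must lie outside $\la = T^{(i)}$, and hence carries a $T$-value strictly greater than $i$; any cell of $C$ outside $T^{(m)}$ is empty in the state. Placing the $k_i$ copies of $i$ from the queue into the $k_i$ cells of $C$ therefore updates the state's shape from $\wh{T}^{(i-1)} \cup T^{(m)}$ to $\wh{T}^{(i)} \cup T^{(m)}$; the newly placed $i$'s together with the untouched cells of $T^{(i)} \setminus \wh{T}^{(i-1)}$ (which already carry $T$-value $i$, since any cell of $T^{(i-1)}$ would lie in $\wh{T}^{(i-1)}$) correctly populate $\wh{T}^{(i)} \setminus \wh{T}^{(i-1)}$ with the value $i$. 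When $k_i = 0$, $i$ is absent from the queue and the algorithm skips row $i$; in that case $C = \emptyset$ and $\wh{T}^{(i)} = \wh{T}^{(i-1)} \cup T^{(i)} \subseteq \wh{T}^{(i-1)} \cup T^{(m)}$, so the state invariant passes unchanged to index $i+1$.

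The main obstacle is reestablishing the queue invariant at index $i+1$, i.e., counting the $(i+1)$'s present in the queue after the $i$-th processing step. My plan here is to exploit the disjoint-union decomposition
\[
\wh{T}^{(i)} \;=\; \bigsqcup_{j=1}^{i} \bigl(\wh{T}^{(j)} \setminus \wh{T}^{(j-1)}\bigr) \;=\; \bigsqcup_{j=1}^{i} \Bigl( \bigl(T^{(j)} \setminus \wh{T}^{(j-1)}\bigr) \sqcup C_j \Bigr),
\]
where $C_j$ denotes the set $C$ arising at the $j$-th processing step. Each cell of $T^{(j)} \setminus \wh{T}^{(j-1)}$ carries $T$-value equal to $j$, so any cell of $\wh{T}^{(i)}$ whose $T$-value equals $i+1$ must lie in some $C_j$ with $j \leq i$. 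Such a cell lies in $T^{(m)} \setminus T^{(j)}$, so it had a $T$-value greater than $j$ which was bumped into the queue at the $j$-th processing step, contributing exactly one $(i+1)$. Adding the $A_{i+1,j}$ copies placed at initialization, the total count is
\[
A_{i+1,j} + \bigl|(T^{(i+1)} \setminus T^{(i)}) \cap \wh{T}^{(i)}\bigr| \;=\; A_{i+1,j} + \bigl|(T^{(i+1)} \cap \wh{T}^{(i)})/T^{(i)}\bigr| \;=\; k_{i+1},
\]
which is the queue invariant at index $i+1$. Since every bumped value strictly exceeds the value just placed, the queue's minimum is non-decreasing, guaranteeing that rows are processed in strictly ascending order and that every $(i+1)$ has entered the queue by the time row $i+1$ is due. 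When the queue empties (after index $i=m$), the state has shape $\wh{T}^{(m)} \cup T^{(m)} = \wh{T}^{(m)}$ (using $T^{(m)} \subseteq \wh{T}^{(m)}$) and coincides with $\wh{T}$ on every cell, yielding the lemma.
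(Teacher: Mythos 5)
Your proof is correct and follows essentially the same route as the paper's: the paper also reduces to an indexed version of the algorithm (processing step $i$ only when $i$ is in the queue), identifies $(T^{(i)}\cap\wh{T}^{(i-1)})/T^{(i-1)}$ as the set of cells where an $i$ was bumped to conclude the queue holds exactly $k$ copies of $i$, and then invokes the growth rule to fill the $k$ cells of $C$. Your state/queue invariants and the disjoint-union decomposition of $\wh{T}^{(i)}$ simply make explicit the inductive bookkeeping that the paper's terser argument leaves implicit.
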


\begin{proof}
For the proof we regard the following variation of the above algorithm: in the $i$-th insertion step we apply the above procedure if $i$ is in the insertion queue and do nothing otherwise.

By definition $T^{(i)}$ is the subtableau of $T$ with entries at most $i$ and $T^{(i)}/T^{(i-1)}$ is the set of cells with entry $i$ in $T$. Hence $(T^{(i)}\cap \wh{T}^{(i-1)}) /T^{(i-1)}$ is the set of cells where an entry $i$ was bumped during the first $(i-1)$ insertion steps. Since each bumped entry is added to the insertion queue and the insertion queue contains at the beginning all $i$ with $A_{i,j}=1$, the multiplicity of an entry $i$ in the insertion queue after the $(i-1)$-st insertion step is equal to $k=|(T^{(i)}\cap \wh{T}^{(i-1)}) /T^{(i-1)}|+A_{i,j}$. By definition of the local $F_\bullet$-dual growth rules, we have $\wh{T}^{(i)}=T^{(i)}\cup \wh{T}^{(i-1)}$ if $k=0$, i.e., we do nothing in this step. If $i$ is in the insertion queue, we have $k>0$ and by definition of the local $F_\bullet$- dual growth rules $\wh{T}^{(i)}=F_{T^{(i)},\wh{T}^{(i-1)},k}(T^{(i-1)})$. This implies $k=|T^{(i)}/(T^{(i)}\cup \wh{T}^{(i-1)})|$, i.e, we fill each cell of $C$ by one of the $i$ entries of the insertion queue. This proves the claim.
\end{proof}

If $T$ is a semistandard Young tableau and $i_1<\cdots<i_r$ are integers, we define the \deff{$F_\bullet$-insertion} of $i_1,\ldots,i_r$ into $T$ by the algorithm of Lemma \ref{lem: growth insertion}. For a given $\{0,1\}$-matrix $A$ denote by $i_{1}^{(j)}<\cdots<i_{r_j}^{(j)}$ the rows for which $A$ has a $1$ entry in the $j$-th column. It follows from the preceding discussion that if $\RSK^*_{F_\bullet}(A) = (P,Q)$, then $P$ can be obtained by the successive $F_\bullet$-insertion of $i_{1}^{(j)},\ldots,i_{r_j}^{(j)}$, starting with $j=1$, into the empty tableau, and $Q$ records the growth of $P$, just as for usual dual RSK insertion.

We call the $F_\bullet$-insertion \deff{traceable} if we can keep track of where a bumped entry in the above algorithm is inserted again and instead of ``simultaneously'' inserting $i_1,\ldots,i_r$ we can apply the insertion process for $i_1$, then for $i_2$ etc. sequentially. This means that in order to obtain the insertion tableau, we can replace a column with $r$ entries $1$ in the rows $i_1< \cdots < i_r$ by $r$ columns where we have a $1$ in the $i_j$-th row of the $j$-th new column. Analogously we define the $F_\bullet$-insertion \deff{reverse traceable} if the sequential insertion is in the reverse ordering, i.e. first insert $i_r$, then $i_{r-1}$ until finally $i_1$.
 It is not difficult to convince oneself by using \eqref{eq:recursion for F bijection}, that the dual column insertion $F_\bullet^\col$ is traceable while the dual row insertion $F_\bullet^\row$ is reverse traceable. Hence it is immediate that they  correspond to the usual row or column insertion version of  $\RSK^*$.



\begin{ex}
\label{ex: column insertion dual growth diagram}
The $F_\bullet^{\col}$-dual growth diagram associated with the matrix
\[
A=\begin{pmatrix}
1 & 0 & 0 & 0 & 1 \\ 
1 & 1 & 0 & 1 & 0 \\ 
0 & 1 & 1 & 0 & 0
\end{pmatrix}
\]
is shown below. As in Example \ref{ex: dual growths}, we omit the 0's in the permutation matrix, and write $X$ instead of 1.


\begin{center}
\begin{tikzpicture}[scale=2]
\Yboxdim{.2cm}
\node at (0,3) {$\emptyset$};
\node at (0,2) {$\emptyset$};
\node at (0,1) {$\emptyset$};
\node at (0,0) {$\emptyset$};
\node at (1,3) {$\emptyset$};
\node at (2,3) {$\emptyset$};
\node at (3,3) {$\emptyset$};
\node at (4,3) {$\emptyset$};
\node at (5,3) {$\emptyset$};
\node at (.5,2.5) {X};
\node at (.5,1.5) {X};
\node at (1.5,1.5) {X};
\node at (1.5,.5) {X};
\node at (2.5,.5) {X};
\node at (3.5,1.5) {X};
\node at (4.5,2.5) {X};
\tyng(.9cm,1.9cm,1);
\tyng(1.9cm,1.9cm,1);
\tyng(2.9cm,1.9cm,1);
\tyng(3.9cm,1.9cm,1);
\tyng(4.8cm,1.9cm,2);
\tyng(.9cm,.8cm,1,1);
\tyng(1.8cm,.8cm,2,1);
\tyng(2.8cm,.8cm,2,1);
\tyng(3.7cm,.8cm,3,1);
\tyng(4.6cm,.8cm,4,1);
\tyng(.9cm,-.2cm,1,1);
\tyng(1.8cm,-.3cm,2,1,1);
\tyng(2.8cm,-.3cm,2,2,1);
\tyng(3.7cm,-.3cm,3,2,1);
\tyng(4.6cm,-.3cm,4,2,1);

\draw (0.1,0) -- (.8,0);
\draw (0.1,1) -- (.8,1);
\draw (0.1,2) -- (.8,2);
\draw (0.1,3) -- (.9,3);
\draw (1.2,0) -- (1.7,0);
\draw (1.2,1) -- (1.7,1);
\draw (1.2,2) -- (1.8,2);
\draw (1.2,3) -- (1.9,3);
\draw (2.3,0) -- (2.7,0);
\draw (2.3,1) -- (2.7,1);
\draw (2.2,2) -- (2.8,2);
\draw (2.2,3) -- (2.9,3);
\draw (3.3,0) -- (3.6,0);
\draw (3.3,1) -- (3.6,1);
\draw (3.2,2) -- (3.8,2);
\draw (3.2,3) -- (3.9,3);
\draw (4.4,0) -- (4.5,0);
\draw (4.4,1) -- (4.5,1);
\draw (4.2,2) -- (4.7,2);
\draw (4.2,3) -- (4.9,3);

\draw (0,0.1) -- (0,0.9);
\draw (0,1.1) -- (0,1.9);
\draw (0,2.1) -- (0,2.9);
\draw (1,0.3) -- (1,0.7);
\draw (1,1.3) -- (1,1.8);
\draw (1,2.2) -- (1,2.9);
\draw (2,0.4) -- (2,0.7);
\draw (2,1.3) -- (2,1.8);
\draw (2,2.2) -- (2,2.9);
\draw (3,0.4) -- (3,0.7);
\draw (3,1.3) -- (3,1.8);
\draw (3,2.2) -- (3,2.9);
\draw (4,0.4) -- (4,0.7);
\draw (4,1.3) -- (4,1.8);
\draw (4,2.2) -- (4,2.9);
\draw (5,0.4) -- (5,0.7);
\draw (5,1.3) -- (5,1.8);
\draw (5,2.2) -- (5,2.9);

\end{tikzpicture}
\end{center}
Reading the last column and last row of this diagram, we find
\[
P = \young(1122,23,3) \;, \qquad Q= \young(1245,13,2) \;, 
\]
in agreement with Example \ref{ex: dual RSK insertion}.
\end{ex}

Given a set of local dual growth rules $F_{\bullet}$ we define the \deff{transpose} $F_{\bullet}^\prime$ of the local dual growth rules by
\begin{equation}
\label{eq:transpose growth rule}
F^\prime_{\rho^\prime,\la^\prime,k}(\mu^\prime) = \nu^\prime \qquad :\Leftrightarrow \qquad
F_{\la,\rho,k}(\mu) = \nu.
\end{equation}
It is immediate that the transpose $\left(F^{\col}_{\bullet}\right)^\prime$ of the column insertion is the row insertion $F^{\row}_{\bullet}$ and vice-versa. 
By definition we obtain the following symmetry for the ${F_\bullet}$-$\RSK^*$
\begin{equation}
\label{eq:sym of dual RSK}
\RSK^*_{F_\bullet}(A) = (P,Q) \qquad \Leftrightarrow \qquad
\RSK^*_{F_\bullet^\prime}(A^T) = (Q^\prime,P^\prime).
\end{equation}

\section{Background on Macdonald polynomials}
\label{sec:Mac}
In this section we review certain basic properties of Macdonald polynomials, following \cite[Ch. VI]{Mac}.\\

The \deff{Macdonald symmetric functions} $P_\la(\mb{x}; q,t)$ are symmetric functions in an infinite set of variables $\mb{x} = (x_1, x_2, \ldots)$ with coefficients in the field $\bb{Q}(q,t)$ of rational functions in two additional variables $q$ and $t$. They were originally defined as the orthogonal basis obtained by applying the Gram--Schmidt orthogonalization procedure to the basis of monomial symmetric functions (ordered by dominance order) with respect to a certain inner product $\langle \cdot, \cdot \rangle_{q,t}$ that depends on $q$ and $t$. We denote by $Q_\la(\mb{x};q,t)$ the elements of the basis dual to the $P_\la(\mb{x}; q,t)$. We usually refer to the symmetric functions $P_\la$ and $Q_\la$ as \deff{Macdonald polynomials}, even though they are not polynomials over any ring. 
Macdonald polynomials generalize many families of symmetric functions. Of importance to this paper are the \deff{$q$-Whittaker functions} $W_\la(\mathbf{x};q)$, the \deff{Hall--Littlewood functions} $P_\la(\mathbf{x};t)$, the \deff{Jack polynomials} $P_\la(x;\alpha)$, and the Schur functions $s_\la(\mathbf{x})$, which are obtained from the Macdonald polynomials by
\begin{align*}
W_\la(\mathbf{x};q) &= P_\la(\mathbf{x};q,0), \\
P_\la(\mathbf{x};t) &= P_\la(\mathbf{x};0,t), \\
P_\la(x;\alpha) &= \lim_{t\rightarrow 1}P_\la(\mathbf{x};t^\alpha,t),\\
s_\la(\mathbf{x}) &= P_\la(\mathbf{x};q,q).
\end{align*}

It follows easily from the definition of the inner product $\langle \cdot, \cdot \rangle_{q,t}$ that the Macdonald polynomials satisfy a generalization of the Cauchy identity for Schur functions.

\begin{thm}[{\cite[Ch. VI (4.13)]{Mac}}]
\label{thm_Cauchy_Mac}
Let $\tb{x} = (x_1, x_2, \ldots)$ and $\tb{y} = (y_1, y_2, \ldots)$ be two sets of variables. Then
\begin{equation}
\label{eq_Cauchy_Mac}
\prod_{i,j \geq 1} \dfrac{(tx_iy_j ; q)_\infty}{(x_iy_j ; q)_\infty} = \sum_\la P_\la(\tb{x};q,t) Q_\la(\tb{y};q,t),
\end{equation}
where $(\alpha;q)_\infty = (1-\alpha)(1-\alpha q)(1-\alpha q^2) \cdots$ is the \deff{infinite $q$-Pochhammer symbol}.
\end{thm}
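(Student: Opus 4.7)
The plan is to prove the identity by expanding both sides in the power-sum basis $\{p_\la\}$ of symmetric functions and showing that the coefficients agree, invoking the fact that $P_\la$ and $Q_\la$ are defined to be dual with respect to $\langle \cdot, \cdot \rangle_{q,t}$. Recall (from \cite[Ch. VI]{Mac}) that this inner product is characterized on the power-sum basis by
\[
\langle p_\la, p_\mu \rangle_{q,t} = \delta_{\la\mu}\, z_\la \prod_{i \geq 1}\frac{1-q^{\la_i}}{1-t^{\la_i}},
\]
so the $p_\la$ are pairwise orthogonal with known lengths. This will be the key tool.

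First, I would compute the power-sum expansion of the left-hand side. Writing $\Omega(\x,\y;q,t)=\prod_{i,j}(tx_iy_j;q)_\infty/(x_iy_j;q)_\infty$ and taking logarithms, the identity $\log((tz;q)_\infty/(z;q)_\infty) = \sum_{n\geq 1}\tfrac{z^n(1-t^n)}{n(1-q^n)}$ (obtained from $\log(z;q)_\infty = -\sum_{n\geq 1}\tfrac{z^n}{n(1-q^n)}$) yields
\[
\log \Omega(\x,\y;q,t) = \sum_{n \geq 1}\frac{1-t^n}{n(1-q^n)}\,p_n(\x)\,p_n(\y),
\]
and exponentiating and collecting by partitions gives
\[
\Omega(\x,\y;q,t) = \sum_\la \frac{1}{z_\la}\prod_{i\geq 1}\frac{1-t^{\la_i}}{1-q^{\la_i}}\, p_\la(\x)\, p_\la(\y).
\]

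Second, I would expand the right-hand side in power sums using the duality $\langle P_\la, Q_\mu\rangle_{q,t}=\delta_{\la\mu}$. Writing $P_\la = \sum_\nu a_{\nu\la}\, p_\nu$ and $Q_\mu = \sum_\sigma b_{\sigma\mu}\, p_\sigma$, orthogonality of the $p_\nu$ gives $\sum_\nu a_{\nu\la}\, b_{\nu\mu}\, z_\nu \prod_i\tfrac{1-q^{\nu_i}}{1-t^{\nu_i}} = \delta_{\la\mu}$. In matrix form, $A^T D B = I$ with $D$ the diagonal matrix whose $(\nu,\nu)$-entry is $z_\nu\prod_i\tfrac{1-q^{\nu_i}}{1-t^{\nu_i}}$, so $AB^T = D^{-1}$. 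Hence
\[
\sum_\la P_\la(\x;q,t)Q_\la(\y;q,t) = \sum_{\nu,\sigma}(AB^T)_{\nu\sigma}\, p_\nu(\x)p_\sigma(\y) = \sum_\nu \frac{1}{z_\nu}\prod_{i\geq 1}\frac{1-t^{\nu_i}}{1-q^{\nu_i}}\, p_\nu(\x)\,p_\nu(\y),
\]
which matches the expansion of $\Omega(\x,\y;q,t)$ from the first step, completing the proof.

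The conceptual content is the \emph{reproducing-kernel principle}: for any inner product on symmetric functions and any pair of bases, the ``Cauchy sum'' $\sum_\la u_\la(\x)v_\la(\y)$ depends only on whether $\{u_\la\}$ and $\{v_\la\}$ are dual bases, not on the choice of such dual bases. The only real computation is the power-sum expansion of $\Omega$ in the first step; the linear-algebra argument in the second step is routine but should be stated carefully, perhaps as a separate lemma so it can be reused (in particular, it gives, almost for free, that the same identity holds with $P_\la, Q_\la$ replaced by any pair of bases dual under $\langle\cdot,\cdot\rangle_{q,t}$, e.g.\ the bases $\{\tfrac{p_\la}{z_\la}\prod_i \tfrac{1-t^{\la_i}}{1-q^{\la_i}}\}$ and $\{p_\la\}$). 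The main obstacle, and the only step that is not bookkeeping, is the logarithmic-series computation giving the power-sum expansion of the infinite product; all other steps reduce to the definitional orthogonality of $P_\la$ and $Q_\la$.
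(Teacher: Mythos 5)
Your proof is correct and is exactly the standard argument from the cited reference (Macdonald, Ch.~VI \S 2 and \S 4): the paper itself offers no proof beyond the remark that the identity ``follows easily from the definition of the inner product'' together with the citation, and the duality-of-bases/reproducing-kernel argument you give, combined with the power-sum expansion of the kernel, is precisely what that remark alludes to. The only point worth making explicit is that the matrix identity $A^TDB=I \Rightarrow AB^T=D^{-1}$ should be applied degree by degree (restricting to partitions of a fixed size $n$) so that all matrices are finite and the invertibility of $A$ follows from $\{P_\la\}$ being a basis of each graded piece.
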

Using the automorphism $\omega_{q,t}$ which can be defined as
\[
\omega_{q,t}P_\la(\x;q,t) = Q_{\la^\prime}(\x;t,q),
\]
and its properties one obtains immediately the dual Cauchy identity for Macdonald polynomials.
\begin{thm}[{\cite[Ch. VI (5.4)]{Mac}}]
\label{thm_dual_Cauchy Mac}
Let $\tb{x} = (x_1, x_2, \ldots)$ and $\tb{y} = (y_1, y_2, \ldots)$ be two sets of variables. Then
\begin{align}
\label{eq_dual_Cauchy Mac}
\prod_{i,j}(1+x_iy_j) &= \sum_\la P_\la(\x ;q,t) P_{\la'}(\mb{y} ;t,q) \\
&= \sum_\la Q_\la(\x ;q,t) Q_{\la'}(\mb{y} ;t,q).
\end{align}
\end{thm}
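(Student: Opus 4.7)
My plan is to deduce the theorem from the Macdonald--Cauchy identity \eqref{eq_Cauchy_Mac} by applying the automorphism $\omega_{q,t}$ to the $\y$-variables, as the paragraph preceding the theorem already suggests. The key input is that, in addition to $\omega_{q,t}P_\la(\y;q,t) = Q_{\la'}(\y;t,q)$, the automorphism satisfies the dual relation $\omega_{q,t}Q_\la(\y;q,t) = P_{\la'}(\y;t,q)$ and acts on power sums by $\omega_{q,t}(p_n) = (-1)^{n-1}\tfrac{1-q^n}{1-t^n}p_n$ (see \cite[Ch.~VI (2.14), (5.1)--(5.3)]{Mac}).

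First I would expand the kernel of the Macdonald--Cauchy identity into power sums. The standard computation using $-\log(1-z) = \sum_{n \geq 1} z^n/n$ gives
\[
\log \prod_{i,j \geq 1}\frac{(tx_iy_j;q)_\infty}{(x_iy_j;q)_\infty} \;=\; \sum_{n \geq 1}\frac{1}{n}\cdot\frac{1-t^n}{1-q^n}\, p_n(\x)\, p_n(\y).
\]
Applying $\omega_{q,t}$ to the $\y$-side replaces the factor $\tfrac{1-t^n}{1-q^n}$ by $(-1)^{n-1}$, collapsing the exponent to $\sum_{n \geq 1}\tfrac{(-1)^{n-1}}{n}p_n(\x)p_n(\y) = \log\prod_{i,j}(1+x_iy_j)$. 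So the left-hand side of \eqref{eq_Cauchy_Mac} transforms into $\prod_{i,j}(1+x_iy_j)$. On the right-hand side, termwise application of $\omega_{q,t}$ in the $\y$-variables sends $Q_\la(\y;q,t)$ to $P_{\la'}(\y;t,q)$, yielding the first equality of \eqref{eq_dual_Cauchy Mac}.

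For the second equality I would compare the $P$- and $Q$-sums term by term using the scalar relation $Q_\la = b_\la(q,t) P_\la$ with
\[
b_\la(q,t) \;=\; \prod_{s \in \la}\frac{1 - q^{a_\la(s)} t^{\ell_\la(s)+1}}{1 - q^{a_\la(s)+1} t^{\ell_\la(s)}}.
\]
The conjugation identities $a_{\la'}(s^T) = \ell_\la(s)$ and $\ell_{\la'}(s^T) = a_\la(s)$ give
\[
b_{\la'}(t,q) \;=\; \prod_{s \in \la}\frac{1 - q^{a_\la(s)+1} t^{\ell_\la(s)}}{1 - q^{a_\la(s)} t^{\ell_\la(s)+1}} \;=\; b_\la(q,t)^{-1},
\]
so $Q_\la(\x;q,t) Q_{\la'}(\y;t,q) = P_\la(\x;q,t) P_{\la'}(\y;t,q)$ for every $\la$, and summing over $\la$ gives the second equality.

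The only nontrivial step is the background input from Macdonald's book: that $\omega_{q,t}$ exists as a well-defined automorphism with the stated actions both on power sums and on the $P_\la, Q_\la$ bases. Granted that, the proof is a one-line power-sum computation together with the cell-by-cell cancellation $b_\la(q,t)\, b_{\la'}(t,q) = 1$; exactly the kind of slick algebraic derivation that the rest of this paper is motivated to replace by a tableaux-theoretic bijection.
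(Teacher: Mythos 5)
Your proposal is correct and follows essentially the same route the paper takes: the paper derives the theorem in one line by applying the automorphism $\omega_{q,t}$ (in the $\y$-variables) to the Macdonald--Cauchy identity \eqref{eq_Cauchy_Mac}, deferring the details to Macdonald's book, and your write-up simply fills in those details (the power-sum computation for the kernel and the relation $b_{\la'}(t,q)=b_\la(q,t)^{-1}$ for the second equality), all of which check out.
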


With a good deal of effort, Macdonald was able to derive explicit formulas for the monomial expansions of $P_\la$ and $Q_\la$ as weighted sums over semistandard Young tableaux. In order to describe these expansions we need some notations. For a partition $\lambda$ and a cell $c$, define
\[
\hl{\la}(c) = 1-q^{a_\la(c)}t^{\ell_\la(c)+1}, \qquad \ha{\la}(c) = 1-q^{a_\la(c)+1}t^{\ell_\la(c)},
\]
if $c \in \la$, and $\hl{\la}(c) = \ha{\la}(c) = 1$ if $c \not \in \la$.  We refer to both of these polynomials as \deff{$(q,t)$-hook-lengths}. Further we need their ratio which is denoted by $b_\lambda(c)$
\[
b_\lambda(c) = \dfrac{\hl{\la}(c)}{\ha{\la}(c)}.
\]
For $\mu \subseteq \la$, define
\[
\psi_{\la/\mu}(q,t) = \prod_{c \in \mc{R}_{\la/\mu} - \mc{C}_{\la/\mu}} \dfrac{b_\mu(c)}{b_\la(c)}, \qquad\qquad \vp^*_{\la/\mu}(q,t) = \prod_{c \in \mc{C}_{\la/\mu} - \mc{R}_{\la/\mu}} \dfrac{b_\la(c)}{b_\mu(c)},
\]
where $\mc{R}_{\la/\mu}$ (resp., $\mc{C}_{\la/\mu}$) is the set of all cells in $\la$  which are in the same row (resp., column) as a cell of $\la/\mu$. This is the definition used by Macdonald\footnote{Contrary to Macdonald \cite[Ch. VI (6.24)]{Mac} we use the symbol $\vp^*$ instead of $\psi^\prime$.} and differs from the one in \cite{AignerFrieden22}.

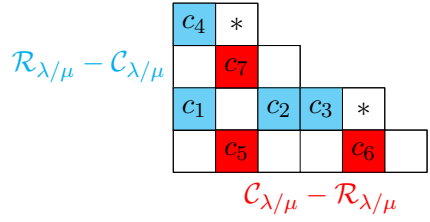
\begin{figure}
\begin{center}
\begin{tikzpicture}
	\Yboxdim{16 pt}
	\Ylinecolour{black}
	\tyng(0,0,6,5,3,2)
	\Ycyan \tgyoung(0pt,0pt,,;:;;,,;)
	\Yred \tgyoung(0pt,0pt,:;::;,,:;)
	\node at (4.5*16pt,1.5*16pt) {$*$};
	\node at (1.5*16pt,3.5*16pt) {$*$};
	\node at (3.5*16pt,-10pt) {\color{red}{$\mc{C}_{\la/\mu}-\mc{R}_{\la/\mu}$}};
	\node at (-32pt,2.5*16pt) {\color{cyan}{$\mc{R}_{\la/\mu}-\mc{C}_{\la/\mu}$}};
	\node at (.5*16pt,3.5*16pt) {$c_4$};
	\node at (3.5*16pt,1.5*16pt) {$c_3$};
	\node at (2.5*16pt,1.5*16pt) {$c_2$};
	\node at (.5*16pt,1.5*16pt) {$c_1$};
	\node at (4.5*16pt,.5*16pt) {$c_6$};
	\node at (1.5*16pt,.5*16pt) {$c_5$};
	\node at (1.5*16pt,2.5*16pt) {$c_7$};
\end{tikzpicture}
\end{center}
\caption{\label{fig: lambda mu} For $\la=(6,5,3,2)$ and $\mu=(6,4,3,1)$ the skew diagram $\la/\mu$ consists of the cells with a star entry. The cells of $\mc{R}_{\lambda/\mu}$ and $\mc{C}_{\lambda/\mu}$ are colored blue and red, respectively.}
\end{figure}
\begin{ex}
For $\la = (6,5,3,2)$ and $\mu = (6,4,3,1)$ as in Figure \ref{fig: lambda mu}, we have
\begin{multline*}
\psi_{\la/\mu}(q,t)= \dfrac{b_{\mu}(c_1)b_{\mu}(c_2)b_{\mu}(c_3)b_{\mu}(c_4)}{b_{\la}(c_1)b_{\la}(c_2)b_{\la}(c_3)b_{\la}(c_4)}\\
=\dfrac{(1-t)^2(1-q^2)^2(1-q t^2)(1-q^3t)(1-q^3t^3)(1-q^5t^2)}
{(1-q)^2(1-qt)^2(1-q^2t)(1-q^2t^2)(1-q^4t^2)(1-q^4t^3)},
\end{multline*}
\[
\vp^*_{\la/\mu}(q,t) = \dfrac{b_\la(c_5)b_\la(c_6)b_\la(c_7)}{b_\mu(c_5)b_\mu(c_6)b_\mu(c_7)}=
\dfrac{(1-q^2)^2(1-qt^2)^2(1-q^5t^2)(1-q^4t^4)}{(1-qt)^2(1-q^2t)^2(1-q^4t^3)(1-q^5t^3)}.
\]
\end{ex}

For a semistandard Young tableau $T$ and a dual semistandard Young tableau $T^*$, define the rational functions $\psi_T(q,t), \vp^*_{T^*}(q,t)$ by
\[
\psi_T(q,t) = \prod_{i \geq 1} \psi_{T^{(i)}/T^{(i-1)}}(q,t), \qquad\qquad
\vp^*_{T^*}(q,t) = \prod_{i \geq 1} \vp_{T^{*(i)}/T^{*(i-1)}}(q,t),
\]
where, as in previous sections, $T^{(i)}$ and $T^{*(i)}$ denotes the shape of the subtableau of $T$ or $T^*$ respectively consisting of entries less than or equal to $i$.

\begin{thm}[{\cite[Ch. VI ($7.13$)]{Mac}}]
\label{thm_Mac_monomial}
The Macdonald polynomial $P_\la(\x; q,t)$ has the following monomial expansions over semistandard Young tableaux of shape $\la$
\begin{equation}
\label{eq:P via SSYT}
P_\la (\x; q,t) = \sum_{T \in \SSYT(\la)} \psi_T(q,t) \tb{x}^T.
\end{equation}
\end{thm}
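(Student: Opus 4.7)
The plan is to deduce the theorem from a branching rule for Macdonald polynomials, which is in turn a consequence of the Pieri rule / the skew-Cauchy structure. The key intermediate object is the skew Macdonald polynomial $P_{\la/\mu}(\y;q,t)$, defined via the splitting identity
\[
P_\la(\x, \y; q,t) = \sum_{\mu} P_{\la/\mu}(\y; q,t)\, P_\mu(\x; q,t).
\]
This splitting exists and is well-defined as a consequence of the Cauchy identity (Theorem~\ref{thm_Cauchy_Mac}) applied with a third set of dummy variables, using that the $P_\la$ and $Q_\la$ are dual bases for $\langle \cdot,\cdot\rangle_{q,t}$.

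First I would establish the one-variable evaluation
\[
P_{\la/\mu}(y; q,t) \;=\;
\begin{cases}
\psi_{\la/\mu}(q,t)\, y^{|\la/\mu|}, & \text{if } \la/\mu \text{ is a horizontal strip}, \\
0, & \text{otherwise.}
\end{cases}
\]
This is essentially the Pieri rule for $P_\mu \cdot g_1$ (with $g_r$ the one-row Macdonald polynomial $Q_{(r)}$), transposed via duality. Deriving the explicit coefficient $\psi_{\la/\mu}$ amounts to evaluating a product of ratios of $(q,t)$-hook-lengths over the rows meeting $\la/\mu$, which can be extracted from the inner-product computation of $\langle P_\mu g_1, P_\la\rangle_{q,t}$. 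Substituting $\x = (x_1,\ldots,x_{n-1})$ and $\y = (x_n)$ into the splitting identity then yields the branching rule
\[
P_\la(x_1,\ldots,x_n; q,t) \;=\; \sum_{\mu \prec \la} \psi_{\la/\mu}(q,t)\, x_n^{|\la/\mu|}\, P_\mu(x_1,\ldots,x_{n-1}; q,t).
\]

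Second I would iterate this branching rule $n$ times, starting from $P_\la(x_1, \ldots, x_n;q,t)$ and peeling off one variable at each step. The result is a sum over chains
\[
\emptyset = \la^{(0)} \prec \la^{(1)} \prec \cdots \prec \la^{(n)} = \la
\]
in Young's lattice by horizontal strips. As recalled in \S\ref{sec: up and down operators}, such chains are in bijection with semistandard Young tableaux $T \in \SSYT(\la)$ with entries in $\{1,\ldots,n\}$, where $\la^{(i)}$ is the subshape of cells with entry at most $i$. The contribution of the chain factorizes as
\[
\prod_{i=1}^n \psi_{\la^{(i)}/\la^{(i-1)}}(q,t)\, x_i^{|\la^{(i)}/\la^{(i-1)}|} \;=\; \psi_T(q,t)\, \x^T,
\]
by the very definition of $\psi_T$. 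Since the identity holds for every finite $n$ and the right-hand side is stable under setting $x_{n+1}=0$, passing to the infinite alphabet yields the claimed expansion.

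The main obstacle is the first step: the explicit identification of the single-variable skew Macdonald polynomial with the coefficient $\psi_{\la/\mu}(q,t)$. The vanishing outside horizontal strips is comparatively soft (it follows from considering which monomials can appear), but matching the combinatorial product over $\mc{R}_{\la/\mu} - \mc{C}_{\la/\mu}$ with the algebraic Pieri coefficient requires a careful $(q,t)$-hook-length manipulation — telescoping the product $\prod b_\mu(c)/b_\la(c)$ against the inner-product weights $\langle P_\la, P_\la\rangle_{q,t}$ and using that $b_\la(c) = b_\mu(c)$ whenever $c$ lies neither in a row nor in a column meeting $\la/\mu$. Once this coefficient is nailed down, the rest of the proof is the formal iteration and bookkeeping described above.
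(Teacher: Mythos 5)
The paper does not actually prove this statement: it is quoted from Macdonald \cite[Ch.~VI~(7.13)]{Mac}, and the authors go on to adopt the expansion as their \emph{definition} of $P_\la$, so there is no in-paper argument to compare yours against. Your outline reproduces Macdonald's own route --- the splitting $P_\la(\x,\y;q,t)=\sum_\mu P_{\la/\mu}(\y;q,t)P_\mu(\x;q,t)$, the one-variable evaluation of the skew polynomial, and iteration of the resulting branching rule. The second half of your argument (iterating the branching rule, identifying chains $\emptyset=\la^{(0)}\prec\cdots\prec\la^{(n)}=\la$ of horizontal strips with semistandard tableaux, factoring the weight as $\psi_T(q,t)\x^T$, and passing to the infinite alphabet by stability) is correct and routine.

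The gap is exactly where you locate it, and it is not a small one: the identity $P_{\la/\mu}(y;q,t)=\psi_{\la/\mu}(q,t)\,y^{|\la/\mu|}$ for horizontal strips, with $\psi_{\la/\mu}$ given by the product of $b_\mu(c)/b_\la(c)$ over $c\in\mc{R}_{\la/\mu}-\mc{C}_{\la/\mu}$, is essentially the whole content of the theorem; it is precisely the computation the paper alludes to with ``a good deal of effort.'' Your sketch gestures at telescoping hook-length ratios against the norms $\langle P_\la,P_\la\rangle_{q,t}$ but does not carry this out, and nothing in the outline pins down that the resulting coefficient is this particular product rather than some other expression with the same degenerations. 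A further small imprecision: since $|\la/\mu|$ can be any nonnegative integer, the one-variable evaluation requires the Pieri rule for $P_\mu\cdot g_r$ for all $r\geq 1$ (equivalently, multiplication by the full one-row kernel $\sum_r g_r(\x)y^r$), not just the case $r=1$ suggested by ``$P_\mu\cdot g_1$.'' In short: the strategy is the standard (Macdonald's) one and the bookkeeping is right, but the decisive step is deferred rather than proved, so this should be regarded as a proof plan rather than a proof.
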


The above monomial expansion of $P_\la$ over SSYTs immediately implies an expansion over dual SSYTs
\begin{equation}
\label{eq:P via SSYT*}
P_{\la^\prime} (\x; t,q) = \sum_{T^* \in \SSYT^*(\la)} \vp^*_{T^*}(q,t) \tb{x}^{T^*},
\end{equation}
by using that $\vp^*_{\kappa/\rho}(q,t) = \psi_{\kappa'/\rho'}(t,q)$ holds for any $\rho \subset \kappa$. 
The Macdonald polynomials $Q_\la(\x; q,t)$ can be expressed analogously as shown in \eqref{eq:Q via SSYT}.\\

In this paper, we take the somewhat unusual perspective of viewing Theorem \ref{thm_Mac_monomial} as the \emph{definition} of the Macdonald polynomials $P_\la$. The theory of Schur functions can be developed in elegant combinatorial fashion by taking the monomial expansion over semistandard Young tableaux as the starting point (for example, this is Stanley's approach in \cite[Ch. 7]{EC2}). We believe that trying to mimic this approach in the more general Macdonald setting will lead to interesting combinatorial and probabilistic results.

\subsection{Up and down operators for Macdonald polynomials}
\label{sec_qt_up_down}

In \S \ref{sec: up and down operators} and \S \ref{sec: growth diagrams} we showed that the dual Cauchy identity for Schur polynomials can be proven combinatorially by ``bijectivizing” the algebraic proof via up and dual down operators. We now the take the first step towards an analogous proof in the Macdonald setting by providing the generalized algebraic setting.\\

 We define the \deff{$(q,t)$-up operator} and \deff{$(q,t)$-dual down operator} as
\[
U_z(q,t) \la = \sum_{\nu \succ \la} z^{|\nu/\la|} \psi_{\nu/\la}(q,t) \; \nu, \quad\quad 
D_z^*(q,t) \la = \sum_{\mu \prec^\prime \la} z^{|\la/\mu|} \vp_{\la/\mu}^*(q,t) \; \mu.
\]

\begin{thm}
\label{thm_dual_DU_UD}
The $(q,t)$-up and $(q,t)$-dual down operators satisfy the commutation relation
\begin{equation}
\label{eq_dual_DU_UD}
D^*_y(q,t) U_x(q,t) = (1+xy) U_x(q,t) D^*_y(q,t).
\end{equation}
\end{thm}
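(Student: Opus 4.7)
The plan is to reduce the commutation relation to local identities indexed by pairs of partitions and to prove each local identity via a probabilistic bijection.

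First, apply both sides of \eqref{eq_dual_DU_UD} to a basis element $\la \in \bb{Y}$ and compare the coefficients of an arbitrary partition $\rho$ and monomial $x^i y^j$. By the definition of $U_x(q,t)$ and $D^*_y(q,t)$, the coefficient on the left runs over $\nu$ with $\nu \succ \la$ and $\nu \succ' \rho$, weighted by $\psi_{\nu/\la}\,\varphi^*_{\nu/\rho}$; the coefficient on the right runs over $\mu$ with $\mu \prec' \la$ and $\mu \prec \rho$, weighted by $(1+xy)\,\psi_{\rho/\mu}\,\varphi^*_{\la/\mu}$. Just as in the Schur case (\S\ref{sec: up and down operators}), matching monomial degrees reduces \eqref{eq_dual_DU_UD} to the assertion that for all partitions $\la,\rho$ and all $k \geq 0$,
\begin{equation*}
\sum_{\nu \in \U^k(\rho,\la)} \psi_{\nu/\la}(q,t)\, \varphi^*_{\nu/\rho}(q,t) \;=\; \sum_{\mu \,\in\, \D^k(\rho,\la) \cup \D^{k-1}(\rho,\la)} \psi_{\rho/\mu}(q,t)\, \varphi^*_{\la/\mu}(q,t),
\end{equation*}
the two summands on the right arising from the two terms of $(1+xy)$.

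Second, as emphasized in the introduction, no weight-preserving bijection can witness this local identity, so I would work in the probabilistic bijection framework of \S\ref{sec:prob bij}. Give the domain $\D^k(\rho,\la) \cup \D^{k-1}(\rho,\la)$ the weight $\omega(\mu) = \psi_{\rho/\mu}(q,t)\, \varphi^*_{\la/\mu}(q,t)$ and the codomain $\U^k(\rho,\la)$ the weight $\ov{\omega}(\nu) = \psi_{\nu/\la}(q,t)\, \varphi^*_{\nu/\rho}(q,t)$, and construct explicit forward and backward probability distributions $\mc{P}(\mu \rightarrow \nu)$ and $\ov{\mc{P}}(\mu \leftarrow \nu)$, to be introduced in \S\ref{sec:probabilities definition}, guided by a geometric description of the removable inner corners of $\la \cap \rho$ and the addable outer corners of $\la \cup \rho$ in the Quebecois notation. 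Once the compatibility condition \eqref{eq:comp condition} is verified and each family is shown to be a probability distribution, summing the compatibility relation over $\nu$ (using $\sum_\nu \mc{P}(\mu \rightarrow \nu) = 1$) and over $\mu$ (using $\sum_\mu \ov{\mc{P}}(\mu \leftarrow \nu) = 1$) yields the two sides of the local identity, and hence \eqref{eq_dual_DU_UD}.

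Two independent verifications then remain. That $\mc{P}$ and $\ov{\mc{P}}$ are probability distributions I would deduce from a symmetric-polynomial extension of Lagrange interpolation due to Chen--Louck (\S\ref{sec:interpolation}), which splits a suitable rational function into partial fractions whose residues are exactly the proposed probabilities. The main obstacle will be the compatibility condition $\omega(\mu)\,\mc{P}(\mu \rightarrow \nu) = \ov{\mc{P}}(\mu \leftarrow \nu)\,\ov{\omega}(\nu)$: the weights $\omega$ and $\ov{\omega}$ are products of $(q,t)$-hook-length ratios over rows and columns of $\la,\rho,\mu,\nu$, and adding or removing a corner alters several arm- and leg-lengths simultaneously, so that a side-by-side comparison of the original geometric definitions is unwieldy. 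The strategy is to introduce a third formulation of both $\mc{P}$ and $\ov{\mc{P}}$ purely in terms of $(q,t)$-hook-lengths (\S\ref{sec:prob via hook-lengths}--\S\ref{sec:weights formula}); in that form the contributions of distinct addable and removable corners decouple, and the cancellations between numerator and denominator in the compatibility condition become explicit factor-by-factor.
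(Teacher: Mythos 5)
Your proposal is correct and follows essentially the same route as the paper: reduce \eqref{eq_dual_DU_UD} to the local identity \eqref{eq_dual_la_rho} (your version is the same statement with $\la$ and $\rho$ interchanged, which is immaterial since it is asserted for all pairs), then establish it via the probabilistic bijection of \S\ref{sec:probabilities definition}, with normalization proved by the Chen--Louck interpolation lemma and compatibility proved through the hook-length reformulation of \S\ref{sec:prob via hook-lengths}--\S\ref{sec:weights formula}. No substantive differences from the paper's argument.
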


This is in fact equivalent to the skew dual Cauchy identity in the special case of $\x=(x_1)$ and $\y=(y_1)$, compare to \cite[Ch. VI, Ex 6(c)]{Mac} or \eqref{eq:skew dual cauchy}, but we will take the opposite perspective: we seek to prove this identity with a probabilistic bijection, and then use it to deduce the dual Cauchy identity (which implies the skew dual Cauchy identity by a standard argument).

The commutation relation of the up and dual down operator is equivalent to the skew version of the dual Cauchy identity in the special case of $\x=(x_1)$ and $\y=(y_1)$, compare to \cite[Ch. VI, Ex 6(c)]{Mac}.

\begin{cor}
Let $\mb{x} = (x_1, \ldots, x_m), \mb{y} = (y_1, \ldots, y_{n})$ be two finite sets of indeterminates. Then
\[
\prod_{1 \leq i \leq m \atop 1 \leq j \leq n}(1+x_iy_j) = \sum_\la P_\la(\mb{x};q,t) P_{\la'}(\mb{y};t,q),
\]
where the sum is over all partitions $\la$.
\end{cor}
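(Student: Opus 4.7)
The plan is to carry out the same algebraic argument used for Schur functions in \S\ref{sec: up and down operators}, but now with the $(q,t)$-up and $(q,t)$-dual down operators, taking the commutation relation \eqref{eq_dual_DU_UD} of Theorem~\ref{thm_dual_DU_UD} as the fundamental input. Extend the bilinear pairing $\langle \cdot, \cdot \rangle$ on $\mathbb{Q}\mathbb{Y}$ (defined by $\langle \la, \mu\rangle = \delta_{\la,\mu}$) to be $\mathbb{Q}(q,t)\llbracket x_1,\ldots,x_m,y_1,\ldots,y_n\rrbracket$-bilinear in the natural way, and evaluate
\[
\left\langle D^*_{y_n}(q,t)\cdots D^*_{y_1}(q,t)\, U_{x_m}(q,t)\cdots U_{x_1}(q,t)\,\emptyset,\ \emptyset\right\rangle
\]
in two different ways.

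First I would expand the operator word by opening up $U$ and $D^*$ individually. Iterating $U_{x_i}(q,t)\mu = \sum_{\nu \succ \mu} x_i^{|\nu/\mu|}\psi_{\nu/\mu}(q,t)\,\nu$, the coefficient of $\la$ in $U_{x_m}(q,t)\cdots U_{x_1}(q,t)\emptyset$ is a sum over chains $\emptyset = \la^{(0)}\prec \la^{(1)}\prec\cdots\prec \la^{(m)} = \la$ with weight $\prod_i x_i^{|\la^{(i)}/\la^{(i-1)}|}\psi_{\la^{(i)}/\la^{(i-1)}}(q,t)$; such chains biject with $T \in \SSYT(\la)$ with entries $\le m$, and multiplicativity of $\psi_T$ over consecutive levels together with \eqref{eq:P via SSYT} gives
\[
\langle U_{x_m}(q,t)\cdots U_{x_1}(q,t)\emptyset,\ \la\rangle = P_\la(x_1,\ldots,x_m;q,t).
\]
In the same fashion, chains of vertical strips paired with \eqref{eq:P via SSYT*} yield
\[
\langle D^*_{y_n}(q,t)\cdots D^*_{y_1}(q,t)\la,\ \emptyset\rangle = P_{\la'}(y_1,\ldots,y_n;t,q).
\]
Inserting a resolution of the identity $\sum_\la |\la\rangle\langle\la|$ between the $D^*$-block and the $U$-block then rewrites the quantity above as $\sum_\la P_\la(\x;q,t)\,P_{\la'}(\y;t,q)$, which is the right-hand side of the claimed identity.

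For the second evaluation, I would push each $D^*_{y_j}(q,t)$ to the right through all the $U_{x_i}(q,t)$'s, picking up a factor of $(1+x_iy_j)$ at each crossing via \eqref{eq_dual_DU_UD}. This yields
\[
D^*_{y_j}(q,t)\,U_{x_m}(q,t)\cdots U_{x_1}(q,t) = \prod_{i=1}^{m}(1+x_iy_j)\ U_{x_m}(q,t)\cdots U_{x_1}(q,t)\,D^*_{y_j}(q,t).
\]
Because $D^*_y(q,t)\emptyset = \emptyset$, induction on $j$ gives
\[
D^*_{y_n}(q,t)\cdots D^*_{y_1}(q,t)\,U_{x_m}(q,t)\cdots U_{x_1}(q,t)\,\emptyset = \prod_{\substack{1\le i\le m \\ 1\le j\le n}}(1+x_iy_j)\cdot U_{x_m}(q,t)\cdots U_{x_1}(q,t)\,\emptyset,
\]
and pairing with $\emptyset$ produces the left-hand side (since $\langle U_{x_m}(q,t)\cdots U_{x_1}(q,t)\emptyset,\emptyset\rangle = P_\emptyset = 1$). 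Equating the two evaluations yields the corollary.

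The only point requiring any real care is the chain interpretation of $U_{x_m}(q,t)\cdots U_{x_1}(q,t)\emptyset$ and its dual: one must confirm that the $(q,t)$-weights collected along a chain assemble into exactly $\psi_T$ (resp.\ $\vp^*_{T^*}$), which is immediate from the definitions $\psi_T(q,t) = \prod_i \psi_{T^{(i)}/T^{(i-1)}}(q,t)$ and $\vp^*_{T^*}(q,t) = \prod_i \vp^*_{T^{*(i)}/T^{*(i-1)}}(q,t)$ introduced before Theorem~\ref{thm_Mac_monomial}. Everything else is a formal manipulation identical to the Schur case, so the genuine content of the corollary is entirely encoded in the commutation relation \eqref{eq_dual_DU_UD}, whose probabilistic combinatorial proof is the main project of the paper.
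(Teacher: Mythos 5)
Your proposal is correct and is essentially identical to the paper's proof: both identify $P_\la(\x;q,t)$ and $P_{\la'}(\y;t,q)$ with the matrix coefficients $\langle U_{x_m}(q,t)\cdots U_{x_1}(q,t)\emptyset,\la\rangle$ and $\langle D^*_{\bullet}(q,t)\cdots\la,\emptyset\rangle$ via the monomial expansions \eqref{eq:P via SSYT} and \eqref{eq:P via SSYT*}, and then repeatedly apply the commutation relation \eqref{eq_dual_DU_UD} exactly as in the Schur-function argument of \S\ref{sec: up and down operators}. The paper states this in two lines and says ``repeatedly apply \eqref{eq_dual_DU_UD} in the usual way''; you have simply written out that ``usual way'' in full.
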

\begin{proof}
It is immediate by the definition of the $(q,t)$-up and $(q,t)$-dual down operator and the monomial expansions of $P_\la$ in \eqref{eq:P via SSYT} and \eqref{eq:P via SSYT*} that
\begin{align*}
P_{\la}(\mb{x}; q,t) &= \langle U_{x_m}(q,t)\cdots U_{x_1}(q,t) \emptyset, \la \rangle, \\
P_{\la'}(\mb{y}; t,q) &= \langle D^*_{y_1}(q,t) \cdots D^*_{y_{n}}(q,t) \la, \emptyset \rangle.
\end{align*}
Now we just repeatedly apply \eqref{eq_dual_DU_UD} in the usual way.
\end{proof}

In \S \ref{sec: up and down operators} we defined the two sets $\U^k(\la,\rho):=\{ \nu | \nu \succ^\prime \la, \nu \succ \rho, |\nu / (\la \cup \rho)|=k\}$ and $\D^k(\la,\rho) := \{ \mu | \mu \prec \la, \mu \prec^\prime \rho, |(\la \cap \rho)/\mu|=k\}$. In order to prove the commutation relation \eqref{eq_dual_DU_UD} we need to show
\begin{equation}
\label{eq_dual_la_rho}
\sum_{\mu \in \D^k(\la,\rho) \cup \D^{k-1}(\la,\rho)} \psi_{\la/\mu}(q,t) \vp^*_{\rho/\mu}(q,t)
= \sum_{\nu \in \U^k(\la,\rho)} \psi_{\nu/\rho}(q,t) \vp^*_{\nu/\la}(q,t),
\end{equation}
for any partitions $\la$ and $\rho$ and integer $k \geq 0$.
For $\la=\rho$ the partitions $\nu$ and $\mu$ in the above sums have to be \deff{diagonal strips} (i.e., no two boxes in the same row or column) with respect to $\la$, i.e., $\nu /\la$ and $\la/\mu$ are both diagonal strips. When further $k=1$, this is equivalent to the single box case of the $(q,t)$-up-down commutation relation for which we have presented a probabilistic bijection in \cite{AignerFrieden22}. In general, each side of this identity has $\binom{d+1}{k}$ summands, where $d$ is the number of removable inner corners of $\la \cap \rho$.

\begin{ex}
\label{ex:weight for almost rect}
Let $\la=(h^v,1)$ and $\rho=(h+1,h^{v-1})$ be almost rectangular partitions with $h \geq 3$ and $v \geq 2$. The partitions $\mu \in \D^1(\la,\rho) \cup \D^{0}(\la,\rho)$ and $\nu \in \U^1(\la,\rho)$ are shown below (for $h=8, v=4$) together with their weights $\psi_{\la/\mu}(q,t) \vp^*_{\rho/\mu}(q,t)$ and $\psi_{\nu/\rho}(q,t) \vp^*_{\nu/\la}(q,t)$.

\begin{center}
\begin{tikzpicture}[scale=0.6]
\begin{scope}[xshift=15cm]
\draw (-3.5,7) node[left]{$\psi_{\la/\mu} \vp^*_{\rho/\mu}$};
\draw (-5,5) node{$1$};
\draw (-5,1) node{$\dfrac{(1-t^{v-1})(1-q^{h-1})}{(1-q t^{v-2})(1-q^{h-2}t)}$};
\end{scope}

\draw (4.5,7) node[left]{$\D^1(\la,\rho) \cup \D^{0}(\la,\rho)$};
\draw[step=0.5, gray, thin] (0,4) grid (4,6);
\draw (0,4) rectangle (4,6);
\draw (2,3.6) node{$h$};
\draw (-0.4,5) node{$v$};
\draw[step=0.5, gray, thin] (0,0) grid (4,1.5);
\draw[step=0.5, gray, thin] (0,1.5) grid (3.5,2);
\draw (0,0) --(4,0) -- (4,1.5)  -- (3.5,1.5) -- (3.5,2) -- (0,2) -- (0,0);
\draw (2,-0.4) node{$h$};
\draw (-0.4,1) node{$v$};

\draw[dashed] (-.5,-1.5) -- (13,-1.5);

\begin{scope}[yshift=-10cm]
\begin{scope}[xshift=-7cm]
\draw (8,7) node[right]{$\U^1(\la,\rho)$};
\draw[step=0.5, gray, thin] (7,4) grid (11,6);
\draw[step=0.5, gray, thin] (7,6) grid (7.5,6.5);
\draw[step=0.5, gray, thin] (11,4) grid (11.5,5);
\draw (7,4) -- (11.5,4) -- (11.5,4.5) -- (11.5,5)  -- (11,5) -- (11,6) -- (7.5,6) -- (7.5,6.5) -- (7,6.5) -- (7,4);
\draw (9,3.6) node{$h$};
\draw (6.6,5) node{$v$};
\draw[step=0.5, gray, thin] (7,0) grid (11,2);
\draw[step=0.5, gray, thin] (7,2) grid (8,2.5);
\draw[step=0.5, gray, thin] (11,0) grid (11.5,.5);
\draw (7,0) -- (11.5,0) -- (11.5,.5) -- (11,.5) -- (11,2) -- (8,2) -- (8,2.5) -- (7,2.5) -- (7,0);
\draw (9,-0.4) node{$h$};
\draw (6.6,1) node{$v$};
\end{scope}

\begin{scope}[xshift=-6cm]
\draw (15,7) node[right]{$\psi_{\nu/\rho} \vp^*_{\nu/\la}$};
\draw (16,5) node{$\dfrac{(1-t^{v-1})(1-q^{h} t^{v-2})}{(1-q t^{v-2})(1-q^{h-1} t^{v-1})}$};
\draw (16,1) node{$\dfrac{(1-q^{h-2}t^v)(1-q^{h-1})}{(1-q^{h-2}t)(1-q^{h-1}t^{v-1})}$};
\end{scope}
\end{scope}

\end{tikzpicture}
\end{center}

\end{ex}


\section{Probabilistic growth rules}
\label{sec:probabilities}
\subsection{Probabilistic bijections}
\label{sec:prob bij}
The following definition is due to Bufetov and Petrov \cite{BufetovPetrov19}, although they use the term
``bijectivization'' (or ``coupling'') rather than ``probabilistic bijection.'' \\

Let $X$ and $Y$ be finite sets equipped with weight functions $\omega : X \rightarrow A$, $\ov{\omega} : Y \rightarrow A$, where $A$ is an algebra. A \deff{probabilistic bijection} from $(X,\omega)$ to $(Y,\ov{\omega})$ is a pair of maps $\mc{P},\ov{\mc{P}} : X \times Y \rightarrow A$ satisfying
\begin{enumerate}
\item For each $x \in X$, $\ds \sum_{y \in Y} \mc{P}(x,y) = 1$.
\item For each $y \in Y$, $\ds \sum_{x \in X} \ov{\mc{P}}(x,y) = 1$.
\item For each $x \in X$ and $y \in Y$, $\ds \omega(x)\mc{P}(x,y) = \ov{\mc{P}}(x,y)\ov{\omega}(y)$.
\end{enumerate}

For the remainder of the paper we write $\mc{P}(x \rightarrow y)$ for $\mc{P}(x,y)$ and $\ov{\mc{P}}(x \leftarrow y)$ for $\ov{\mc{P}}(x,y)$. We think of $\mc{P}(x \rightarrow y)$ as the ``probability'' of mapping $x$ to $y$ and call it therefore the \deff{forward probability} and of $\ov{\mc{P}}(x \leftarrow y)$ as the ``probability'' of mapping $y$ ``back'' to $x$ and call it the \deff{backward probability}. We put ``probability'' in quotes because we do not require $\mc{P}(x \rightarrow y), \ov{\mc{P}}(x \leftarrow y) \in [0,1]$ (they need not even be real-valued).
 Condition (1) states that $\mc{P}$ defines a ``probability distribution'' on $Y$ for each $x$, and (2) says that $\ov{\mc{P}}$ defines a ``probability distribution'' on $X$ for each $y$.  We refer to (3) as the \deff{compatibility condition}. \bigskip


The next lemma shows, that analogously to bijections, it satisfies to find a forward probability and a backward probability, i.e., a probabilistic bijection, in order to prove that two weighted sets have equal sums of weights.
 
\begin{lem}
\label{lem:prob bij easy}
If $\mc{P},\ov{\mc{P}}$ is a probabilistic bijection between $(X,\omega)$ and $(Y,\ov{\omega})$, then
\[
\sum_{x \in X} \omega(x) = \sum_{y \in Y} \ov{\omega}(y).
\]
\end{lem}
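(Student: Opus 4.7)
The proof will be a short double-counting argument with no real obstacle; the plan is simply to assemble the three defining conditions of a probabilistic bijection in the correct order so that the weights $\omega$ and $\ov{\omega}$ change places while the probabilities $\mc{P}$ and $\ov{\mc{P}}$ are eliminated.

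First I would start from $\sum_{x \in X} \omega(x)$ and insert the identity $1 = \sum_{y \in Y} \mc{P}(x \rightarrow y)$, which is available thanks to condition (1) of the definition. This converts the sum into the double sum $\sum_{x \in X} \sum_{y \in Y} \omega(x) \mc{P}(x \rightarrow y)$. Next I would apply the compatibility condition (3) termwise to rewrite each summand $\omega(x) \mc{P}(x \rightarrow y)$ as $\ov{\mc{P}}(x \leftarrow y) \ov{\omega}(y)$.

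Interchanging the order of summation (the sets are finite, so no convergence issues arise) gives $\sum_{y \in Y} \ov{\omega}(y) \sum_{x \in X} \ov{\mc{P}}(x \leftarrow y)$, and condition (2) collapses the inner sum to $1$, yielding $\sum_{y \in Y} \ov{\omega}(y)$. Chaining these equalities establishes the claim. There is no substantive difficulty; the only thing worth flagging is that the $\mc{P}, \ov{\mc{P}}$ values need not be real numbers in $[0,1]$, so I will be careful to phrase everything in the ambient algebra $A$ rather than invoking probabilistic intuition, but since each step is a purely algebraic manipulation (and $X,Y$ are finite) this causes no trouble.
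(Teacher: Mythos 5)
Your proposal is correct and follows exactly the same route as the paper's proof: apply condition (1) to insert the inner sum, use the compatibility condition (3) termwise, swap the order of summation, and collapse with condition (2). Nothing to add.
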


\begin{proof}
Using properties (1), (3), and (2) successively, we compute
\[
\sum_{x \in X} \omega(x) = \sum_{x \in X} \sum_{y \in Y} \omega(x) \mc{P}(x \rightarrow y) = \sum_{y \in Y} \sum_{x \in X} \ov{\mc{P}}(x \leftarrow y) \ov{\omega}(y) = \sum_{y \in Y} \ov{\omega}(y). \qedhere
\]
\end{proof}

The existence of a probabilistic bijection $\mc{P},\ov{\mc{P}}$ also implies the more refined identities
\[
\sum_{x \in X} \omega(x) \mc{P}(x \rightarrow y) = \ov{\omega}(y), \qquad\qquad \sum_{y \in Y} \ov{\mc{P}}(x \leftarrow y) \ov{\omega}(y) = \omega(x).
\]

If $\omega(x) = \ov{\omega}(y) = 1$ for all $x \in X$ and $y \in Y$, and $f: X \rightarrow Y$ is a bijection, then we may take $\mc{P}(x \rightarrow y) = \ov{\mc{P}}(x \leftarrow y) = \delta_{y,f(x)}$. Thus, the notion of probabilistic bijection generalizes that of bijection, allowing for situations in which $X$ and $Y$ have different cardinalities, or the same cardinality but differently distributed weight functions, etc.
Further we want to point out, that there is an easy connection between the concept of probabilistic bijections and joint distributions, compare for example with \cite[Remark 4.1.4]{AignerFrieden22}.

\subsection{The probabilities}
\label{sec:probabilities definition}

For partitions $\la,\rho,\mu,\nu$ satisfying $\mu \prec \la \prec^\prime \nu$ and $\mu \prec^\prime \rho \prec \nu$ we define the weights
\begin{align*}
\omega_{\la,\rho}(\mu) = \psi_{\la/\mu}(q,t) \vp_{\rho/\mu}^*(q,t), \qquad \qquad
\ov{\omega}_{\la,\rho}(\nu) = \psi_{\nu/\rho}(q,t) \vp_{\nu/\la}^*(q,t).
\end{align*}
Then equation \eqref{eq_dual_la_rho} becomes
\begin{equation}
\label{eq: sum of weights}
\sum_{\mu \in \D^k(\la,\rho) \cup \D^{k-1}(\la,\rho)}  \omega_{\la,\rho}(\mu) =
\sum_{\nu \in \U^k(\la,\rho)} \ov{\omega}_{\la,\rho}(\nu).
\end{equation}
In order to show this equation, it ``suffices'' by Lemma \ref{lem:prob bij easy} to find a forward probability $\mc{P}_{\la,\rho}(\mu \rightarrow \nu)$ and a backward probability $\ov{\mc{P}}_{\la,\rho}(\mu \leftarrow \nu)$ for which we can prove
\begin{align*}
\sum_{\nu \in \U^k(\la,\rho)} \mc{P}_{\la,\rho}(\mu \rightarrow \nu) &= 1
 \qquad \text{ for all } \mu \in \D^k(\la,\rho) \cup \D^{k-1}(\la,\rho),\\
\sum_{\mu \in \D^k(\la,\rho) \cup \D^{k-1}(\la,\rho)} \ov{\mc{P}}_{\la,\rho}(\mu \leftarrow \nu) &= 1
 \qquad \text{ for all } \nu \in \U^k(\la,\rho),
\end{align*}
and 
\[
 \omega_{\la,\rho}(\mu) \mc{P}_{\la,\rho}(\mu \rightarrow \nu)
 = \ov{\omega}_{\la,\rho}(\nu) \ov{\mc{P}}_{\la,\rho}(\mu \leftarrow \nu),
\]
for all $\mu \in \D^k(\la,\rho) \cup \D^{k-1}(\la,\rho)$ and $\nu \in \U^k(\la,\rho)$. In case we want to emphasize the dependence of the probabilities on $q$ and $t$, we write $\mc{P}_{\la,\rho}(\mu \rightarrow \nu)[q,t]$ and $\ov{\mc{P}}_{\la,\rho}(\mu \leftarrow \nu)[q,t]$. 
Before we can define these probabilities, we need to introduce some notations.\bigskip

Denote by $d$ the number of removable inner corners of $\la \cap \rho$; see \S \ref{sec: up and down operators} for their definition. For a subset $\Ret \subseteq [d] = \{1,2,\ldots d\}$ we define the partition $\mu^{(\Ret)}$ as the partition obtained by removing from $\la \cap \rho$ the $i$-th removable inner corner of $\la \cap \rho$, counted from bottom to top, for all $i\in \Ret$. For a subset $\Set\subseteq [0,d]=\{0,1,\ldots, d\}$ we define $\nu^{(\Set)}$ as the partition obtained by adding to $\la \cup \rho$ the $i$-th addable (``supplementable'') outer corner of $\la \cup \rho$, where we count the addable outer corners again from bottom to top but starting with $0$. 

As we see in a moment, it turns out to be convenient to draw Young diagrams using \deff{Quebecois convention} in which the boxes are right-justified instead of left-justified, i.e., one obtains this new convention by reflecting diagrams in French convention vertically or reflecting diagrams in English convention both vertically and horizontally, see Figure \ref{fig: def of pts}.
We define $R_i$ (resp., $\ov{R}_i$) to be the lower right (resp., upper left) corner of the $i$-th removable inner corner of $\la \cap \rho$, $S_i$ (resp., $\ov{S}_i$) to be the lower right (resp., upper left) corner of the $i$-th addable outer corner of $\la \cup \rho$, and set $I_i = \ov{R}_i$ and $O_i=S_i$. For an example see Figure \ref{fig: def of pts}.	

For the rest of the paper we identify a point with coordinates $(x,y)$ with the monomial\footnote{By using French instead of Quebecois notation, the corresponding monomial would be $q^{-x}t^y$, and $q^{-x}t^{-y}$ for English notation.} $q^x t^y$. In order to determine the coordinates of the above defined points we assume that the cells of Young diagrams are unit squares and define the origin such that all of the above points have integer coordinates. Since the expressions we are interested in are homogeneous rational functions of degree 0 in the above defined points, these expressions are invariant under translation of the points and hence well-defined.
\medskip

For $\Ret \subseteq [d]$ and $\Set \subseteq [0,d]$, we define the probabilities
\begin{align}
\label{eq:forward using quebecois}
\mc{P}_{\la,\rho}(\mu^{(\Ret)} \rightarrow \nu^{(\Set)}) &= 
\prod_{s \in \Set} \frac{\prod\limits_{i \in [d]\setminus \Ret} (S_s-I_i)}{\prod\limits_{j \in [0,d]\setminus \Set} (S_s-O_j)}
\prod_{r \in \Ret} \frac{\prod\limits_{j \in [0,d]\setminus \Set} (R_r-O_j)}{\prod\limits_{i \in [d]\setminus \Ret} (R_r-I_i)},
\\
\label{eq:backward using quebecois}
\ov{\mc{P}}_{\la,\rho}(\mu^{(\Ret)} \leftarrow \nu^{(\Set)}) &=
\prod_{s \in \Set} \frac{\prod\limits_{i \in [d]\setminus \Ret} (\ov{S}_s-I_i)}{\prod\limits_{j \in [0,d]\setminus \Set} (\ov{S}_s-O_j)}
\prod_{r \in \Ret} \frac{\prod\limits_{j \in [0,d]\setminus \Set} (\ov{R}_r-O_j)}{\prod\limits_{i \in [d]\setminus \Ret} (\ov{R}_r-I_i)}.
\end{align}
If $\la, \rho$ are clear from the context, we abbreviate the probabilities by 
\[
p_{\Ret,\Set} = \mc{P}_{\la,\rho}(\mu^{(\Ret)} \rightarrow \nu^{(\Set)}), 
\qquad \qquad \ov{p}_{\Ret,\Set} = \ov{\mc{P}}_{\la,\rho}(\mu^{(\Ret)} \leftarrow \nu^{(\Set)}).
\]
When we want to emphasize the dependence of our probabilities on $q$ and $t$, we write $p_{\Ret,\Set}[q,t]$ or $\mc{P}_{\la,\rho}(\mu^{(\Ret)} \rightarrow \nu^{(\Set)})[q,t]$.

\begin{figure}
\begin{center}
\begin{tikzpicture}[scale=1.5, xscale=-1]
\tyng(0,0,10,9,7,6,6,4,2,1)
\draw[thick, blue, fill=myblue] (4.5,.5) rectangle (5,1);
\draw[thick, blue, fill=myblue] (2.5,2.5) rectangle (2,3);
\draw[thick, blue, fill=myblue] (.5,4.5) rectangle (0,4);
\node at (.25,4.25) {$+$};
\node at (2.25,2.75) {$+$};
\node at (4.75,.75) {$+$};
\draw[thick, orange, fill=myorange] (3.5,1.5) rectangle (3,1);
\draw[thick, orange, fill=myorange] (.5,3.5) rectangle (1,3);
\node at (3.25,1.25) {$-$};
\node at (.75,3.25) {$-$};
\draw[line width=1.75pt] (0,0) -- (5,0) -- (5,.5) -- (4.5,.5) -- (4.5,1) -- (3.5,1) -- (3.5,1.5) -- (3,1.5) -- (3,2.5) -- (2,2.5) -- (2,3) -- (1,3) -- (1,3.5) -- (.5,3.5) -- (.5,4) -- (0,4) -- (0,0) -- (0,1);
\draw[fill, blue] (0,4) circle (1.5pt);
\draw[fill, blue] (.5,4.5) circle (1.5pt);
\draw[fill, blue] (2,2.5) circle (1.5pt);
\draw[fill, blue] (2.5,3) circle (1.5pt);
\draw[fill, blue] (4.5,.5) circle (1.5pt);
\draw[fill, blue] (5,1) circle (1.5pt);
\draw[fill, red] (.5,3) circle (1.5pt);
\draw[fill, red] (1,3.5) circle (1.5pt);
\draw[fill, red] (3,1) circle (1.5pt);
\draw[fill, red] (3.5,1.5) circle (1.5pt);
\node[blue] at (4.5,.5) [below right]  {$S_0 = O_0$};
\node[blue] at (5,1) [above left] {$\ov{S}_0$};
\node[blue] at (2,2.5) [below right]  {$S_1=O_1$};
\node[blue] at (2.5,3) [above left] {$\ov{S}_1$};
\node[blue] at (0,4) [below right]  {$S_2=O_2$};
\node[blue] at (.5,4.5) [above left] {$\ov{S}_2$};
\node[red] at (3,1) [below right]  {$R_1$};
\node[red] at (3.5,1.5) [above left] {$\ov{R}_1=I_1$};
\node[red] at (.5,3) [below right]  {$R_2$};
\node[red] at (1,3.5) [above left] {$\ov{R}_2=I_2$};

\end{tikzpicture}
\caption{\label{fig: def of pts} The partition $\la \cup \rho$ together with the points $I_i,O_j,R_i,\ov{R}_i,S_j$ and $\ov{S}_j$ for $\la=(9,9,7,5,5,4,2)$ and $\rho=(10,7,7,6,6,2,2,1)$ as in Figure~\ref{fig: inner outer corner}.}
\end{center}
\end{figure}
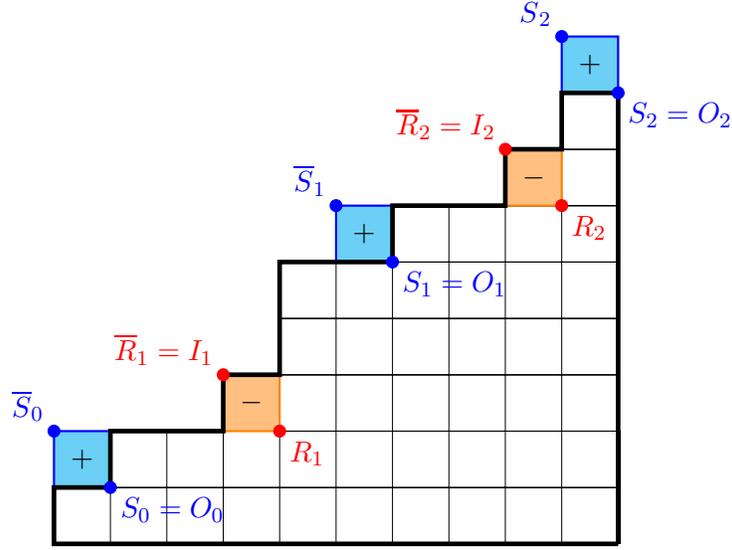

\begin{ex}
\label{ex:qt growth rule}
For $\la=\rho=(2,1)$ the points $I_i,R_i$ and $S_j=O_j$ are shown next
\begin{center}
\begin{tikzpicture}[scale=2, xscale=-1]
\tyng(0,0,2,1)
\draw[fill, blue] (1,0) circle (1.25pt);
\draw[fill, blue] (.5,.5) circle (1.25pt);
\draw[fill, blue] (0,1) circle (1.25pt);
\node[blue] at (1,0) [below left]  {$S_0$};
\node[blue] at (.5,.5) [below right]  {$S_1$};
\node[blue] at (0,1) [above right]  {$S_2$};
\draw[fill, red] (1,.5) circle (1.25pt);
\draw[fill, red] (0,.5) circle (1.25pt);
\draw[fill, red] (.5,0) circle (1.25pt);
\draw[fill, red] (.5,1) circle (1.25pt);
\node[red] at (.5,0) [below]  {$R_1$};
\node[red] at (0,.5) [right]  {$R_2$};
\node[red] at (.5,1) [above left]  {$I_1$};
\node[red] at (1,.5) [above left]  {$I_2$};
\end{tikzpicture}
\end{center}
For $\Ret=\{2\}$ we have the following forward probabilities.
\begin{align*}
p_{\{2\},\{0,1\}} = \mc{P}_{(2,1),(2,1)}\left((2) \rightarrow (3,2) \right)
&= \dfrac{(S_0-I_1)(S_1-I_1)}{(S_0-O_2)(S_1-O_2)} \cdot \dfrac{(R_2-O_2)}{(R_2-I_1)}\\[11pt]
&= q \dfrac{(1-t)^2(1-q)}{(1-q^2 t^2)(1-q t)(1-q^2)}, \\ \\
p_{\{2\},\{1,2\}} = \mc{P}_{(2,1),(2,1)}\left((2) \rightarrow (2,2,1) \right)
&= \dfrac{(S_1-I_1)(S_2-I_1)}{(S_1-O_0)(S_2-O_0)} \cdot \dfrac{(R_2-O_0)}{(R_2-I_1)}\\[11pt]
&= t \dfrac{(1-q)(1-q^2 t)^2}{(1-q t)(1-q^2t^2)(1-q^2)}
,\\ \\
p_{\{2\},\{0,2\}} = \mc{P}_{(2,1),(2,1)}\left((2) \rightarrow (3,1,1) \right)
&= \dfrac{(S_0-I_1)(S_2-I_1)}{(S_0-O_1)(S_2-O_1)} \cdot \dfrac{(R_2-O_1)}{(R_2-I_1)}\\[11pt]
&=\dfrac{(1-t)(1-q^2 t)(1-q)}{(1-q t)^2(1-q^2)}
.
\end{align*}
\end{ex}

For an integer $k \geq 0$ and a set $\Set$, we denote by $\binom{\Set}{k}$ the set of $k$-element subsets of $\Set$.

\begin{thm}
\label{thm:sum to 1}
Let $\la, \rho$ be partitions, $d$ the number of removable inner corners of $\la \cap \rho$, and $k \in [d+1]$. The probabilities satisfy
\begin{align}
\label{eq: sum of forward prob}
\sum_{\Set \in \binom{[0,d]}{k}} p_{\Ret,\Set} = 1 \qquad &\text{ for each } \Ret\in \binom{[d]}{k-1} \cup \binom{[d]}{k},\\
\label{eq: sum of backward prob}
\sum_{\Ret\in \binom{[d]}{k-1} \cup \binom{[d]}{k}} \ov{p}_{\Ret,\Set} = 1 \qquad &\text{ for each } \Set \in \binom{[0,d]}{k}.
\end{align}
\end{thm}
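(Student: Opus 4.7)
The plan is to deduce both identities from a single Lagrange-style interpolation formula for symmetric polynomials (the Chen--Louck extension alluded to in the introduction and developed in \S\ref{sec:interpolation}). I would begin by observing that the backward probability $\ov{p}_{\Ret,\Set}$ is obtained from the forward probability $p_{\Ret,\Set}$ by the formal substitution $S_s \mapsto \ov{S}_s$ and $R_r \mapsto \ov{R}_r$, with the nodes $I_i, O_j$ playing identical roles. Since the interpolation identity we aim to invoke depends only on the numerical positions of its evaluation nodes (not on their ``corner'' interpretation), the argument for \eqref{eq: sum of backward prob} parallels the one for \eqref{eq: sum of forward prob} verbatim after this relabelling. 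It therefore suffices to establish \eqref{eq: sum of forward prob}.

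To analyze $\sum_\Set p_{\Ret,\Set}$, fix $\Ret$ with $|\Ret|\in\{k-1,k\}$ and write $A := [d]\setminus \Ret$, $x_j := S_j = O_j$ for $j \in [0,d]$, and $f(x) := \prod_{i \in A}(x - I_i)$. Each summand then factors as
\begin{equation*}
p_{\Ret,\Set} \;=\; \Phi_\Ret(\Set)\cdot\prod_{s \in \Set}\frac{f(x_s)}{\prod_{j \in [0,d]\setminus \Set}(x_s - x_j)},\qquad \Phi_\Ret(\Set) \;:=\;\prod_{r \in \Ret}\frac{\prod_{j \in [0,d]\setminus \Set}(R_r - x_j)}{\prod_{i \in A}(R_r - I_i)}.
\end{equation*}
The product over $s \in \Set$ already has the shape appearing in classical Lagrange expansions over a distinguished set of $k$ evaluation nodes drawn from $\{x_0,\ldots,x_d\}$; the prefactor $\Phi_\Ret(\Set)$ is itself a product of Lagrange-type ratios evaluated at the auxiliary nodes $R_r$, and absorbs the extra ``pole'' structure at those points coming from the varying $\Set$.

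Concretely, the plan is to state the master Chen--Louck identity as a symmetric extension of Lagrange interpolation — roughly, that for distinct nodes $x_0,\ldots,x_d$ and a polynomial $g$ of appropriate degree, the sum $\sum_{\Set \in \binom{[0,d]}{k}}\prod_{s\in\Set}\frac{g(x_s)}{\prod_{j\notin\Set}(x_s-x_j)}$ admits a closed form — and then to verify by degree-counting that, in both subcases $|\Ret|=k$ and $|\Ret|=k-1$, the sum $\sum_\Set p_{\Ret,\Set}$ is an instance of this master identity with $g$ chosen so that the resulting closed form cancels $\Phi_\Ret$ exactly. In the $|\Ret|=k-1$ case, $\Phi_\Ret$ has net degree zero in each $R_r$, which simplifies the matching; in the $|\Ret|=k$ case the extra $R_r$-node supplies the ``missing'' degree needed for the Lagrange identity to output $1$. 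The main obstacle will be isolating the precise Chen--Louck identity that treats both subcases uniformly — I expect a single formula parametrized by the total count of evaluation nodes and the size of the distinguished subset, ideally general enough that it also handles the $(\ov{S}_s, \ov{R}_r)$-substitution needed for \eqref{eq: sum of backward prob}. Once that identity is in hand, matching the degrees and tracking the cancellations to land on exactly $1$ should be routine.
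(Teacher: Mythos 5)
Your route is the paper's route in outline: both reduce the claim to a Chen--Louck symmetric Lagrange interpolation identity, and your factorization of $p_{\Ret,\Set}$ and the degree count for $\Phi_\Ret$ are correct. But two steps of the plan have genuine gaps.

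First, the reduction of \eqref{eq: sum of backward prob} to \eqref{eq: sum of forward prob} by substituting $S_s\mapsto\ov{S}_s$, $R_r\mapsto\ov{R}_r$ does not work: that substitution turns the forward identity into the statement $\sum_{\Set}\ov{p}_{\Ret,\Set}=1$ for fixed $\Ret$, whereas \eqref{eq: sum of backward prob} fixes $\Set$ and sums over $\Ret\in\binom{[d]}{k-1}\cup\binom{[d]}{k}$ --- a different summation variable, ranging over subsets of $[d]$ of two different sizes rather than over $k$-subsets of $[0,d]$. The paper's fix is to introduce a symmetrized auxiliary quantity $p'_{\Ret,\Set}(\mathbf{a},\mathbf{b})$ with \emph{both} $\Ret,\Set\in\binom{[0,d]}{k}$, observe that $p'_{\Ret,\Set}(\mathbf{a},\mathbf{b})=p'_{\Set,\Ret}(\mathbf{b},\mathbf{a})$ so that a single interpolation lemma (Lemma~\ref{lem_master_identity}) yields both the $\Set$-sum and the $\Ret$-sum, and then to recover the asymmetric index sets of the actual probabilities by sending an auxiliary node $b_0\to\infty$ as in \eqref{eq: limit of pre-prob}. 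Some such symmetrization, or a separate interpolation argument tailored to the $\Ret$-sum, is unavoidable; ``verbatim after relabelling'' is not available.

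Second, the master identity you propose, $\sum_{\Set}\prod_{s\in\Set}\frac{g(x_s)}{\prod_{j\notin\Set}(x_s-x_j)}=\text{closed form}$, does not match the summand: $\Phi_\Ret(\Set)$ contains the factor $\prod_{r\in\Ret}\prod_{j\in[0,d]\setminus\Set}(R_r-O_j)$, which depends on $\Set$ through a product over the \emph{complement} of $\Set$ and therefore cannot be absorbed into a function $g$ evaluated only at the nodes $x_s$ with $s\in\Set$. The identity you actually need carries a factor $\prod_{j\notin\Set}\prod_{r\in\Ret}(R_r-x_j)$ inside each summand; it is precisely the Chen--Louck formula (Lemma~\ref{lem: sym poly interpolation}) applied to the symmetric polynomial $f(\mathbf{y})=\prod_{r}\prod_{i\in A}(y_r-I_i)$, expanded over the $d+1$ nodes $O_0,\ldots,O_d$ and evaluated at $\mathbf{y}=(R_r)_{r\in\Ret}$, after which division by $f\bigl((R_r)_{r\in\Ret}\bigr)=\prod_{r\in\Ret}\prod_{i\in A}(R_r-I_i)$ gives $\sum_\Set p_{\Ret,\Set}=1$. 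This closes the case $|\Ret|=k$ directly; when $|\Ret|=k-1$ you are short one evaluation coordinate for the $k$-variable interpolation, and $\deg_{y_1}f=d+1-k$ sits exactly at the allowed maximum --- this missing coordinate is again what the paper's extra node $b_0\to\infty$ supplies. So the forward case does go through, but only after the master identity is stated with the complementary product and the $|\Ret|=k-1$ case is given its missing node.
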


\begin{thm}
\label{thm:weights}
Let $\la, \rho$ be partitions, $d$ the number of removable inner corners of $\la \cap \rho$, and $k \in [d+1]$. For $\Ret\in \binom{[d]}{k-1} \cup \binom{[d]}{k}$ and $\Set \in \binom{[0,d]}{k}$, one has
\[
\frac{\omega_{\la,\rho}(\mu^{(\Ret)})}{\ov{\omega}_{\la,\rho}(\nu^{(\Set)})} = \frac{\ov{p}_{\Ret,\Set}}{p_{\Ret,\Set}}.
\]
\end{thm}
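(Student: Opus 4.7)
The plan is to prove Theorem~\ref{thm:weights} by giving a third formulation of the probabilities $p_{\Ret,\Set}$ and $\ov{p}_{\Ret,\Set}$ expressed directly as products of $(q,t)$-hook-length factors $\hl{}$ and $\ha{}$, and then to match this against the hook-length expansion of $\omega_{\la,\rho}(\mu^{(\Ret)})/\ov{\omega}_{\la,\rho}(\nu^{(\Set)})$. The ratio $\ov{p}_{\Ret,\Set}/p_{\Ret,\Set}$ is itself a product of differences of corner points, and the key geometric idea is that each such difference $(X-Y)$, viewed as a monomial identity with $X = q^{x_X}t^{y_X}$ and $Y = q^{x_Y}t^{y_Y}$, factors as a monomial times $\pm(1-q^{a}t^{b})$, i.e.\ a $\hl{}$- or $\ha{}$-type quantity for a specific cell of $\la \cup \rho$ or $\la \cap \rho$. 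Homogeneity of $\ov p/p$ in the corner points guarantees that all monomial prefactors cancel.

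First I would unwind the LHS: write $\omega_{\la,\rho}(\mu^{(\Ret)})/\ov{\omega}_{\la,\rho}(\nu^{(\Set)})$ as
\[
\frac{\psi_{\la/\mu^{(\Ret)}}(q,t)\,\vp^*_{\rho/\mu^{(\Ret)}}(q,t)}{\psi_{\nu^{(\Set)}/\rho}(q,t)\,\vp^*_{\nu^{(\Set)}/\la}(q,t)},
\]
and expand each $\psi$ and $\vp^*$ via the definition $b_\la(c) = \hl{\la}(c)/\ha{\la}(c)$. This expresses the LHS as a product of $(q,t)$-hook quantities indexed by the cells of $\mc{R}_{\la/\mu^{(\Ret)}}-\mc{C}_{\la/\mu^{(\Ret)}}$, $\mc{C}_{\rho/\mu^{(\Ret)}}-\mc{R}_{\rho/\mu^{(\Ret)}}$, and analogous sets on the $\ov{\omega}$-side, with most cells contributing through a telescoping cancellation between $\la$-hooks and $\mu^{(\Ret)}$-hooks (resp.\ between $\nu^{(\Set)}$- and $\rho$-hooks).

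Second, for the RHS, I would show that each difference of corner points appearing in $\ov p/p$ corresponds precisely to a $(q,t)$-hook factor of a distinguished cell. Concretely: the vector from $R_r$ (or $\ov R_r$) to $O_j$ (or $\ov O_j$) along the staircase boundary of $\la \cap \rho$ (resp.\ $\la \cup \rho$) has arm-length and leg-length matching the exponents of $q$ and $t$ in the difference, and identifies a unique cell whose hook spans between the two corners. Carrying out this identification for all four types of factors $(\ov S_s - I_i)$, $(\ov S_s - O_j)$, $(\ov R_r - O_j)$, $(\ov R_r - I_i)$ (and the unbarred versions in $p$) converts $\ov p/p$ into a product of $\hl{}/\ha{}$ ratios over the same families of cells that appear on the LHS.

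The main obstacle will be the third, bookkeeping step: showing cell-by-cell that the two hook-length products agree. I would attack it in two stages. Stage one is the single-corner case $|\Ret|+|\Set|=1$ or $2$, which reduces to (and refines) the single-box up-down relation already established in~\cite{AignerFrieden22}. Stage two is the general case, which I would handle by writing both $p_{\Ret,\Set}$ and $\ov p_{\Ret,\Set}$ as a product over $r \in \Ret$ and $s \in \Set$ of ``single-corner'' factors, modulo correction terms indexed by pairs $(r,s) \in \Ret \times \Set$, and observing that these correction terms cancel between numerator and denominator in $\ov p/p$. The delicate combinatorics is in verifying that the cells of $\la$ whose hooks change when passing from $\mu^{(\Ret)}$ to $\la$ are exactly the cells in $\mc{R}_{\la/\mu^{(\Ret)}} \cup \mc{C}_{\la/\mu^{(\Ret)}}$ lying in the rows and columns of the $r$-th removable corner ($r \in \Ret$), and that the corresponding hook factors telescope into precisely the differences $(R_r - I_i)$ and $(R_r - O_j)$. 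An analogous, symmetric analysis applies to $\nu^{(\Set)}/\la$ and $\nu^{(\Set)}/\rho$, and to the barred points giving the $\vp^*$ contributions. Once these matchings are in place, the identity of Theorem~\ref{thm:weights} follows by inspection.
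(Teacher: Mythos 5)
Your proposal is correct and follows essentially the same route as the paper: the paper likewise expands $\omega_{\la,\rho}(\mu)/\ov{\omega}_{\la,\rho}(\nu)$ cell by cell (the type analysis of \S\ref{sec:weights formula} leading to \eqref{eq:weights formula}), converts each corner-point difference in $p_{\Ret,\Set}$ and $\ov{p}_{\Ret,\Set}$ into a $(q,t)$-hook-length of a distinguished cell by telescoping along rows and columns (Lemmas~\ref{lem:alpha beta gamma pieces} and~\ref{lem:alpha beta gamma pieces2}), and cancels the pairwise correction terms, which appear there as the common factor $\gamma_{\mu,\la,\rho,\nu}$ together with $\tau_{\Ret,\Set}=\ov{\tau}_{\Ret,\Set}$, in the ratio $\ov{p}_{\Ret,\Set}/p_{\Ret,\Set}$ (Proposition~\ref{prop:alpha beta gamma}). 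The only cosmetic difference is that you phrase the cancellation of the monomial prefactors via homogeneity, whereas the paper derives it from $\ov{R}_r/\ov{S}_s = R_r/S_s$.
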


Theorems \ref{thm:sum to 1} and \ref{thm:weights} show that our probabilities define a probabilistic bijection. In \S \ref{sec:interpolation}, we use a generalization of Lagrange interpolation to prove Theorem \ref{thm:sum to 1}. In \S \ref{sec:prob via hook-lengths}, we derive an ``intrinsic'' formula for the probabilities that (mostly) avoids the use of coordinates, and we use this formula to prove Theorem \ref{thm:weights}.


\begin{lem}
\label{lem:inverting q and t}
The probabilities satisfy
\begin{align*}
\mc{P}_{\la,\rho}(\mu \rightarrow \nu)[q^{-1},t^{-1}] = \mc{P}_{\rho',\la'}(\mu' \rightarrow \nu')[t,q], \\
\ov{\mc{P}}_{\la,\rho}(\mu \leftarrow \nu)[q^{-1},t^{-1}] = \ov{\mc{P}}_{\rho',\la'}(\mu' \leftarrow \nu')[t,q].
\end{align*}
\end{lem}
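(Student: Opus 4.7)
The plan is to prove the lemma by directly comparing the defining formulas \eqref{eq:forward using quebecois} and \eqref{eq:backward using quebecois} on both sides. The key observation is that the substitution $(q,t) \mapsto (q^{-1}, t^{-1})$ inverts every monomial, while conjugation of Young diagrams is geometrically realized by the reflection $(x,y) \mapsto (y,x)$ of the plane, which on monomials is the substitution $q \leftrightarrow t$. To set up the combinatorial correspondence, I will use that conjugation preserves meets and joins (i.e. $(\la \cap \rho)' = \la' \cap \rho'$ and $(\la \cup \rho)' = \la' \cup \rho'$) and interchanges horizontal with vertical strips; this induces bijections $\D^k(\la,\rho) \leftrightarrow \D^k(\rho',\la')$ and $\U^k(\la,\rho) \leftrightarrow \U^k(\rho',\la')$ via $\pi \mapsto \pi'$. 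In particular the number $d$ of removable inner corners is the same in both pictures, and the index sets $\Ret^* = \{d - r + 1 : r \in \Ret\}$ and $\Set^* = \{d - j : j \in \Set\}$ (reflecting the reversal of the ``bottom to top'' Quebecois ordering under the diagonal reflection) satisfy $(\mu^{(\Ret)})' = \mu^{*(\Ret^*)}$ and $(\nu^{(\Set)})' = \nu^{*(\Set^*)}$.

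Next I will translate the combinatorial correspondence to a point correspondence. The diagonal reflection interchanges the upper-left and lower-right vertices of each cell, so the points $R^*_r, S^*_s, I^*_r, O^*_s, \ov S^*_s$ in the $(\rho',\la')$-picture, when evaluated at $[t,q]$, are identified with specific points of the $(\la,\rho)$-picture (with indices given by the reindexing above), schematically
\[
R^*_r[t,q] \leftrightarrow I_{?}[q,t], \quad I^*_r[t,q] \leftrightarrow R_{?}[q,t], \quad O^*_j[t,q] \leftrightarrow \ov S_{?}[q,t], \quad \ov S^*_j[t,q] \leftrightarrow O_{?}[q,t].
\]
Substituting these into \eqref{eq:forward using quebecois} applied to the pair $(\rho',\la')$ and re-indexing expresses the right-hand side $\mc{P}_{\rho',\la'}(\mu' \to \nu')[t,q]$ entirely in the points $\ov S$, $R$, $I$ of the $(\la,\rho)$-picture.

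Finally I will verify the algebraic identity. Under $(q,t) \mapsto (q^{-1}, t^{-1})$, every difference $X - Y$ in \eqref{eq:forward using quebecois} becomes $X^{-1} - Y^{-1} = -(X-Y)/(XY)$. A sign count, using that each sub-factor of \eqref{eq:forward using quebecois} has numerator minus denominator multiplicity equal to $|\Set| - |\Ret| - 1$ or its negative, shows the total sign is $+1$ in both cases $|\Ret| = |\Set|$ and $|\Ret| = |\Set| - 1$. The remaining monomial scaling factors $1/(XY)$ can be absorbed using the homogeneity of degree zero of \eqref{eq:forward using quebecois} together with the relation $\ov X = (t/q) X$ (valid because the upper-left and lower-right vertices of a cell differ by the vector $(-1,1)$), yielding exactly the expression from the previous step. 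The backward identity follows by the same argument applied to \eqref{eq:backward using quebecois}, which differs from \eqref{eq:forward using quebecois} only by replacing $S_s$ with $\ov S_s$ and $R_r$ with $I_r = \ov R_r$.

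The main obstacle is the simultaneous bookkeeping of the index reversal, the upper-left/lower-right interchange, the sign contributions, and the scaling factors $1/(XY)$; once the point dictionary is correctly established, however, the remaining computation is essentially mechanical. A possibly cleaner alternative, which would sidestep the coordinate-by-coordinate verification, is to use the intrinsic hook-length formulation to be established in \S \ref{sec:prob via hook-lengths} together with the known identity $\vp^*_{\la/\mu}(q,t) = \psi_{\la'/\mu'}(t,q)$; this would reduce the lemma to the transformation properties of the $(q,t)$-hook-length factors under conjugation and inversion.
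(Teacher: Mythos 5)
Your overall strategy---compare the two defining formulas \eqref{eq:forward using quebecois} via a geometric correspondence between the marked points of the $(\la,\rho)$ picture and those of the $(\rho',\la')$ picture---is exactly the right one, and it is what the paper does. But your realization of conjugation is the wrong reflection for the convention in force. In Quebecois coordinates (right-justified diagrams, point $(x,y)$ identified with $q^xt^y$), conjugation is the reflection $(x,y)\mapsto(-y,-x)$ along $y=-x$, not $(x,y)\mapsto(y,x)$. The map $(x,y)\mapsto(y,x)$ does not carry the Quebecois diagram of $\la$ to the Quebecois diagram of $\la'$, so the ``points of the $(\la,\rho)$ picture'' you obtain after applying it are not the actual $R_i, I_i, O_j, \ov{S}_j$. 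The reflection along $y=-x$ preserves the lower-right/upper-left designation of cell corners (it does not swap them), maps the $i$-th removable inner corner to the $(d+1-i)$-th one and the $j$-th addable outer corner to the $(d-j)$-th one, and acts on monomials as the single substitution $q\mapsto t^{-1}$, $t\mapsto q^{-1}$. Hence the correct dictionary is $R^*_r[t,q]=1/R_{d+1-r}[q,t]$, $I^*_r[t,q]=1/I_{d+1-r}[q,t]$, $O^*_j[t,q]=1/O_{d-j}[q,t]$, $\ov{S}^*_j[t,q]=1/\ov{S}_{d-j}[q,t]$ --- same corner type, reversed index, inverted monomial --- and not your claimed swap of types $R^*\leftrightarrow I$, $O^*\leftrightarrow\ov{S}$. (One can check on Example \ref{ex:qt growth rule} that $1/R_{d+1-r}$ is not of the form $c\,I_{i'}$ for a single translation constant $c$.) With the correct dictionary the lemma is immediate, since replacing every point by its reciprocal in \eqref{eq:forward using quebecois} is the same as evaluating at $[q^{-1},t^{-1}]$; no sign or scaling bookkeeping is needed. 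That is the paper's one-line proof.

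The final step of your proposal also does not close as stated. Your sign count is fine (the total sign is indeed $+1$ in both cases), but the leftover monomial from the factors $1/(XY)$ works out to $\prod_{s\in\Set}S_s\big/\prod_{r\in\Ret}R_r$ when $|\Ret|=|\Set|$ and to $\prod_{j\notin\Set}O_j\big/\prod_{i\notin\Ret}I_i$ when $|\Ret|=|\Set|-1$. These are themselves homogeneous of degree zero, so they cannot be ``absorbed using the homogeneity of degree zero,'' and they are not produced by the relation $\ov{X}=(t/q)X$ either. In fact this leftover monomial is precisely the ratio $\mc{P}_{\rho',\la'}(\mu'\rightarrow\nu')[t,q]\big/\mc{P}_{\la,\rho}(\mu\rightarrow\nu)[q,t]$ (e.g.\ it equals $q^2t = O_2/I_1$ in Example \ref{ex:qt growth rule}), so it carries the entire content of the lemma; your argument leaves exactly this identity unproven. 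Your suggested alternative via the hook-length formulation of Proposition \ref{prop:alpha beta gamma} and $\vp^*_{\la/\mu}(q,t)=\psi_{\la'/\mu'}(t,q)$ is viable in principle, but it relies on machinery developed only in \S\ref{sec:prob via hook-lengths} and is considerably heavier than the reflection argument.
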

\begin{proof}
Let us assume that the bottom right corner of the partitions $\la,\rho,\nu,\mu$ in Quebecois notation has coordinates $(0,0)$, compare to Figure~\ref{fig: def of pts}. Then we obtain the partitions  $\la^\prime,\rho^\prime,\nu^\prime,\mu^\prime$ by reflecting along the line $y=-x$, i.e., by mapping coordinates $(x,y)$   to $(-y,-x)$. Because of our identification of points with coordinates $(x,y)$ with monomials $q^x t^y$, this reflection corresponds to the variable transformation $q,t \rightarrow t^{-1},q^{-1}$. Hence we have by  \eqref{eq:forward using quebecois} 
\[
\mc{P}_{\la,\rho}(\mu \rightarrow \nu)[q,t]  = \mc{P}_{\rho',\la'}(\mu' \rightarrow \nu')[t^{-1},q^{-1}],
\]
which is equivalent to the assertion. The second statement follows analogously. 
\end{proof}

\subsection{Recovering dual RSK}
\label{sec:recovering dual RSK}

Next we show that in the limits $q,t \rightarrow 0$ and $q,t \rightarrow \infty$, our probabilistic bijection $\mc{P}_{\la,\rho}$ degenerates to the deterministic bijections $F^{\row}_{\la,\rho}$ and $F^{\col}_{\la,\rho}$, respectively. By Lemma \ref{lem:inverting q and t} and the fact that $F^{\row}_{\la,\rho}$ is the transpose of $F^{\col}_{\la,\rho}$, it is enough to consider the case $q,t \rightarrow 0$.

The points $O_j=S_j$ and $I_i=\ov{R}_i$ satisfy
\[
O_0 < I_1 < O_1 < \cdots < I_d < O_d,
\]
where $(x,y) < (x',y')$ means that $x \leq x', y \leq y'$, and at least one inequality is strict. The same inequalities hold if we replace some subset of the $I_i$ by $R_i$, or some subset of the $O_j$ by $\ov{S}_j$. We now rewrite the definitions of $p_{\Ret,\Set}$ and $\ov{p}_{\Ret,\Set}$ by pulling out the smaller point (that is, the monomial with smaller exponents) from each binomial factor. For brevity, we write $i \not \in \Ret$ and $j \not \in \Set$ instead of $i \in [d] \setminus \Ret$ and $j \in [0,d] \setminus \Set$. The result is
\begin{equation}
\label{eq:prob rewritten}
p_{\Ret,\Set} = \tau_{\Ret,\Set} \zeta_{\Ret,\Set}, \qquad\quad \ov{p}_{\Ret,\Set} = \ov{\tau}_{\Ret,\Set} \ov{\zeta}_{\Ret,\Set},
\end{equation}
where $\zeta_{\Ret,\Set}$ is defined as
\begin{equation}
\label{eq:zeta}
\dfrac{\ds \prod_{s \in \Set, i \not \in \Ret\atop s < i} \left(1-\frac{I_i}{S_s}\right)
\prod_{s \in \Set, i \not \in \Ret\atop s \geq i} \left(1-\frac{S_s}{I_i}\right)
\prod_{r \in \Ret, j \not \in \Set \atop r \leq j} \left(1-\frac{O_j}{R_r}\right)
\prod_{r \in \Ret, j \not \in \Set \atop r > j} \left(1-\frac{R_r}{O_j}\right)}
{\ds \prod_{s \in \Set, j \not \in \Set \atop s < j} \left(1-\frac{O_j}{S_s}\right)
\prod_{s \in \Set, j \not \in \Set \atop s > j} \left(1-\frac{S_s}{O_j}\right)
\prod_{r \in \Ret, i \not \in \Ret\atop r < i} \left(1-\frac{I_i}{R_r}\right)
\prod_{r \in \Ret, i \not \in \Ret\atop r > i} \left(1-\frac{R_r}{I_i}\right)},
\end{equation}
\begin{multline}
\label{eq:tau}
\tau_{\Ret,\Set} = \prod_{s \in \Set} (S_s)^{\#\{i \not \in \Ret\mid i > s\} - \#\{j \not \in \Set \mid j > s\}} \prod_{r \in \Ret} (R_r)^{\#\{j \not \in \Set \mid j \geq r\} - \#\{i \not \in \Ret\mid i > r\}} \\
\times \prod_{j \not \in \Set} (-O_j)^{\#\{r \in \Ret\mid r > j\} - \#\{s \in \Set \mid s > j\}} \prod_{i \not \in \Ret} (-I_i)^{\#\{s \in \Set \mid s \geq i\} - \#\{r \in \Ret\mid r > i\}},
\end{multline}
and $\ov{\zeta}_{\Ret,\Set}, \ov{\tau}_{\Ret,\Set}$ are obtained from $\zeta_{\Ret,\Set}, \tau_{\Ret,\Set}$ by replacing all points $R_r$ and $S_s$ with $\ov{R}_r$ and $\ov{S}_s$.
Each binomial factor in $\zeta_{\Ret,\Set}$ and $\ov{\zeta}_{\Ret,\Set}$ is of the form $1-q^xt^y$ for some $x,y \geq 0$, with $x,y$ not both 0. It follows immediately that
\begin{equation}
\label{eq:zeta at 0}
\lim_{q,t \rightarrow 0} \zeta_{\Ret,\Set} = \lim_{q,t \rightarrow 0} \ov{\zeta}_{\Ret,\Set} = 1.
\end{equation}
The analysis of the monomials $\tau_{\Ret,\Set}$ and $\ov{\tau}_{\Ret,\Set}$ is the content of the next two lemmas.

\begin{lem}
\label{lem:tau}
There are integers $x,y \geq 0$ such that
\[
\tau_{\Ret,\Set} = \ov{\tau}_{\Ret,\Set} = q^xt^y.
\]
Moreover, $x$ and $y$ are not both zero unless $\Set = \Ret$ or $\Set = \Ret\cup \{0\}$.
\end{lem}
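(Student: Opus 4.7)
My plan is to work directly from the explicit formula \eqref{eq:tau} for $\tau_{\Ret,\Set}$ (and its analog $\ov{\tau}_{\Ret,\Set}$) and verify the claim in three stages. First, for the identity $\tau = \ov{\tau}$: since the lower-right and upper-left corners of a single unit cell differ by $(-1,+1)$ in Quebecois coordinates, we have $\ov{R}_r/R_r = \ov{S}_s/S_s = t/q$, and substitution into \eqref{eq:tau} gives $\ov{\tau}/\tau = (t/q)^E$, where $E$ is the total exponent of the $R_r$ ($r \in \Ret$) and $S_s$ ($s \in \Set$) in $\tau$. I would prove $E = 0$ combinatorially: expanding each summand as a count of ordered pairs and partitioning $[d] = A \sqcup B \sqcup C \sqcup D$ (with $A = \Ret \cap \Set$, $B = \Ret \setminus \Set$, $C = (\Set \cap [d]) \setminus \Ret$, $D = [d]\setminus(\Ret \cup \Set)$) while separating the contribution of the element $0$ (which appears only when $0 \in \Set$) reduces $E$ to $\binom{|B|+1}{2} + \binom{|C|}{2} - |B||C|$ plus a boundary correction. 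The relation $|B| - |C| = |\Ret| - |\Set \cap [d]|$ then forces $E = 0$ in each of the four sub-cases determined by $|\Ret| \in \{|\Set|-1, |\Set|\}$ and whether $0 \in \Set$.

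Second, I would show $\tau = q^x t^y$ with $x, y \geq 0$. The formula \eqref{eq:tau} immediately gives $\tau = \pm q^x t^y$ for integers $x, y$. The sign is $+1$: the identity $(|\Set|-|\Ret|)(|\Set|-|\Ret|-1) = 0$ (arising from counting numerator-minus-denominator factors in the probability) yields $\sum_s e_s + \sum_r f_r + \sum_j g_j + \sum_i h_i = 0$, and combining with Step~1 shows $\sum_j g_j + \sum_i h_i = 0$, so the total parity of the sign exponents in \eqref{eq:tau} is even. For non-negativity of $x, y$ I would exploit the interleaving chain $O_0 < R_1 < O_1 < R_2 < \cdots < R_d < O_d$ noted earlier: consecutive ratios $R_r/O_{r-1}$ and $O_r/R_r$ are pure non-negative monomials because the corresponding points lie along the staircase border in a compatible partial order. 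Writing $\tau$ as a telescoping product of such consecutive ratios, where the telescoping is indexed by the symmetric difference of $\Ret$ and $\Set \cap [d]$, exhibits $\tau$ as a product of factors of the form $q^a t^b$ with $a, b \geq 0$.

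Third, for the characterization of $\tau = 1$: if $\Set = \Ret$ or $\Set = \Ret \cup \{0\}$, substitution into \eqref{eq:tau} directly shows each of $e_s, f_r, g_j, h_i$ equals $0$, whence $\tau = 1$. Conversely, the telescoping decomposition from Step~2 writes $\tau$ as a product of non-negative monomials, each strictly greater than $1$ unless its underlying ``step'' is trivial; a short analysis shows no non-trivial step occurs exactly when $\Ret = \Set \cap [d]$, leaving the two claimed cases. The main obstacle will be the construction in Step~2: producing an explicit telescoping decomposition whose factors are manifestly non-negative requires a careful matching between $\Ret$ and $\Set \cap [d]$ along the staircase, correct accounting for the signs of the $(-O_j)$ and $(-I_i)$ terms in \eqref{eq:tau}, and consistent handling of the possible asymmetry $|\Ret| = |\Set|-1$ together with the optional inclusion of $0$ in $\Set$.
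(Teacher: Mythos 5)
Your Step 1 and the sign computation in Step 2 are correct and take a genuinely more ``global'' route than the paper: since $\ov{R}_r/R_r=\ov{S}_s/S_s$ is the same fixed monomial for every cell, $\ov{\tau}_{\Ret,\Set}/\tau_{\Ret,\Set}$ is that monomial raised to the total exponent $E$ of the $R_r$'s and $S_s$'s, and $E=0$ follows from $|\Set|\in\{|\Ret|,|\Ret|+1\}$; the same count applied to the $O_j$'s and $I_i$'s kills the sign coming from the $(-O_j)$ and $(-I_i)$ factors. The paper instead gets both facts as byproducts of a pairing, but your shortcut is sound.

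The genuine gap is the nonnegativity of $x$ and $y$ and the characterization of when both vanish --- which is the entire content of the lemma. Your telescoping idea is right in spirit (it is Abel summation along the chain $O_0<R_1/I_1<O_1<\cdots<O_d$, and each consecutive ratio is indeed a monomial with nonnegative exponents), but after telescoping, the exponent attached to the consecutive ratio at position $m$ is the \emph{suffix partial sum} of the exponent sequence of \eqref{eq:tau}, and these partial sums are neither supported on nor bounded by the symmetric difference of $\Ret$ and $\Set\cap[d]$: in the paper's running example ($d=8$, $\Ret=\{1,4,8\}$, $\Set=\{2,4,5,7\}$) one finds $(O_1/I_2)^{-2}$ in $\tau_{\Ret,\Set}$, and the suffix sums are nonzero at indices such as $3\notin\Ret\cup\Set$ and $4\in\Ret\cap\Set$, with values as large as $4$. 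So the decomposition ``indexed by the symmetric difference'' does not exist, and you have given no argument that the actual suffix sums are nonnegative. What makes this provable is the paper's Lemma \ref{lem:exponents}: the exponents are signed heights of two lattice paths built from $D_0U_1D_1\cdots U_dD_d$, after which each suffix sum equals $\binom{h_1}{2}+\binom{h_2}{2}$ for the two intermediate heights (equivalently, the paper pairs each up step with a down step at equal height, so that each pair contributes a ratio of two comparable staircase points), and the all-zero case forces both paths to oscillate between heights $0$ and $1$, which is exactly $\Set=\Ret$ or $\Set=\Ret\cup\{0\}$. Without this combinatorial control of the exponents, your Step 2 and the converse direction of Step 3 remain unproved assertions.
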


Before proving this result, we derive some consequences.

\begin{cor} \label{cor: limits}
\begin{enumerate}
\item The $q,t \rightarrow 0$ limit of $\mc{P}_{\la,\rho}[q,t]$ is the dual row insertion bijection $F_{\la,\rho}^{\row}$.
\item The $q,t \rightarrow \infty$ limit of $\mc{P}_{\la,\rho}[q,t]$ is the dual column insertion bijection $F_{\la,\rho}^{\col}$.
\item 
\label{item: between 0,1} The rational functions $p_{\Ret,\Set}[q,t]$ and $\ov{p}_{\Ret,\Set}[q,t]$ take values in $[0,1]$ when $q,t \in [0,1)$ or $q,t \in (1,\infty)$. 
\end{enumerate}
\end{cor}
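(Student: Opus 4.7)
The plan is to combine the factorization \eqref{eq:prob rewritten} with two already-prepared ingredients: equation \eqref{eq:zeta at 0} for the $\zeta$ factors, and Lemma \ref{lem:tau} for the monomial $\tau$ factors.

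For part (1), in the limit $q,t\to 0$ equation \eqref{eq:zeta at 0} gives $\zeta_{\Ret,\Set}, \ov{\zeta}_{\Ret,\Set} \to 1$, while Lemma \ref{lem:tau} forces $\tau_{\Ret,\Set} = \ov{\tau}_{\Ret,\Set} = q^x t^y$ with $(x,y) \neq (0,0)$, hence $p_{\Ret,\Set}, \ov{p}_{\Ret,\Set} \to 0$, whenever $\Set \notin \{\Ret, \Ret\cup\{0\}\}$. Of the two candidates, only $\Set = \Ret$ has cardinality $k$ when $|\Ret| = k$, and only $\Set = \Ret \cup \{0\}$ has cardinality $k$ when $|\Ret| = k-1$, so exactly one summand in the identity $\sum_{\Set}p_{\Ret,\Set} = 1$ from Theorem \ref{thm:sum to 1} can have a nonzero limit, and it must equal $1$. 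Meanwhile, the single-box version of $F^{\row}_{\la,\rho,1}$ pairs the $i$-th removable inner corner with $S_i$ (the next addable outer corner in a row above, using the alternation $S_0 < R_1 < S_1 < \cdots < R_d < S_d$ along the boundary of $\la \cup \rho$), and maps $\emptyset$ to $S_0$. Unfolding the recursion \eqref{eq:recursion for F bijection} then gives $F^{\row}_{\la,\rho,k}(\Ret) = \Ret$ when $|\Ret| = k$ and $F^{\row}_{\la,\rho,k}(\Ret) = \Ret \cup \{0\}$ when $|\Ret| = k-1$; this matches. The analogous computation using the second identity in Theorem \ref{thm:sum to 1} gives the same limit for $\ov p_{\Ret,\Set}$.

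Part (2) follows from (1) by Lemma \ref{lem:inverting q and t}: substituting $Q = q^{-1}$, $T = t^{-1}$ yields $\mc{P}_{\la,\rho}(\mu \to \nu)[q,t] = \mc{P}_{\rho',\la'}(\mu' \to \nu')[t^{-1}, q^{-1}]$, so as $q,t\to\infty$ the right-hand side tends by (1) to the indicator of $\nu' = F^{\row}_{\rho',\la'}(\mu')$. The transpose identity $(F^{\row})' = F^{\col}$, i.e., \eqref{eq:transpose growth rule} applied to row insertion, converts this into the indicator of $\nu = F^{\col}_{\la,\rho}(\mu)$.

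For part (3), the key observation is that every binomial factor in the formula \eqref{eq:zeta} for $\zeta_{\Ret,\Set}$ has the form $1 - q^x t^y$ with $x,y \geq 0$ and $(x,y) \neq (0,0)$ — this is precisely the point of pulling out the smaller of each pair of points. Hence for $q,t \in [0,1)$ each such factor lies in $(0,1]$, so $\zeta_{\Ret,\Set} > 0$; combined with $\tau_{\Ret,\Set} = q^x t^y \geq 0$, we get $p_{\Ret,\Set} \geq 0$, and Theorem \ref{thm:sum to 1} then forces $p_{\Ret,\Set} \in [0,1]$. The argument for $\ov{p}_{\Ret,\Set}$ is identical. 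For $q,t \in (1,\infty)$, Lemma \ref{lem:inverting q and t} identifies these values with the already-handled case $q^{-1},t^{-1} \in (0,1)$ for transposed partitions. The only nontrivial step in the whole proof is the geometric verification in part (1) that the bottom-to-top numbering of $R_i$ and $S_j$ yields $F^{\row}_{\la,\rho,1}(\{i\}) = \{i\}$ and $F^{\row}_{\la,\rho,1}(\emptyset) = \{0\}$; once this alternation of corners is pinned down, the rest is a clean composition of the already-proven lemmas.
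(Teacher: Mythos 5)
Your proof is correct and follows essentially the same route as the paper: parts (1) and (2) via the factorization $p_{\Ret,\Set}=\tau_{\Ret,\Set}\zeta_{\Ret,\Set}$, \eqref{eq:zeta at 0}, Lemma~\ref{lem:tau}, Lemma~\ref{lem:inverting q and t} and \eqref{eq:transpose growth rule} (you merely spell out the identification of the surviving $\Set$ with $F^{\row}_{\la,\rho,k}(\Ret)$, which the paper leaves implicit), and part (3) by reducing to nonnegativity via Theorem~\ref{thm:sum to 1}. The only minor divergence is that for $q,t\in(1,\infty)$ you invoke Lemma~\ref{lem:inverting q and t} to reduce to the already-settled case $q^{-1},t^{-1}\in(0,1)$ for the transposed partitions, whereas the paper argues directly that every binomial factor of $\zeta_{\Ret,\Set}$ and $\ov{\zeta}_{\Ret,\Set}$ is then strictly negative and that numerator and denominator contain equally many factors; both arguments are valid.
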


\begin{proof}
Part (1) follows immediately from Lemma \ref{lem:tau} and \eqref{eq:zeta at 0}. Part (2) follows from part (1), Lemma \ref{lem:inverting q and t} and \eqref{eq:transpose growth rule}.

For part (3), observe that by Theorem \ref{thm:sum to 1}, it suffices to show that $p_{\Ret,\Set}[q,t]$ and $\ov{p}_{\Ret,\Set}[q,t]$ are nonnegative when $q,t \in [0,1)$ or $q,t \in (1,\infty)$. By Lemma \ref{lem:tau}, $\tau_{\Ret,\Set}[q,t]$ is nonnegative whenever $q,t \geq 0$. When $q,t \in [0,1)$, each binomial factor in $\zeta_{\Ret,\Set}[q,t]$ and $\ov{\zeta}_{\Ret,\Set}[q,t]$ is strictly positive, so the result is clear. When $q,t \in (1,\infty)$, each factor in $\zeta_{\Ret,\Set}[q,t]$ and $\ov{\zeta}_{\Ret,\Set}[q,t]$ is strictly negative, and the numerators and denominators of these functions contain the same number of factors.
\end{proof}

In order to prove Lemma \ref{lem:tau}, we give a combinatorial interpretation to the exponents of each of the points $S_s, R_r, O_j, I_i$ appearing in $\tau_{\Ret,\Set}$. We regard two complementary subsequences of $D_0,U_1,D_1,\ldots,D_d,U_d$:
\begin{itemize}
\item The first subsequence includes all $U_r$ with $r \in \Ret$ and $D_s$ with $s \in \Set$.
\item The second subsequence includes all $U_i$ with $i \notin \Ret$ and $D_j$ with $j \notin \Set$.
\end{itemize}
We interpret both subsequences as lattice paths consisting of \deff{up steps} $(i,j) \rightarrow (i+1,j+1)$, and \deff{down steps} $(i,j) \rightarrow (i+1,j-1)$ by replacing each $U$ letter by an up step and each $D$ letter by a down step where we fix the endpoint of both paths to lie on the $x$-axis. We label the steps by $U_i$ or $D_j$ respectively and denote by $u_i$ the height of the starting point of $U_i$ and by $d_j$ the height of the ending point of $D_j$. See Figure~\ref{fig:exponents} for an example.
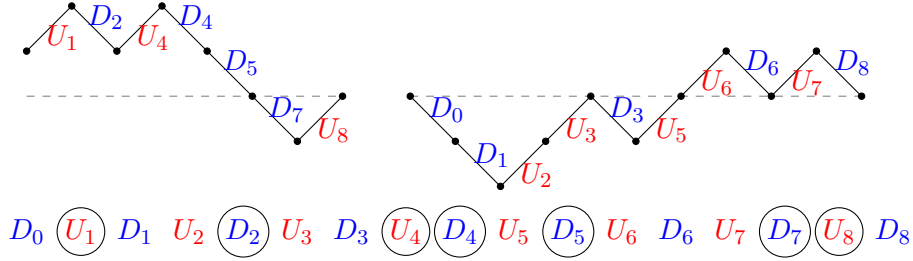
\begin{figure}
\begin{center}
\begin{tikzpicture}[scale=0.6]
\begin{scope}[scale=1.2]
\draw (0,0) node{$\textcolor{blue}{D_0}$};
\draw (1,0) node{$\mathcircled{\textcolor{red}{U_1}}$};
\draw (2,0) node{$\textcolor{blue}{D_1}$};
\draw (3,0) node{$\textcolor{red}{U_2}$};
\draw (4,0) node{$\mathcircled{\textcolor{blue}{D_2}}$};
\draw (5,0) node{$\textcolor{red}{U_3}$};
\draw (6,0) node{$\textcolor{blue}{D_3}$};
\draw (7,0) node{$\mathcircled{\textcolor{red}{U_4}}$};
\draw (8,0) node{$\mathcircled{\textcolor{blue}{D_4}}$};
\draw (9,0) node{$\textcolor{red}{U_5}$};
\draw (10,0) node{$\mathcircled{\textcolor{blue}{D_5}}$};
\draw (11,0) node{$\textcolor{red}{U_6}$};
\draw (12,0) node{$\textcolor{blue}{D_6}$};
\draw (13,0) node{$\textcolor{red}{U_7}$};
\draw (14,0) node{$\mathcircled{\textcolor{blue}{D_7}} $};
\draw (15,0) node{$\mathcircled{\textcolor{red}{U_8}}$};
\draw (16,0) node{$\textcolor{blue}{D_8}$};

\end{scope}

\begin{scope}[yshift=3cm, xshift=-1cm]
\draw[dashed,thin,gray] (1,0) -- (8,0);
\draw (1,1) -- (2,2) -- (3,1) -- (4,2) -- (5,1) -- (6,0) -- (7,-1) -- (8,0);
\filldraw (1,1) circle (2pt);
\filldraw (2,2) circle (2pt);
\filldraw (3,1) circle (2pt);
\filldraw (4,2) circle (2pt);
\filldraw (5,1) circle (2pt);
\filldraw (6,0) circle (2pt);
\filldraw (7,-1) circle (2pt);
\filldraw (8,0) circle (2pt);
\node[red] at (1.8,1.3) {$U_1$};
\node[red] at (3.8,1.3) {$U_4$};
\node[red] at (7.8,-.75) {$U_8$};
\node[blue] at (2.75,1.75) {$D_2$};
\node[blue] at (4.75,1.75) {$D_4$};
\node[blue] at (5.75,.75) {$D_5$};
\node[blue] at (6.75,-.3) {$D_7$};

\begin{scope}[xshift=9.5cm]
\draw[dashed,thin,gray] (0,0) -- (10,0);
\draw (0,0) -- (1,-1) -- (2,-2) -- (3,-1) -- (4,0) -- (5,-1) -- (6,0) --(7,1) -- (8,0) -- (9,1) -- (10,0);

\filldraw (0,0) circle (2pt);
\filldraw (1,-1) circle (2pt);
\filldraw (2,-2) circle (2pt);
\filldraw (3,-1) circle (2pt);
\filldraw (4,0) circle (2pt);
\filldraw (5,-1) circle (2pt);
\filldraw (6,0) circle (2pt);
\filldraw (7,1) circle (2pt);
\filldraw (8,0) circle (2pt);
\filldraw (9,1) circle (2pt);
\filldraw (10,0) circle (2pt);
\node[blue] at (0.8,-.3) {$D_0$};
\node[blue] at (1.8,-1.3) {$D_1$};
\node[blue] at (4.8,-.3) {$D_3$};
\node[blue] at (7.8,.75) {$D_6$};
\node[blue] at (9.8,.75) {$D_8$};
\node[red] at (2.8,-1.75) {$U_2$};
\node[red] at (3.8,-.75) {$U_3$};
\node[red] at (5.8,-.75) {$U_5$};
\node[red] at (6.8,.3) {$U_6$};
\node[red] at (8.8,.3) {$U_7$};
\end{scope}
\end{scope}
\end{tikzpicture}
\end{center}
\caption{The pair of paths used to compute the exponents in $\tau_{\Ret,\Set}$ for $\Ret= \{1,4,8\}, \Set = \{2,4,5,7\}$ and $d=8$. The height sequences are $(u_i)=(\underline{1},-2,-1,\underline{1},-1,0,0,\underline{-1})$ and $(d_i)_i=(-1,-2,\underline{1},-1,\underline{1},\underline{0},0,\underline{-1},0)$ where the heights of steps of the first path are underlined.}
\label{fig:exponents}
\end{figure}

\begin{lem}
\label{lem:exponents}
Let $u_i$ and $d_j$ be defined as above, then 
\[
\tau_{\Ret,\Set}=\prod_{s \in \Set}S_s^{d_s}\prod_{r \in \Ret}R_r^{-u_r}\prod_{j \notin \Set}(-O_j)^{d_j}\prod_{i \notin \Ret}(-I_i)^{-u_i}.
\]
\end{lem}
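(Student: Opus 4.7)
The plan is to translate each of the four exponent expressions in the definition of $\tau_{\Ret,\Set}$ into a lattice-path height, using the basic observation that if a path ends on the $x$-axis, then the height at any point equals (number of down steps after that point) $-$ (number of up steps after that point). Since each of the two paths ends on the $x$-axis by construction, this will give each height a clean combinatorial meaning.

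First, I will fix conventions for the ``full sequence'' $D_0, U_1, D_1, U_2, D_2, \ldots, U_d, D_d$, noting crucially that for every $i \geq 1$ the step $U_i$ appears \emph{immediately before} $D_i$. As a consequence, when $i \in \Ret \cap \Set$ (so both $U_i$ and $D_i$ belong to the first path) the step $U_i$ precedes $D_i$ in the first path as well; likewise $U_i$ precedes $D_i$ in the second path whenever $i \notin \Ret$ and $i \notin \Set$.

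Now I carry out the four computations in turn. Fix $r \in \Ret$. The steps at or after $U_r$ in the first path are: $U_r$ itself, $D_r$ (if $r \in \Set$), the $U_{r'}$ with $r' \in \Ret,\ r' > r$, and the $D_s$ with $s \in \Set,\ s > r$. Equating the resulting net height change with $-u_r$ and using $r \in \Ret$ to absorb $U_r$ into $\#\{r' \in \Ret : r' \geq r\}$ (and similarly $[r \in \Set] + \#\{s \in \Set : s > r\} = \#\{s \in \Set : s \geq r\}$), I get
\[
u_r = \#\{s \in \Set : s \geq r\} - \#\{r' \in \Ret : r' \geq r\}.
\]
Applying the complementary identities $\#\{r' \in \Ret : r' \geq r\} = (d-r+1) - \#\{i \notin \Ret : i \geq r\}$ and the analogous one over $[0,d]$ for $\Set$, then using $r \in \Ret$ to replace $\#\{i \notin \Ret : i \geq r\}$ by $\#\{i \notin \Ret : i > r\}$, yields exactly the exponent of $R_r$ in $\tau_{\Ret,\Set}$. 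An entirely parallel argument handles $d_s$ for $s \in \Set$ (counting steps strictly after $D_s$), $u_i$ for $i \notin \Ret$, and $d_j$ for $j \notin \Set$; each time, the heights-to-exponents match after one application of complementary counting together with the appropriate ``is/isn't in the base set'' adjustment for a strict vs.\ weak inequality.

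The steps are almost mechanical; the only delicate point, and the one I expect to watch most carefully, is the bookkeeping of $\geq$ versus $>$ in the definition of $\tau_{\Ret,\Set}$. The choices there are precisely calibrated so that the ``diagonal'' case $r \in \Ret \cap \Set$ (and its dual $i \notin \Ret \cup \Set$) is absorbed correctly, and it is exactly at this point that the ordering $U_i$ before $D_i$ in the full sequence is used. Once these conventions are aligned, the identity claimed in the lemma follows by direct substitution of the four height computations into $\tau_{\Ret,\Set}$.
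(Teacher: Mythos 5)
Your proposal is correct and uses essentially the same argument as the paper: since each path terminates on the $x$-axis, the height at a given step equals the signed count of steps to its right, and a single complementary-counting step (converting counts over $[s+1,d]\setminus\Ret$ and $[s+1,d]\setminus\Set$ into counts over $\Ret$ and $\Set$, with the $\geq$ versus $>$ adjustment) matches this with the exponents in \eqref{eq:tau}. The paper works out the $S_s$ case and you work out the $R_r$ case, but the computation is the same in all four cases.
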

\begin{proof}
For $s \in \Set$, observe that the exponent of $S_s$ in $\tau_{\Ret,\Set}$ is
\begin{multline*}
\big|[s+1,d] \setminus \Ret\big| - \big|[s+1,d] \setminus \Set\big| \\
= 
\Big(\big|[s+1,d]\big|- \big|[s+1,d] \cap \Ret\big| \Big) - \Big(\big|[s+1,d]\big|-\big|[s+1,d] \cap \Set\big| \Big)\\
= \big|[s+1,d] \cap \Set\big| - \big|[s+1,d] \cap \Ret\big|,
\end{multline*}
which is the difference between the number of down steps and the number of up steps occurring to the right of the down step labeled $D_s$. Since the path ends at height $0$, this difference is equal to $d_s$. The other three cases are similar.
\end{proof}

The interpretation of $\tau_{\Ret,\Set}$ in terms of paths is useful because it allows us to pair $R_r$ and $S_s$ or $I_i$ and $O_j$ appearing with opposite exponents. Specifically, we pair each up step above the $x$-axis with the nearest down step to its right which lies at the same height, and we pair each up step below the $x$-axis to the nearest down step to its left at the same height. Since $|\Set|=|\Ret|$ or $|\Set|=|\Ret|+1$, it is easy to see that by this all down steps are paired except for one which ends at the $x$-axis. By identifying the up step with label $U_i$ with $R_i^{-u_i}$ if $i$ is in $\Ret$ and $(-I_i)^{-u_i}$ otherwise and the down step with label $D_j$ with $S_j^{d_j}$ of $j\in \Set$ and $(-O_j)^{u_j}$ otherwise, we obtain a pairing of the powers of $R_r$ and $S_s$ or $I_i$ and $O_j$ where the exponents in each pair are opposite. This implies furthermore $\tau_{\Ret,\Set} = \ov{\tau}_{\Ret,\Set}$ since $\dfrac{R_r}{S_s} = \dfrac{\ov{R}_r}{\ov{S}_s}$.

\begin{ex}
\label{ex:exponents}
Let $d = 8$ and $\Ret= \{1,4,8\}, \Set = \{2,4,5,7\}$. Using the paths in Figure \ref{fig:exponents}, we find the following pairing of the powers of $R_r$ and $S_s$ or $I_i$ and $O_j$ in $\tau_{\Ret,\Set}$
\begin{multline*}
\tau_{\Ret,\Set}=\left( \left(\dfrac{S_2}{R_1}\right)^1 \left(\dfrac{S_4}{R_4}\right)^1  S_5^0 \left(\dfrac{S_7}{R_8}\right)^{-1} \right) \\ \times
\left( \left(\dfrac{O_0}{I_3}\right)^{-1} \left(\dfrac{O_1}{I_2}\right)^{-2} \left(\dfrac{O_3}{I_5}\right)^{-1} \left(\dfrac{O_6}{I_6}\right)^0 \left(\dfrac{O_8}{I_7}\right)^0 \right),
\end{multline*}
where the terms in the first bracket corresponds to the first path and the terms in the second bracket to the second path.
\end{ex}

It is clear that each pair of up and down steps corresponds to a monomial $q^at^b$ with $a,b$ nonnegative, where both $a,b$ are $0$ only if the end point of the down step lies on the $x$-axis. By taking the product of all pairs and the left-over down step ending on the $x$-axis, the exponents of $q$ and $t$ in $\tau_{\Ret,\Set}$ are nonnegative.
Assume that the exponent of $q$ and $t$ are both $0$. The steps with labels $U_d$ and $D_d$ must be in the same path since the down step before $U_d$ would otherwise end at height $-1$ which is a contradiction. By applying the same argument recursively for $U_{d-1}, \ldots,U_1$, we see that the sequence of labels of the paths are $D_0 U_{i_1} D_{i_1} \ldots U_{i_k} D_{i_k}$ and $U_{j_1} D_{j_1} \ldots U_{j_{d-k}} D_{j_{d-k}}$ respectively. By the definition of the paths this implies either $\Set=\Ret$ or $\Set=\Ret\cup \{0\}$ which finishes the proof of Lemma~\ref{lem:tau}.

\subsection{Definition and examples of $qtRSK^*$}
\label{sec:dual qtRSK}

 In this section, we introduce the probabilistic insertion algorithm $\dRSK$ by interpreting the $\mc{P}_{\la,\rho}$ as $(q,t)$-local dual growth rules.
We freely use the definitions and notations from \S \ref{sec: growth diagrams}. Let $\Lambda$ be a dual    growth, and suppose $\square \in \Lambda$ is a square in the $m \times n$ grid of the form

\begin{center}
\begin{tikzpicture}[baseline=-3.5ex]
\draw (-0.2,-.1) --node[left]{\rotatebox{-90}{$\prec$}} (-0.2,-.9);
\draw (1.2,-.1) --node[right]{\rotatebox{-90}{$\prec$}} (1.2,-.9);
\draw (0.1,0.2) --node[above]{$\prec'$} (.9,0.2);
\draw (.1,-1.2) --node[below]{$\prec'$} (.9,-1.2);
\node at (-0.2,0.2) {$\mu$};
\node at (1.2,0.2) {$\rho$};
\node at (-0.2,-1.15) {$\la$};
\node at (1.2,-1.2) {$\nu$};
\node at (.5,-.5) {$a$};
\end{tikzpicture}
\end{center}
where $a \in \{0,1\}$, $\nu \in \mc{U}^k(\la,\rho)$ and $\mu \in \mc{D}^{k-a}(\la,\rho)$. Set
\[
\mc{P}(\Lambda) = \prod_{\square \in \Lambda} \mc{P}(\square), \qquad\qquad \ov{\mc{P}}(\Lambda) = \prod_{\square \in \Lambda} \ov{\mc{P}}(\square),
\]
where 
\[
\mc{P}(\square) = \mc{P}_{\la,\rho}(\mu \rightarrow \nu), \qquad\qquad
\ov{\mc{P}}(\square)= \ov{\mc{P}}_{\la,\rho}(\mu \leftarrow \nu).
\]

\begin{defn}
Suppose $A$ is an $m \times n$ $\{0,1\}$-matrix and $P \in \SSYT(\la)$, $Q \in \SSYT^*(\la)$ for some $\la \subseteq (m^n)$. Define
\[
\mc{P}(A \rightarrow P,Q) = \sum_{\Lambda : A \rightarrow (P,Q)} \mc{P}(\Lambda), \qquad\qquad
\ov{\mc{P}}(A \leftarrow P,Q) = \sum_{\Lambda : A \rightarrow (P,Q)} \ov{\mc{P}}(\Lambda).
\]
\end{defn}

We accomplish our goal of proving the dual Cauchy identity for Macdonald polynomials with the following theorem.

\begin{thm}
\label{thm:dRSK is prob bij}
The expressions $\mc{P}(A \rightarrow P,Q)$ and $\ov{\mc{P}}(A \leftarrow P,Q)$ define a probabilistic bijection between the weighted sets of $m \times n$ $\{0,1\}$-matrices with weight $\omega$ and $\ds \bigsqcup_{\la \subseteq (m^n)} \SSYT(\la) \times \SSYT^*(\la)$ with weight $\ov{\omega}$, where
\[
\omega(A) = \prod_{\substack{1 \leq i \leq m \\1 \leq j \leq n}}(x_iy_j)^{A_{i,j}}, \quad\quad \text{ and } \quad\quad \ov{\omega}(P,Q) = \psi_P(q,t)\vp^*_Q(q,t)\mb{x}^P\mb{y}^Q.
\]
Moreover, $\mc{P}(A \rightarrow P,Q)$ and $\ov{\mc{P}}(A \leftarrow P,Q)$ take values in $[0,1]$ when $q,t \in [0,1)$ or $q,t \in (1,\infty)$.
\end{thm}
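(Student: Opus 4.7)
My plan is to lift the single-square probabilistic bijection of Theorems~\ref{thm:sum to 1} and~\ref{thm:weights} to the entire growth diagram, with the $[0,1]$ bound inherited from Corollary~\ref{cor: limits}\,(3). The argument naturally splits into three steps.

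\emph{Sum-to-1 identities.} To verify $\sum_{P,Q}\mc{P}(A \rightarrow P,Q) = 1$, I would rewrite the left-hand side as $\sum_{\Lambda} \prod_{\square \in \Lambda} \mc{P}(\square)$, where the sum ranges over all dual growths $\Lambda$ associated with $A$, and evaluate it by summing over the interior vertices $\Lambda_{i,j}$ one at a time in lexicographic order. When processing square $(i,j)$, the partitions $\mu = \Lambda_{i-1,j-1}$, $\la = \Lambda_{i,j-1}$, $\rho = \Lambda_{i-1,j}$ have already been fixed (by the boundary conditions $\Lambda_{i,0}=\Lambda_{0,j}=\emptyset$ or by previously processed squares), the value $a = A_{i,j}$ is known, and the sum over the remaining partition $\nu = \Lambda_{i,j}$ equals $\sum_{\nu\in\U^k(\la,\rho)} \mc{P}_{\la,\rho}(\mu \rightarrow \nu) = 1$ by Theorem~\ref{thm:sum to 1}. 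The backward sum-to-1 runs symmetrically: process the squares in reverse lexicographic order so that $\la,\rho,\nu$ are available at each step, and sum over $\mu$ using the backward identity of Theorem~\ref{thm:sum to 1} (with $a$ then determined by $a=|\nu|+|\mu|-|\la|-|\rho|$).

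\emph{Compatibility.} I would actually establish the stronger per-growth-diagram identity $\omega(A)\mc{P}(\Lambda) = \ov{\omega}(P,Q)\ov{\mc{P}}(\Lambda)$ for each $\Lambda : A \rightarrow (P,Q)$, from which the stated compatibility follows by summing over $\Lambda$. Theorem~\ref{thm:weights} supplies the local identity $\psi_{\la/\mu}\vp^*_{\rho/\mu}\mc{P}(\square) = \psi_{\nu/\rho}\vp^*_{\nu/\la}\ov{\mc{P}}(\square)$. Multiplying over all squares, the $\psi$-factors on internal vertical edges cancel (each such edge is the left edge of one square and the right edge of its horizontal neighbor, so appears on opposite sides of the ratio); the leftmost column of vertical edges lies between empty partitions and contributes $1$, while the rightmost column assembles into $\psi_P$. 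An analogous telescoping of the $\vp^*$-factors along horizontal edges yields $\vp^*_Q$. Hence $\mc{P}(\Lambda)/\ov{\mc{P}}(\Lambda) = \psi_P \vp^*_Q$, and the claim reduces to the familiar content identity $\omega(A) = \mb{x}^P \mb{y}^Q$, which follows from $|\Lambda_{i,n}| - |\Lambda_{i-1,n}| = \sum_j A_{i,j}$ and its column analogue.

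The $[0,1]$ bound is then immediate from Corollary~\ref{cor: limits}\,(3): each local probability lies in $[0,1]$ for $q,t \in [0,1) \cup (1,\infty)$, so the finite products $\mc{P}(\Lambda), \ov{\mc{P}}(\Lambda)$ and their finite sums $\mc{P}(A \rightarrow P,Q), \ov{\mc{P}}(A \leftarrow P,Q)$ do as well (boundedness by $1$ following from the sum-to-1 property). The substantive algebraic content has already been absorbed into the previous theorems; the expected main obstacle here is purely bookkeeping --- in particular, setting up the edge-telescoping in the compatibility step with the correct orientation conventions, so that the internal $\psi$ and $\vp^*$ factors land on opposite sides of the local identity and genuinely cancel. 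Once that is carried out carefully, the proof is formal.
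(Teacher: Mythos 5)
Your proposal is correct and follows essentially the same route as the paper: the sum-to-one identities come from recursively filling the growth diagram square by square using Theorem~\ref{thm:sum to 1}, and compatibility is reduced to the per-diagram identity $\omega(A)\mc{P}(\Lambda)=\ov{\omega}(P,Q)\ov{\mc{P}}(\Lambda)$ built from the local relation of Theorem~\ref{thm:weights}, with the $[0,1]$ bound from Corollary~\ref{cor: limits}. The only cosmetic difference is that the paper organizes your edge-telescoping as an induction over lattice paths from the northeast to the southwest corner (Lemma~\ref{lem_Lambda_compatibility}), which amounts to the same cancellation of internal $\psi$ and $\vp^*$ factors.
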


\begin{proof}
For a fixed $\{0,1\}$-matrix  $A$, one can obtain all the dual growths associated with $A$ by starting with the empty partition along the north and west boundaries, and recursively filling in the rest of the diagram.
When the top left, bottom left, and top right vertices ($\mu,\la,\rho$, respectively) of a square have been filled in, $\nu$ is chosen according to the probability distribution $\mc{P}_{\la,\rho}(\mu \rightarrow \nu)$.
Hence the fact that the local probabilities $\mc{P}_{\la,\rho}(\mu \rightarrow \nu)$ sum to $1$ for fixed $\mu, \la,\rho$ implies that the probabilities $\mc{P}(A \rightarrow P,Q)$ sum to $1$ for fixed $A$. Since all local probabilities take values in $[0,1]$ for $q,t \in [0,1)$ or $q,t \in (1, \infty)$ by Corollary~\ref{cor: limits}(\ref{item: between 0,1}), the same is true for $\mc{P}(A \rightarrow P,Q)$.

By starting with a fixed pair of tableaux $(P,Q)$ along the right and bottom boundaries and recursively filling in the rest of the dual growth according to the local backward probabilities $\ov{\mc{P}}_{\la,\rho}(\mu \leftarrow \nu)$, one obtains that the backward probabilities $\ov{\mc{P}}(A \leftarrow P,Q)$ sum to 1 for fixed $(P,Q)$, and take values in $[0,1]$ for $q,t \in [0,1)$ or $q,t \in (1, \infty)$ as before.
Finally, we prove the compatibility relation 
 in Lemma \ref{lem_Lambda_compatibility} below.
\end{proof}

\begin{lem}
\label{lem_Lambda_compatibility}
If $\Lambda$ is a dual growth from $A$ to $(P,Q)$, then
\[
\omega(A)\mc{P}(\Lambda) = \ov{\mc{P}}(\Lambda) \ov{\omega}(P,Q).
\]
\end{lem}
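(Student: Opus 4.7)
The plan is to derive the desired identity by invoking Theorem~\ref{thm:weights} locally on each square of $\Lambda$ and multiplying the resulting relations together. For a square with NW, SW, NE, SE corners $\mu, \la, \rho, \nu$, Theorem~\ref{thm:weights} gives
\[
\omega_{\la,\rho}(\mu)\,\mc{P}(\square) \;=\; \ov{\mc{P}}(\square)\,\ov{\omega}_{\la,\rho}(\nu).
\]
Taking the product over all squares of $\Lambda$ immediately yields
\[
\mc{P}(\Lambda) \prod_{\square} \omega_{\la,\rho}(\mu) \;=\; \ov{\mc{P}}(\Lambda) \prod_{\square} \ov{\omega}_{\la,\rho}(\nu),
\]
so the first task is to understand how the two products of local weights compare.

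The key observation will be that these products telescope along the edges of the grid. Recall that $\omega_{\la,\rho}(\mu) = \psi_{\la/\mu}\,\vp^*_{\rho/\mu}$ carries a $\psi$-factor on the left (vertical) edge of the square and a $\vp^*$-factor on the top (horizontal) edge, while $\ov{\omega}_{\la,\rho}(\nu) = \psi_{\nu/\rho}\,\vp^*_{\nu/\la}$ does the analogous thing on the right and bottom edges. Each interior vertical edge is simultaneously the left edge of one square and the right edge of another, so its $\psi$-factor appears once on each side of the displayed identity and cancels; the same happens for $\vp^*$-factors on interior horizontal edges. The left column and top row of vertices are all equal to $\emptyset$ by the definition of a dual growth, so the boundary factors there are trivially $1$. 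The only surviving factors come from the right column and the bottom row, which encode $P$ and $Q$, respectively. Hence by the definitions of $\psi_P$ and $\vp^*_Q$, the right-hand product equals $\psi_P(q,t)\,\vp^*_Q(q,t)$ while the left-hand product equals $1$, giving
\[
\mc{P}(\Lambda) \;=\; \ov{\mc{P}}(\Lambda)\,\psi_P(q,t)\,\vp^*_Q(q,t).
\]

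The final step is to match the monomial parts. From the defining identity $|\Lambda_{i,j}| = \sum_{k \le i,\,\ell \le j} A_{k,\ell}$ one reads off $|P^{(i)}/P^{(i-1)}| = \sum_{j} A_{i,j}$ and $|Q^{(j)}/Q^{(j-1)}| = \sum_{i} A_{i,j}$, hence
\[
\mb{x}^P\mb{y}^Q \;=\; \prod_{i,j}(x_i y_j)^{A_{i,j}} \;=\; \omega(A).
\]
Multiplying the preceding display by this equality and using $\ov{\omega}(P,Q) = \psi_P\,\vp^*_Q\,\mb{x}^P\mb{y}^Q$ will then yield $\omega(A)\,\mc{P}(\Lambda) = \ov{\mc{P}}(\Lambda)\,\ov{\omega}(P,Q)$. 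The only substantive ingredient is Theorem~\ref{thm:weights}; the remainder is a bookkeeping exercise about which edges of the dual growth lie on its boundary, so I do not anticipate a serious obstacle.
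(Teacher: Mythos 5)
Your proposal is correct and takes essentially the same approach as the paper: the only substantive input is Theorem~\ref{thm:weights} applied to each square, and the cancellation of interior edge weights you describe is exactly what the paper's induction over lattice paths (adding one square at a time to $\nw(L)$) carries out, with the monomial identity $\x^P\y^Q=\omega(A)$ handled there square by square via $|\nu|-|\la\cup\rho|=|\la\cap\rho|-|\mu|+A_{i,j}$ rather than globally. Your direct global telescoping is a valid reorganization of the same argument.
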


\begin{proof}
Let $L$ be a lattice path in $\Lambda$ from the northeast corner $(0,n)$ to the southwest corner $(n,0)$ consisting of unit steps to the south or west. Let $\omega(L)$ be the product of the weights of the edges in $L$, where the $i$-th vertical edge $\mu \atop {| \atop \la}$ has weight $\psi_{\la/\mu}(q,t) x_i^{|\la/\mu|}$, and the $(n+1-j)$-th horizontal edge $\mu - \rho$ has weight $\vp^*_{\rho/\mu}(q,t)y_j^{|\rho/\mu|}$. Observe that the lattice path consisting of $n$ south steps followed by $n$ west steps has weight $\psi_P(q,t) \vp^*_Q(q,t) \mb{x}^P\mb{y}^Q=\ov{\omega}(P,Q)$.

Let $\nw(L)$ be the set of squares of $\Lambda$ between $L$ and the path consisting of $n$ west steps followed by $n$ south steps, and write $(i,j) \in \nw(L)$ if the square in the $i$-th row from top and $j$-th column from left is in $\nw(L)$. Define a partial order on the set of lattice paths by $L \leq L'$ if $\nw(L) \subseteq \nw(L')$. We prove by induction with respect to this partial order that
\begin{equation}
\label{eq_L_induction}
\prod_{(i,j) \in \nw(L)} (x_iy_j)^{A_{i,j}} \prod_{\square \in \nw(L)} \mc{P}(\square) = \omega(L) \prod_{\square \in \nw(L)} \ov{\mc{P}}(\square).
\end{equation}

The base case is the path consisting of $n$ west steps followed by $n$ south steps, for which both sides of \eqref{eq_L_induction} are equal to 1. For the induction step, it suffices to consider the case where $\nw(L')$ is obtained from $\nw(L)$ by adding a single square, as depicted next.

\begin{center}
\begin{tikzpicture}[scale=0.9]

\draw (0,0) -- (0,-1) -- (1,-1) -- (1,0) -- (0,0);
\draw[color=red] (0.1,0.2) -- (.9,0.2);
\draw[color=red] (-0.2,-.1) -- (-.2,-.9);
\node[color=red] at (.5,.6) {$\vp^*_{\rho/\mu}$}; 
\node[color=red] at (-.8,-.5) {$\psi_{\la/\mu}$};  
\node at (-0.2,0.2) {$\mu$};
\node at (1.2,0.2) {$\rho$};
\node at (-0.2,-1.2) {$\la$};
\node at (1.25,-1.25) {$\nu$};
\node at (.5,-.5) {$A_{i,j}$};

\begin{scope}[xshift=4cm]
\draw (0,0) -- (0,-1) -- (1,-1) -- (1,0) -- (0,0);
\draw[color=blue] (.1,-1.2) -- (.8,-1.2);
\draw[color=blue] (1.2,-.1) -- (1.2,-.9);
\node[color=blue] at (.5,-1.6) {$\vp^*_{\nu/\la}$}; 
\node[color=blue] at (1.8,-.5) {$\psi_{\nu/\rho}$};
\node at (-0.2,0.2) {$\mu$};
\node at (1.2,0.2) {$\rho$};
\node at (-0.2,-1.2) {$\la$};
\node at (1.25,-1.25) {$\nu$};
\node at (.5,-.5) {$A_{i,j}$};
\end{scope}
\draw[->] (1.7,-0.5) -- (3.3,-0.5);
\end{tikzpicture}
\end{center}
The left hand side of \eqref{eq_L_induction} changes by the factor $(x_iy_j)^{A_{i,j}} \mc{P}_{\la,\rho}(\mu \rightarrow \nu)$ while the right hand side changes by the factor 
\[
\ov{\mc{P}}_{\la,\rho}(\mu \leftarrow \nu) \frac{\psi_{\nu/\rho}\vp^*_{\nu/\la}}{\psi_{\la/\mu}\vp^*_{\rho/\mu}}x_i^{|\nu/\rho|-|\la/\mu|}y_j^{|\nu/\la|-|\rho/\mu|}.
\]
The fact that $|\nu|-|\la\cup \rho| = |\la \cap \rho|-|\mu|+A_{i,j}$ implies that the change of the exponents of $x_i$ and $y_j$ is equal on both sides. The equality of the remaining expressions is the statement of Theorem~\ref{thm:weights}.
\end{proof}

The statement of the above lemma can be reformulated as $\mc{P}(\Lambda) = \linebreak \ov{\mc{P}}(\Lambda) \psi_P(q,t) \vp^*_Q(q,t)$. It is not difficult to see, that this and Theorem~\ref{thm:dRSK is prob bij} implies the more refined identities
\begin{align*}
& \sum_{A} \mc{P}(A \rightarrow P,Q) = \psi_P(q,t) \vp^*_Q(q,t), \\
&\sum_{(P,Q)} \ov{\mc{P}}(A \leftarrow P,Q) \psi_P(q,t) \vp^*_Q(q,t) = 1.
\end{align*}

\begin{rem}
\label{rem:skew dual cauchy}
Let $\la,\rho$ be two partitions.  It is easy to see that $\dRSK$ yields a proof of the skew dual Cauchy identity
\begin{equation}
\label{eq:skew dual cauchy}
\sum_{\mu} P_{\la/\mu}(\x;q,t)P_{\rho^\prime/\mu^\prime}(\y;t,q) \prod_{\substack{1 \leq i \leq m \\1 \leq j \leq n}}= 
\sum_{\nu} P_{\nu/\rho}(\x;q,t)P_{\nu^\prime/\la^\prime}(\y;t,q).
\end{equation}
Indeed, the left hand side of \eqref{eq:skew dual cauchy} is the weighted generating function of triples $(A,P,Q)$ where $A$ is an $m \times n$ $\{0,1\}$-matrix, $P$ a skew SSYT of shape $\la/\mu$ and $Q$ a skew dual SSYT of shape $\rho/\mu$ with respect to the weight 
\[
\omega(A,P,Q)=  \prod_{\substack{1 \leq i \leq m \\1 \leq j \leq n}}(x_iy_j)^{A_{i,j}}
\psi_P(q,t) \varphi^*_Q(q,t) \x^P \y^Q.
\]
The right hand side of \eqref{eq:skew dual cauchy} is the weighted generating function of pairs $(\wh{P},\wh{Q})$ where $\wh{P}$ is a skew SSYT of shape $\nu/\rho$ and $\wh{Q}$ is a skew dual SSYT of shape $\nu/\la$ with weight
\[
\ov{\omega}(\wh{P},\wh{Q}) = \psi_{\wh{P}}(q,t)\varphi^*_{\wh{Q}}(q,t)\x^{\wh{P}}\y^{\wh{Q}}.
\]
A straight forward adaption of the proof of Theorem~\ref{thm:dRSK is prob bij} and of Lemma~\ref{lem_Lambda_compatibility} shows that the expressions
\begin{align*}
\mc{P}(A,P,Q \rightarrow \wh{P},\wh{Q})& := \mc{P}(A \rightarrow \wh{P},\wh{Q}), \\ 
\ov{\mc{P}}(A,P,Q \leftarrow \wh{P},\wh{Q})& := \ov{\mc{P}}(A \leftarrow \wh{P},\wh{Q}),
\end{align*}
yield a probabilistic bijection between the above described weighted sets.
\end{rem}

The probabilities $\mc{P}(A \rightarrow P,Q)$ and $\ov{\mc{P}}(A \leftarrow P,Q)$ satisfy the following symmetry property which generalizes \eqref{eq:sym of dual RSK}.

\begin{thm}
\label{thm:symmetry of qt prob}
Let $A$ be an $m \times n$ $\{0,1\}$-matrix, $P$ an SSYT and $Q$ a dual SSYT of the same shape. Then
\begin{equation}
\label{eq:symmetry of qt prob}
\mc{P}(A^T \rightarrow Q^\prime,T^\prime)[q,t] = \mc{P}(A \rightarrow P,Q)[t^{-1},q^{-1}],
\end{equation}
and similarly for $\ov{\mc{P}}$.
\end{thm}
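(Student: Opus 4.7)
The plan is to reduce the claim to the local statement already proved in Lemma~\ref{lem:inverting q and t} via the combinatorics of dual growth diagrams. The key geometric observation is that transposing the matrix $A$ corresponds to reflecting the dual growth diagram across its main diagonal while simultaneously conjugating every partition label; this already produces the transpose in the last column/row correspondence and carries $(P,Q)$ to $(Q',P')$.

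More precisely, first I would verify the following involution on dual growths: if $\Lambda$ is a dual growth associated with the $m \times n$ matrix $A$, then
\[
\Lambda'_{i,j} := (\Lambda_{j,i})'
\]
is a dual growth associated with $A^T$. This is a short check: the horizontal-strip and vertical-strip conditions are swapped by transposition of the grid and restored by conjugation of partitions, and the size condition is automatic. Reading off the last column and last row of $\Lambda'$ one finds
\[
P(\Lambda') = Q(\Lambda)', \qquad Q(\Lambda') = P(\Lambda)',
\]
because conjugation turns a dual semistandard chain into a semistandard chain and vice versa. Thus $\Lambda \mapsto \Lambda'$ is a bijection between dual growths from $A$ to $(P,Q)$ and dual growths from $A^T$ to $(Q',P')$.

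Next I would compare local probabilities square by square. A square of $\Lambda$ with corners $(\mu,\rho,\la,\nu)$ (top-left, top-right, bottom-left, bottom-right) is reflected to a square of $\Lambda'$ with corners $(\mu',\la',\rho',\nu')$, that is, $\la$ and $\rho$ get swapped (and all partitions conjugated). By the very definition of $\mc{P}(\square)$ this says
\[
\mc{P}(\square')[q,t] = \mc{P}_{\rho',\la'}(\mu' \to \nu')[q,t],
\]
and Lemma~\ref{lem:inverting q and t}, after the substitution $(q,t) \mapsto (t^{-1},q^{-1})$, rewrites the right-hand side as $\mc{P}_{\la,\rho}(\mu \to \nu)[t^{-1},q^{-1}] = \mc{P}(\square)[t^{-1},q^{-1}]$. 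Taking the product over all squares yields $\mc{P}(\Lambda')[q,t] = \mc{P}(\Lambda)[t^{-1},q^{-1}]$, and summing over all $\Lambda : A \to (P,Q)$ using the bijection above gives \eqref{eq:symmetry of qt prob}. The proof for $\ov{\mc{P}}$ is identical using the second identity in Lemma~\ref{lem:inverting q and t}.

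The only mildly delicate point in this plan is the bookkeeping in the local step: one has to be careful that in passing from $\Lambda$ to $\Lambda'$ the roles of $\la$ and $\rho$ really do get interchanged (not just both conjugated), since otherwise Lemma~\ref{lem:inverting q and t} would not apply directly. Everything else is routine combinatorics of growth diagrams, and no further analysis of the rational functions $p_{\Ret,\Set}$ is needed.
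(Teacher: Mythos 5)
Your proposal is correct and follows essentially the same route as the paper: both define $\Lambda'$ by conjugating all partitions and transposing the array, observe that this is a bijection between dual growths from $A$ to $(P,Q)$ and from $A^T$ to $(Q',P')$, and then apply Lemma~\ref{lem:inverting q and t} square by square before taking the product and summing. Your version is slightly more explicit about the key bookkeeping point — that reflection swaps the roles of $\la$ and $\rho$ so that the lemma applies directly — which the paper leaves implicit.
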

\begin{proof}
Let $\Lambda$ be a dual growth from $A$ to $(P,Q)$. We define $\Lambda^\prime$ as the dual growth from $A^T$ to $(Q^\prime,P^\prime)$ which is obtained by transposing all partitions in $\Lambda$ and then taking the transpose of the array of partitions itself. It is immediate that the map $\Lambda \mapsto \Lambda^\prime$ is a bijection from the set of dual growths from $A$ to $(P,Q)$ to the set of dual growths from $A^T$ to $(Q^\prime,P^\prime)$. By Lemma~\ref{lem:inverting q and t} we obtain for the probability of a dual growth $\Lambda^\prime$
\[
\mc{P}(\Lambda^\prime)[q,t] = \prod_{\square^\prime \in \Lambda^\prime} \mc{P}(\square^\prime)[q,t] =
\prod_{\square \in \Lambda} \mc{P}(\square)[t^{-1},q^{-1}] = \mc{P}(\Lambda)[t^{-1},q^{-1}]. 
\]
The result follows by summing over all $\Lambda^\prime$.
\end{proof}

The discussion in \S \ref{sec: growth diagrams}, in particular Lemma~\ref{lem: growth insertion}, explains how the above $(q,t)$-local dual growth rules can be translated into a probabilistic insertion algorithm. Let $T$ be a semistandard Young tableau and recall that $T^{(i)}$ denotes the shape of the subtableau consisting of entries at most $i$.

\begin{defn}
\label{def:qtRSK insertion}
Let $T$ be a semistandard Young tableau and $i_1< \cdots < i_r$ be positive integers. The \deff{dual $(q,t)$-RSK insertion} of $i_1, \ldots, i_r$ into $T$, denoted
\[
(i_1,\ldots,i_r) \xrightarrow{\dRSK} T = \wh{T},
\]
is the probability distribution computed as follows:
\begin{itemize}
\item Call the multiset $\{i_1,\ldots,i_r\}$ the \deff{insertion queue}.
\item Let $i$ be the smallest integer of the insertion queue and denote by $k$ the multiplicity of $i$ in the insertion queue. For each $\nu \in \U^k\left(T^{(i)},\wh{T}^{(i-1)}\right)$, place $i$ in each cell of $\nu/\left(T^{(i)}\cup\wh{T}^{(i-1)}\right)$ with probability $\mc{P}_{T^{(i)},\wh{T}^{(i-1)}}\left(T^{(i-1)} \rightarrow \wh{T}^{(i)} \right)$. Delete all $i's$ from the insertion queue and add all entries which have been replaced (bumped) by an $i$ to the insertion queue.
\item Repeat the previous step until the insertion queue is empty.
\end{itemize}
\end{defn}

\begin{ex}
\label{ex:qrst insertion}
The insertion $(2,3) \xrightarrow{\dRSK} \young(12,3)$ produces

\begin{align*}
\young(122,33) \quad \text{ with probability }  & \mc{P}_{(2),(1)}((1)\rightarrow (3))
=\dfrac{1-qt}{1-q^2t}
, \\[10pt]
\young(123,23) \quad  \text{ with probability } & \mc{P}_{(2),(1)}((1)\rightarrow (2,1))\mc{P}_{(2,1),(2,1)}((2)\rightarrow (3,2)) \\
&=q^2 t \dfrac{(1-q)^2(1-t)^2}{(1-qt)(1-q^2)(1-q^2t)(1-q^2t^2)}
,\\
\young(12,23,3) \quad  \text{ with probability } & \mc{P}_{(2),(1)}((1)\rightarrow (2,1))\mc{P}_{(2,1),(2,1)}((2)\rightarrow (2,2,1)) \\
&=qt^2\dfrac{(1-q)^2(1-q^2t)}{(1-qt)(1-q^2)(1-q^2t^2)}
,\\
\young(123,2,3) \quad  \text{ with probability }  & \mc{P}_{(2),(1)}((1)\rightarrow (2,1))\mc{P}_{(2,1),(2,1)}((2)\rightarrow (3,1,1))\\
&=qt\dfrac{(1-q)^2(1-t)}{(1-q^2)(1-qt)^2},
\end{align*}
where some of the probabilities were already calculated in Example~\ref{ex:qt growth rule}.
\end{ex}

\section{Proof of Theorem \ref{thm:sum to 1} and Theorem \ref{thm:weights}}
\label{sec:weights proof}
The aim of this section is to prove that the probabilities $\mc{P}_{\la,\rho}$ and  $\ov{\mc{P}}_{\la,\rho}$ define probabilistic bijections.  In \S \ref{sec:interpolation} we use a generalization of Lagrange interpolation to prove Theorem~\ref{thm:sum to 1}. 
In \S \ref{sec:prob via hook-lengths} we define six rational functions in $q,t$, namely $\alpha,\beta,\gamma$ and their companions $\ov{\alpha}, \ov{\beta}, \ov{\gamma}$ and show in Proposition~\ref{prop:alpha beta gamma} an alternative definition of the probabilities using these functions. While the definition in \S \ref{sec:probabilities definition} allows a simple proof of Theorem~\ref{thm:sum to 1}, the alternative definition given in Proposition~\ref{prop:alpha beta gamma} is analogous to the one in \cite{AignerFrieden22} and allows us to prove Theorem~\ref{thm:weights}.

\subsection{Interpolation identities}
\label{sec:interpolation}

Let $\binom{A}{k}$ denote the set of $k$-element subsets of $A$ and write $[n]=\{1,\ldots,n\}$. For an element $\Set=\{s_1,\ldots,s_k\} \in \binom{A}{k}$ we denote by $\mathbf{x}_S:=(x_{s_1},\ldots,x_{s_k})$.

\begin{lem}[\cite{ChenLouck96}]
\label{lem: sym poly interpolation}
Let $K$ be a field of characteristic $0$, $f \in K[x_1,\ldots,x_n]^{S_n}$ a symmetric function of degree at most $d$ in $x_1$ and let $a_1, \ldots, a_{n+d}$ be pairwise different elements of $K$. Then we have
\begin{equation}
\label{eq: sym poly interpolation}
f(x_1,\ldots,x_n) = \sum_{\Set \in \binom{[n+d]}{n}} f(\mathbf{a}_\Set)
\prod_{i \in [n+d]\setminus \Set} \frac{\prod\limits_{j=1}^n (x_j-a_i)}{\prod\limits_{s \in \Set} (a_s-a_i)}.
\end{equation}
\end{lem}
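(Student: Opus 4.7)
The plan is to show that both sides of \eqref{eq: sym poly interpolation} lie in a common finite-dimensional space of symmetric polynomials, and then verify that they agree on sufficiently many points to force equality. Writing $L_\Set(\mb{x}) := \prod_{i \in [n+d]\setminus \Set} \prod_{j=1}^n (x_j-a_i) / \prod_{s \in \Set}(a_s - a_i)$, each $L_\Set$ is manifestly symmetric in $x_1,\ldots,x_n$ since the numerator factors as a symmetric product and the denominator is a constant, and it has degree exactly $d$ in each variable (there are $d$ indices $i$ in the complement of $\Set$). By hypothesis the left-hand side $f$ is symmetric of degree $\leq d$ in $x_1$, hence by symmetry of degree $\leq d$ in each variable. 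Thus both sides belong to the space $V_{n,d}$ of symmetric polynomials in $x_1,\ldots,x_n$ of degree at most $d$ in each variable.

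Next I would check that the two sides agree at every point $\mb{a}_\Set$ for $\Set \in \binom{[n+d]}{n}$. Substituting $(x_1,\ldots,x_n) = (a_{s_1},\ldots,a_{s_n})$ with $\Set = \{s_1 < \cdots < s_n\}$ into a summand $f(\mb{a}_{\Set'}) L_{\Set'}(\mb{a}_\Set)$: if $\Set' \neq \Set$ then $\Set \setminus \Set'$ is nonempty, so picking $s \in \Set \setminus \Set'$ produces a factor $(a_s - a_s) = 0$ in $L_{\Set'}(\mb{a}_\Set)$; if $\Set' = \Set$ the numerator of $L_\Set(\mb{a}_\Set)$ cancels the denominator, yielding $1$. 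Hence the right-hand side evaluates to $f(\mb{a}_\Set)$, which coincides with the left-hand side (the ordering of entries is irrelevant by symmetry).

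What remains is the \emph{unisolvence lemma}: if $g \in V_{n,d}$ vanishes at $\mb{a}_\Set$ for every $\Set \in \binom{[n+d]}{n}$, then $g \equiv 0$. The proof proceeds by peeling off the variables one at a time. Fix distinct $j_2,\ldots,j_n \in [n+d]$ and consider $h(x_1) := g(x_1, a_{j_2},\ldots,a_{j_n})$, a univariate polynomial of degree at most $d$ in $x_1$. For each $s \in [n+d] \setminus \{j_2,\ldots,j_n\}$ the point $(a_s, a_{j_2},\ldots,a_{j_n})$ equals $\mb{a}_\Set$ (up to permutation) for $\Set = \{s,j_2,\ldots,j_n\}$, so $h(a_s) = 0$; this gives $d+1$ distinct zeros, forcing $h \equiv 0$. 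By symmetry $g(x_1, a_{j_2},\ldots,a_{j_n}) = 0$ for all $x_1$ and all distinct $j_2,\ldots,j_n$. Now fix $x_1$ and distinct $j_3,\ldots,j_n$; the polynomial $y_2 \mapsto g(x_1, y_2, a_{j_3},\ldots,a_{j_n})$ has degree $\leq d$ and vanishes at the $d+2$ values $a_s$ for $s \in [n+d]\setminus\{j_3,\ldots,j_n\}$, so it is identically zero. Iterating this step in each subsequent variable yields $g \equiv 0$.

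Combining these three ingredients, the difference between the two sides of \eqref{eq: sym poly interpolation} lies in $V_{n,d}$ and vanishes on every $\mb{a}_\Set$, hence is identically zero. The only subtle step is the unisolvence lemma; I expect the iterative variable-by-variable degree count to be the main obstacle, and one should take care that at each stage the number of available $a_s$ values strictly exceeds the degree bound $d$, which is why the sets are indexed over $[n+d]$ rather than any smaller range.
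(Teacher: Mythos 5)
Your proof is correct, and the key step is handled by a genuinely different argument than the paper's. Both proofs begin the same way: checking that the right-hand side reproduces $f(\mb{a}_\Set)$ at each interpolation point because $L_{\Set'}(\mb{a}_\Set)$ vanishes for $\Set'\neq\Set$ and equals $1$ for $\Set'=\Set$. The divergence is in the uniqueness step. The paper expands $f$ in the Schur basis $\{s_\la : \la \subseteq (d)^n\}$ of the space of symmetric polynomials of degree at most $d$ in each variable, and shows the evaluation matrix $\bigl(s_\la(\mb{a}_\Set)\bigr)$ is invertible by evaluating its determinant explicitly as $\pm\prod_{i<j}(a_i-a_j)^{\binom{n+d-2}{n-1}}$ via the identification-of-factors method (divisibility from coinciding rows, a total-degree count $\sum_{\la\subseteq(d)^n}|\la|=\tfrac{n(n+1)}{2}\binom{n+d}{n+1}$, and a leading-coefficient comparison). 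You instead prove unisolvence directly by iterated univariate interpolation: symmetry converts vanishing at the $\binom{n+d}{n}$ points $\mb{a}_\Set$ into vanishing at all tuples $(a_{j_1},\dots,a_{j_n})$ with distinct indices, and then peeling off one variable at a time always leaves at least $d+1$ admissible nodes, which kills a degree-$\leq d$ univariate polynomial at each stage. Your route is more elementary and self-contained (no Schur functions, no dimension count, no determinant evaluation), while the paper's route yields an explicit Vandermonde-type determinant identity as a byproduct. One small point worth making explicit in your write-up: the difference $g$ of the two sides has degree at most $d$ in each variable because each $L_\Set$ is a product of exactly $d$ linear factors in each $x_j$ and $f$ inherits the bound in every variable from symmetry; you state this, and it is exactly what the peeling argument needs.
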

For reasons of being self-contained, we give another proof of the above interpolation identity.
\begin{proof}
Both the left-hand and the right-hand side of \eqref{eq: sym poly interpolation} are equal for $\mathbf{x}=(a_{s_1},\ldots,a_{s_n})$ and $1 \leq s_1 < \cdots < s_n \leq n+d$. Hence it suffices to show that these positions uniquely determine $f$. We know that $f$ can be written as
\[
f(\mathbf{x}) = \sum_{\lambda \subseteq (d)^n}c_\lambda s_\lambda(\mathbf{x}).
\]
The function $f$ is therefore uniquely determined at the positions $\mb{a}_\Set$ for $\Set \in \binom{[n+d]}{n}$ if and only if the determinant $\det\left( s_\lambda(\mb{a}_\Set)\right)$, where $\Set \in \binom{[n+d]}{n}$ and $\lambda \subseteq (d)^n$ is nonzero, i.e., the matrix $(s_\lambda(\mb{a}_\Set))_{\lambda,\Set}$ is the transition matrix between the Schur polynomials and the polynomials appearing on the right hand side of \eqref{eq: sym poly interpolation}. We claim that the determinant is equal to
\begin{equation}
\label{eq: Vandermonde type determinant}
\det\limits_{\Set \in \binom{[n+d]}{n},\; \lambda \subseteq (d)^n}\Big(s_\lambda(\mb{a}_\Set)\Big)
= \pm \prod_{1 \leq i < j \leq n+d}(a_i-a_j)^{\binom{n+d-2}{n-1}}.
\end{equation}
Note that we added $\pm$ on the right-hand side of \eqref{eq: Vandermonde type determinant} since the sign of the determinant is depending on the actual order of the rows and columns. We prove \eqref{eq: Vandermonde type determinant} by using the \emph{indication of factors} method. For $a_i = a_j$ and $i \neq j$ we have $\binom{n+d-2}{n-1}$ pairs of rows which are identical, namely the rows of the form $\{i\} \cup \Set^\prime$ and $\{j \} \cup \Set^\prime$ where $\Set^\prime \subset [n+d] \setminus\{i,j\}$ is a subset of size $n-1$. Hence the right hand of \eqref{eq: Vandermonde type determinant} side is dividing the determinant. The total-degree of the determinant is equal to the sum of $|\la|$, where we sum over all partitions $\lambda$ included in the box $(d)^n$. For $\lambda=(i_1,\ldots,i_n)$ where we allow parts to be $0$, the total-degree of the determinant is 
\[
\sum_{\lambda \subseteq (d)^n}|\lambda| = \sum_{d \geq i_1 \geq \cdots \geq i_n \geq 0} (i_1+\cdots +i_n)
=\frac{n(n+1)}{2}\binom{n+d}{n+1}.
\]
The last equality can be proven by induction on $n$. This implies that the total-degree of both sides of \eqref{eq: Vandermonde type determinant} coincide. Hence it suffices to prove that the leading coefficient with respect to the lexicographical order is the same on both sides. For the left-hand side we obtain the leading term by choosing for a fixed $\lambda=(d-i_1,\ldots,d-i_n)$ the set $\Set=\{1+i_1,2+i_2,\ldots,n+i_n\}$ and then the monomial corresponding to the semistandard Young tableau with the $j$-th row filled by $j+i_j$. It is easy to see that this yields the leading coefficient and is, up to sign, equal to $1$. Since the leading coefficient of the right-hand side is $1$, this proves the assertion.
\end{proof}

For $\Ret,\Set \in \binom{[0,d]}{k}$ and two sequences $\textbf{a}=(a_0, \ldots, a_d), \textbf{b}=(b_0, \ldots, b_d)$ of pairwise different variables, define
\[
p'_{\Ret,\Set}(\textbf{a},\textbf{b}) = \prod_{i \in [0,d] \setminus \Ret} \dfrac{\prod\limits_{j \in \Set} (a_j-b_i)}{\prod\limits_{j \in \Ret} (b_j-b_i)} \prod_{i \in [0,d] \setminus \Set} \dfrac{\prod\limits_{j \in \Ret} (b_j-a_i)}{\prod\limits_{j \in \Set} (a_j-a_i)}.
\]
Note that $p'_{\Ret,\Set}(\textbf{a},\textbf{b}) = p'_{\Set,\Ret}(\textbf{b},\textbf{a})$. 

\begin{lem}
\label{lem_master_identity}
For fixed $\Ret \in \binom{[0,d]}{k}$, we have
\begin{equation}
\label{eq:sum of p prob 1}
\sum_{\Set \in \binom{[0,d]}{k}} p'_{\Ret,\Set}(\textbf{a},\textbf{b}) = 1.
\end{equation}
\end{lem}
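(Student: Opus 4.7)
The plan is to derive Lemma \ref{lem_master_identity} as a direct application of the symmetric polynomial interpolation identity (Lemma \ref{lem: sym poly interpolation}) to a carefully chosen test polynomial, followed by a specific evaluation. Fix $\Ret = \{r_1, \ldots, r_k\} \in \binom{[0,d]}{k}$ and consider the symmetric polynomial
$$f(x_1, \ldots, x_k) := \prod_{j=1}^k \prod_{i \in [0,d]\setminus \Ret}(x_j - b_i),$$
which is manifestly symmetric in $x_1, \ldots, x_k$ and has degree exactly $d+1-k = |[0,d]\setminus \Ret|$ in each variable.

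Next, I apply Lemma \ref{lem: sym poly interpolation} with $n=k$ variables, degree bound $d+1-k$, and interpolation nodes $a_0, \ldots, a_d$ (of which there are $n + (d+1-k) = d+1$, matching the hypothesis exactly). The resulting expansion reads
$$f(\textbf{x}) = \sum_{\Set \in \binom{[0,d]}{k}} f(\mathbf{a}_\Set) \prod_{i \in [0,d]\setminus \Set} \frac{\prod_{j=1}^k(x_j - a_i)}{\prod_{s \in \Set}(a_s - a_i)}.$$

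Then I specialize $x_j = b_{r_j}$ for $j = 1, \ldots, k$. The left-hand side becomes $\prod_{j \in \Ret}\prod_{i \in [0,d]\setminus \Ret}(b_j - b_i)$, which is a nonzero quantity provided the $b_i$ are pairwise distinct. On the right-hand side, $f(\mathbf{a}_\Set) = \prod_{s \in \Set}\prod_{i \in [0,d]\setminus \Ret}(a_s - b_i)$, and each product $\prod_j(x_j - a_i)$ becomes $\prod_{j \in \Ret}(b_j - a_i)$. Dividing both sides by the left-hand side and recognizing the two remaining products as the two bracketed factors in the definition of $p'_{\Ret,\Set}(\textbf{a},\textbf{b})$, the identity reduces precisely to $\sum_{\Set} p'_{\Ret, \Set}(\textbf{a},\textbf{b}) = 1$.

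The entire conceptual content of the argument is concentrated in the choice of $f$: its zero locus at the $b_i$ with $i \notin \Ret$ produces the factor $\prod_{i \notin \Ret}(a_s - b_i)$ upon evaluation at $\mathbf{a}_\Set$, and its degree in each variable saturates the slack allowed by the interpolation identity. I do not anticipate a genuine obstacle once this choice is made; the remainder is bookkeeping. (As an aside, the companion identity $\sum_\Ret p'_{\Ret,\Set} = 1$ for fixed $\Set$, which is implicitly needed for \eqref{eq: sum of backward prob}, follows by the same argument after swapping the roles of $\textbf{a}$ and $\textbf{b}$, using $p'_{\Ret,\Set}(\textbf{a},\textbf{b}) = p'_{\Set,\Ret}(\textbf{b},\textbf{a})$.)
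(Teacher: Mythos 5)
Your proposal is correct and coincides with the paper's own proof: the same test polynomial $f(x_1,\ldots,x_k)=\prod_{j=1}^k\prod_{i\in[0,d]\setminus\Ret}(x_j-b_i)$, the same application of Lemma~\ref{lem: sym poly interpolation} with nodes $a_0,\ldots,a_d$, the same evaluation at $(b_{r_1},\ldots,b_{r_k})$, and the same final division. No differences to report.
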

By the symmetry of $p'_{\Ret,\Set} $ in $\Ret$ and $\Set$, we also have 
\begin{equation}
\label{eq:sum of p prob 2}
\sum_{\Ret \in \binom{[0,d]}{k}} p'_{\Ret,\Set}(\textbf{b},\textbf{a}) = 1,
\end{equation}
 for fixed $\Set$.

\begin{proof}
Define the symmetric function
\[
f(x_1, \ldots, x_k) = \prod_{j = 1}^k \; \prod_{i \in [0,d] \setminus \Ret} (x_j-b_i).
\] 
Since $f$ has degree $d+1-k$ in $x_1$, Lemma~\ref{lem: sym poly interpolation} implies
\[
f(x_1, \ldots, x_k) = \sum_{\Set \in \binom{[0,d]}{k}} f(\mb{a}_\Set) \prod_{i \in [0,d] \setminus \Set} \dfrac{\prod\limits_{j=1}^k (x_j - a_i)}{\prod\limits_{s \in \Set} (a_s - a_i)}.
\]
By evaluating $f$ at $(b_{r_1},\ldots, b_{r_k})$, where $\Ret=\{r_1,\ldots,r_k\}$, we obtain
\[
\prod_{r \in \Ret} \; \prod_{i \in [0,d] \setminus \Ret} (b_r - b_i) = \sum_{\Set \in \binom{[0,d]}{k}} \prod_{s \in \Set} \; \prod_{i \in [0,d] \setminus \Ret} (a_s - b_i) \prod_{i \in [0,d] \setminus \Set} \dfrac{\prod\limits_{r \in \Ret} (b_r - a_i)}{\prod\limits_{s \in \Set} (a_s - a_i)}.
\]
Dividing by the left-hand side, we obtain the assertion.
\end{proof}

The last ingredient for the proof of Theorem~\ref{thm:sum to 1} is the following limit which can be verified easily
\begin{equation}
\label{eq: limit of pre-prob}
\lim_{b_0 \rightarrow \infty} p'_{\Ret,\Set}(\textbf{a},\textbf{b}) = \prod_{i \in [d] \setminus \Ret} \dfrac{\prod\limits_{j \in \Set} (a_j-b_i)}{\prod\limits_{j \in \Ret\cap [d]} (b_j-b_i)} \prod_{i \in [0,d] \setminus \Set} \dfrac{\prod\limits_{j \in \Ret\cap [d]} (b_j-a_i)}{\prod\limits_{j \in \Set} (a_j-a_i)}.
\end{equation}

\begin{proof}[Proof of Theorem~\ref{thm:sum to 1}]
Lemma \ref{lem_master_identity} and \eqref{eq: limit of pre-prob} imply equation \eqref{eq: sum of forward prob} by setting $a_i=O_i$ for $0 \leq i \leq d$ and $b_i =R_i$ for $i \in \Ret$ and $b_i=I_i$ for $i \in [d]\setminus \Ret$.
Analogously equation \eqref{eq: sum of backward prob} is obtained from  \eqref{eq:sum of p prob 2} together with \eqref{eq: limit of pre-prob} by setting $a_i=\ov{S}_i$ for $i \in \Set$ and $a_i=O_i$ for $i \in [0,d]\setminus \Set$, and $b_i=I_i$ for all $1 \leq i \leq d$.
\end{proof}

\subsection{Probabilities via $(q,t)$-hook-lengths}
\label{sec:prob via hook-lengths}

By definition, we have
\begin{multline}
\label{eq:weights def}
\dfrac{\omega_{\la,\rho}(\mu)}{\ov{\omega}_{\la,\rho}(\nu)} = \dfrac{\psi_{\lambda/ \mu} \vp^*_{\rho/ \mu}}{\psi_{\nu/ \rho} \vp^*_{\nu/\lambda}} \\
= \prod_{c \in \mc{R}_{\la/\mu} - \mc{C}_{\la/\mu}} \dfrac{b_\mu(c)}{b_\la(c)} \prod_{c \in \mc{C}_{\rho/\mu} - \mc{R}_{\rho/\mu}} \dfrac{b_\rho(c)}{b_\mu(c)} \prod_{c \in \mc{R}_{\nu/\rho} - \mc{C}_{\nu/\rho}} \dfrac{b_\nu(c)}{b_\rho(c)} \prod_{c \in \mc{C}_{\nu/\la} - \mc{R}_{\nu/\la}} \dfrac{b_\la(c)}{b_\nu(c)}.
\end{multline}
In \S \ref{sec:weights formula}, we analyze the contribution of each cell $c \in \nu$ to the right-hand side of \eqref{eq:weights def} to obtain the alternative formula
\begin{multline}
\label{eq:weights formula}
\dfrac{\omega_{\la,\rho}(\mu)}{\ov{\omega}_{\la,\rho}(\nu)} 
= \prod_{c \in \mc{R}_{\la \cap \rho/\mu} - \mc{C}_{\la/\la \cap \rho}} \dfrac{b_\mu(c)}{b_\rho(c)} 
\prod_{c \in \mc{C}_{\la \cap \rho/\mu} - \mc{R}_{\rho/\la \cap \rho}} \\ \times \dfrac{b_\la(c)}{b_\mu(c)} \prod_{c \in \mc{R}_{\nu/\la \cup \rho} - \mc{C}_{\la \cup \rho/\rho}} \dfrac{b_\nu(c)}{b_\la(c)} \prod_{c \in \mc{C}_{\nu/\la \cup \rho} - \mc{R}_{\la \cup \rho/\la}} \dfrac{b_{\rho}(c)}{b_{\nu}(c)}.
\end{multline}
In Proposition \ref{prop:alpha beta gamma} we provide an alternative formula for the fraction of the probabilities
\begin{equation}
\label{eq:frac of P as alpha beta}
\dfrac{\ov{\mc{P}}_{\la,\rho}(\mu \leftarrow \nu)}{\mc{P}_{\la,\rho}(\mu \rightarrow \nu)}
=
\dfrac{\ov{\alpha}_{\la,\rho,\nu} \ov{\beta}_{\mu,\la,\rho}}{\alpha_{\la,\rho,\nu} \beta_{\mu,\la,\rho}},
\end{equation}
where the terms on the right-hand side are defined below. Furthermore Proposition~\ref{prop:alpha beta gamma} implies that the right-hand side of \eqref{eq:weights formula} is equal to the right-hand side of \eqref{eq:frac of P as alpha beta}.
Thus, Theorem \ref{thm:weights} follows from Proposition \ref{prop:alpha beta gamma} and \eqref{eq:weights formula}.\\

Recall that by Lemma \ref{lem:tau}, we have $p_{\Ret,\Set} = \tau_{\Ret,\Set} \zeta_{\Ret,\Set}$ and $\ov{p}_{\Ret,\Set} = \tau_{\Ret,\Set} \ov{\zeta}_{\Ret,\Set}$, where $\tau_{\Ret,\Set}$ is a monomial in $q$ and $t$ with nonnegative exponents, and $\zeta_{\Ret,\Set}, \ov{\zeta}_{\Ret,\Set}$ are ratios of products of binomials of the form $1-q^xt^y$, with $x,y \geq 0$. These binomial factors are in fact $(q,t)$-hook-lengths of cells in $\la,\rho,\mu^{(\Ret)},$ and $\nu^{(\Set)}$.

\begin{prop}
\label{prop:alpha beta gamma}
Let $\mu = \mu^{(\Ret)}$ and $\nu = \nu^{(\Set)}$. The probabilities $p_{\Ret,\Set} = \mc{P}_{\la,\rho}(\mu \rightarrow \nu)$ and $\ov{p}_{\Ret,\Set} = \ov{\mc{P}}_{\la,\rho}(\mu \leftarrow \nu)$ are given by the formulas
\[
\mc{P}_{\la,\rho}(\mu \rightarrow \nu) = \tau_{\mu,\la,\rho,\nu} \dfrac{\alpha_{\la,\rho,\nu} \beta_{\mu,\la,\rho}}{\gamma_{\mu,\la,\rho,\nu}}, \qquad\quad
\ov{\mc{P}}_{\la,\rho}(\mu \leftarrow \nu) = \tau_{\mu,\la,\rho,\nu} \dfrac{\ov{\alpha}_{\la,\rho,\nu} \ov{\beta}_{\mu,\la,\rho}}{\gamma_{\mu,\la,\rho,\nu}},
\]
where $\tau_{\mu,\la,\rho,\nu} = \tau_{\Ret,\Set}$,
\[
\alpha_{\la,\rho,\nu} = \prod_{c \in \mc{R}_{\nu/\la \cup \rho} - \mc{C}_{\la \cup \rho/\rho}} \dfrac{\hl{\la}(c)}{\hl{\nu}(c)} \prod_{c \in \mc{C}_{\nu/\la \cup \rho} - \mc{R}_{\la \cup \rho/\la}} \dfrac{\ha{\rho}(c)}{\ha{\nu}(c)},
\]
\[
\beta_{\mu,\la,\rho} = \prod_{c \in \mc{R}_{\la \cap \rho/\mu} - \mc{C}_{\la/\la \cap \rho}} \dfrac{\hl{\rho}(c)}{\hl{\mu}(c)} \prod_{c \in \mc{C}_{\la \cap \rho/\mu} - \mc{R}_{\rho/\la \cap \rho}} \dfrac{\ha{\la}(c)}{\ha{\mu}(c)},
\]
\[
\gamma_{\mu,\la,\rho,\nu} = \dfrac{\ds \prod_{c \in \mc{R}_{\nu/\la \cup \rho} \cap \mc{C}_{\la \cap \rho/\mu}} \hl{\la}(c)\ha{\la}(c) \prod_{c \in \mc{C}_{\nu/\la \cup \rho} \cap \mc{R}_{\la \cap \rho/\mu}} \hl{\rho}(c)\ha{\rho}(c)}{\ds \prod_{c \in \mc{R}_{\nu/\la \cup \rho} \cap \mc{C}_{\nu/\la \cup \rho}} \hl{\nu}(c)\ha{\nu}(c) \prod_{c \in \mc{R}_{\la \cap \rho/\mu} \cap \mc{C}_{\la \cap \rho/\mu}} \hl{\mu}(c)\ha{\mu}(c)},
\]
and $\ov{\alpha}_{\la,\rho,\nu}, \ov{\beta}_{\mu,\la,\rho}$ are defined in the same way as $\alpha_{\la,\rho,\nu}, \beta_{\mu,\la,\rho}$, but with all the $\hl{\kappa}$'s and $\ha{\kappa}$'s interchanged.
\end{prop}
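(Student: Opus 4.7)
The plan is to prove the proposition by converting the coordinate-based formulas \eqref{eq:forward using quebecois} and \eqref{eq:backward using quebecois} into products of $(q,t)$-hook-lengths. By Lemma~\ref{lem:tau} we have the factorizations $p_{\Ret,\Set} = \tau_{\Ret,\Set}\zeta_{\Ret,\Set}$ and $\ov{p}_{\Ret,\Set} = \tau_{\Ret,\Set}\ov{\zeta}_{\Ret,\Set}$, with $\tau_{\Ret,\Set} = \tau_{\mu,\la,\rho,\nu}$ already matching the monomial factor claimed in the proposition. Thus the entire statement reduces to verifying the two identities
\[
\zeta_{\Ret,\Set} \;=\; \frac{\alpha_{\la,\rho,\nu}\,\beta_{\mu,\la,\rho}}{\gamma_{\mu,\la,\rho,\nu}}, \qquad \ov{\zeta}_{\Ret,\Set} \;=\; \frac{\ov{\alpha}_{\la,\rho,\nu}\,\ov{\beta}_{\mu,\la,\rho}}{\gamma_{\mu,\la,\rho,\nu}}.
\]
I would treat the forward case in detail; the backward case is parallel, the only change being $S_s\!\to\!\ov S_s$, $R_r\!\to\!\ov R_r$, which shifts coordinates by $(1,1)$ and therefore swaps the roles of $\hl{\kappa}$ and $\ha{\kappa}$ in $\alpha,\beta$, while leaving $\gamma$ invariant (its factors come in $\hl\ha$-pairs).

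The key step is a term-by-term dictionary between binomials and hook-lengths. After the pull-out in \eqref{eq:prob rewritten}--\eqref{eq:zeta}, every factor of $\zeta_{\Ret,\Set}$ has the form $1-q^xt^y$ with $x,y\ge 0$ not both zero. Using the coordinates $R_r=q^{x_r}t^{y_r}$, $I_i=q^{x'_i}t^{y'_i}$, $S_s=q^{\xi_s}t^{\eta_s}$, $O_j=q^{\xi'_j}t^{\eta'_j}$ assigned in Quebecois convention, the horizontal distance between two distinguished corners equals an arm-length and the vertical distance equals a leg-length in the appropriate partition. A factor $1-S_s/I_i$ with $s<i$ (resp.\ $1-I_i/S_s$ with $s\geq i$) thus equals $\hl{\kappa}(c)$ (resp.\ $\ha{\kappa}(c)$) for the unique cell $c$ lying in the row of the $s$-th addable corner and the column of the $i$-th removable corner, with $\kappa\in\{\mu,\la,\rho,\nu\}$ determined by whether the factor sits in the numerator or denominator of $\zeta$ and by the signs in the pull-out. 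The analogous statements hold for $(R_r,O_j)$, $(S_s,O_j)$ and $(R_r,I_i)$ pairs.

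Grouping the resulting hook-lengths by the position of their cells yields exactly the four products defining $\alpha_{\la,\rho,\nu}$ and $\beta_{\mu,\la,\rho}$. More precisely, the $(S_s,I_i)$ and $(R_r,O_j)$ factors from indices $s\in\Set$, $i\notin\Ret$ or $r\in\Ret$, $j\notin\Set$ produce the cells in $\mc{R}_{\nu/\la\cup\rho}-\mc{C}_{\la\cup\rho/\rho}$ and $\mc{C}_{\nu/\la\cup\rho}-\mc{R}_{\la\cup\rho/\la}$ respectively (the $\alpha$-contributions), while the $(S_s,O_j)$ and $(R_r,I_i)$ factors from $s\in\Set$, $j\in\Ret$ or $r\in\Ret$, $i\in\Set\cap[d]\setminus\Ret$ produce the cells in $\mc{R}_{\la\cap\rho/\mu}-\mc{C}_{\la/\la\cap\rho}$ and $\mc{C}_{\la\cap\rho/\mu}-\mc{R}_{\rho/\la\cap\rho}$ (the $\beta$-contributions). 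A cell lying \emph{both} in a row of $\nu/(\la\cup\rho)$ and a column of $(\la\cap\rho)/\mu$ (or vice versa) is produced \emph{twice}, once from an $(S,I)$-type pair and once from an $(R,O)$-type pair; the two copies carry one $\hl$ and one $\ha$ of the same cell, and together they form precisely the factors collected in the denominator $\gamma_{\mu,\la,\rho,\nu}$. The four remaining "diagonal" overlaps in $\gamma$ (cells in $\mc{R}\cap\mc{C}$ of the same region) come from the special factors where the two indices in a pair sit at the same corner, for which the pull-out in \eqref{eq:zeta} produces a double hook-length.

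The main obstacle will be the meticulous case analysis underlying the dictionary in the previous paragraphs: one must track, for each of the four types of binomial factors in \eqref{eq:zeta}, both the sign of the exponent pair (which decides $\hl$ versus $\ha$) and the ambient partition (which decides whether the cell is measured in $\mu,\la,\rho$ or $\nu$), and then verify that the $\hl\ha$ overlap cancellations match exactly the definition of $\gamma$. This is a finite but intricate bookkeeping problem, organized naturally by splitting the index set $[0,d]$ according to which of $\Set$ and $\Ret$ each element belongs to; the argument closely parallels the single-box case in \cite[\S6]{AignerFrieden22}, and once the dictionary is established, the identities for $\zeta_{\Ret,\Set}$ and $\ov\zeta_{\Ret,\Set}$ follow by comparing factor by factor, which yields the stated formulas for $\mc{P}_{\la,\rho}(\mu\to\nu)$ and $\ov{\mc{P}}_{\la,\rho}(\mu\leftarrow\nu)$.
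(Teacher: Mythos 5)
Your overall architecture coincides with the paper's: both arguments use the factorization $p_{\Ret,\Set}=\tau_{\Ret,\Set}\zeta_{\Ret,\Set}$, $\ov{p}_{\Ret,\Set}=\tau_{\Ret,\Set}\ov{\zeta}_{\Ret,\Set}$ from Lemma~\ref{lem:tau} to reduce the proposition to the identities $\zeta_{\Ret,\Set}=\alpha_{\la,\rho,\nu}\beta_{\mu,\la,\rho}/\gamma_{\mu,\la,\rho,\nu}$ and $\ov{\zeta}_{\Ret,\Set}=\ov{\alpha}_{\la,\rho,\nu}\ov{\beta}_{\mu,\la,\rho}/\gamma_{\mu,\la,\rho,\nu}$, and both then relate binomial factors in the corner coordinates to $(q,t)$-hook-lengths. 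Your observations that individual factors such as $1-I_i/S_s$ are hook-lengths of specific cells, and that passing to $\ov S_s,\ov R_r$ interchanges $\hl{}$ and $\ha{}$ while fixing $\gamma$, are correct and agree with the paper's Lemmas~\ref{lem:alpha beta gamma pieces} and~\ref{lem:alpha beta gamma pieces2}.

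The genuine gap is in the sentence ``grouping the resulting hook-lengths by the position of their cells yields exactly the four products defining $\alpha_{\la,\rho,\nu}$ and $\beta_{\mu,\la,\rho}$.'' Your dictionary assigns one cell to each binomial factor of $\zeta_{\Ret,\Set}$, hence produces on the order of $d^2$ hook-lengths, whereas the products defining $\alpha$ and $\beta$ range over \emph{all} cells of $\mc{R}_{\nu/\la\cup\rho}-\mc{C}_{\la\cup\rho/\rho}$ and its three companions --- in general far more cells --- each contributing a ratio such as $\hl{\la}(c)/\hl{\nu}(c)$, which is not $1$ for a single cell. The two sides therefore do not even have the same number of factors, and the identity only holds because of a telescoping cancellation along each row or column segment between consecutive corners: for the cells $c^{(1)},\ldots,c^{(m)}$ between two corners one has $\hl{\la}\bigl(c^{(k+1)}\bigr)=\hl{\nu}\bigl(c^{(k)}\bigr)$, so the product of ratios collapses to $\hl{\la}\bigl(c^{(1)}\bigr)/\hl{\nu}\bigl(c^{(m)}\bigr)$, and only these surviving boundary terms are the binomials your dictionary sees. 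This cancellation is the entire content of the paper's proof of Lemma~\ref{lem:alpha beta gamma pieces} (one telescoping identity per product in $\alpha,\beta,\gamma$, e.g.\ \eqref{eq:prod 1}--\eqref{eq:prod 5}), and your proposal never states it; without it the ``grouping'' step fails. A secondary inaccuracy: the clean split you propose, with $(S,I)$- and $(R,O)$-factors feeding $\alpha$ and $(S,O)$- and $(R,I)$-factors feeding $\beta$, does not match the actual bookkeeping --- each product in $\alpha$ and $\beta$ absorbs numerator factors of one type together with denominator factors of another (in \eqref{eq:prod 1} the factors $1-I_j/S_i$ sit upstairs while $1-O_j/S_i$ and $1-tO_j/S_i$ sit downstairs), and $\gamma$ must also cancel the factors that $\alpha$ and $\beta$ over-count (those with one index in $\Set$ and one in $\Ret$), not only the doubly-produced overlap cells.
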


Recall the rational functions $\zeta_{\Ret,\Set}$ and $\ov{\zeta}_{\Ret,\Set}$ defined in \eqref{eq:zeta}.
Lemma~\ref{lem:alpha beta gamma pieces} implies
\[
\zeta_{\Ret,\Set} = \dfrac{\alpha_{\la,\rho,\nu}\beta_{\mu,\la,\rho}}{\gamma_{\mu,\la,\rho,\nu}},
\]
and Lemma~\ref{lem:alpha beta gamma pieces2} implies
\[
\ov{\zeta}_{\Ret,\Set} = \dfrac{\ov{\alpha}_{\la,\rho,\nu}\ov{\beta}_{\mu,\la,\rho}}{\gamma_{\mu,\la,\rho,\nu}}.
\]
Hence Proposition~\ref{prop:alpha beta gamma} is a direct consequence of these lemmas which we prove next.

\begin{lem}
\label{lem:alpha beta gamma pieces}
We have
\begin{equation}
\label{eq:prod 1}
\prod_{c \in \mc{R}_{\nu/\la \cup \rho} - \mc{C}_{\la \cup \rho/\rho}} \dfrac{\hl{\la}(c)}{\hl{\nu}(c)} = \prod_{i \in \Set} \dfrac{\ds \prod_{i < j \leq d} \left(1 - \dfrac{I_j}{S_i}\right)}{\ds \prod_{i < j \leq d \atop j \not \in \Set} \left(1 - \dfrac{O_j}{S_i}\right) \prod_{i \leq j \leq d \atop j \in \Set} \left(1 - t\dfrac{O_j}{S_i}\right)},
\end{equation}
\begin{equation}
\prod_{c \in \mc{C}_{\nu/\la \cup \rho} - \mc{R}_{\la \cup \rho/\la}} \dfrac{\ha{\rho}(c)}{\ha{\nu}(c)} = \prod_{j \in \Set} \dfrac{\ds \prod_{1 \leq i \leq j} \left(1 - \dfrac{S_j}{I_i}\right)}{\ds \prod_{0 \leq i < j \atop i \not \in \Set} \left(1 - \dfrac{S_j}{O_i}\right) \prod_{0 \leq i \leq j \atop i \in \Set} \left(1 - q\dfrac{S_j}{O_i}\right)},
\end{equation}
\begin{equation}
\prod_{c \in \mc{R}_{\la \cap \rho/\mu} - \mc{C}_{\la/\la \cap \rho}} \dfrac{\hl{\rho}(c)}{\hl{\mu}(c)} = \prod_{i \in \Ret} \dfrac{\ds \prod_{i \leq j \leq d} \left(1 - \dfrac{O_j}{R_i}\right)}{\ds \prod_{i < j \leq d \atop j \not \in \Ret} \left(1 - \dfrac{I_j}{R_i}\right) \prod_{i < j \leq d \atop j \in \Ret} \left(1 - t^{-1}\dfrac{I_j}{R_i}\right)},
\end{equation}
\begin{equation}
\label{eq:prod 4}
\prod_{c \in \mc{C}_{\la \cap \rho/\mu} - \mc{R}_{\rho/\la \cap \rho}} \dfrac{\ha{\la}(c)}{\ha{\mu}(c)} = \prod_{j \in \Ret} \dfrac{\ds \prod_{0 \leq i < j} \left(1 - \dfrac{R_j}{O_i}\right)}{\ds \prod_{1 \leq i < j \atop i \not \in \Ret} \left(1 - \dfrac{R_j}{I_i}\right) \prod_{1 \leq i < j \atop i \in \Ret} \left(1 - q^{-1}\dfrac{R_j}{I_i}\right)},
\end{equation}
\begin{equation}
\label{eq:prod 5}
\prod_{c \in \mc{R}_{\nu/\la \cup \rho} \cap \mc{C}_{\nu/\la \cup \rho}} \hl{\nu}(c)\ha{\nu}(c) = \prod_{i,j \in \Set \atop i \leq j} \left(1 - t\dfrac{O_j}{S_i}\right)\left(1 - q\dfrac{S_j}{O_i}\right),
\end{equation}
\begin{equation}
\prod_{c \in \mc{R}_{\la \cap \rho/\mu} \cap \mc{C}_{\la \cap \rho/\mu}} \hl{\mu}(c)\ha{\mu}(c) = \prod_{i,j \in \Ret \atop i < j} \left(1 - q^{-1}\dfrac{R_j}{I_i}\right)\left(1 - t^{-1}\dfrac{I_j}{R_i}\right),
\end{equation}
\begin{equation}
\prod_{c \in \mc{R}_{\nu/\la \cup \rho} \cap \mc{C}_{\la \cap \rho/\mu}} \hl{\la}(c)\ha{\la}(c) = \prod_{i \in \Set, j \in \Ret \atop i < j} \left(1 - \dfrac{I_j}{S_i}\right)\left(1 - \dfrac{R_j}{O_i}\right),
\end{equation}
\begin{equation}
\prod_{c \in \mc{C}_{\nu/\la \cup \rho} \cap \mc{R}_{\la \cap \rho/\mu}} \hl{\rho}(c)\ha{\rho}(c) = \prod_{i \in \Ret, j \in \Set \atop i \leq j} \left(1 - \dfrac{O_j}{R_i}\right)\left(1 - \dfrac{S_j}{I_i}\right).
\end{equation}
\end{lem}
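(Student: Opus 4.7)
The proof of Lemma~\ref{lem:alpha beta gamma pieces} is essentially an extended geometric bookkeeping exercise, so the plan is to sketch the strategy for \eqref{eq:prod 1}; the remaining identities follow by analogous arguments together with the symmetries exchanging rows with columns (swapping the leg- and arm-versions of the $(q,t)$-hook-length), $\la$ with $\rho$, and $\U$ with $\D$ (which exchanges the pair $(\la \cup \rho, \nu)$ with $(\la \cap \rho, \mu)$).

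The starting observation is that every factor $\hl{\kappa}(c) = 1 - q^{a_\kappa(c)} t^{\ell_\kappa(c)+1}$ has the form $1 - P/Q$ for two marked points $P,Q \in \{I_i, O_j, R_r, S_s\}$: if a convenient reference corner of $c$ sits at $Q$, then $P$ is the point on the boundary of $\kappa$ directly across the hook of $c$. Applied to a cell $c$ in the row of an added outer corner $S_s$ (with $s \in \Set$), the arm of $c$ in $\la$ ends at whichever of $I_j$, $O_j$, or $R_j$ with $j \geq s$ bounds the row on the right, while the arm in $\nu$ ends one column further because an outer corner has been added.

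To prove \eqref{eq:prod 1}, I would first decompose
\[
\mc{R}_{\nu/(\la \cup \rho)} - \mc{C}_{(\la \cup \rho)/\rho} = \bigsqcup_{s \in \Set} \mc{R}_s,
\]
where $\mc{R}_s$ is the set of cells in the row of $S_s$ that avoid the columns of $\la/(\la \cap \rho)$. For $c \in \mc{R}_s$, only the arm-length changes in passing from $\la$ to $\nu$ (by one), so the ratio $\hl{\la}(c)/\hl{\nu}(c)$ takes the form $(1 - q^{a} t^{\ell+1})/(1 - q^{a+1} t^{\ell+1})$. Traversing $\mc{R}_s$ from right to left and reading off the arm-end of each successive cell, the product telescopes: the numerator produced by one cell cancels the denominator produced by the next, unless a corner of the staircase profile of $\la \cup \rho$ intervenes. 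The surviving factors are precisely one binomial per corner at index $j \geq s$, which matches the three products on the right-hand side of \eqref{eq:prod 1}: removable inner corners $I_j$ with $j \notin \Ret$ contribute $(1 - I_j/S_s)$ to the numerator, addable outer corners $O_j$ with $j \notin \Set$ contribute $(1 - O_j/S_s)$ to the denominator, and addable outer corners $O_j$ with $j \in \Set$ contribute $(1 - tO_j/S_s)$, where the extra factor of $t$ reflects that adding a cell in row $s$ shifts the reference point by one in the $t$-direction.

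The last four identities of the lemma, beginning with \eqref{eq:prod 5}, involve pure products of hook-lengths rather than ratios and are more direct. For example, $\mc{R}_{\nu/(\la \cup \rho)} \cap \mc{C}_{\nu/(\la \cup \rho)}$ consists of exactly those cells $c$ lying in both the row of $S_i$ and the column of $S_j$ for some pair $i \leq j$ in $\Set$; at such a cell, the arm ends at $S_j$ and the leg at $S_i$, so $\hl{\nu}(c) \ha{\nu}(c) = (1 - tO_j/S_i)(1 - qS_j/O_i)$ directly, matching the claim. The main obstacle for the full proof is the careful case analysis needed to determine, for each cell in a row $\mc{R}_s$, whether its arm ends at a corner of type $S$, $O$, or $I$; this is purely combinatorial, driven by the profile of $\la \cup \rho$, and becomes transparent once one draws the Quebecois diagram with all the marked points indicated.
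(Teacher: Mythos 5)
Your strategy is essentially the one the paper uses: prove \eqref{eq:prod 1} and one of the intersection identities in detail, group the cells of $\mc{R}_{\nu/\la \cup \rho} - \mc{C}_{\la \cup \rho/\rho}$ by the rows of the addable corners $S_s$ with $s \in \Set$, telescope the hook-length ratios along each row so that only boundary factors attached to the marked points survive, and compute the intersection products such as \eqref{eq:prod 5} directly at the cells lying in the row of $S_i$ and the column of $S_j$. One local claim in your sketch is, however, not correct as stated: for a cell $c$ in such a row it is \emph{not} true that only the arm-length changes (by one) in passing from $\la$ to $\nu$. The leg-length can change as well, because the column of $c$ may contain cells of the vertical strip $\rho/(\la\cap\rho)$ or an added outer corner (this is precisely the second schematic case in the paper's Figure~\ref{fig:prod 1 cases}), so the single-cell ratio $\hl{\la}(c)/\hl{\nu}(c)$ need not have the form $(1-q^{a}t^{\ell+1})/(1-q^{a+1}t^{\ell+1})$. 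The telescoping still goes through, but for the reason your cancellation implicitly relies on: the correct identity is the cross-cell one, $\hl{\la}\bigl(c_{i,j}^{(k+1)}\bigr) = \hl{\nu}\bigl(c_{i,j}^{(k)}\bigr)$ for adjacent cells in the segment between $I_j$ and $O_j$, which is what the paper verifies and which does not require the legs of a single cell to agree in $\la$ and $\nu$. With that repair, and noting explicitly that the added corner cell itself contributes $\hl{\la}(c_s)/\hl{\nu}(c_s) = 1/(1-t) = 1/(1 - t\,O_s/S_s)$ (accounting for the diagonal term $j=s$ in the product over $j \in \Set$, whose range starts at $j \geq s$ rather than $j > s$), your outline coincides with the paper's proof.
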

\begin{proof}
We prove the equations \eqref{eq:prod 1} and \eqref{eq:prod 5}, the proofs of the other equations are analogue.

Let $i\in \Set$ and $i < j \leq d$. We consider the cells $c_{i,j}^{(1)},\ldots,c_{i,j}^{(m)}$ of $\nu$ in the same row as the $i$-th addable outer corner where the left corner of $c_{i,j}^{(1)}$ is below $I_j$ and the right corner of $c_{i,j}^{(m)}$ is below $O_j$, see Figure~\ref{fig:prod 1 cases}.
\begin{figure}
\begin{center}
\begin{tikzpicture}[scale=0.7]
	\draw[thick, blue, fill=myblue] (0,0) rectangle (1,1);
	\node at (.5,.5) {$+$};
	\draw[fill] (1,0) circle (2pt);
	\node at (1,-.5) {$S_i$};
	
	\draw[dotted] (1,0) rectangle (2.5,1);
	\draw[dotted] (3.5,0) rectangle (7,1);
	\draw[dotted] (2.5,2.5) -- (2.5,1);
	\draw[dotted] (7,2.5) -- (7,1);
	
	\draw[thick] (2.5,2.5) -- (7,2.5);
	\draw[fill] (2.5,2.5) circle (2pt);
	\node at (2.5,3) {$I_j$};
	\draw[fill] (7,2.5) circle (2pt);
	\node at (7,3) {$O_j$};
	
	\draw[thick] (2.5,0) rectangle (3.5,1);
	\node at (3,.5) {$c_{i,j}^{(1)}$};
	\draw[thick] (6,0) rectangle (7,1);
	\node at (6.5,.5) {$c_{i,j}^{(m)}$};

\begin{scope}[xshift=10cm]
	\draw[thick, blue, fill=myblue] (0,0) rectangle (1,1);
	\node at (.5,.5) {$+$};
	\draw[fill] (1,0) circle (2pt);
	\node at (1,-.5) {$S_i$};
	
	\draw[dotted] (1,0) rectangle (2.5,1);
	\draw[dotted] (3.5,0) rectangle (7,1);
	\draw[dotted] (2.5,2.5) -- (2.5,1);
	\draw[dotted] (7,2.5) -- (7,1);
	
	\draw[thick] (2.5,2.5) -- (7,2.5);
	\draw[thick, green!70!black, pattern=vertical lines, pattern color=green!70!black] (6,2.5) rectangle (7,4.5);
	\draw[fill] (2.5,2.5) circle (2pt);
	\node at (2.5,3) {$I_j$};
	\draw[fill] (7,4.5) circle (2pt);
	\node at (7,5) {$O_j$};
	
	\draw[thick] (2.5,0) rectangle (3.5,1);
	\node at (3,.5) {$c_{i,j}^{(1)}$};
	\draw[thick] (6,0) rectangle (7,1);
	\node at (6.5,.5) {$c_{i,j}^{(m)}$};
\end{scope}
\end{tikzpicture}
\end{center}
\caption{\label{fig:prod 1 cases} Two schematic situations for the cells $c_{i,j}^{(1)},\ldots,c_{i,j}^{(m)}$ where we use the color and shading code for different cells according to Table~\ref{tab:color code}.}
\end{figure}
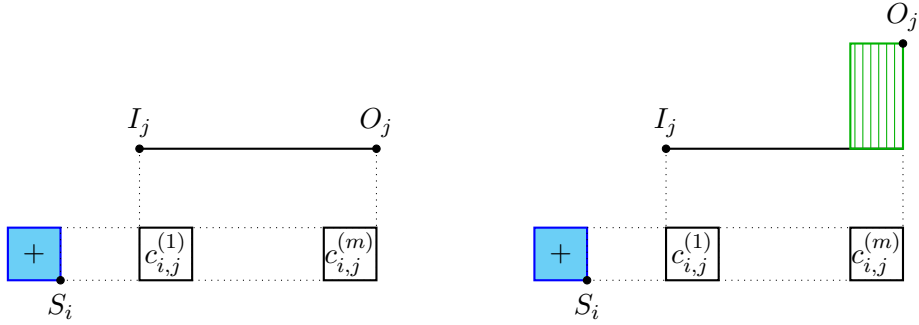
We have $\hl{\la}\left(c_{i,j}^{(k+1)}\right)= \hl{\nu}\left(c_{i,j}^{(k)}\right)$ for all $1 \leq k < m$ and
\[
\hl{\la}\left(c_{i,j}^{(1)}\right)= 1-\frac{I_j}{S_i}, \qquad 
\hl{\nu}\left(c_{i,j}^{(m)}\right) = \begin{cases}
1-\dfrac{O_j}{S_i} \qquad & j \notin \Set,\\[11pt]
1-t\dfrac{O_j}{S_j} & j \in \Set.
\end{cases}
\]
Hence the total contribution of the cells $c_{i,j}^{(1)},\ldots,c_{i,j}^{(m)}$ is
\[
\prod_{k=1}^m \frac{\hl{\la}\left(c_{i,j}^{(k)}\right)}{\hl{\nu}\left(c_{i,j}^{(k)}\right)} = \begin{cases}
\dfrac{1-\frac{I_j}{S_i}}{1-\frac{O_j}{S_i}} \qquad & j \notin \Set,\\[18pt]
\dfrac{1-\frac{I_j}{S_i}}{1-t\frac{O_j}{S_i}}  & j \in \Set.
\end{cases}
\]
For $i \in \Set$ let $c_i$ denote the $i$-th addable outer corner. By definition we have $\hl{\la}(c_i)=1$ and  $\hl{\nu}(c_i) = 1-t=1-t\frac{O_i}{S_i}$.
The left hand side of \eqref{eq:prod 1} is the product over the cells $c_{i,j}^{(1)},\ldots,c_{i,j}^{(m)}$ of the above form for all $i \in \Set$ and $i<j\leq d$ and all added outer corners $c_i$ with $i \in \Set$. Combining the above we therefore obtain the right hand side of \eqref{eq:prod 1}. \bigskip

Denote by $c_{i,j}$ the unique cell that lies in the same row as the $i$-th addable outer corner and in the same column as the $j$-th addable outer corner for $i \leq j$ as shown in Figure \ref{fig:prod 5}.
It is immediate that the $(q,t)$-hook lengths of $c_{i,j}$ with respect to $\nu$ are
\[
\hl{\nu}(c_{i,j}) = 1-t \dfrac{O_j}{S_i}, \qquad \qquad \ha{\nu}(c_{i,j}) = 1-q \dfrac{S_j}{O_i}.
\]
Since the cells contributing to the product on the left hand side of \eqref{eq:prod 5} are exactly all $c_{i,j}$ with $i,j \in \Set$, this implies the assertion.\qedhere
\begin{figure}[h]
\begin{center}
\begin{tikzpicture}[scale=0.65]
	\draw[dotted] (1,0) rectangle (3,1);
	\draw[dotted] (3,1) rectangle (4,2.5);

	\draw[thick, blue, fill=myblue] (0,0) rectangle (1,1);
	\node at (.5,.5) {$+$};
	\draw[fill] (1,0) circle (2pt);
	\node at (1,-.5) {$S_i=O_i$};
	
	\draw[thick, blue, fill=myblue] (3,2.5) rectangle (4,3.5);
	\node at (3.5,3) {$+$};
	\draw[fill] (4,2.5) circle (2pt);
	\node at (5.25,2.5) {$S_j=O_j$};
	
	\draw[thick] (3,0) rectangle (4,1);
	\node at (3.55,.4) {$c_{i,j}$};

\end{tikzpicture}
\end{center}
\caption{\label{fig:prod 5} The unique cell $c_{i,j}$ together with the positions of $S_i,S_j$.}
\end{figure}
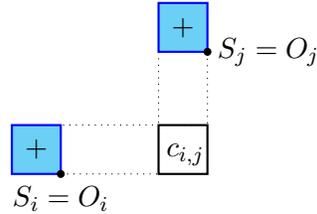
\end{proof}

\begin{lem}
\label{lem:alpha beta gamma pieces2}
Interchanging all $\hl{}$ and $\ha{}$ on the left hand side of the equations \eqref{eq:prod 1}-\eqref{eq:prod 4} has the effect of replacing $R_i$ by $\ov{R}_i$ and $S_j$ by $\ov{S}_j$ for all $1 \leq i \leq d$ and $0 \leq j \leq d$ on the right hand side of the equations.
\end{lem}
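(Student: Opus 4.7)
My plan is to run the same telescoping argument used in the proof of Lemma~\ref{lem:alpha beta gamma pieces}, but with $\hl{}$ and $\ha{}$ systematically interchanged. The key geometric observation is that the two kinds of $(q,t)$-hook lengths
\[
\hl{\kappa}(c) = 1 - q^{a_\kappa(c)}\,t^{\ell_\kappa(c)+1}, \qquad \ha{\kappa}(c) = 1 - q^{a_\kappa(c)+1}\,t^{\ell_\kappa(c)}
\]
differ by the uniform exponent shift $(a,\ell+1)\mapsto(a+1,\ell)$, and this is exactly the shift relating the ``lower-right'' corner $S_i$ (resp.\ $R_i$) of an addable outer (resp.\ removable inner) corner cell to its ``upper-left'' counterpart $\ov{S}_i$ (resp.\ $\ov{R}_i$): in Quebecois coordinates one has $\ov{S}_i-S_i=\ov{R}_i-R_i=(-1,1)$, equivalently $\ov{S}_i=q^{-1}t\,S_i$ and $\ov{R}_i=q^{-1}t\,R_i$. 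Since the points $I_i$ and $O_j$ appearing on the right-hand sides of \eqref{eq:prod 1}--\eqref{eq:prod 4} correspond to corners on the boundary of $\la\cap\rho$ and $\la\cup\rho$ which are independent of the hook convention, swapping $\hl{}\leftrightarrow\ha{}$ inside a binomial of the form $1-I_j/S_i$ (or an analogous factor) has precisely the effect of substituting $S_i\mapsto\ov{S}_i$ (resp.\ $R_i\mapsto\ov{R}_i$), as asserted by the lemma.

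Concretely, for \eqref{eq:prod 1} with $\ha{}$ in place of $\hl{}$, I would reuse the cells $c_{i,j}^{(1)},\ldots,c_{i,j}^{(m)}$ lying in the row of the $i$-th addable outer corner together with the added corners $c_i$, exactly as in the proof of Lemma~\ref{lem:alpha beta gamma pieces}. The telescoping identity $\ha{\la}(c_{i,j}^{(k+1)}) = \ha{\nu}(c_{i,j}^{(k)})$ holds by the same argument as for $\hl{}$, since both hook types use identical arm- and leg-data up to the uniform exponent shift. Tracing through, the boundary factors become
\[
\ha{\la}(c_{i,j}^{(1)}) = 1-\frac{I_j}{\ov{S}_i}, \qquad \ha{\nu}(c_{i,j}^{(m)})\in\left\{1-\frac{O_j}{\ov{S}_i},\ 1-t\,\frac{O_j}{\ov{S}_i}\right\},
\]
and the added corner contributes $\ha{\nu}(c_i)=1-q$, which rewrites as $1-t\,O_i/\ov{S}_i$ using $O_i=S_i$ and $\ov{S}_i=q^{-1}t\,S_i$. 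Assembling these factors reproduces the right-hand side of \eqref{eq:prod 1} with every $S_i$ replaced by $\ov{S}_i$. The arguments for \eqref{eq:prod 2}--\eqref{eq:prod 4} follow the same template, with columns replacing rows where appropriate and $R$-points replacing $S$-points.

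The main obstacle is the tedious coordinate bookkeeping needed to verify each of the four equations and to check that every explicit monomial prefactor (the various $q$'s and $t$'s appearing on the right-hand sides) transforms consistently under the corner shift. A more conceptual alternative would be to exploit the identity $\ha{\la}(c)[q,t] = \hl{\la'}(c')[t,q]$ together with the conjugation involution $(\la,\rho,\mu,\nu)\mapsto(\rho',\la',\nu',\mu')$ from Lemma~\ref{lem:inverting q and t}, under which rows and columns swap and the four left-hand sides of \eqref{eq:prod 1}--\eqref{eq:prod 4} permute; however, tracking how the corner points $I_i,O_j,R_i,\ov{R}_i,S_j,\ov{S}_j$ transform under conjugation is itself nontrivial, so the direct verification via telescoping appears to be the cleaner route.
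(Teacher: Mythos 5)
Your proposal is correct and follows essentially the same route as the paper, whose proof of this lemma is simply the remark that it is analogous to Lemma~\ref{lem:alpha beta gamma pieces}; you have correctly supplied the key detail, namely that swapping $\hl{}\leftrightarrow\ha{}$ shifts each hook monomial by the factor $qt^{-1}$ (or its inverse), which is exactly the shift $S_j\mapsto\ov{S}_j$, $R_i\mapsto\ov{R}_i$ between the lower-right and upper-left corner points, while the telescoping identities carry over verbatim. (One minor caveat: only \eqref{eq:prod 1} and \eqref{eq:prod 4} carry labels in the source, so avoid citing the unlabeled intermediate equations by number.)
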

\begin{proof}
The proof is analogous to that of Lemma~\ref{lem:alpha beta gamma pieces}.
\end{proof}

\subsection{Proof of Equation \eqref{eq:weights formula}}
\label{sec:weights formula}

In the following we use the symbols $\la,\rho,-,+$ as shorthand for the skew shapes $\la / (\la \cap \rho)$, $\rho/(\la \cap \rho)$, $(\la \cap \rho)/\mu$, $\nu/(\la \cup \rho)$, respectively, as shown in Table \ref{tab:color code}. It will be clear from context whether $\la, \rho$ are partitions or symbols representing a certain skew shape.

\begin{table}[h]
\begin{center}
$
\begin{array}{c |cc}
\text{skew shape} &  \text{symbol} & \text{color and shading code} \\
\hline
\lambda/ (\lambda \cap \rho) = (\lambda \cup \rho)/\rho  & \la &  \lacell \\
\rho/ (\lambda \cap \rho)=(\lambda \cup \rho) / \lambda & \rho & \rhocell \\
(\lambda \cap \rho)/\mu & - & \mucell \\
\nu/ (\lambda \cup \rho) & + & \nucell
\end{array}
$
\caption{\label{tab:color code} The color and shading code for marking the cells of the skew shapes $\lambda / (\lambda \cap \rho), \rho / (\lambda \cap \rho), (\lambda \cap \rho)/ \mu,$ and $\nu / (\lambda \cup \rho)$.}
\end{center}
\end{table}

Let $c$ be a cell of $\nu$, and define $\omega_{\la,\rho}^{\mu,\nu}(c)$ to be the contribution of $c$ to the right-hand side of \eqref{eq:weights def}. Let $R_c$ (resp., $C_c$) be the subset of skew shapes $\kappa \in \{\la, \mu, -, +\}$ such that $c$ lies in the same row (resp., column) as a cell of $\kappa$. We call $R_c$ (resp., $C_c$) the \deff{row type} (resp., \deff{column type}) of $c$, and define the \deff{type} of $c$ to be the pair $(R_c,C_c)$. Since we assume that $\la / \mu, \nu / \rho$ are horizontal strips and $\rho/\mu, \nu /\la$ are vertical strips, the possible row types are either $R_c = \{ \rho\}$ or $R_c \subseteq \{ \la, +,-\}$. Similarly, the possible column types are either $C_c = \{\la \}$ or $C_c \subseteq \{ \rho, -, +\}$. It is immediate that $\omega_{\la,\rho}^{\mu,\nu}(c)$ is defined piecewise depending on the row and column type of $c$. It follows from \eqref{eq:weights def} that  $\omega_{\la,\rho}^{\mu,\nu}(c) = 1$ if $R_c = \{ \rho \}$ or $C_c = \{\la \}$, hence we ignore these two cases for now. Table \ref{tab:cell weights} contains formulas for the other cases where we abbreviate $b_\kappa(c)$ by $\kappa$ for reasons of readability.

\begin{table}[h]
\begin{center}
$
\begin{array}{c|cccc|cccc}
\text{\diagbox{$\; R_c$}{$C_c$}} & \emptyset & \{-\} & \{+\} & \{-,+\} & \{\rho\} & \{\rho,-\} & \{\rho,+\} & \{\rho,-,+\} \\\hline &&&&&\\[-8pt]
\emptyset
	& 1 & \frac{\la}{\mu} & \frac{\rho}{\nu} & \frac{\la \, \rho}{\mu \,\nu} & 1 & \frac{\la}{\mu} & \frac{\rho}{\nu} & \frac{\la\,\rho}{\mu\, \nu}	
\\[10pt]

\{-\}
	& \frac{\mu}{\rho} & \frac{\la}{\rho} & \frac{\mu}{\nu} & \frac{\la}{\nu} & \frac{\mu}{\rho} & \frac{\la}{\rho} & \frac{\mu}{\nu} & \frac{\la}{\nu}
\\[10pt]

\{+\}
	& \frac{\nu}{\la} & \frac{\nu}{\mu} & \frac{\rho}{\la} & \frac{\rho}{\mu} & \frac{\nu}{\la} & \frac{\nu}{\mu} & \frac{\rho}{\la} & \frac{\rho}{\mu}
\\[10pt]

\{-,+\}
	& \frac{\mu\, \nu}{\la\,\rho} & \frac{\nu}{\rho} & \frac{\mu}{\la} & 1 & \frac{\mu\, \nu}{\la\, \rho} & \frac{\nu}{\rho} & \frac{\mu}{\la} & 1
\\[8pt]
\hline &&&&\\[-8pt]

\{\la\}
	& 1 & \frac{\la}{\mu} & \frac{\rho}{\nu} & \frac{\la\,\rho}{\mu\,\nu} & 1 & \frac{\la}{\mu} & \frac{\rho}{\nu} & \frac{\la\,\rho}{\mu\,\nu}
\\[10pt]

\{\la,-\}
	& \frac{\mu}{\rho} & \frac{\la}{\rho} & \frac{\mu}{\nu} & \frac{\la}{\nu} & \frac{\mu}{\rho} & \frac{\la}{\rho} & \frac{\mu}{\nu} & \frac{\la}{\nu}
\\[10pt]

\{\la,+\}
	& \frac{\nu}{\la} & \frac{\nu}{\mu} & \frac{\rho}{\la} & \frac{\rho}{\mu} & \frac{\nu}{\la} & \frac{\nu}{\mu} & \frac{\rho}{\la} & \frac{\rho}{\mu}
\\[10pt]

\{\la,-,+\}
	& \frac{\mu\,\nu}{\la\,\rho} & \frac{\nu}{\rho} & \frac{\mu}{\la} & 1 & \frac{\mu\,\nu}{\la\,\rho} & \frac{\nu}{\rho} & \frac{\mu}{\la} & 1
\end{array}
$
\end{center}
\caption{\label{tab:cell weights} Explicit formulas for $\omega_{\la,\rho}^{\mu,\nu}(c)$ depending on the type of $c$.}
\end{table}

We prove the formula for the second entry in the first row; the other formulas are obtained in a similar manner. Let $c$ be a cell with type $(\emptyset, \{-\})$. This implies that $c$ is an element of $\mc{C}_{\la/\mu}$, $\mc{C}_{\rho/\mu}$ but not of $\mc{R}_{\la/\mu}$, $\mc{R}_{\rho/\mu}$, $\mc{R}_{\nu/\la}$, $\mc{R}_{\nu/\rho}$, $\mc{C}_{\nu/\la}$, $\mc{C}_{\nu/\rho}$. Hence we have
\begin{equation*}
\omega_{\la,\rho}^{\mu,\nu}(c)= \frac{b_{\rho}(c)}{b_{\mu}(c)}.
\end{equation*}
The type $(\emptyset, \{-\})$ of $c$ implies $a_{\rho}(c) = a_{\la}(c)$ and $\ell_{\rho}(c) = \ell_{\la}(c)$ and hence $b_\rho(c)=b_\la(c)$, which proves the formula for the second entry in the first row. \bigskip

For $R \subseteq \{ \la, -, +\}$ and $C \subseteq \{ \rho, -,+\}$, denote by $[R,C](c)$ the expression for $\omega_{\la,\rho}^{\mu,\nu}(c)$ given by row $R$ and column $C$ of Table \ref{tab:cell weights}. Observe that the four quadrants of Table \ref{tab:cell weights} are identical, i.e., for all $R,C \subseteq \{-,+\}$ one has  
\begin{equation}
[R,C](c) = [R \cup \{\la\},C](c) = [R ,C\cup \{\rho\}](c) = [R  \cup \{\la\},C\cup \{\rho\}](c).
\end{equation}
Furthermore, for $R,C \subseteq \{-,+\}$ one has  
\begin{equation}
[R,C](c) = \prod_{\kappa \in R} [\{\kappa\},\emptyset](c)
\prod_{\kappa \in C} [\emptyset,\{\kappa\}](c).
\end{equation}
Using these observations, together with the fact that $\omega_{\la,\rho}^{\mu,\nu}(c) = 1$ if $R_c = \{\rho\}$ or $C_c = \{\lambda\}$, we obtain the formula
\begin{multline*}
\prod_{c \in \nu} \omega_{\la,\rho}^{\mu,\nu}(c) =
\prod_{c: \, \substack{R_c \neq \{\rho\} \\ C_c \neq \{\la\}} }  \omega_{\la,\rho}^{\mu,\nu}(c) =
\prod_{c: \, \substack{R_c \neq \{\rho\} \\ C_c \neq \{\la\}} } [R_c \cap\{-,+\},C_c\cap \{-,+\}](c)
\\
=\prod_{ c: \, \substack{\{-\} \subseteq R_c \\ C_c \neq \{\la\}} } [\{-\},\emptyset](c)
\prod_{ c: \, \substack{\{-\} \subseteq C_c \\ R_c \neq \{\rho\}} } [\emptyset,\{-\}](c) \\
\times
\prod_{ c: \, \substack{\{+\} \subseteq R_c \\ C_c \neq \{\la\}} } [\{+\},\emptyset](c)
\prod_{ c: \, \substack{\{+\} \subseteq C_c \\ R_c \neq \{\rho\}} } [\emptyset,\{+\}](c).
\end{multline*}
By the definition of the type of a cell, the condition for $c$ in the first product above is equivalent to $c \in \mc{R}_{\la \cap \rho / \mu} - \mc{C}_{\la / \la \cap \rho}$. Since $[\{-\},\emptyset](c) = \frac{b_\mu(c)}{b_\rho(c)}$, the first product in the above expression is equal to the first product of \eqref{eq:weights formula}. By analogous arguments for the three other products, we see that the above is equal to \eqref{eq:weights formula}.

\section{Degenerations}
\label{sec:degenerations}
\subsection{$q$-Whittaker and Hall--Littlewood degenerations}
\label{sec:qWhit and HL}

In this section we study the $t = 0$ ($q$-Whittaker dual row insertion) and $q = 0$ (Hall--Littlewood  dual row insertion) degenerations of the probabilities \linebreak $\mc{P}_{\la,\rho}(\mu^{(\Ret)},\nu^{(\Set)})[q,t] = p_{\Ret,\Set}[q,t]$. By Lemma~\ref{lem:inverting q and t} these two specializations can be translated to the $t \rightarrow \infty$ ($q$-Whittaker  dual column insertion) and $q\rightarrow \infty$ (Hall--Littlewood  dual column insertion) degenerations. The key to understanding these degenerations is the combinatorial description of the exponents of $q$ and $t$ in the monomial $\tau_{\Ret,\Set}$ presented in \S \ref{sec:recovering dual RSK}. 
The notation $\shift(\Ret,\Set) := \{i: |\Ret\cap [i+1,d]| >|\Set \cap [i+1,d]|\}$ turns out useful for the next two lemmas.

\begin{lem}[$q$-Whittaker specialization]
\label{lem:q Whittaker spez}
Let $\la,\rho$ be two partitions, $d$ the number of removable inner corners of $\la \cap \mu$ and $\mu=\mu^{(\Ret)}$. Denote by $\alpha \subseteq [0,d]$ the set of integers $i$ with $(i+1)\in \Ret$ such that the $i$-th addable outer corner is in the same row as the $(i+1)$-st removable inner corner, i.e., $q^x S_i=R_{i+1}$ for some $x \in \mathbb{N}$. Then $p_{\Ret,\Set}[q,0]=0$ unless $\shift(\Ret,\Set) \subseteq \alpha\cup \{0\}$, in which case we have
\begin{equation}
\label{eq:prob in q-whittaker}
p_{\Ret,\Set}[q,0]=
\dfrac{\prod\limits_{i \in \Set\setminus\left(\Ret\cup\{0\}\right)} \left(1- \dfrac{S_i}{I_i}\right)
\prod\limits_{i \in \alpha\setminus \Set}\left(1-\dfrac{R_{i+1}}{O_i}\right)}
{\prod\limits_{i \in \alpha \setminus \left(\Ret\cup\{0\}\right)}\left(1-\dfrac{R_{i+1}}{I_i}\right)}
\prod_{i \in \shift(\Ret,\Set)} \frac{R_{i+1}}{S_{i}}.
\end{equation}	
\end{lem}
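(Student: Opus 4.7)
My strategy is to use the factorization $p_{\Ret,\Set}[q,t] = \tau_{\Ret,\Set}\,\zeta_{\Ret,\Set}$ from \eqref{eq:prob rewritten} and analyze the monomial $\tau_{\Ret,\Set}$ and the rational binomial factor $\zeta_{\Ret,\Set}$ separately at $t = 0$. By Lemma~\ref{lem:tau}, $\tau_{\Ret,\Set} = q^{e_q}t^{e_t}$ is a monomial with $e_q, e_t \geq 0$, so $\tau_{\Ret,\Set}[q,0] = 0$ precisely when $e_t > 0$; otherwise $\tau_{\Ret,\Set}[q,0] = q^{e_q}$. Moreover every binomial factor in $\zeta_{\Ret,\Set}$ has the form $1 - q^at^b$ with $a,b \geq 0$: factors with $b \geq 1$ become trivial at $t=0$, while factors with $b=0, a\geq 1$ survive.

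The first task is to characterize the pairs $(\Ret, \Set)$ for which $e_t = 0$. For this I will use the lattice-path description of the exponents from Lemma~\ref{lem:exponents}: after pairing each up-step with the nearest matching down-step at the same height (leaving a single unpaired down-step on the $x$-axis), the exponent of each pair is determined by the $(q,t)$-coordinates of the corresponding points, and the pair contributes a pure power of $q$ iff its two endpoints lie in the same row. Pairs whose labels are all contained in one of the two index sets (i.e.\ both in $\Ret$ and $\Set$ or both in their complements) automatically occur at positions straddling only intermediate outer/inner corners within a single ``concave'' or ``convex'' segment of the boundary, and one checks that these pairs always contribute pure powers of $q$. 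In contrast, pairs that cross a position $i \in \shift(\Ret, \Set)$ force the boundary segment between the $i$-th addable outer corner and the $(i+1)$-st removable inner corner to lie in a single row, which is exactly the condition $i \in \alpha$. The index $i = 0$ absorbs the unpaired down-step ending on the $x$-axis, which contributes trivially to $e_t$; this accounts for the $\cup\{0\}$. Putting these pieces together, $e_t = 0$ iff $\shift(\Ret, \Set) \subseteq \alpha \cup \{0\}$, and in that case the nontrivial monomial contributions come exactly from the crossings at shift positions, giving $\tau_{\Ret,\Set}[q,0] = \prod_{i \in \shift(\Ret, \Set)} R_{i+1}/S_{i}$, matching the rightmost product in \eqref{eq:prob in q-whittaker}.

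The second task is to evaluate $\zeta_{\Ret,\Set}[q,0]$ under the same assumption. I will go through each of the eight binomial products of \eqref{eq:zeta} and identify which factors have $t$-exponent zero. The claim is that the survivors are precisely: in the numerator, the diagonal factors $1 - S_s/I_s$ for $s \in \Set \setminus (\Ret \cup \{0\})$ (coming from the $i = s$ terms of $\prod (1-S_s/I_i)$; for $s \in \Set \cap \Ret$ the corresponding factor is cancelled against a denominator factor, while $s = 0$ is absent since $0 \notin [d]$), and factors $1 - R_{i+1}/O_i$ for $i \in \alpha \setminus \Set$ (coming from the $j = i$ terms of $\prod(1 - O_j/R_r)$ with $r = i+1$, using $i \in \alpha$ to guarantee the ratio is a power of $q$); in the denominator, the surviving factors are $1 - R_{i+1}/I_i$ for $i \in \alpha \setminus (\Ret \cup \{0\})$. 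Combining $\tau_{\Ret,\Set}[q,0]$ with $\zeta_{\Ret,\Set}[q,0]$ then yields the right-hand side of \eqref{eq:prob in q-whittaker}.

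\textbf{Main obstacle.} The principal difficulty lies in the bookkeeping of Step~2: determining, among the many binomial factors in the intricate expression \eqref{eq:zeta}, exactly which ones have vanishing $t$-exponent. This requires, for each product and each pair of indices, a geometric analysis of when two of the points $S_s, \ov{S}_s, O_j, R_r, \ov{R}_r, I_i$ lie in the same row. Careful treatment is needed for indices $s = i$ or $i = j \pm 1$ that produce ``adjacent'' point-pairs, and especially for the boundary index $i = 0$; the recursive structure of addable/removable corners alternating along the boundary of $\la \cup \rho$ versus $\la \cap \rho$ must be exploited throughout. Expected cancellations between numerator and denominator factors indexed by $\Set \cap \Ret$ will also require attention to arrive at the clean indexing by $\Set \setminus (\Ret \cup \{0\})$ and $\alpha \setminus (\Ret \cup \{0\})$ in the final formula.
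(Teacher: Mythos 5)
Your overall strategy coincides with the paper's: factor $p_{\Ret,\Set}=\tau_{\Ret,\Set}\zeta_{\Ret,\Set}$ via \eqref{eq:prob rewritten}, determine when the monomial $\tau_{\Ret,\Set}$ survives at $t=0$ using the lattice-path pairing, and then pick out the factors of $\zeta_{\Ret,\Set}$ with vanishing $t$-exponent. Your treatment of $\zeta_{\Ret,\Set}$ is essentially the correct one (modulo one slip: for $s\in\Set\cap\Ret$ the factor $1-S_s/I_s$ is simply absent from the numerator product, which runs over $i\notin\Ret$; nothing is ``cancelled against a denominator factor''). The problem lies in your analysis of $\tau_{\Ret,\Set}$.

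The claim that ``pairs whose labels are all contained in one of the two index sets \ldots\ always contribute pure powers of $q$'' cannot be right: the pairing of Lemma~\ref{lem:exponents} operates separately within each of the two complementary paths, so \emph{every} pair has its up-step label in $\Ret$ and down-step label in $\Set$, or both labels in the complements. If all such pairs contributed pure $q$-powers, then $e_t=0$ always and the ``unless'' clause of the lemma would be vacuous, which Example~\ref{ex:exponents} already contradicts. The correct mechanism is different: a pair $(U_i,D_{p(i)})$ contributes $(S_{p(i)}/R_i)^{z}$ (resp.\ $(O_{p(i)}/I_i)^{z}$), which is a pure $q$-power either because $z=0$ (the up step starts on the $x$-axis) or because the two points lie in the same row; your ``iff its two endpoints lie in the same row'' drops the first, generic, case. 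What is then needed --- and what your proposal omits --- is the argument that forces the combinatorial structure of the pairing: one must show, by induction starting from the largest $i\in\Ret$, that $c_i|_{t=0}\neq 0$ for all $i\in\Ret$ only if $p(i)\in\{i,i-1\}$ with $p(i)=i$ whenever $(i-1)\notin\alpha$, whence the complementary path is alternating with all up steps on the axis. A priori $p(i)$ could be far from $i$, and nothing in your write-up rules this out; yet this is exactly the step that produces both the condition $\shift(\Ret,\Set)\subseteq\alpha\cup\{0\}$ and the identification $\tau_{\Ret,\Set}|_{t=0}=\prod_{i\in\shift(\Ret,\Set)}R_{i+1}/S_i$. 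Your explanation of the ``$\cup\{0\}$'' is also off: when $0\in\shift(\Ret,\Set)$ the contribution at index $0$ is the nontrivial factor $R_1/S_0$ coming from the pair $(U_1,D_0)$, not the unpaired down step ending on the axis.
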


\begin{proof}
We use the description $p_{\Ret,\Set}=\tau_{\Ret,\Set} \zeta_{\Ret,\Set}$ in \eqref{eq:prob rewritten} to calculate the probability for $t=0$. Analogously to \eqref{eq:zeta at 0}, it is immediate that $\lim_{t\rightarrow 0}\zeta_{\Ret,\Set} \neq 0$. Therefore $\tau_{\Ret,\Set}$ determines if the probability is equal to $0$ or not.
Analogously to Lemma~\ref{lem:exponents} and the discussion thereafter, we calculate $\tau_{\Ret,\Set}$ by constructing two paths and pairing all up steps and down steps of each path with the exception of one down step.
Define $p:[d] \rightarrow [0,d]$ as the injection such that $U_i$ is paired with $D_{p(i)}$ for $i \in [d]$ and by $c_i$ the contribution of the pair $(U_i,D_{p(i)})$ to $\tau_{\Ret,\Set}$.
 Then $c_i=\left(\frac{S_{p(i)}}{R_i}\right)^z$ if $i \in \Ret$ and $c_i=\left(\frac{O_{p(i)}}{I_i}\right)^z$ if $i \in [d]\setminus \Ret$, where $z$ is a certain integer satisfying
\[
z= \begin{cases}
\text{positive} \quad & U_i \text{ is strictly above the $x$-axis (this implies $p(i)\geq i$)}, \\
0 & \text{the starting point of $U_i$ is on the $x$-axis}, \\
\text{positive} \quad & U_i \text{ is weakly below the $x$-axis (this implies $p(i) < i$)}.
\end{cases}
\]
It is immediate that the exponent of $t$ in $c_i$ is positive for $i \in \Ret$ unless $U_i$ starts on the $x$-axis, or $p(i)=i-1$ and $\frac{R_{i}}{S_{i-1}}$ is a power of $q$, i.e., $(i-1) \in \alpha \cap \Set$. This implies inductively, starting with the largest $i\in \Ret$, that $\left.c_i\right|_{t=0} \neq 0$ for all $i \in \Ret$ if and only if $p(i) \in \{i,i-1\}$ for all $i \in \Ret$ and $p(i)=i$ if $(i-1) \notin \alpha$. As a consequence, the second path consists of alternating up and down steps where all up steps start at the $x$-axis, implying the first part of the assertion.

\begin{figure}[h]
\begin{center}
\begin{tikzpicture}[scale=0.75]

\draw (0,0) node{$\textcolor{blue}{D_0}$};
\draw (1.25,0) node{$\textcolor{blue}{D_1}$};
\draw (2.5,0) node{$\mathcircled{\textcolor{blue}{D_2}}$};
\draw (3.75,0) node{$\mathcircled{\textcolor{blue}{D_3}}$};
\draw (5,0) node{$\textcolor{blue}{D_4}$};
\draw (6.25,0) node{$\mathcircled{\textcolor{blue}{D_5}}$};
\draw (7.5,0) node{$\textcolor{blue}{D_6}$};
\draw (8.75,0) node{$\textcolor{blue}{D_7}$};
\draw (10,0) node{$\mathcircled{\textcolor{blue}{D_8}}$};

\begin{scope}[xshift=0cm, yshift=-1.75cm]
\draw (1.25,0) node{$\textcolor{red}{U_1}$};
\draw (2.5,0) node{$\mathcircled{\textcolor{red}{U_2}}$};
\draw (3.75,0) node{$\textcolor{red}{U_3}$};
\draw (5,0) node{$\mathcircled{\textcolor{red}{U_4}}$};
\draw (6.25,0) node{$\mathcircled{\textcolor{red}{U_5}}$};
\draw (7.5,0) node{$\textcolor{red}{U_6}$};
\draw (8.75,0) node{$\textcolor{red}{U_7}$};
\draw (10,0) node{$\mathcircled{\textcolor{red}{U_8}}$};
\end{scope}

\draw[->, thick] (10,-1.25) -- (10,-.55);
\draw[->, thick] (6.25,-1.25) -- (6.25,-.55);
\draw[->, thick] (2.5,-1.25) -- (2.5,-.55);

\draw[->, thick] (4.65,-1.25) -- (4.1,-.5);

\draw[dotted, thick] (2.15,-1.25) -- (1.6,-.55);
\draw[dotted, thick] (5,-1.2) -- (5,-.55);
\draw[dotted, thick] (5.9,-1.25) -- (5.35,-.55);

\begin{scope}[yshift=-4.5cm, xshift=-3.25cm, scale=.9]
\draw[dashed,thin,gray] (0,0) -- (8,0);
\draw (0,0) -- (1,1) -- (2,0) -- (3,-1) -- (4,0) -- (5,1) -- (6,0) -- (7,1) -- (8,0);
\filldraw (0,0) circle (2pt);
\filldraw (1,1) circle (2pt);
\filldraw (2,0) circle (2pt);
\filldraw (3,-1) circle (2pt);
\filldraw (4,0) circle (2pt);
\filldraw (5,1) circle (2pt);
\filldraw (6,0) circle (2pt);
\filldraw (7,1) circle (2pt);
\filldraw (8,0) circle (2pt);
\node[red] at (.8,.3) {$U_2$};
\node[red] at (3.8,-.75) {$U_4$};
\node[red] at (4.8,.3) {$U_5$};
\node[red] at (6.8,.3) {$U_8$};
\node[blue] at (1.75,.75) {$D_2$};
\node[blue] at (2.75,-.3) {$D_3$};
\node[blue] at (5.75,.75) {$D_5$};
\node[blue] at (7.75,.75) {$D_8$};

\begin{scope}[xshift=9cm, scale=.9]
\draw[dashed,thin,gray] (1,0) -- (10,0);
\draw (1,1) -- (2,0) -- (3,1) -- (4,0) -- (5,1) -- (6,0) --(7,1) -- (8,0) -- (9,1) -- (10,0);
\filldraw (1,1) circle (2pt);
\filldraw (2,0) circle (2pt);
\filldraw (3,1) circle (2pt);
\filldraw (4,0) circle (2pt);
\filldraw (5,1) circle (2pt);
\filldraw (6,0) circle (2pt);
\filldraw (7,1) circle (2pt);
\filldraw (8,0) circle (2pt);
\filldraw (9,1) circle (2pt);
\filldraw (10,0) circle (2pt);
\node[blue] at (1.8,.75) {$D_0$};
\node[blue] at (3.8,.75) {$D_1$};
\node[blue] at (5.8,.75) {$D_4$};
\node[blue] at (7.8,.75) {$D_6$};
\node[blue] at (9.8,.75) {$D_7$};
\node[red] at (2.8,.3) {$U_1$};
\node[red] at (4.8,.3) {$U_3$};
\node[red] at (6.8,.3) {$U_6$};
\node[red] at (8.8,.3) {$U_7$};
\end{scope}
\end{scope}
\end{tikzpicture}
\caption{\label{fig:whittaker example} An example of the map $p$ restricted to $\Ret=\{2,4,5,8\}$ where $\alpha=\{1,3,4\}$ together with the corresponding paths. The dotted lines denote the alternative choices for $p(2),p(4)$ and $p(5)$ which preserve $\left.\tau_{\Ret,\Set}\right|_{t=0}\neq 0$.}
\end{center}
\end{figure}
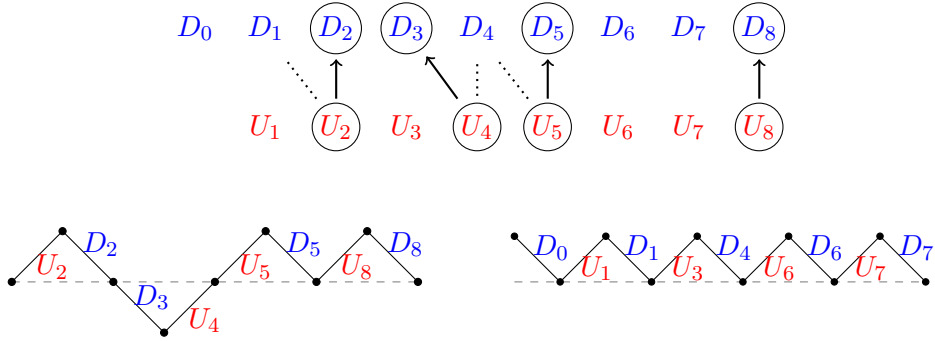

Assume $\Set$ is chosen such that $\left.\tau_{\Ret,\Set}\right|_{t=0} \neq 0$. By the above considerations it is immediate that $\left.c_i\right|_{t=0}= \frac{R_i}{S_{i-1}}$ if $p(i)=i-1$, i.e., $(i-1) \in \shift(\Ret,\Set)$, and $\left.c_i\right|_{t=0}=1$ otherwise. Hence we obtain
\[
\left.\tau_{\Ret,\Set}\right|_{t=0} = \prod_{i \in \shift(\Ret,\Set)} \frac{R_{i+1}}{S_{i}}.
\]
Finally, observe that all fractions in the products of \eqref{eq:zeta} have positive powers of $t$ except the term $\frac{S_i}{I_i}$ for $i \in \Set\setminus \left(\Ret\cup\{0\}\right)$, the fraction $\frac{R_{i+1}}{O_i}$ for $i \in \alpha\setminus \Set$ and $\frac{R_{i+1}}{I_i}$ for $i \in \alpha\setminus \left(\Ret\cup\{0\}\right)$, which proves the assertion.
\end{proof}

By the above lemma and Definition~\ref{def:qtRSK insertion}, we can describe the \deff{$q$-Whittaker dual row insertion} as follows.
Let $T$ be an SSYT and $i_1< \cdots <i_n$ be positive integers which we want to insert simultaneously into $T$.
For the $k$-th step of the insertion algorithm let $\mu=\mu^{(\Ret)}=T^{(k-1)}, \la=T^{(k)}$ and denote by $\rho$ the shape of the SSYT obtained in the previous step restricted to the entries at most $k-1$. Denote by $a_0,\ldots,a_d$ the addable outer cells of $\la \cup \rho$ and define $\alpha$ as the set of indices $j$ such that $(j+1)\in \Ret$ and $a_j$ is in the same row as $c_{j+1}$. For simplicity, we use the notation
\[
\wh{\Ret} = \begin{cases}
\Ret \cup \{0\} \qquad &\text{if } k \in \{i_1,\ldots,i_n\},\\
\Ret & \text{otherwise}.
\end{cases}
\]
We (re-)insert an entry $k$ into $T$ for each $j \in \wh{\Ret}$, starting with the smallest $j$, according to the following rule.
\begin{itemize}
\item If $(j-1) \notin \alpha$ or if there is already an entry $k$ in the box $a_{j-1}$, (re-)insert $k$ into the box $a_j$ with probability $1$.
\item If $(j-1) \in \alpha$ and there is no entry $k$ in the box $a_{j-1}$, reinsert $k$ into the box $a_{j-1}$ with probability $\frac{R_{j}}{S_{j-1}}$ if $(j-1)\in \Ret$ and with probability $\frac{R_{j}}{S_{j-1}}\left(1-\frac{S_{j-1}}{I_{j-1}} \right)$ otherwise; and reinsert $k$ into $a_j$ with probability $\left(1-\frac{R_j}{O_{j-1}}\right)$.
\item If $k$ is placed into a cell $a_j$ with an entry $z$ in $T$, then $z$ is bumped and reinserted in the $z$-th insertion step.
\end{itemize}
Finally, we have to multiply all individual probabilities from above and divide the result by $\prod_{j \in \alpha\setminus \left(\Ret\cup\{0\}\right)}\left(1-\frac{R_{j+1}}{I_j}\right)$.

\begin{rem}
The $q$-Whittaker dual row insertion was first described by Matveev and Petrov \cite[\S 5.1]{MatveevPetrov17} for Gelfand Tsetlin patterns which are known to be in bijection to semistandard Young tableaux, see for example
\cite[p. 313]{EC2}.
\end{rem}

Next we describe our probabilities for $q=0$. For readability, we write $\Ret^c= [d] \setminus \Ret$ and $\Set^c=[0,d] \setminus \Set$. 

\begin{lem}[Hall--Littlewood specialization]
\label{lem:Hall-Littlewood spez}
Let $\la,\rho$ be two partitions, $d$ the number of removable inner corners of $\la \cap \rho$ and $\mu=\mu^{(\Ret)}$. We define $\beta \subseteq [0,d]$ as the set of integers $i$ such that the $i$-th addable outer corner is in the column next to the $(i+1)$-st removable inner corner, i.e., $t^x O_i=I_{i+1}$ for a suitable $x \in \mathbb{N}$ and $(i+1) \in \Ret^c$.
Then $p_{\Ret,\Set}[0,t]=0$ unless $\shift(\Ret^c,\Set^c) \subseteq \beta$ in which case we have 
\begin{equation}
\label{eq:prob in HL}
p_{\Ret,\Set}[0,t]= \dfrac{\prod\limits_{i \in \Set\cap \beta}\left(1-\dfrac{I_{i+1}}{S_i}\right)
\prod\limits_{i \in \Ret \setminus \Set} \left(1-\dfrac{O_i}{R_i} \right)}
{\prod\limits_{i \in \Ret \cap \beta}\left(1-\dfrac{I_{i+1}}{R_i}\right)}
\prod_{i \in \shift(\Ret^c,\Set^c)}\frac{I_{i+1}}{O_i}.
\end{equation}
\end{lem}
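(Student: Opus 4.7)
The strategy parallels that of Lemma~\ref{lem:q Whittaker spez} with the roles of $\Ret, \Set$ swapped for those of their complements $\Ret^c = [d]\setminus \Ret$ and $\Set^c = [0,d]\setminus \Set$; the heuristic reason is that the $q \to 0$ degeneration is related to $t \to 0$ by the transposition symmetry of Lemma~\ref{lem:inverting q and t}, which exchanges $q$ and $t$ along with interchanging the roles played by the two lattice paths of \S\ref{sec:recovering dual RSK}. First I would write $p_{\Ret,\Set} = \tau_{\Ret,\Set}\, \zeta_{\Ret,\Set}$ as in \eqref{eq:prob rewritten}. Every binomial factor of $\zeta_{\Ret,\Set}$ has the form $1 - q^x t^y$ with $x,y \geq 0$, so at $q=0$ each factor collapses to $1$ (if $x>0$) or to $1-t^y$ (if $x=0$); in particular $\zeta_{\Ret,\Set}|_{q=0}$ is well-defined and nonzero, so the vanishing of $p_{\Ret,\Set}[0,t]$ is controlled solely by the $q$-exponent of the monomial $\tau_{\Ret,\Set}$.

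Next I would use the pair-of-paths description of $\tau_{\Ret,\Set}$ from Lemma~\ref{lem:exponents}, but now examine the second path (labels in $\Ret^c$ and $\Set^c$) in place of the first. Observe that in the first path every fraction $S_{p(r)}/R_r$ between corner points of an addable/removable pair of $\la \cup \rho$ and $\la \cap \rho$ in the same strip is a pure power of $q$. Hence pairs in the first path can contribute additional $q$, and the constraint ``$q$-exponent of $\tau_{\Ret,\Set}$ equals $0$'' forces $u_r = 0$ for every $r \in \Ret$; equivalently every $U_r$ in the first path must start on the $x$-axis, so the first path is a concatenation of trivial motifs $U_r D_r$ at height $0$. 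I would then run an induction on the indices of the second path from largest to smallest, exactly as in Lemma~\ref{lem:q Whittaker spez}: a pair $(U_i, D_{p'(i)})$ with $i \in \Ret^c$ and $p'(i) \in \Set^c$ contributes $(O_{p'(i)}/I_i)^{\pm u_i}$, which survives at $q=0$ only if $u_i = 0$ or $p'(i) = i-1$ with $i-1 \in \beta$. This forces $p'(i) \in \{i-1, i\}$, with $p'(i) = i-1$ permitted exactly when $i-1 \in \beta$, yielding the condition $\shift(\Ret^c, \Set^c) \subseteq \beta$ and simultaneously computing
\[
\tau_{\Ret,\Set}\big|_{q=0} = \prod_{i \in \shift(\Ret^c,\Set^c)} \frac{I_{i+1}}{O_i}.
\]

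Finally I would identify which binomial factors of $\zeta_{\Ret,\Set}$ remain nontrivial at $q=0$ by going through the four products in \eqref{eq:zeta} under the structural constraints just established. The factors with $x=0$ in their $1 - q^x t^y$ expansion turn out to be precisely: $1 - I_{i+1}/S_i$ for $i \in \Set \cap \beta$ (from the first numerator product), $1 - O_i/R_i$ for $i \in \Ret \setminus \Set$ (from the second numerator product, since this is the only way an $O_j/R_r$ ratio can have trivial $q$-part), and $1 - I_{i+1}/R_i$ for $i \in \Ret \cap \beta$ in the denominator. All other binomials become $1$. Multiplying these remnants by $\tau_{\Ret,\Set}|_{q=0}$ gives the claimed formula \eqref{eq:prob in HL}.

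The main obstacle, as in the $q$-Whittaker case, will be the combinatorial bookkeeping: one must verify that the inductive path analysis is consistent (no admissible configurations are missed, and no spurious ones are admitted) and, crucially, check exhaustively against \eqref{eq:zeta} that exactly the three families of binomials listed above have $q$-exponent zero — while simultaneously ruling out any cancellation between $\tau$ and $\zeta$ that would alter which factors appear in the final product.
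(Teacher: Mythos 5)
Your proposal follows the paper's proof essentially step for step: the decomposition $p_{\Ret,\Set}=\tau_{\Ret,\Set}\,\zeta_{\Ret,\Set}$, the observation that $\zeta_{\Ret,\Set}|_{q=0}\neq 0$ so that the monomial $\tau_{\Ret,\Set}$ alone governs vanishing, the induction over $i\in\Ret^c$ (from largest to smallest) forcing $p(i)\in\{i,i-1\}$ with $p(i)=i-1$ permitted only when $(i-1)\in\beta$, the resulting evaluation $\tau_{\Ret,\Set}|_{q=0}=\prod_{i\in\shift(\Ret^c,\Set^c)}I_{i+1}/O_i$, and the identification of exactly the three families of binomials in \eqref{eq:zeta} that survive at $q=0$. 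This is the paper's argument.

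One step of your write-up, however, is not correct as stated: the claim that every first-path fraction $S_{p(r)}/R_r$ is ``a pure power of $q$,'' from which you deduce $u_r=0$ for all $r\in\Ret$. If the $p(r)$-th addable outer corner of $\la\cup\rho$ sits directly on top of the cell of the $r$-th removable inner corner of $\la\cap\rho$ (a configuration that does occur; it is exactly the relative position of the cells $x$ and $x^N$ in the Jack-limit analysis), then $S_{p(r)}/R_r$ is a pure power of $t$, and the pair $(U_r,D_{p(r)})$ does \emph{not} vanish at $q=0$ even when $u_r>0$. So the first-path analysis alone cannot force $u_r=0$. The paper avoids this: it extracts constraints only from the pairs with $i\in\Ret^c$, and then observes that $p(i)\in\{i,i-1\}$ for all $i\in\Ret^c$ forces, by complementarity of the two subsequences, the \emph{first} path to be alternating, whence all of its up steps start on the $x$-axis and contribute trivially to $\tau_{\Ret,\Set}|_{q=0}$. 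Since you also carry out the second-path induction, your argument is repaired by deleting the ``pure power of $q$'' claim and deducing the triviality of the first path from the second-path constraints, as the paper does; but the shortcut as you wrote it would fail in the configuration just described. (A cosmetic point: the first path is $D(UD)(UD)\cdots$ or $(UD)(UD)\cdots$ with the indices of consecutive $U$ and $D$ letters generally distinct, not a concatenation of motifs $U_rD_r$.)
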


\begin{proof}
It is immediate, that $\lim_{q\rightarrow 0} \zeta_{\Ret,\Set} \neq 0$, hence $\left.\tau_{\Ret,\Set}\right|_{q=0}$ determines if $p_{\Ret,\Set}[0,t]$ is nonzero.
Analogously to the proof of Lemma~\ref{lem:q Whittaker spez} we denote by $p:[d] \rightarrow [0,d]$ the pairing of up and down steps and by $c_i$ the contribution of the pair $(U_i,D_{p(i)})$ to $\tau_{\Ret,\Set}$. For $i \in \Ret^c$ it follows that the exponent of $q$ in $c_i$ is positive unless $U_i$ starts on the $x$-axis or $p(i)=i-1$ and $O_{i-1} t^x=I_{i}$, i.e., $(i-1)\in \beta$. This implies inductively, starting with the largest $i \in \Ret^c$, that $\left.c_i\right|_{q=0}\neq 0$ for all $i \in \Ret^c$ if and only if $p(i) \in \{i,i-1\}$ for all $i \in \Ret^c$ and $p(i)=i$ if $(i-1) \notin \beta$. See Figure~\ref{fig:HL example} for an example. As a consequence the first path consists in this case alternately of up and down steps and we obtain
\[
\left.\tau_{\Ret,\Set}\right|_{q=0}=\prod_{i \in \shift\left(\Ret^c,\Set^c\right)}\frac{I_{i+1}}{O_i}.
\]

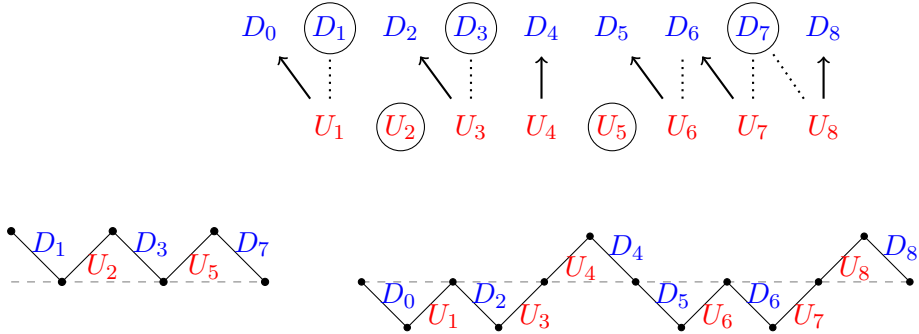
\begin{figure}[h]
\begin{center}
\begin{tikzpicture}[scale=0.75]

\draw (0,0) node{$\textcolor{blue}{D_0}$};
\draw (1.25,0) node{$\mathcircled{\textcolor{blue}{D_1}}$};
\draw (2.5,0) node{$\textcolor{blue}{D_2}$};
\draw (3.75,0) node{$\mathcircled{\textcolor{blue}{D_3}}$};
\draw (5,0) node{$\textcolor{blue}{D_4}$};
\draw (6.25,0) node{$\textcolor{blue}{D_5}$};
\draw (7.5,0) node{$\textcolor{blue}{D_6}$};
\draw (8.75,0) node{$\mathcircled{\textcolor{blue}{D_7}}$};
\draw (10,0) node{$\textcolor{blue}{D_8}$};

\begin{scope}[xshift=0cm, yshift=-1.75cm]
\draw (1.25,0) node{$\textcolor{red}{U_1}$};
\draw (2.5,0) node{$\mathcircled{\textcolor{red}{U_2}}$};
\draw (3.75,0) node{$\textcolor{red}{U_3}$};
\draw (5,0) node{$\textcolor{red}{U_4}$};
\draw (6.25,0) node{$\mathcircled{\textcolor{red}{U_5}}$};
\draw (7.5,0) node{$\textcolor{red}{U_6}$};
\draw (8.75,0) node{$\textcolor{red}{U_7}$};
\draw (10,0) node{$\textcolor{red}{U_8}$};
\end{scope}

\draw[dotted, thick] (1.25,-1.25) -- (1.25,-.55);
\draw[dotted, thick] (3.75,-1.25) -- (3.75,-.55);
\draw[->, thick] (5,-1.25) -- (5,-.5);
\draw[dotted, thick] (7.5,-1.25) -- (7.5,-.55);
\draw[dotted, thick] (8.75,-1.25) -- (8.75,-.55);
\draw[->, thick] (10,-1.25) -- (10,-.5);

\draw[->, thick] (.9,-1.25) -- (.35,-.5);
\draw[->, thick] (3.4,-1.25) -- (2.85,-.5);
\draw[->, thick] (7.15,-1.25) -- (6.6,-.5);
\draw[->, thick] (8.4,-1.25) -- (7.85,-.5);
\draw[dotted, thick] (9.65,-1.25) -- (9.1,-.45);

\begin{scope}[yshift=-4.5cm, xshift=-3.5cm, scale=.9]
\draw[dashed,thin,gray] (-1,0) -- (4,0);
\draw (-1,1) -- (0,0) -- (1,1) -- (2,0) -- (3,1) -- (4,0);
\filldraw (-1,1) circle (2pt);
\filldraw (0,0) circle (2pt);
\filldraw (1,1) circle (2pt);
\filldraw (2,0) circle (2pt);
\filldraw (3,1) circle (2pt);
\filldraw (4,0) circle (2pt);
\node[red] at (.8,.3) {$U_2$};
\node[red] at (2.8,.3) {$U_5$};
\node[blue] at (-.25,.75) {$D_1$};
\node[blue] at (1.75,.75) {$D_3$};
\node[blue] at (3.75,.75) {$D_7$};

\begin{scope}[xshift=5cm, scale=.9]
\draw[dashed,thin,gray] (1,0) -- (13,0);
\draw (1,0) -- (2,-1) -- (3,0) -- (4,-1) -- (5,0) -- (6,1) --(7,0) -- (8,-1) -- (9,0) -- (10,-1) -- (11,0) -- (12,1) -- (13,0);
\filldraw (1,0) circle (2pt);
\filldraw (2,-1) circle (2pt);
\filldraw (3,0) circle (2pt);
\filldraw (4,-1) circle (2pt);
\filldraw (5,0) circle (2pt);
\filldraw (6,1) circle (2pt);
\filldraw (7,0) circle (2pt);
\filldraw (8,-1) circle (2pt);
\filldraw (9,0) circle (2pt);
\filldraw (10,-1) circle (2pt);
\filldraw (11,0) circle (2pt);
\filldraw (12,1) circle (2pt);
\filldraw (13,0) circle (2pt);
\node[blue] at (1.8,-.3) {$D_0$};
\node[blue] at (3.8,-.3) {$D_2$};
\node[blue] at (6.8,.75) {$D_4$};
\node[blue] at (7.8,-.3) {$D_5$};
\node[blue] at (9.8,-.3) {$D_6$};
\node[blue] at (12.8,.75) {$D_8$};
\node[red] at (2.8,-.75) {$U_1$};
\node[red] at (4.8,-.75) {$U_3$};
\node[red] at (5.8,.3) {$U_4$};
\node[red] at (8.8,-.75) {$U_6$};
\node[red] at (10.8,-.75) {$U_7$};
\node[red] at (11.8,.3) {$U_8$};
\end{scope}

\end{scope}
\end{tikzpicture}
\caption{\label{fig:HL example} An example of $p$ restricted to $\Ret^c=\{1,3,4,6,7,8\}$ where $\beta=\{0,2,5,6,7\}$ together with the corresponding paths. The dotted lines denote the alternative choices for $p(1),p(3),p(6),p(7)$ and $p(8)$ which preserve that $\left.\tau_{\Ret,\Set}\right|_{q=0}$ is nonzero.}
\end{center}
\end{figure}

We observe that all fractions in the products of \eqref{eq:zeta} have positive powers in $q$ except for 
$\frac{I_{i+1}}{S_{i}}$ for $i\in \Set \cap \beta$, the fraction $\frac{O_i}{R_i}$ for $i \in \Ret \setminus \Set$ and $\frac{I_{i+1}}{R_{i}}$ for $i \in \Ret\cap \beta$ which proves the claim.
\end{proof}

We translate the dual growth rules for Hall--Littlewood polynomials into a \deff{Hall--Littlewood dual row insertion} by Definition~\ref{def:qtRSK insertion}. Let $T$ be an SSYT and $i_1< \cdots<i_n$ be positive integers which we want to insert simultaneously into $T$. For the $k$-th step of the insertion algorithm let $\mu=\mu^{(\Ret)}=T^{(k-1)}, \la=T^{(k)}$ and denote by $\rho$ be the shape of the SSYT obtained in the previous step restricted to entries at most $k-1$. Let $a_0, \ldots, a_d$ the addable outer corners of $\la \cup \rho$ and define $\wh{\Ret}$ as before. We (re-)insert $k$ for each $j\in \wh{R}$, starting with the smallest $j$, according to the following rules.
\begin{itemize}
\item For $j \notin \beta$ or $j+1 \in \Ret$ we (re-)insert $k$ into the cell $a_j$ with probability $1$.
\item If $j \in \beta$ we (re-)insert $k$ into a box $a_i$, where $j \leq i$ satisfies $\{j,j+1,\ldots,i-1\} \subseteq \beta$, with probability
\[
\left(1- \dfrac{I_{i+1}}{S_i}\right)^{[i \in \beta]}\left(1-\dfrac{O_j}{R_j}\right)^{[i \neq j] }
\left(1-\frac{I_{j+1}}{R_j}\right)^{-1}
\prod\limits_{l=j}^{i-1}\dfrac{I_{l+1}}{O_l},
\]
where $[\text{expression}]$ is the Iverson bracket which is $1$ if `$\text{expression}$' is true and $0$ otherwise, and using the convention $\frac{O_0}{R_0}=\frac{I_1}{R_0}=0$.
\item If $k$ is placed into a cell $a_i$ with an entry $z$ in $T$, then $z$ is bumped and reinserted in the $z$-th insertion step.
\end{itemize}



\begin{rem}
By Lemma~\ref{lem:inverting q and t} and \eqref{eq:sym of dual RSK}, one can define the \deff{$q$-Whittaker dual column insertion} as the process obtained by first conjugating all objects, applying the Hall--Littlewood dual-row insertion and finally replacing $t$ by $q^{-1}$. The $q$-Whittaker dual column insertion was first introduced by Matveev and Petrov in \cite[\S 5.4]{MatveevPetrov17} for Gelfand-Tsetlin patterns. A Hall--Littlewood dual column insertion can be defined analogously.
\end{rem}

\subsection{Jack limit}
\label{sec:Jack}
The Jack polynomials $P_\la(x;\alpha)$ are a one-parameter family of symmetric functions introduced by Henry Jack \cite{Jack70}. They are obtained from the Macdonald polynomials $P_\la(x;q,t)$ by setting $q = t^{\alpha}$, and taking the limit $t \rightarrow 1$. 
The next theorem states an intriguing property for the $\dRSK$ correspondence when restricting to the Jack polynomial setting. It can be seen easily that it does not hold in the general Macdonald setting.

\begin{thm}
\label{thm:Jack}
Let $A$ be an $m \times n$ $\{0,1\}$-matrix such that at most one entry in each column is nonzero. In the Jack limit of $\dRSK$,  interchanging adjacent columns of $A$ does not affect the distribution of the $P$-tableaux.
\end{thm}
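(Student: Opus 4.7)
My plan is to prove the theorem by reducing it to a local two-column identity and then verifying it in the Jack limit.

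First, observe that $\dRSK$, applied column by column, defines a Markov process on chains of partitions: the right-boundary chain $\chi^{(j)} = (\Lambda_{0,j}, \ldots, \Lambda_{m,j})$ of the growth sub-diagram after processing column $j$ depends only on $\chi^{(j-1)}$ and on the $j$-th column of $A$. Since the columns to the right of position $j+1$ are unaffected by swapping columns $j$ and $j+1$, and the $P$-tableau is read from the rightmost chain, the Markov property reduces the theorem to the following local claim: for every chain $\chi$ with $\chi_0 = \emptyset$ and every two columns $C_1, C_2$ each having at most one $1$, the distribution of the right-boundary chain obtained by processing $(C_1, C_2)$ agrees with that obtained by $(C_2, C_1)$ in the Jack limit.

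Second, I would isolate the essential subcase. A direct check of the local growth rules shows that an all-zero column acts as the identity on chains: at every row one has $k = 0$, forcing $\nu = \la \cup \rho$ deterministically, so the chain is preserved. Hence zero columns trivially commute with anything. Columns with their $1$ in the same row are identical. The essential case is $C_1$ with its $1$ in row $a$ and $C_2$ with its $1$ in row $b$, for some $a \neq b$.

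Third, in this essential case one must verify
\[
\sum_{\chi'} \mc{P}_{C_1}(\chi \to \chi')\,\mc{P}_{C_2}(\chi' \to \chi'')
\;=\;
\sum_{\chi'} \mc{P}_{C_2}(\chi \to \chi')\,\mc{P}_{C_1}(\chi' \to \chi'')
\]
in the Jack limit, for every $\chi''$. Using Proposition~\ref{prop:alpha beta gamma} together with the expansion $1 - q^{x}t^{y} \sim -(x\alpha + y)(t-1)$ as $t \to 1$, each local transition probability becomes an explicit rational function of $\alpha$ built from the arm--leg data of the surrounding partitions. My primary approach would be to construct a probabilistic bijection on intermediate chains $\chi'$ that preserves $\chi''$ and matches the two sides term by term, in the same spirit as the local growth rules underpinning $\dRSK$ itself.

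The main obstacle is that the sum over intermediate chains depends sensitively on the positions of the jumps in $\chi$ relative to the rows $a$ and $b$, leading to an intricate case analysis. A complementary route worth pursuing is to exploit the identity $\sum_A \mathbf{y}^{c(A)}\,\mathrm{Pr}[P \mid A] = \psi_P(\alpha)\,P_{\la'}(\mathbf{y}; 1/\alpha)$, obtained from the compatibility condition of Theorem~\ref{thm:dRSK is prob bij} by summing over $Q$: extracting the $\mathbf{y}$-squarefree part restricts the left-hand side to one-per-column matrices, and the symmetry of $P_{\la'}$ in $\mathbf{y}$ shows that the \emph{average} of $\mathrm{Pr}[P \mid A]$ over one-per-column matrices with fixed row content is invariant under column permutation. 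Upgrading this averaged invariance to the pointwise invariance asserted by the theorem is precisely where the special structure of the Jack limit must enter, and I expect this step to be the crux of the proof.
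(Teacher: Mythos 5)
Your reduction to a local two-column statement is sound and matches the skeleton of the paper's argument (an all-zero column is indeed the identity on chains, and the theorem does come down to commuting two single-$1$ columns). But the proposal stops exactly where the proof has to start: the local commutation identity
$\sum_{\chi'} \mc{P}_{C_1}(\chi \to \chi')\,\mc{P}_{C_2}(\chi' \to \chi'') = \sum_{\chi'} \mc{P}_{C_2}(\chi \to \chi')\,\mc{P}_{C_1}(\chi' \to \chi'')$
is stated as a goal, not proven. Your primary route (``construct a probabilistic bijection on intermediate chains'') is a restatement of what must be shown, and your secondary route via $\sum_Q$ of the compatibility relation only yields a sum over \emph{all} one-per-column matrices with fixed content, which is trivially invariant under column permutation (permuting columns permutes the index set of the sum) and carries no pointwise information; you correctly flag that the upgrade to pointwise invariance is the crux, but that crux is the entire theorem. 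There is also a quantifier issue: your Markov reduction requires the local identity for an \emph{arbitrary} left-boundary chain $\chi$, which is a strictly harder statement than what the paper ever verifies directly.

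For comparison, the paper handles this in three steps you would need substitutes for. First, an induction on the number $M$ of $1$'s strictly below and to the left of the lower $1$ (realizing the swap as a sequence of swaps with smaller $M$) reduces to the case where the left-boundary chain is \emph{constant} below row $j_0=\max(a,b)$; this is what tames the ``intricate case analysis'' you anticipate. Second, in that base case one checks that every square strictly above row $j_0$ contributes the same local probability in both orders (the factors $\mc{P}_{\mu_{i+1},\rho_i}(\mu_i \to \rho_{i+1})$ and $1$ appear in both products, just in swapped positions), and every square below row $j_0$ is deterministic. This pins the entire comparison down to the single row $j_0$: one must show that for $\mu \prec \la$, $\mu \lessdot \rho$, the two configurations induce the same distribution on $\nu$ with $|\nu/\la|=2$. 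Third, that final statement is \emph{not} formal: it is verified by the explicit coordinate computations of Lemmas~\ref{lem:Jack Case 1} and~\ref{lem:Jack Case 2}, where the ratios of the two sides come out to expressions such as $(tS_{s_1}-qS_{s_2})/\bigl(S_{s_1}(1-q+qt)-qS_{s_2}\bigr)$, $t$, or $q^{-1}$, each of which tends to $1$ only in the Jack limit $q=t^{\alpha}$, $t\to 1$ (and the paper exhibits a counterexample showing the identity fails for general $(q,t)$ and for matrices with two $1$'s in a column). Without carrying out computations of this kind, the proposal does not establish the theorem.
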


As shown in the next section, restricting to $\{0,1\}$-matrix with at most one nonzero entry per column yields a $(q,t)$-variation of RS for words. The above theorem and Lemma~\ref{lem:inverting q and t} imply that interchanging two adjacent rows, when restricted to $\{0,1\}$-matrices with at most one nonzero entry per row, does not affect the distribution of the $Q$-tableaux.
Before proving the above theorem we show that it can not be extended to all $\{0,1\}$-matrices. Regard the matrix 
 \[
A=\begin{pmatrix}
0 & 0 & 1 \\
1 & 1 & 0 \\
0 & 0 & 1
\end{pmatrix}.
\]
One can check easily that the tableau
\[
T= \young(12,23)\;, 
\]
can not be obtained as the $P$-tableau of any dual growth associated to $A$.
On the other hand, by interchanging the second and third column, there are two possibilities to obtain $T$ as a $P$-tableau as shown next.

\begin{center}
\begin{tikzpicture}[scale=1.5]
\Yboxdim{.2cm}
\node at (0,3) {$\emptyset$};
\node at (0,2) {$\emptyset$};
\node at (1,2) {$\emptyset$};
\node at (0,1) {$\emptyset$};
\node at (0,0) {$\emptyset$};
\node at (1,3) {$\emptyset$};
\node at (2,3) {$\emptyset$};
\node at (3,3) {$\emptyset$};
\node at (1.5,2.5) {X};
\node at (.5,1.5) {X};
\node at (2.5,1.5) {X};
\node at (1.5,.5) {X};
\tyng(1.9cm,1.9cm,1);
\tyng(2.9cm,1.9cm,1);
\tyng(.9cm,.9cm,1);
\tyng(1.8cm,.9cm,2);
\tyng(2.8cm,.8cm,2,1);
\tyng(.9cm,-.1cm,1);
\tyng(1.8cm,-.2cm,2,1);
\tyng(2.8cm,-.2cm,2,2);
\draw (0.2,3) -- (.8,3);
\draw (1.2,3) -- (1.8,3);
\draw (2.2,3) -- (2.8,3);
\draw (0.2,2) -- (.8,2);
\draw (1.2,2) -- (1.75,2);
\draw (2.25,2) -- (2.75,2);
\draw (0.2,1) -- (.75,1);
\draw (1.25,1) -- (1.7,1);
\draw (2.3,1) -- (2.7,1);
\draw (0.2,0) -- (.75,0);
\draw (1.25,0) -- (1.7,0);
\draw (2.3,0) -- (2.7,0);
\draw (0,.2) -- (0,.8);
\draw (0,1.2) -- (0,1.8);
\draw (0,2.2) -- (0,2.8);
\draw (1,.25) -- (1,.75);
\draw (1,1.25) -- (1,1.8);
\draw (1,2.2) -- (1,2.8);
\draw (2,.3) -- (2,.75);
\draw (2,1.25) -- (2,1.8);
\draw (2,2.2) -- (2,2.8);
\draw (3,.3) -- (3,.7);
\draw (3,1.3) -- (3,1.75);
\draw (3,2.25) -- (3,2.8);

\begin{scope}[xshift=4.5cm]
\node at (0,3) {$\emptyset$};
\node at (0,2) {$\emptyset$};
\node at (1,2) {$\emptyset$};
\node at (0,1) {$\emptyset$};
\node at (0,0) {$\emptyset$};
\node at (1,3) {$\emptyset$};
\node at (2,3) {$\emptyset$};
\node at (3,3) {$\emptyset$};
\node at (1.5,2.5) {X};
\node at (.5,1.5) {X};
\node at (2.5,1.5) {X};
\node at (1.5,.5) {X};
\tyng(1.9cm,1.9cm,1);
\tyng(2.9cm,1.9cm,1);
\tyng(.9cm,.9cm,1);
\tyng(1.9cm,.8cm,1,1);
\tyng(2.9cm,.8cm,2,1);
\tyng(.9cm,-.1cm,1);
\tyng(1.8cm,-.2cm,2,1);
\tyng(2.8cm,-.2cm,2,2);
\draw (0.2,3) -- (.8,3);
\draw (1.2,3) -- (1.8,3);
\draw (2.2,3) -- (2.8,3);
\draw (0.2,2) -- (.8,2);
\draw (1.2,2) -- (1.75,2);
\draw (2.25,2) -- (2.75,2);
\draw (0.2,1) -- (.75,1);
\draw (1.25,1) -- (1.75,1);
\draw (2.25,1) -- (2.7,1);
\draw (0.2,0) -- (.75,0);
\draw (1.25,0) -- (1.7,0);
\draw (2.3,0) -- (2.7,0);
\draw (0,.2) -- (0,.8);
\draw (0,1.2) -- (0,1.8);
\draw (0,2.2) -- (0,2.8);
\draw (1,.25) -- (1,.75);
\draw (1,1.25) -- (1,1.8);
\draw (1,2.2) -- (1,2.8);
\draw (2,.3) -- (2,.7);
\draw (2,1.3) -- (2,1.8);
\draw (2,2.2) -- (2,2.8);
\draw (3,.3) -- (3,.7);
\draw (3,1.3) -- (3,1.75);
\draw (3,2.25) -- (3,2.8);
\end{scope}
\end{tikzpicture}
\end{center}

The probability of the first dual growth is
\begin{multline*}
\mc{P}_{(1),(1)}\bigl(\emptyset \rightarrow (2)\bigr) \cdot 
\mc{P}_{(2),(1)}\bigl((1) \rightarrow (2,1)\bigr) \cdot 
\mc{P}_{(2,1),(2,1)}\bigl((2) \rightarrow (2,2)\bigr) \\
= \frac{q(1-t)}{1-qt} \cdot \frac{qt(1-q)}{1-q^2t} \cdot \frac{q(1-t)(1-q^2t)}{(1+q)(1-qt)^2}
= \frac{q^3t (1-q)(1-t)^2}{(1+q)(1-qt)^3},
\end{multline*}
and for the second growth
\begin{multline*}
\mc{P}_{(1),(1)}\bigl(\emptyset \rightarrow (1,1)\bigr) \cdot 
\mc{P}_{(1),(1,1)}\bigl((1) \rightarrow (2,1)\bigr) \cdot 
\mc{P}_{(2,1),(2,1)}\bigl((1,1) \rightarrow (2,2)\bigr) \\
= \frac{1-q}{1-qt} \cdot
\frac{1-t}{1-qt^2} \cdot
\frac{(1-q)(1-qt^2)}{(1+t)(1-qt)^2}
= \frac{(1-q)^2(1-t)}{(1+t)(1-qt)^3}.
\end{multline*}
By summing both probabilities and taking the Jack limit, we obtain
\[
\frac{\alpha}{2\alpha(1+\alpha)^2},
\]
which is not $0$ and hence different to the probability of obtaining $T$ as the $P$-tableau for $A$.

\begin{proof}[Proof of Theorem~\ref{thm:Jack}]
Let $A^\prime$ be the matrix obtained from $A$ by interchanging the $k$-th and $(k+1)$-st columns. The assertion is immediate if at most one of these columns has a nonzero entry. Denote by $i_0<j_0$ the rows of the $1$ entry of the $k$-th or $(k+1)$-st column respectively.
Define $M=M_{k}(A)$ as the number of $1$ entries strictly below and to the left of the entry $(j_0,k)$. We prove the assertion by induction on $M$. We postpone the proof of the base case and assume that we proved the statement for $M-1$. Let $l<k$ be maximal such that an entry in column $l$ and a row below $j_0$ is equal to $1$. By the induction hypothesis we can consecutively interchange the columns $(l,l+1),(l+1,l+2),\ldots,(k,k+1)$ which has the effect of putting the $l$-th column to the right of former $(k+1)$-st column and reducing $M_k(A)$ by 1. We can now interchange the former $k$-th and $(k+1)$-st column by the induction hypothesis and then consecutively interchange columns to move the former $l$-th column back to its original position. Hence we realized interchanging the columns $k,k+1$ by a series of interchanges where the corresponding $M_i$ is at most $M-1$. We can therefore restrict ourselves to the base case. \medskip

In the following we regard dual growths associated  with $A$ and $A^\prime$ restricted to the $k$-th and $(k+1)$-st columns. We denote by I the case in which the 1 entries are in positions $(i_0,k)$ and $(j_0,k+1)$ and by II the case with 1 entries in positions $(i_0,k+1)$ and $(j_0,k)$, compare to Figure~\ref{fig:Jack matrix}.

\begin{figure}[h]
\begin{center}
\begin{tikzpicture}
\draw[dotted, thick] (-0.2,.6) -- (-0.2,1);
\draw[dotted, thick] (1.2,.6) -- (1.2,1);
\draw[dotted, thick] (2.6,.6) -- (2.6,1);
\node at (-0.2,0.2) {$\mu_0$};
\node at (1.2,0.2) {$\mu_0$};
\node at (2.6,0.2) {$\mu_0$};
\node at (-0.2,-1.2) {$\mu_0$};
\node at (1.2,-1.2) {$\rho_0$};
\node at (2.6,-1.2) {$\rho_0$};
\node at (.5,-.5) {X};
\draw (-0.2,-.1) -- (-0.2,-.9);
\draw (1.2,-.1) -- (1.2,-.9);
\draw (2.6,-.1) -- (2.6,-.9);
\draw (0.1,0.2) --(.9,0.2);
\draw (1.5,0.2) --(2.3,0.2);
\draw (0.1,-1.2) --(.9,-1.2);
\draw (1.5,-1.2) --(2.3,-1.2);
\node at (-0.2,-2.6) {$\mu_1$};
\node at (1.2,-2.6) {$\rho_1$};
\node at (2.6,-2.6) {$\rho_1$};
\draw (-0.2,-1.5) -- (-0.2,-2.3);
\draw (1.2,-1.5) -- (1.2,-2.3);
\draw (2.6,-1.5) -- (2.6,-2.3);
\draw (0.1,-2.6) --(.9,-2.6);
\draw (1.5,-2.6) --(2.3,-2.6);
\draw[dotted, thick] (-0.2,-3) -- (-0.2,-3.6);
\draw[dotted, thick] (1.2,-3) -- (1.2,-3.6);
\draw[dotted, thick] (2.6,-3) -- (2.6,-3.6);

\begin{scope}[yshift=-4.2cm]
\node at (-0.2,0.2) {$\mu$};
\node at (1.2,0.2) {$\rho$};
\node at (2.6,0.2) {$\rho$};
\node at (-0.2,-1.15) {$\la$};
\node at (1.2,-1.2) {$*$};
\node at (2.6,-1.2) {$\nu$};
\node at (1.9,-.5) {X};
\draw (-0.2,-.1) -- (-0.2,-.9);
\draw (1.2,-.1) -- (1.2,-.9);
\draw (2.6,-.1) -- (2.6,-.9);
\draw (0.1,0.2) --(.9,0.2);
\draw (1.5,0.2) --(2.3,0.2);
\draw (0.1,-1.2) --(.9,-1.2);
\draw (1.5,-1.2) --(2.3,-1.2);
\draw[dotted, thick] (-0.2,-1.6) -- (-0.2,-2.2);
\draw[dotted, thick] (1.2,-1.6) -- (1.2,-2.2);
\draw[dotted, thick] (2.6,-1.6) -- (2.6,-2.2);
\node at (-0.2,-2.55) {$\la$};
\node at (1.2,-2.6) {$*$};
\node at (2.6,-2.6) {$\nu$};
\draw (0.1,-2.6) --(.9,-2.6);
\draw (1.5,-2.6) --(2.3,-2.6);
\node at (1.2,-3.4) {I};
\end{scope}

\node at (-2,-.5) {row $i_0$};
\node at (-2,-4.7) {row $j_0$};

\begin{scope}[xshift=6cm]
\draw[dotted, thick] (-0.2,.6) -- (-0.2,1);
\draw[dotted, thick] (1.2,.6) -- (1.2,1);
\draw[dotted, thick] (2.6,.6) -- (2.6,1);
\node at (-0.2,0.2) {$\mu_0$};
\node at (1.2,0.2) {$\mu_0$};
\node at (2.6,0.2) {$\mu_0$};
\node at (-0.2,-1.2) {$\mu_0$};
\node at (1.2,-1.2) {$\mu_0$};
\node at (2.6,-1.2) {$\rho_0$};
\node at (1.9,-.5) {X};
\draw (-0.2,-.1) -- (-0.2,-.9);
\draw (1.2,-.1) -- (1.2,-.9);
\draw (2.6,-.1) -- (2.6,-.9);
\draw (0.1,0.2) --(.9,0.2);
\draw (1.5,0.2) --(2.3,0.2);
\draw (0.1,-1.2) --(.9,-1.2);
\draw (1.5,-1.2) --(2.3,-1.2);
\node at (-0.2,-2.6) {$\mu_1$};
\node at (1.2,-2.6) {$\mu_1$};
\node at (2.6,-2.6) {$\rho_1$};
\draw (-0.2,-1.5) -- (-0.2,-2.3);
\draw (1.2,-1.5) -- (1.2,-2.3);
\draw (2.6,-1.5) -- (2.6,-2.3);
\draw (0.1,-2.6) --(.9,-2.6);
\draw (1.5,-2.6) --(2.3,-2.6);
\draw[dotted, thick] (-0.2,-3) -- (-0.2,-3.6);
\draw[dotted, thick] (1.2,-3) -- (1.2,-3.6);
\draw[dotted, thick] (2.6,-3) -- (2.6,-3.6);

\begin{scope}[yshift=-4.2cm]
\node at (-0.2,0.2) {$\mu$};
\node at (1.2,0.2) {$\mu$};
\node at (2.6,0.2) {$\rho$};
\node at (-0.2,-1.15) {$\la$};
\node at (1.2,-1.2) {$*$};
\node at (2.6,-1.2) {$\nu$};
\node at (.5,-.5) {X};
\draw (-0.2,-.1) -- (-0.2,-.9);
\draw (1.2,-.1) -- (1.2,-.9);
\draw (2.6,-.1) -- (2.6,-.9);
\draw (0.1,0.2) --(.9,0.2);
\draw (1.5,0.2) --(2.3,0.2);
\draw (0.1,-1.2) --(.9,-1.2);
\draw (1.5,-1.2) --(2.3,-1.2);
\draw[dotted, thick] (-0.2,-1.6) -- (-0.2,-2.2);
\draw[dotted, thick] (1.2,-1.6) -- (1.2,-2.2);
\draw[dotted, thick] (2.6,-1.6) -- (2.6,-2.2);
\node at (-0.2,-2.55) {$\la$};
\node at (1.2,-2.6) {$*$};
\node at (2.6,-2.6) {$\nu$};
\draw (0.1,-2.6) --(.9,-2.6);
\draw (1.5,-2.6) --(2.3,-2.6);
\node at (1.2,-3.4) {II};
\end{scope}
\end{scope}
\end{tikzpicture}
\end{center}
\caption{\label{fig:Jack matrix} A schematic of dual growth diagrams associated with $A$ and $A^\prime$ restricted to the $k$-th and $(k+1)$-st columns.}
\end{figure}
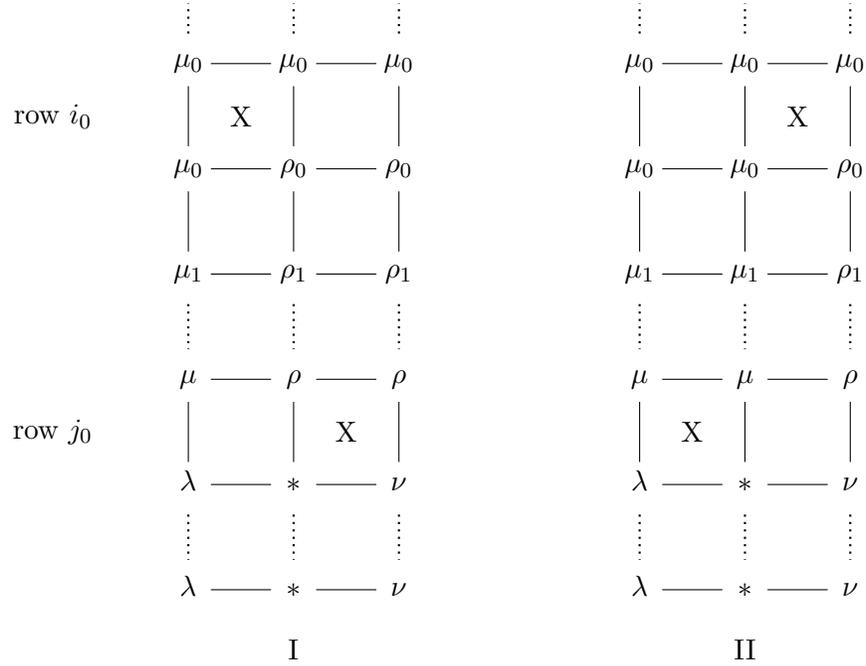

We claim that the probability to obtain the partitions $\mu,\rho,\rho$ at the top of row $j_0$ for a growth diagram in case I is equal to the probability of obtaining the partitions $\mu,\mu,\rho$ in case II. It is immediate that each square above row $i_0$ has probability $1$ in both cases. For row $i_0$ the probabilities of the squares are $\mc{P}_{\mu_0,\mu_0}(\mu_0 \rightarrow \rho_0)$ and $1$ in situation I and $1$ and $\mc{P}_{\mu_0,\mu_0}(\mu_0 \rightarrow \rho_0)$ in situation II. Similarly for the rows between $i_0$ and $j_0$, the probabilities for the squares in case I and II are both $\mc{P}_{\mu_{i+1},\rho_i}(\mu_i \rightarrow \rho_{i+1})$ and $1$, however in different order, which proves the claim.

Finally, since we can assume $A_{i,j}=0$ for all $i>j_0$ and $j<k$, the sequences of partitions for all rows below row $j_0$ are for both case I and II of the form $\la,*,\nu$; the probabilities of the corresponding squares are therefore always $1$.
Hence it suffices to show that for all partitions $\mu \prec \la$ and $\mu \lessdot \rho$
the distributions of $\nu$, with $|\nu / \la|=2$, in both situations I and II are equal, which is shown in Lemma~\ref{lem:Jack Case 1} and Lemma~\ref{lem:Jack Case 2}.
\end{proof}

\section{$qRSt$ for words}
\label{sec:words}
In the final section of this paper we show that restricting $\dRSK$ to $\{0,1\}$-matrices which have at most one $1$ entry in each column yields a generalization of the RS correspondence for words to the Macdonald setting.\bigskip

Let $\mu \subseteq \la$ be two partitions and set\footnote{Due to the different definition of $\mc{C}_{\la/\mu}$ in \cite{AignerFrieden22} the definition of $\vp_{\la/\mu}$ looks different, however it is easy to see that they coincide.}
\[
\vp_{\la/\mu} (q,t) = \prod_{c \in \mc{C}_{\la/\mu}} \frac{b_\la(c)}{b_\mu(c)}.
\]
For a semistandard Young tableau $T$ define
\[
\vp_T(q,t) = \prod_{i \geq 1} \vp_{T^{(i)}/T^{(i-1)}} (q,t).
\]
It was shown by Macdonald \cite[Ch. VI (7.13)]{Mac}, that the Macdonald polynomials $Q_\la(\x;q,t)$ have the following monomial expansions
\begin{equation}
\label{eq:Q via SSYT}
Q_\la(\x;q,t) = \sum_{T \in \SSYT_\la}\vp_T(q,t)\x^T.
\end{equation}
Again, we take the perspective of viewing the above monomial expansion as the definition of the Macdonald polynomials $Q_\la$. The aim of $\qrst$ for words is a combinatorial proof of the squarefree part with respect to $\y$ of the Cauchy identity \eqref{eq_Cauchy_Mac} for Macdonald polynomials. As in the previous sections, we first derive a combinatorial proof for a certain commutation relation which is then used to define the $\qrst$ correspondence for words.

The \deff{restricted $(q,t)$-down operator} $D_{q,t}$ is defined as
\[
D_{q,t} \la = \sum_{\substack{\mu \prec \la \\ |\la/\mu|=1}}\vp_{\la/\mu}(q,t) \; \mu.
\]

\begin{thm}
\label{thm:RS com}
The $(q,t)$-up and restricted $(q,t)$-down operators satisfy the commutation relation
\begin{equation}
\label{eq:RS com}
D_{q,t} U_x(q,t) = U_x(q,t)  D_{q,t} + \frac{1-t}{1-q}x U_x(q,t).
\end{equation}
\end{thm}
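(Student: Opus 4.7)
The plan is to reduce the operator identity \eqref{eq:RS com} to a scalar identity between weighted sums of horizontal-strip skew shapes, and then to obtain that scalar identity by extracting the coefficient of $y$ in a single-variable specialization of Macdonald's (non-dual) skew Cauchy identity.

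First, I would apply both sides of \eqref{eq:RS com} to a partition $\la$ and compare coefficients of an arbitrary partition $\mu$. Unwinding the definitions of $U_x(q,t)$ and $D_{q,t}$, this reduces the theorem to the scalar identity
\[
\sum_{\nu} \psi_{\nu/\la}(q,t)\, \vp_{\nu/\mu}(q,t)\, x^{|\nu/\la|}
= \sum_{\kappa} \vp_{\la/\kappa}(q,t)\, \psi_{\mu/\kappa}(q,t)\, x^{|\mu/\kappa|}
+ \frac{1-t}{1-q}\, \psi_{\mu/\la}(q,t)\, x^{|\mu/\la|+1},
\]
where $\nu$ runs over partitions with $\la\prec\nu$ (horizontal strip) and $|\nu/\mu|=1$, $\kappa$ runs over partitions with $|\la/\kappa|=1$ and $\kappa\prec\mu$ (horizontal strip), and the last term is interpreted as zero unless $\mu\succ\la$.

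Second, I would invoke Macdonald's skew Cauchy identity \cite[Ch.~VI, (7.9)]{Mac},
\[
\sum_{\sigma} P_{\sigma/\la}(\x;q,t)\, Q_{\sigma/\mu}(\y;q,t) = \prod_{i,j}\frac{(tx_iy_j;q)_\infty}{(x_iy_j;q)_\infty}\, \sum_{\tau} P_{\mu/\tau}(\x;q,t)\, Q_{\la/\tau}(\y;q,t),
\]
specialized to a single $x$-variable and a single $y$-variable. In this specialization the skew Macdonald polynomials reduce to $P_{\sigma/\la}(x)=\psi_{\sigma/\la}(q,t)\, x^{|\sigma/\la|}$ when $\sigma/\la$ is a horizontal strip (and vanish otherwise), and analogously $Q_{\sigma/\mu}(y)=\vp_{\sigma/\mu}(q,t)\, y^{|\sigma/\mu|}$, while the Cauchy kernel expands as $\prod_{k\geq 0}(1-tq^k xy)/(1-q^k xy) = 1 + \frac{1-t}{1-q}\,xy + O((xy)^2)$. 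Extracting the coefficient of $y^1$ from both sides produces the desired identity: on the left it forces $|\sigma/\mu|=1$, while on the right it yields two contributions, one from the $y^0$-term of the kernel times the $y^1$-term of the sum (which forces $|\la/\tau|=1$) and one from the $y^1$-term of the kernel times the $\tau=\la$ summand (producing the extra $\tfrac{1-t}{1-q}$ term). Renaming $\sigma\to\nu$ and $\tau\to\kappa$ matches exactly the identity from the previous paragraph.

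The main obstacle is simply careful bookkeeping of the horizontal-strip conditions and of the matching exponents of $x$ across every term; no further combinatorial machinery is required beyond invoking the skew Cauchy identity. A fully self-contained alternative, in the spirit of the rest of the paper, would be to construct a probabilistic bijection for the hybrid pair $U_x(q,t),\, D_{q,t}$ analogous to those of \S\ref{sec:probabilities definition}, but the algebraic route sketched here is considerably shorter.
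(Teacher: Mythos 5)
Your proposal is correct, but it takes a genuinely different route from the paper. Your reduction of \eqref{eq:RS com} to the scalar identity on horizontal strips is exactly right, and extracting the coefficient of $y$ from the single-variable specialization of Macdonald's skew Cauchy identity \cite[Ch.~VI, (7.9)]{Mac} does produce that identity (the kernel contributes $1+\tfrac{1-t}{1-q}xy+O(y^2)$, and the $\tau=\la$ term paired with the $xy$-term of the kernel gives the extra summand). The paper instead stays entirely inside its own framework: it observes that for $|\rho/\mu|\le 1$ one has $\vp_{\rho/\mu}=\vp^*_{\rho/\mu}$ if $\mu=\rho$ and $\vp_{\rho/\mu}=\tfrac{1-t}{1-q}\vp^*_{\rho/\mu}$ if $\mu\lessdot\rho$ (since the single box $c_0=\rho/\mu$ is a corner, so $b_\rho(c_0)=\tfrac{1-t}{1-q}$), and this rescaling turns \eqref{eq:RS via weighted sets} into a special case of the already-established identity \eqref{eq: sum of weights} for the \emph{dual} down operator, which was proved by the probabilistic bijection of \S\ref{sec:probabilities definition}. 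What each approach buys: yours is shorter if one is willing to treat the skew Cauchy identity as a black box, but it imports precisely the algebraic machinery the paper is trying to re-derive from the tableau definition \eqref{eq:macdonald monomial}, and it sits awkwardly with the paper's architecture, since \eqref{eq:RS com} is subsequently used (Corollary~\ref{cor:qrst is prob bij}) to give a tableaux-theoretic proof of the squarefree part of the Cauchy identity -- deriving it from the full skew Cauchy identity would make that application circular within the paper's program, even if not mathematically circular in an absolute sense. Your suggested "self-contained alternative" of building a new probabilistic bijection for the hybrid pair is also unnecessary: the paper's one-line comparison of $\vp$ and $\vp^*$ on single boxes already achieves self-containment with no new bijection.
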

\begin{proof}
Define the sets
\begin{align*}
\mc{D}^{(k,l)}(\la,\rho)&=\{\mu \prec \la,\rho: |\la/\mu|=k, |\rho/\mu|=l\}, \\  
\mc{U}^{(k,l)}(\la,\rho)&=\{\nu \succ \la,\rho: |\nu/\rho|=k, |\nu/\la|=l\},
\end{align*}
together with the weights
\[
\omega^{\RS}_{\la,\rho}(\mu)= \psi_{\la / \mu}(q,t) \vp_{\rho /\mu}(q,t) x^{|\la/\mu|}, \qquad
\ov{\omega}^{\RS}_{\la,\rho}(\nu)= \psi_{\nu / \rho}(q,t) \vp_{\nu /\la}(q,t) x^{|\nu/\rho|}.
\]
Then equation \eqref{eq:RS com} is equivalent to the system of equations
\begin{equation}
\label{eq:RS via weighted sets}
\sum_{\nu \in \mc{U}^{(k+1,1)}(\la,\rho)} \ov{\omega}^{\RS}_{\la,\rho}(\nu)
= \sum_{\mu \in \mc{D}^{(k+1,1)}(\la,\rho)} \omega^{\RS}_{\la,\rho}(\mu)
+\frac{1-t}{1-q}x \sum_{\mc{D}^{(k,0)}(\la,\rho)} \omega^{\RS}_{\la,\rho}(\mu),
\end{equation}
for all partitions $\la,\rho$ and nonnegative integers $k$.
Comparing the definitions of $\vp$ and $\vp^*$ we observe for two partitions $\mu \subseteq \rho$ with $|\rho/\mu|\leq 1$
\[
\vp_{\rho/\mu} = \begin{cases}
\vp^*_{\rho/\mu} \quad &  \mu=\rho,\\
\frac{1-t}{1-q}\vp^*_{\rho/\mu} \quad & \mu \lessdot \rho,
\end{cases}
\]
where $\mu \lessdot \rho$ is defined as $\mu \prec \rho$ and $|\rho/\mu|=1$. This implies
\begin{align*}
\omega^{\RS}_{\la,\rho}(\mu) &=\omega_{\la,\rho}(\mu) & \text{if } \mu \in \mc{D}^{(k,0)}(\la,\rho),\\
\omega^{\RS}_{\la,\rho}(\mu) &= \frac{1-t}{1-q} \omega_{\la,\rho}(\mu)
 & \text{if } \mu \in \mc{D}^{(k+1,1)}(\la,\rho),\\
\ov{\omega}^{\RS}_{\la,\rho}(\nu) &= \frac{1-t}{1-q} \ov{\omega}_{\la,\rho}(\nu)
& \text{if }  \nu \in \mc{U}^{(k+1,1)}(\la,\rho),
\end{align*}
By dividing \eqref{eq:RS via weighted sets} through $\frac{1-t}{1-q}$ we obtain a special case of  \eqref{eq: sum of weights} which proves the claim.
\end{proof}

We define $\qrst$ for words by restricting $\dRSK$ to $\{0,1\}$-matrices with at most one $1$ entry in each column. Thereby we obtain the following probabilistic local dual growth rules, where the corresponding probabilities are shown below each case.
\begin{center}
\begin{tikzpicture}[baseline=-3.5ex]
\draw (-0.2,0) -- (-0.2,-1);
\draw (1.2,0) -- (1.2,-1);
\draw (0,0.2) -- (1,0.2);
\draw (0,-1.2) -- (1,-1.2);
\node at (-0.2,0.2) {$\mu$};
\node at (1.2,0.2) {$\mu$};
\node at (-0.2,-1.2) {$\la$};
\node at (1.25,-1.25) {$\la$};
\node at (.5,-.5) {$0$};
\node at (.5,-2) {$1$};


\begin{scope}[xshift=3.5cm]
\node at (.5,-2.5) {where $\mu \lessdot \rho$};
\draw (-0.2,0) -- (-0.2,-1);
\draw (1.2,0) -- (1.2,-1);
\draw (0,0.2) -- (1,0.2);
\draw (0,-1.2) -- (1,-1.2);
\node at (-.2,0.2) {$\mu$};
\node at (1.2,0.2) {$\rho$};
\node at (-0.2,-1.2) {$\la$};
\node at (1.25,-1.25) {$\nu$};
\node at (.5,-.5) {$0$};
\node at (.5,-2) {$\mc{P}_{\la,\rho}(\mu \rightarrow\nu)$};
\end{scope}

\begin{scope}[xshift=7cm]
\draw (-0.2,0) -- (-0.2,-1);
\draw (1.2,0) -- (1.2,-1);
\draw (0,0.2) -- (1,0.2);
\draw (0,-1.2) -- (1,-1.2);
\node at (-0.2,0.2) {$\mu$};
\node at (1.2,0.2) {$\mu$};
\node at (-0.2,-1.2) {$\la$};
\node at (1.25,-1.25) {$\nu$};
\node at (.5,-.5) {$1$};
\node at (.5,-2) {$\mc{P}_{\la,\mu}(\mu \rightarrow \nu)$};
\end{scope}
\end{tikzpicture}
\end{center}

\begin{cor}
\label{cor:qrst is prob bij}
$\qrst$ for words defines a probabilistic bijection between the weighted set of $m \times n$ $\{0,1\}$-matrices with at most one entry equal to $1$ in each column and weight $\omega$ and $\ds \bigsqcup_{\la \subseteq (m^n)} \SSYT(\la) \times \SYT^{\leq n}(\la)$ and weight $\ov{\omega}^{\RS}$, where
\[
\omega(A) = \prod_{\substack{1 \leq i \leq m \\1 \leq j \leq n}}(x_iy_j)^{A_{i,j}} \quad\quad \text{ and } \quad\quad \ov{\omega}^{\RS}(P,Q) = \psi_P(q,t)\vp_Q(q,t)\mb{x}^P\mb{y}^Q.
\]
Moreover, $\qrst$ for words extends $\qrst$ as defined in \cite{AignerFrieden22} and specializes to the row insertion version of RS for words for $q,t\rightarrow 0$ and the column insertion version of RS for words  for $q,t\rightarrow \infty$.
\end{cor}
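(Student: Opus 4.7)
The plan is to deduce Corollary \ref{cor:qrst is prob bij} from Theorem \ref{thm:dRSK is prob bij} by restricting the input to $\{0,1\}$-matrices with at most one $1$ per column and then translating the bottom-row weights from $\vp^*$ to $\vp$. First I would observe that if $A$ is such a matrix and $\Lambda$ is any dual growth associated with $A$, then the size of a horizontal step $\Lambda_{i,j-1}\prec'\Lambda_{i,j}$ equals $\sum_{k\le i}A_{k,j}$ and is therefore bounded by the column sum of $A$, hence at most $1$. Consequently, every horizontal step in $\Lambda$ is either trivial or single-cell, only the three local configurations displayed above the corollary can appear, and the recording tableau $Q$ is a partial standard Young tableau with entries in $[n]$, i.e., $Q\in\SYT^{\le n}(\la)$.

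Next, since every single-cell strip $\rho/\mu$ satisfies $\vp_{\rho/\mu}(q,t)=\tfrac{1-t}{1-q}\vp^*_{\rho/\mu}(q,t)$ (as noted in the proof of Theorem \ref{thm:RS com}), iterating over the bottom-row strips of $\Lambda$ yields $\vp_Q(q,t)=\bigl(\tfrac{1-t}{1-q}\bigr)^{|Q|}\vp^*_Q(q,t)$, so $\ov{\omega}^{\RS}(P,Q)=\bigl(\tfrac{1-t}{1-q}\bigr)^{|Q|}\ov{\omega}(P,Q)$. Moreover, the same local identity applies one square at a time for each of the two local configurations whose bottom horizontal edge is single-cell. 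This allows me to replace $\vp^*$ by $\vp$ in the bottom horizontal edges used in Lemma \ref{lem_Lambda_compatibility}; absorbing the resulting factor $\tfrac{1-t}{1-q}$ per entry-$1$ square into the corresponding local probabilities gives the probabilistic local growth rules displayed above the corollary. The compatibility relation of Lemma \ref{lem_Lambda_compatibility} then transports directly to a compatibility condition with $\ov{\omega}^{\RS}$, and Theorem \ref{thm:sum to 1} still guarantees that the forward and backward distributions sum to $1$, so $\qrst$ for words is a probabilistic bijection. The $[0,1]$ claim for $q,t\in[0,1)\cup(1,\infty)$ follows from Corollary \ref{cor: limits}(\ref{item: between 0,1}) since $\tfrac{1-t}{1-q}$ is positive on these ranges.

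For the extension claim, restricting further to permutation matrices forces every row of $A$ to contain at most one $1$, hence both $P$ and $Q$ become standard Young tableaux of the same shape; a term-by-term comparison of the three local configurations with those of $\qrst$ in \cite[Def.~4.6.5]{AignerFrieden22} shows that the two correspondences agree. For the limits, Corollary \ref{cor: limits} shows that the local $\dRSK$ probabilities collapse to the dual row insertion bijection $F^{\row}_{\la,\rho}$ as $q,t\to 0$ and to the dual column insertion bijection $F^{\col}_{\la,\rho}$ as $q,t\to\infty$; in both cases the probabilistic insertion algorithm produced by Lemma \ref{lem: growth insertion}, when restricted to matrices with at most one $1$ per column, collapses to the row and column insertion versions of RS for words, respectively.

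The main technical point is the bookkeeping in the second step: one must verify that absorbing one factor $\tfrac{1-t}{1-q}$ per entry-$1$ square yields local probabilities that still satisfy both the sum-to-$1$ condition in each direction and the local compatibility condition with $\ov{\omega}^{\RS}$. The key observation is that the number of entry-$1$ squares in any dual growth associated with $A$ equals $|Q|=\sum A_{i,j}$, so the global weight discrepancy $\bigl(\tfrac{1-t}{1-q}\bigr)^{|Q|}$ between $\ov{\omega}$ and $\ov{\omega}^{\RS}$ factors exactly as the product of the local corrections introduced by the switch $\vp^*\to\vp$ on single-cell bottom edges, and the redistribution is therefore consistent with both forward and backward probability distributions.
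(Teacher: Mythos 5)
Your overall strategy --- restrict $\dRSK$ to matrices with column sums at most $1$, observe that $Q$ then lands in $\SYT^{\leq n}(\la)$, and convert $\vp^*$ to $\vp$ via the single-cell identity $\vp_{\rho/\mu}=\tfrac{1-t}{1-q}\vp^*_{\rho/\mu}$ --- is the same route the paper takes: its proof defers to the proof of Theorem~\ref{thm:RS com}, where exactly this comparison of $\omega^{\RS},\ov{\omega}^{\RS}$ with $\omega,\ov{\omega}$ is carried out. Your first paragraph and your treatment of the extension and limit claims are fine.

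The gap is in the step you yourself flag as the main technical point. You cannot ``absorb one factor $\tfrac{1-t}{1-q}$ per entry-$1$ square into the corresponding local probabilities'' and still retain both sum-to-one conditions: by Lemma~\ref{lem:prob bij easy}, a probabilistic bijection forces the two total weights to agree, whereas here $\sum_A\omega(A)=\prod_j\bigl(1+y_j(x_1+\cdots+x_m)\bigr)$ while $\sum_{(P,Q)}\ov{\omega}^{\RS}(P,Q)=\prod_j\bigl(1+\tfrac{1-t}{1-q}\,y_j(x_1+\cdots+x_m)\bigr)$. Already for $m=n=1$ the unique nontrivial square has $a=1$, forward and backward probabilities both equal to $1$, $\omega(A)=x_1y_1$, but $\ov{\omega}^{\RS}(P,Q)=\tfrac{1-t}{1-q}x_1y_1$; no redistribution among probabilities that must sum to $1$ in both directions can repair such a global discrepancy, so your concluding assertion that ``the redistribution is therefore consistent with both forward and backward probability distributions'' cannot be correct as stated. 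The bookkeeping that actually works, and that the proof of Theorem~\ref{thm:RS com} records, is that the factor $\tfrac{1-t}{1-q}$ accompanying each entry $1$ sits on the \emph{weight} of the matrix side --- it is the coefficient of $xU_x(q,t)$ in \eqref{eq:RS com}, equivalently the third summand of \eqref{eq:RS via weighted sets} --- while the forward and backward probabilities remain literally the unmodified $\dRSK$ ones; in the local compatibility check, this $\tfrac{1-t}{1-q}$ cancels against the ratio $\vp_{\nu/\la}/\vp^*_{\nu/\la}$ produced by the bottom edge of each entry-$1$ square. Your argument should either carry this factor on the matrix side throughout, or verify \eqref{eq:RS via weighted sets} directly; as written, the claim that Lemma~\ref{lem_Lambda_compatibility} ``transports directly'' to a compatibility condition between the stated $\omega$ and $\ov{\omega}^{\RS}$ does not go through.
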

\begin{proof}
It follows from the proof of Theorem~\ref{thm:RS com}, that $\qrst$ for words is the claimed probabilistic bijection.
Comparing with the growth rules in \cite[\S 4.6]{AignerFrieden22} and the description of the probabilities in \cite[Proposition 4.5.1]{AignerFrieden22} it is immediate that $\qrst$ for words is an extension of $\qrst$ as defined in \cite{AignerFrieden22}.
Finally the claim regarding the limits follows from Corollary~\ref{cor: limits} and the fact that $\RSK^*$ restricted to $\{0,1\}$-matrices with at most one entry equal to $1$ in each row yields RS for words.
\end{proof}

It is immediate, that we obtain the following insertion rules for $\qrst$ for words as a special case of Definition~\ref{def:qtRSK insertion}.

\begin{defn}
\label{def:qrst insertion}
Let $T$ be a semistandard Young tableau, and $i$ a positive integer. The $\qrst$-insertion of $i$ into $T$, denoted by
\[
\widehat{T}=T \xleftarrow{\dRSK} i,
\]
is the probability distribution computed as follows.
\begin{itemize}
\item For each $\nu \in \U^{(k+1,1)}\left(T^{(i)},T^{(i-1)}\right)$, where $k=|T^{(i)}/T^{(i-1)}|$, place $i$ in the cell $\nu/T^{(i)}$ with probability $\mc{P}_{T^{(i)},T^{(i-1)}}\left(T^{(i-1)} \rightarrow \nu\right)$.
\item If an entry $z$ was bumped during the process, reinsert it in the cell $\nu /T^{(z)}$ with probability 
$\mc{P}_{T^{(z)},\widehat{T}^{(z-1)}}\left(T^{(z-1)} \rightarrow \nu\right)$, where $\nu \in \U^{(k,1)}\left(T^{(z)},\widehat{T}^{(z-1)}\right)$ and $k=|T^{(z)}/T^{(z-1)}|$.
\end{itemize}
\end{defn}

\begin{ex}
Inserting $2$ into the semistandard Young tableau \young(1123,2) yields
\begin{align*}
\Yboxdim{0.4cm}
{\small{\young(1123,22)}} \text{ with prob. } &  \mc{P}_{(3,1),(2)}\big((2) \rightarrow (3,2)\big) \\
& =\ds qt\frac{1-q}{1-q^2t},
\\[18pt]
\Yboxdim{0.4cm}
{\small{\young(11223,2)}} \text{ with prob. } &\mc{P}_{(3,1),(2)}\big(((2) \rightarrow (4,1)\big)\cdot\mc{P}_{(4,1),(4,1)}\big((3,1) \rightarrow (5,1)\big) \\
&=\ds q \frac{(1-q^3t^2)^2}{(1+q t)(1-q^3t)(1-q^4t^2)},
\\[18pt]
\Yboxdim{0.4cm}
{\small{\young(1122,23)}} \text{ with prob. } & \mc{P}_{(3,1),(2)}\big((2) \rightarrow (4,1)\big)\cdot\mc{P}_{(4,1),(4,1)}\big((3,1) \rightarrow (4,2)\big) \\
& = \ds \frac{(1-q)(1-t)(1-q^3t^2)}{(1-q^2t)(1-q^2t^2)(1-q^3t)},
 \\[18pt]
\Yboxdim{0.4cm}
{\small{\young(1122,2,3)}} \text{ with prob. } &\mc{P}_{(3,1),(2)}\big((2) \rightarrow (4,1)\big)\cdot\mc{P}_{(4,1),(4,1)}\big((3,1) \rightarrow (4,1,1)\big) \\
& = \ds t \frac{(1-q)^2}{(1-q^2t^2)(1-q^4t^2)}.
\end{align*}

\end{ex}

\begin{appendix}
\section{Two technical lemmas for the Jack limit}
\label{sec:appendix}

\begin{lem}
\label{lem:Jack Case 1}
For partitions $\mu \lessdot \rho \prec \nu$ and $\mu \prec \la \subseteq \nu$ satisfying $\la \cap \rho=\mu$ and $|\nu / \la|=2$ the probability to obtain $\nu$ in the partial growth diagrams I and II are equal. 
\begin{center}
\begin{tikzpicture}
\node at (-0.2,0.2) {$\mu$};
\node at (1.2,0.2) {$\rho$};
\node at (2.6,0.2) {$\rho$};
\node at (-0.2,-1.15) {$\la$};
\node at (1.2,-1.2) {$*$};
\node at (2.6,-1.2) {$\nu$};
\node at (1.9,-.5) {X};
\draw (-0.2,-.1) -- (-0.2,-.9);
\draw (1.2,-.1) -- (1.2,-.9);
\draw (2.6,-.1) -- (2.6,-.9);
\draw (0.1,0.2) --(.9,0.2);
\draw (1.5,0.2) --(2.3,0.2);
\draw (0.1,-1.2) --(.9,-1.2);
\draw (1.5,-1.2) --(2.3,-1.2);
\node at (1.2,-1.8) {I};

\begin{scope}[xshift=5cm]
\node at (-0.2,0.2) {$\mu$};
\node at (1.2,0.2) {$\mu$};
\node at (2.6,0.2) {$\rho$};
\node at (-0.2,-1.15) {$\la$};
\node at (1.2,-1.2) {$*$};
\node at (2.6,-1.2) {$\nu$};
\node at (.5,-.5) {X};
\draw (-0.2,-.1) -- (-0.2,-.9);
\draw (1.2,-.1) -- (1.2,-.9);
\draw (2.6,-.1) -- (2.6,-.9);
\draw (0.1,0.2) --(.9,0.2);
\draw (1.5,0.2) --(2.3,0.2);
\draw (0.1,-1.2) --(.9,-1.2);
\draw (1.5,-1.2) --(2.3,-1.2);
\node at (1.2,-1.8) {II};
\end{scope}
\end{tikzpicture}
\end{center}
\end{lem}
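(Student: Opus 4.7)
The plan is to reduce the claim to an explicit algebraic identity in the Jack parameter $\alpha$ that compares one contribution of Diagram~I with the sum of at most two contributions of Diagram~II, and to verify it by direct computation. Let $c_\rho$ denote the unique cell of $\rho \setminus \mu$; the hypothesis $\la \cap \rho = \mu$ gives $c_\rho \notin \la$. In Diagram~I the condition $a = 0$ in square~1 together with $\la \cap \rho = \mu$ forces $* = \la \cup \rho = \la + c_\rho$, so
\[
\mc{P}_I(\nu) = \mc{P}_{\la+c_\rho,\,\rho}(\rho \to \nu),
\]
and $\nu$ necessarily has the form $\la + \{c_\rho, c_2\}$ for some cell $c_2$ addable to $\la + c_\rho$. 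In Diagram~II the intermediate $* = \la + c_*$ varies over addable corners $c_*$ of $\la$ with $*/\mu$ a horizontal strip; the inclusion $* \subseteq \nu$ then forces $c_* \in \{c_\rho, c_2\}$, so only the choices $c_* = c_\rho$ (\emph{Case~A}, where square~2 has $k = 1$) and, when $c_2$ is addable to $\la$ with $(\la + c_2)/\mu$ a horizontal strip, $c_* = c_2$ (\emph{Case~B}, where square~2 has $k = 0$ and hence trivial probability $1$) yield nonzero contributions. Thus the lemma reduces to
\[
\mc{P}_{\la+c_\rho,\,\rho}(\rho \to \nu) = \mc{P}_{\la,\mu}(\mu \to \la+c_\rho)\,\mc{P}_{\la+c_\rho,\,\rho}(\mu \to \nu) + \varepsilon_{c_2}\,\mc{P}_{\la,\mu}(\mu \to \la+c_2)
\]
in the Jack limit, where $\varepsilon_{c_2} \in \{0,1\}$ indicates whether Case~B applies.

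Next, I would expand all three probabilities using the Quebecois-coordinate formula \eqref{eq:forward using quebecois}. Writing $\mc{P}_I(\nu)$ and $\mc{P}_{\la+c_\rho,\rho}(\mu \to \nu)$ as $p_{\emptyset,\{s\}}$ and $p_{\{r\},\{s\}}$ respectively over the common configuration $(\la + c_\rho, \rho)$, where $s$ indexes $c_2$ among the addable corners of $\la + c_\rho$ and $r$ indexes $c_\rho$ among the removable corners of $\rho$, one obtains the clean relation
\[
\mc{P}_{\la+c_\rho,\rho}(\mu \to \nu) = \mc{P}_I(\nu) \cdot \dfrac{\prod_{j \neq s}(R_r - O_j)}{(S_s - I_r)\prod_{i \neq r}(R_r - I_i)}.
\]
Substituting this into the target identity and (assuming $\mc{P}_I(\nu) \neq 0$) dividing through by $\mc{P}_I(\nu)$ reduces the problem to an equation relating the two single-box probabilities $\mc{P}_{\la,\mu}(\mu \to \la + c_\rho)$ and $\mc{P}_{\la,\mu}(\mu \to \la + c_2)$ via explicit rational factors in the corner coordinates. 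Setting $q = t^\alpha$ and sending $t \to 1$ turns every binomial $1 - q^a t^b$ into the linear form $a\alpha + b$, and after clearing denominators the target becomes a polynomial identity in $\alpha$ and in the linear forms associated with the addable and removable corners of $\la$ near $c_\rho$ and $c_2$.

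The main obstacle will be verifying this residual identity. When $\varepsilon_{c_2} = 1$, I expect it to follow from a partial-fraction decomposition in $\alpha$ whose poles are the coordinates of the addable corners of $\la$ lying strictly between $c_\rho$ and $c_2$; this should be recognizable as a one-variable degeneration of the interpolation identity of Lemma~\ref{lem_master_identity} restricted to two-element sets. The delicate case is $\varepsilon_{c_2} = 0$, which occurs precisely when $c_2$ lies directly above $c_\rho$ in the same column so that $c_2$ is addable to $\la + c_\rho$ but not to $\la$: the right-hand side then collapses to a single term, and the identity reduces to verifying that $\mc{P}_{\la,\mu}(\mu \to \la+c_\rho)$ equals a specific ratio of linear forms in $\alpha$. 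This requires tracking carefully how the addable corners of $\la$ differ from those of $\la + c_\rho$ (the former addable corner at $c_\rho$'s position is replaced by a new one at $c_2$'s position), and I expect the Jack-limit simplification of the Macdonald binomials to make the required cancellation explicit after matching the relevant linear forms.
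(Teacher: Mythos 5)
Your reduction is exactly the one the paper uses: Diagram I forces $*=\la\cup\rho$, Diagram II admits $*=\la+c_\rho$ and (when it is a valid intermediate) $*=\la+c_2$, the first square of I and the second square of the Case~B branch of II each contribute probability $1$, and the lemma becomes your displayed identity in the Jack limit; your ``clean relation'' expressing $\mc{P}_{\la+c_\rho,\rho}(\mu\to\nu)/\mc{P}_I(\nu)$ as $\frac{\prod_{j\neq s}(R_r-O_j)}{(S_s-I_r)\prod_{i\neq r}(R_r-I_i)}$ is also correct. Two steps, however, would fail as you have planned them. First, your description of when $\varepsilon_{c_2}=0$ is incomplete: writing $x=\rho/\mu$, the term $\mc{P}_{\la,\mu}(\mu\to\la+c_2)$ disappears not only when $c_2$ is the cell $x^N$ directly above $x$, but also when $c_2$ is the cell $x^W$ adjacent to $x$ in the same row --- in both situations $\la+c_2$ is not a partition, and these are precisely the configurations with a unique intermediate $\pi$ between $\la$ and $\nu$. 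The two sub-cases behave differently before the limit (the ratio of the two sides of your identity is $t$ in one and $q^{-1}$ in the other), so the second one cannot be absorbed into the first.

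Second, and more seriously, the residual identity in the main case is \emph{not} an exact identity in $q$ and $t$, so it cannot be obtained as a degeneration of the (exact) interpolation identity of Lemma~\ref{lem_master_identity}: after all cancellations the ratio of the two sides equals $\frac{tS_{s_1}-qS_{s_2}}{S_{s_1}(1-q+qt)-qS_{s_2}}$, which equals $1$ only after setting $q=t^\alpha$ and letting $t\to 1$. There is therefore no partial-fraction shortcut; one has to carry out the cancellation of the long products explicitly (the paper isolates a common factor $F$ shared by all three terms) and then check that the surviving ratio degenerates to $1$ in the Jack limit. Relatedly, when you ``match the linear forms'' between the corner data of $(\la,\mu)$ and of $(\la\cup\rho,\rho)$, you must deal with the non-generic situations in which $x^N$ or $x^W$ fails to be an addable corner of $\la\cup\rho$ with respect to $(\la\cup\rho,\rho)$; the paper needs a separate argument (Remark~\ref{rem:nongeneric case}, resting on the cancellation $O_{b}=I_{a}$) to reduce to the generic case, and your proposal gestures at this bookkeeping without supplying it.
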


\begin{proof}
Let $d$ be the number of removable inner corners of  $\mu$ with respect to $(\la,\mu)$ and $R_i,I_i,S_j,O_j$ for $i \in [d]$ and $j \in [0,d]$ be the points defined for $\la,\mu$. Since we regard removable and addable corners for various partitions at the same time, we identify in the following each outer and inner corner with their ``index'' and can therefore omit referring to the shapes they are defined on. The removable inner corners of $\mu$ with respect to $(\la,\mu)$ have indices in $[d]$ and the addable outer corners of $\la$ have indices in $[0,d]$.
Further we use the notation
\[
p\bigl(\Ret,\Ret^c,\Set,\Set^c\bigr) := 
\prod_{s \in \Set} \frac{\prod\limits_{i \in \Ret^c}(S_s -I_i)}{\prod\limits_{j \in \Set^c}(S_s-O_j)}
\prod_{r \in \Ret} \frac{\prod\limits_{j \in \Set^c}(R_r -O_j)}{\prod\limits_{i \in \Ret^c}(R_r-I_i)}.
\]
We regard the following two cases.\medskip

\textbf{Case 1: $|\{\pi: \la \lessdot \pi \lessdot \nu\}|=2$.}\\
There exist two addable outer corners $s_1,s_2 \in [0,d]$ such that $\rho$ is obtained by adding $s_1$ to $\mu$ and $\nu$ is obtained by adding both $s_1$ and $s_2$ to $\la$, see Figure~\ref{fig:Case1 quebecois} for a sketch. Indeed, the only possible partition for $*$ in situation I is the partition $\la \cup \rho$ which implies that $(\la \cup \rho)/\la = \rho/\mu$ is an addable cell $s_1 \in [0,d]$. The condition $|\{\pi: \la \lessdot \pi \lessdot \nu\}|=2$ implies the existence of a partition $\la_2\neq \la \cup \rho$ which satisfies $\la \lessdot \pi \lessdot \nu$. By definition $\la_2$ is therefore obtained by adding an outer corner $s_2\neq s_1$ to $\la$. The conditions of the right square in the growth diagram II imply $\nu=\la_2\cup \rho$ which yields the above claim.

The possible local configurations for I and II are shown next.
\begin{center}
\begin{tikzpicture}
\node at (-0.2,0.2) {$\mu$};
\node at (1.2,0.2) {$\rho$};
\node at (2.6,0.2) {$\rho$};
\node at (-0.2,-1.15) {$\la$};
\node at (1.2,-1.2) {$\la \cup \rho$};
\node at (2.6,-1.2) {$\nu$};
\node at (1.9,-.5) {X};
\draw (-0.2,-.1) -- (-0.2,-.9);
\draw (1.2,-.1) -- (1.2,-.9);
\draw (2.6,-.1) -- (2.6,-.9);
\draw (0.1,0.2) --(.9,0.2);
\draw (1.5,0.2) --(2.3,0.2);
\draw (0.1,-1.2) --(.7,-1.2);
\draw (1.7,-1.2) --(2.3,-1.2);
\node at (1.2,-1.8) {I};

\begin{scope}[xshift=5cm]
\node at (-0.2,0.2) {$\mu$};
\node at (1.2,0.2) {$\mu$};
\node at (2.6,0.2) {$\rho$};
\node at (-0.2,-1.15) {$\la$};
\node at (1.2,-1.2) {$\la \cup \rho$};
\node at (2.6,-1.2) {$\nu$};
\node at (.5,-.5) {X};
\draw (-0.2,-.1) -- (-0.2,-.9);
\draw (1.2,-.1) -- (1.2,-.9);
\draw (2.6,-.1) -- (2.6,-.9);
\draw (0.1,0.2) --(.9,0.2);
\draw (1.5,0.2) --(2.3,0.2);
\draw (0.1,-1.2) --(.7,-1.2);
\draw (1.7,-1.2) --(2.3,-1.2);
\node at (1.2,-1.8) {IIa};
\end{scope}
\begin{scope}[xshift=9cm]
\node at (-0.2,0.2) {$\mu$};
\node at (1.2,0.2) {$\mu$};
\node at (2.6,0.2) {$\rho$};
\node at (-0.2,-1.15) {$\la$};
\node at (1.2,-1.2) {$\la_2$};
\node at (2.6,-1.2) {$\nu$};
\node at (.5,-.5) {X};
\draw (-0.2,-.1) -- (-0.2,-.9);
\draw (1.2,-.1) -- (1.2,-.9);
\draw (2.6,-.1) -- (2.6,-.9);
\draw (0.1,0.2) --(.9,0.2);
\draw (1.5,0.2) --(2.3,0.2);
\draw (0.1,-1.2) --(.9,-1.2);
\draw (1.5,-1.2) --(2.3,-1.2);
\node at (1.2,-1.8) {IIb};
\end{scope}
\end{tikzpicture}
\end{center}

In the following we assume that the cells with index $x^N$ and $x^W$ as shown in Figure~\ref{fig:Case1 quebecois} are both addable outer corners of $(\la \cup \rho)$ with respect to $(\la \cup \rho,\rho)$ and call this ``the generic case''. We show in Remark~\ref{rem:nongeneric case} that we can restrict ourselves to this generic case. The partition $\rho$ has $d+1$ removable corners with respect to $(\la \cup \rho,\rho)$, namely the removable corners with index $i \in [d]$ and the cell $\rho/\mu$ which is denoted with the index $x$. Note that the cell $\rho/\mu$ has two indices: the index $s_1$ when regarded  as an outer corner and the index $x$ when regarded as an inner corner. The addable outer corners of $(\la \cup \rho)$ with respect to $(\la \cup \rho,\rho)$ are the outer corners with index in $[0,d]\setminus \{s_1\}$ and the cells directly above and to the left of $x$ which we denote by $x^N$ and $x^W$.
\begin{figure}[h]
\begin{center}
\begin{tikzpicture}[scale=1.5, xscale=-1]
\tyng(0,0,9,6,6,3,2)
\draw[thick, magenta, pattern=horizontal lines, pattern color=magenta] (5.5,.5) rectangle (4.5,0);
\draw[thick, magenta, pattern=horizontal lines, pattern color=magenta] (1.5,1.5) rectangle (2,2);
\draw[thick, dotted] (4,.5) -- (4,1) -- (3.5,1) -- (3.5,1.5) -- (3,1.5);
\draw[thick, blue, fill=myblue] (3.5,.5) rectangle (3,1);
\draw[thick, blue, fill=myblue] (2.5,1.5) rectangle (2,2);
\draw[line width=1.75pt] (0,0) -- (4.5,0) -- (4.5,.5) -- (3,.5) -- (3,1.5) -- (1.5,1.5) -- (1.5,2) -- (1,2) -- (1,2.5) -- (0,2.5) -- (0,0) -- (0,1);
\node at (3.25,.75) {$s_1$};
\node at (2.25,1.75) {$s_2$};
\node at (3.75,.75) {$x^W$};
\node at (3.25,1.25) {$x^N$};
\draw[fill, blue] (3.5,.5) circle (1.5pt);
\draw[fill, blue] (3,1) circle (1.5pt);
\draw[fill, red] (3,.5) circle (1.5pt);
\node[blue] at (3.75,.25)  {$S_{x^W}$};
\node[blue] at (2.75,1.25)  {$S_{x^N}$};
\node[red] at (2.75,.25)  {$R_{x}$};
\end{tikzpicture}
\end{center}
\caption{\label{fig:Case1 quebecois} A sketch for Lemma~\ref{lem:Jack Case 1}.} 
\end{figure}
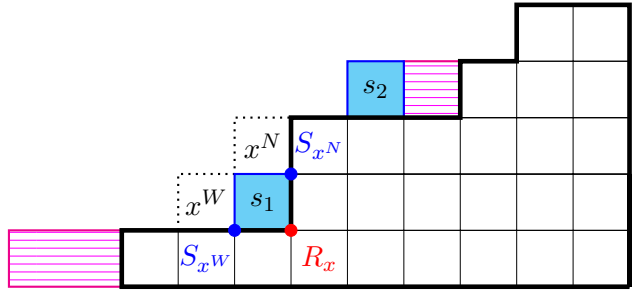
The points corresponding to the newly defined corners satisfy
\[
R_x=S_{s_1}, \quad I_x = \frac{t}{q} S_{s_1}, \quad S_{x^N}=O_{x^N} = t S_{s_1}, \quad S_{x^W}=O_{x^W}=q^{-1}S_{s_1}.
\]
Hence we obtain for the probability of $\nu$ in situation I the expression
\begin{multline}
\label{eq:Case 1a I}
\mc{P}_{\la \cup \rho,\rho}(\rho \rightarrow \nu) = p\Bigl(\emptyset,[d] \cup\{x\},\{s_2\},\{x^N,x^W\} \cup[0,d]\setminus\{s_1,s_2\}\Bigr) \\
= \frac{(S_{s_2}-\frac{t}{q}S_{s_1}) \prod\limits_{i \in [d]} (S_{s_2}-I_i)}
{(S_{s_2}-tS_{s_1})(S_{s_2}-q^{-1}S_{s_1})
\prod\limits_{j \in [0,d]\setminus\{s_1,s_2\}}(S_{s_2}-O_j)}.
\end{multline}

On the other hand, the probability to obtain $\nu$ in situation IIa or IIb is
\begin{multline}
\label{eq:Case 1a II}
\mc{P}_{\la,\mu}(\mu \rightarrow \la \cup \rho)\mc{P}_{\la\cup \rho,\rho}(\mu \rightarrow \nu) 
+ \mc{P}_{\la,\mu}(\mu \rightarrow \la_2)
 \\
= p \Bigl(\emptyset,[d],\{s_1\},[0,d]\setminus\{s_1\} \Bigr)
p \Bigl(\{x\},[d],\{s_2\},\{x^N,x^W\}\cup[0,d]\setminus\{s_1,s_2\} \Bigr) \\
+ p \Bigl(\emptyset,[d],\{s_2\},[0,d]\setminus\{s_2\} \Bigr)
 \\
= \frac{\prod\limits_{i \in [d]}(S_{s_1}-I_i)}{\prod\limits_{j \in [0,d]\setminus\{s_1\}}(S_{s_1}-O_j)} \\
\times \frac{(S_{s_1}-tS_{s_1})(S_{s_1}-q^{-1}S_{s_1})\prod\limits_{i \in [d]}(S_{s_2}-I_i)\prod\limits_{j\in [0,d]\setminus\{s_1,s_2\}}(S_{s_1}-O_j)}{(S_{s_2}-tS_{s_1})(S_{s_2}-q^{-1}S_{s_1})\prod\limits_{j\in [0,d]\setminus\{s_1,s_2\}}(S_{s_2}-O_j)\prod\limits_{i \in [d]}(S_{s_1}-I_i)} \\
+ \frac{\prod\limits_{i \in [d]}(S_{s_2}-I_i)}{\prod\limits_{j \in [0,d]\setminus\{s_2\}}(S_{s_2}-O_j)}
 \\
= \frac{\prod\limits_{i \in [d]}(S_{s_2}-I_i)}{\prod\limits_{j\in [0,d]\setminus\{s_2\}}(S_{s_2}-O_j)}
\left(
1-\frac{(S_{s_1}-tS_{s_1})(S_{s_1}-q^{-1}S_{s_1})}{(S_{s_2}-tS_{s_1})(S_{s_2}-q^{-1}S_{s_1})}
\right).
\end{multline}

By taking the fraction of \eqref{eq:Case 1a I} and \eqref{eq:Case 1a II} we obtain
\begin{equation}
\label{eq:Case 1a frac}
\frac{t S_{s_1}-q S_{s_2}}{S_{s_1}(1-q+q t)-q S_{s_2}},
\end{equation}
which yields $1$ in the Jack limit and hence proves the assertion for this case.\medskip

\textbf{Case 2: $|\{\pi: \la \lessdot \pi \lessdot \nu\}|=1$.}\\
The fact that $\la\cup \rho$ is the only partition $\pi$ satisfying $\la \lessdot \pi \lessdot \nu$ implies that $\nu$ is obtained by adding either the outer corner $x^N$ or $x^W$ to $\la \cup \rho$. In both cases the local configurations for I and II are shown next.
\begin{center}
\begin{tikzpicture}
\node at (-0.2,0.2) {$\mu$};
\node at (1.2,0.2) {$\rho$};
\node at (2.6,0.2) {$\rho$};
\node at (-0.2,-1.15) {$\la$};
\node at (1.2,-1.2) {$\la \cup \rho$};
\node at (2.6,-1.2) {$\nu$};
\node at (1.9,-.5) {X};
\draw (-0.2,-.1) -- (-0.2,-.9);
\draw (1.2,-.1) -- (1.2,-.9);
\draw (2.6,-.1) -- (2.6,-.9);
\draw (0.1,0.2) --(.9,0.2);
\draw (1.5,0.2) --(2.3,0.2);
\draw (0.1,-1.2) --(.7,-1.2);
\draw (1.7,-1.2) --(2.3,-1.2);
\node at (1.2,-1.8) {I};

\begin{scope}[xshift=5cm]
\node at (-0.2,0.2) {$\mu$};
\node at (1.2,0.2) {$\mu$};
\node at (2.6,0.2) {$\rho$};
\node at (-0.2,-1.15) {$\la$};
\node at (1.2,-1.2) {$\la \cup \rho$};
\node at (2.6,-1.2) {$\nu$};
\node at (.5,-.5) {X};
\draw (-0.2,-.1) -- (-0.2,-.9);
\draw (1.2,-.1) -- (1.2,-.9);
\draw (2.6,-.1) -- (2.6,-.9);
\draw (0.1,0.2) --(.9,0.2);
\draw (1.5,0.2) --(2.3,0.2);
\draw (0.1,-1.2) --(.7,-1.2);
\draw (1.7,-1.2) --(2.3,-1.2);
\node at (1.2,-1.8) {II};
\end{scope}
\end{tikzpicture}
\end{center}
If $\nu$ is obtained by adding the cell $x^N$ to $\la \cup \rho$, the probability to obtain $\nu$ in situation I is
\begin{multline}
\label{eq:Case 1b I}
\mc{P}_{\la \cup \rho,\rho}(\rho \rightarrow \nu) 
= p\Bigl(\emptyset,[d] \cup\{x\},\{x^N\},\{x^W\} \cup[0,d]\setminus\{s_1\}\Bigr) 
\\
= \frac{(S_{x^N} - \frac{t}{q}S_1)\prod\limits_{i \in [d]}(S_{x^N}-I_i)}{(S_{x^N}-S_{x^W}) \prod\limits_{j \in [0,d] \setminus \{s_1\}} (S_{x^N}-O_j)},
\end{multline}
and in situation II
\begin{multline}
\label{eq:Case 1b II}
\mc{P}_{\la,\mu}(\mu \rightarrow \la \cup \rho)  \mc{P}_{\la \cup \rho,\rho}(\mu \rightarrow \nu)
\\
= p\Bigl(\emptyset,[d],\{s_1\},[0,d]\setminus\{s_1\}\Bigr) 
p\Bigl(\{x\},[d],\{x^N\},\{x^W\} \cup[0,d]\setminus\{s_1\}\Bigr) 
\\
=
\frac{\prod\limits_{i \in [d]}(S_{s_1}-I_i)}{\prod\limits_{j \in [0,d]\setminus \{s_1\}}(S_{s_1}-O_j)}
\cdot
\frac{(S_{s_1}-O_{x^W})\prod\limits_{i \in [d]}(S_{x^N}-I_i)\prod\limits_{j \in [0,d]\setminus \{s_1\}}(S_{s_1}-O_j)}
{(S_{x^N}-O_{x^W})\prod\limits_{j \in [0,d]\setminus \{s_1\}}(S_{x^N}-O_j)\prod\limits_{i \in [d]}(S_{s_1}-I_i)}
\\
= \frac{(S_{s_1}-O_{x^W})\prod\limits_{i \in [d]}(S_{x^N}-I_i)}
{(S_{x^N}-O_{x^W})\prod\limits_{j \in [0,d]\setminus \{s_1\}}(S_{x^N}-O_j)}.
\end{multline}
If $\nu$ is however obtained by adding $x^W$ to $\la \cup \rho$, we obtain the corresponding probabilities by interchanging $x^N$ and $x^W$ in \eqref{eq:Case 1b I} and \eqref{eq:Case 1b II}.

Dividing \eqref{eq:Case 1b I} by \eqref{eq:Case 1b II}, we obtain after cancellation $t$ or, when replacing first $x^N$ and $x^W$, $q^{-1}$. Taking the Jack limit in both cases yields $1$ and hence finishes the proof.
\end{proof}

\begin{rem}[The non-generic case]
\label{rem:nongeneric case}
Assume in the setting of Case 1 of the above proof that $x^N$ is not an addable cell of $(\la \cup \rho)$, i.e., we are not in the generic case.  This implies that there exists an $i_0$ with $S_{x^N}=I_{i_0}$ such that $i_0$ is not a removable cell of $\rho$.
In this case the probability of $\nu$ in I is given by
\begin{multline}
\label{eq:Case1 I degenerate}
\mc{P}_{\la \cup \rho,\rho}(\rho \rightarrow \nu) = p\Bigl(\emptyset,\{x\} \cup [d]\setminus\{i_0\},\{s_2\},\{x^W\} \cup[0,d]\setminus\{s_1,s_2\}\Bigr) \\
= \frac{(S_{s_2}-\frac{t}{q}S_{s_1}) \prod\limits_{i \in [d]\setminus \{i_0\}} (S_{s_2}-I_i)}
{(S_{s_2}-q^{-1}S_{s_1})
\prod\limits_{j \in [0,d]\setminus\{s_1,s_2\}}(S_{s_2}-O_j)}.
\end{multline}
The right hand sides of \eqref{eq:Case 1a I} and \eqref{eq:Case1 I degenerate} differ by the factor
\[
\frac{S_{s_2}-I_{i_0}}{S_{s_2}-t S_{s_1}}=1,
\]
where the equality follows from $S_{x^N}=t S_{s_1}$.
This observation can be generalized as follows. For two sets $A=\{a_1,\ldots,a_n\} \subseteq \Ret^c$ and $B=\{b_1,\ldots,b_n\} \subseteq \Set^c$ holds
\begin{equation}
\label{eq:comparing to generic case}
\frac{p\bigl(\Ret,\Ret^c\setminus A,\Set,\Set^c \setminus B\bigr)}{p\bigl(\Ret,\Ret^c,\Set,\Set^c\bigr)}
= 
\prod_{\substack{s \in \Set \\ 1 \leq i \leq n}}
\frac{S_s-O_{b_i}}{S_s-I_{a_i}}
\prod_{\substack{r \in \Ret \\ 1 \leq i \leq n}}
\frac{R_r-I_{a_i}}{R_r-O_{b_i}}.
\end{equation}
Comparing to the generic case, a cell which is an outer corner in the generic case with index $b_i$ becomes not addable if it lies directly north and west of a cell which is an inner corner in the generic case with index $a_i$, compare to Figure~\ref{fig:non generic case}. This implies the identity $O_{b_i}=I_{a_i}$.
By choosing $A$ to be the set of inner cells of the generic case, which are not removable and $B$ as the set of outer corners of the generic case which are not addable, the expression in \eqref{eq:comparing to generic case} is equal to $1$. This implies that we may assume the generic case for all calculations.

\begin{figure}
\begin{center}
\begin{tikzpicture}[scale=1.5, xscale=-1]
\tyng(0,0,4,1)
\draw[thick, orange, fill=myorange] (1.5,.5) rectangle (2,0);
\node at (1.75,.25) {$a$};
\draw[thick, blue, fill=myblue] (.5,.5) rectangle (1,1);
\node at (.75,.75) {$b$};

\node[blue] at (.25,.75)  {$O_{b}$};
\node[red] at (2.25,.25)  {$I_{a}$};

\draw[line width=1.75pt] (2,-.2) -- (2,.5) -- (.5,.5) -- (.5,1.2);
\draw[line width=1.75pt, dotted] (2,0) -- (2,-.5);
\draw[line width=1.75pt, dotted] (.5,1) -- (.5,1.5);
\draw[fill, red] (2,.5) circle (1.5pt);
\draw[fill, blue] (.5,.5) circle (1.5pt);

\begin{scope}[xshift=-4cm]
\tyng(0,0,4,4)
\draw[thick, orange, fill=myorange] (1.5,.5) rectangle (2,0);
\node at (1.75,.25) {$a$};
\draw[thick, blue, fill=myblue] (2.5,.5) rectangle (2,1);
\node at (2.25,.75) {$b$};

\node[blue] at (1.75,.75)  {$O_{b}$};
\node[red] at (2.25,.25)  {$I_{a}$};

\draw[line width=1.75pt] (2,-.2) -- (2,1.2);
\draw[line width=1.75pt, dotted] (2,0) -- (2,-.5);
\draw[line width=1.75pt, dotted] (2,1) -- (2,1.5);
\draw[fill, red] (2,.5) circle (1.5pt);
\end{scope}
\end{tikzpicture}

\caption{\label{fig:non generic case} For  the ``generic case'' on the left the cell $b$ is addable and the cell $a$ is removable. The cell $b$ stops being addable and the cell $a$ stops being removable once the row lengths of both rows are equal as shown on the right. In this case  we obtain the identity $O_b=I_a$.}
\end{center}
\end{figure}
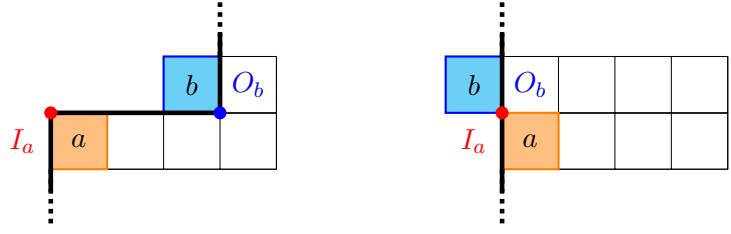
\end{rem}

\begin{lem}
\label{lem:Jack Case 2}
For partitions $\mu \lessdot \rho \prec \la \subseteq \nu$ satisfying $|\nu / \la|=2$ the probability to obtain $\nu$ in the partial growth diagrams I and II are equal. 
\begin{center}
\begin{tikzpicture}
\node at (-0.2,0.2) {$\mu$};
\node at (1.2,0.2) {$\rho$};
\node at (2.6,0.2) {$\rho$};
\node at (-0.2,-1.15) {$\la$};
\node at (1.2,-1.2) {$*$};
\node at (2.6,-1.2) {$\nu$};
\node at (1.9,-.5) {X};
\draw (-0.2,-.1) -- (-0.2,-.9);
\draw (1.2,-.1) -- (1.2,-.9);
\draw (2.6,-.1) -- (2.6,-.9);
\draw (0.1,0.2) --(.9,0.2);
\draw (1.5,0.2) --(2.3,0.2);
\draw (0.1,-1.2) --(.9,-1.2);
\draw (1.5,-1.2) --(2.3,-1.2);
\node at (1.2,-1.8) {I};

\begin{scope}[xshift=5cm]
\node at (-0.2,0.2) {$\mu$};
\node at (1.2,0.2) {$\mu$};
\node at (2.6,0.2) {$\rho$};
\node at (-0.2,-1.15) {$\la$};
\node at (1.2,-1.2) {$*$};
\node at (2.6,-1.2) {$\nu$};
\node at (.5,-.5) {X};
\draw (-0.2,-.1) -- (-0.2,-.9);
\draw (1.2,-.1) -- (1.2,-.9);
\draw (2.6,-.1) -- (2.6,-.9);
\draw (0.1,0.2) --(.9,0.2);
\draw (1.5,0.2) --(2.3,0.2);
\draw (0.1,-1.2) --(.9,-1.2);
\draw (1.5,-1.2) --(2.3,-1.2);
\node at (1.2,-1.8) {II};
\end{scope}
\end{tikzpicture}
\end{center}
\end{lem}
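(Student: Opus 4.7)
The plan is to follow the strategy of Lemma \ref{lem:Jack Case 1}: enumerate the valid intermediate partitions $*$ in both situations, expand each probability via the coordinate formula \eqref{eq:forward using quebecois}, and verify that the two resulting sums agree in the Jack limit $q = t^\alpha$, $t \to 1$. The key structural difference from Lemma \ref{lem:Jack Case 1} is that now $\rho \subseteq \la$, so $\la \cup \rho = \la$ and $\la \cap \rho = \rho$; in particular the cell $c_0 := \rho/\mu$ lies in the interior of $\la$ rather than forming an addable outer corner.

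First I would observe that in both diagrams the local growth rules force $|*/\la| = |\nu/*| = 1$, so $* = \la + c$ for some addable outer corner $c$ of $\la$. The constraints on $c$ differ between the situations: situation I requires $*/\rho$ to be a horizontal strip, whereas situation II requires the strictly stronger condition that $*/\mu$ be a horizontal strip (equivalently, $c$ cannot lie in the column of $c_0$). The two quantities to compare are
\[
P_I = \sum_{*} \mc{P}_{\la,\rho}(\mu \rightarrow *)\, \mc{P}_{*,\rho}(\rho \rightarrow \nu), \qquad
P_{II} = \sum_{*} \mc{P}_{\la,\mu}(\mu \rightarrow *)\, \mc{P}_{*,\rho}(\mu \rightarrow \nu),
\]
both summed over valid $*$.

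Next I would split, as in Lemma \ref{lem:Jack Case 1}, into two cases according to $|\{\pi : \la \lessdot \pi \lessdot \nu\}|$. When this set has two elements, $\nu = \la + c_1 + c_2$ for distinct addable corners $c_1, c_2$, and $*$ ranges over the intersection of $\{\la + c_1, \la + c_2\}$ with the horizontal strip condition relevant to each situation; when it has one element, $\nu/\la$ consists of two cells stacked vertically on a single addable corner and $*$ is uniquely determined. In each case I would expand all four probabilities via \eqref{eq:forward using quebecois} using a common coordinate system of points $R_i, I_i, S_j, O_j$ associated to $\la$ together with $\rho$ or $\mu$, let the bulk of the binomial factors telescope, and show that what remains is a ratio of a few explicit binomials centered around the points associated to $c_0, c_1, c_2$, in direct analogy with \eqref{eq:Case 1a frac} and \eqref{eq:Case 1b II}. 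That ratio should simplify to $1$ under the Jack limit. Degenerate configurations where addable and removable corners coincide can be reduced to the generic case via the argument of Remark \ref{rem:nongeneric case}.

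The hard part will be bookkeeping the two distinct coordinate systems used by the left squares of situations I and II, which reference the pairs $(\la, \rho)$ and $(\la, \mu)$ respectively. These systems share the addable outer corners of $\la$ but generally differ in their removable inner corners: removing $c_0$ eliminates it as a removable corner of $\rho$ while potentially exposing a new removable corner of $\mu$ adjacent to it. Tracking this shift through the indexing of the $R_i, I_i$ and identifying the precise cancellation of binomial factors between $P_I$ and $P_{II}$, together with matching up the ``extra'' contributions in $P_I$ coming from those $*$ that are excluded in $P_{II}$, is the delicate bookkeeping step. Once this is carried out, the final Jack-limit simplification should mirror the one in Lemma \ref{lem:Jack Case 1}.
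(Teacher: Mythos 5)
Your proposal follows essentially the same route as the paper's proof: enumerate the admissible intermediate partitions $*$ in each situation (noting that situation II imposes the stronger horizontal-strip condition relative to $\mu$), split according to $|\{\pi : \la \lessdot \pi \lessdot \nu\}|$, expand everything with the coordinate formula for the probabilities, reduce to the generic case as in the remark on non-generic configurations, and check that the resulting ratio of the two sums tends to $1$ in the Jack limit. The paper merely organizes your ``two addable corners'' case into two subcases (both corners admissible in II, versus one of them being the cell above $\rho/\mu$ and hence excluded in II), which is exactly the bookkeeping of excluded intermediates you already flag.
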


\begin{proof}
As in the proof of Lemma~\ref{lem:Jack Case 1} let $d$ be the number of removable inner corners of $\mu$ with respect to $(\la,\mu)$ and define the points $R_i,I_i,S_j$ and $O_j$ for $\la,\mu$. We regard three different cases where we assume $|\{\pi: \la \lessdot \pi \lessdot \nu\}|=2$ for Case 1 and 2, and $|\{\pi: \la \lessdot \pi \lessdot \nu\}|=1$ in Case 3.\medskip

\textbf{Case 1.}\\
In this case we assume $|\{\pi: \la \lessdot \pi \lessdot \nu\}|=2$ and that $\nu$ is obtained by adding two outer corners with indices $s_1,s_2 \in[0,d]$ to $\la$. We denote by  $\la_i$ the partition obtained by adding $s_i$ to $\la$ for $i =1,2$. See Figure~\ref{fig:Case2 quebecois} for a sketch.
\begin{figure}[h]
\begin{center}
\begin{tikzpicture}[scale=1.5, xscale=-1]
\tyng(0,0,10,8,5,5)
\draw[thick, magenta, pattern=horizontal lines, pattern color=magenta] (6,.5) rectangle (5,0);
\draw[thick, magenta, pattern=horizontal lines, pattern color=magenta] (3,1.5) rectangle (3.5,1);
\draw[thick, magenta, pattern=horizontal lines, pattern color=magenta] (1,2.5) rectangle (0,2);
\draw[thick, dotted] (5,.5) -- (5,1) -- (4.5,1);
\draw[thick, dotted] (3,1.5) -- (3,2) -- (2.5,2);
\draw[thick, dotted] (2,2) -- (2,2.5) -- (1.5,2.5);
\draw[thick, orange, fill=myorange] (2.5,1.5) rectangle (3,1);
\draw[thick, blue, fill=myblue] (4.5,.5) rectangle (4,1);
\draw[thick, blue, fill=myblue] (1.5,2.5) rectangle (1,2);
\node at (4.25,.75) {$s_1$};
\node at (1.25,2.25) {$s_2$};
\node at (2.75,1.25) {$x$};
\node at (2.75,1.75) {$x^N$};
\node at (4.75,.75) {$s_1^W$};
\node at (1.75,2.25) {$s_2^W$};;
\draw[line width=1.75pt] (0,0) -- (5,0) -- (5,.5) -- (4,.5) -- (4,1) -- (2.5,1) -- (2.5,2) -- (0,2) -- (0,0) -- (0,1);
\draw[fill, blue] (5,.5) circle (1.5pt);
\draw[fill, blue] (2.5,1.5) circle (1.5pt);
\draw[fill, blue] (1.5,2) circle (1.5pt);
\draw[fill, red] (2.5,1) circle (1.5pt);
\node[blue] at (4.75,.25)  {$S_{s_1^W}$};
\node[blue] at (2.25,1.25)  {$S_{x^N}$};
\node[blue] at (1.25,1.75)  {$S_{s_2^W}$};
\node[red] at (2.25,.75)  {$R_{x}$};
\end{tikzpicture}
\end{center}
\caption{\label{fig:Case2 quebecois} A sketch for Lemma~\ref{lem:Jack Case 1}.}  
\end{figure}
The possible local configurations for I and II are shown next.
\begin{center}
\begin{tikzpicture}[scale=0.8]
\node at (-0.2,0.2) {$\mu$};
\node at (1.2,0.2) {$\rho$};
\node at (2.6,0.2) {$\rho$};
\node at (-0.2,-1.15) {$\la$};
\node at (1.2,-1.2) {$\la_1$};
\node at (2.6,-1.2) {$\nu$};
\node at (1.9,-.5) {X};
\draw (-0.2,-.1) -- (-0.2,-.9);
\draw (1.2,-.1) -- (1.2,-.9);
\draw (2.6,-.1) -- (2.6,-.9);
\draw (0.1,0.2) --(.9,0.2);
\draw (1.5,0.2) --(2.3,0.2);
\draw (0.1,-1.2) --(.9,-1.2);
\draw (1.5,-1.2) --(2.3,-1.2);
\node at (1.2,-1.8) {Ia};

\begin{scope}[xshift=4cm]
\node at (-0.2,0.2) {$\mu$};
\node at (1.2,0.2) {$\rho$};
\node at (2.6,0.2) {$\rho$};
\node at (-0.2,-1.15) {$\la$};
\node at (1.2,-1.2) {$\la_2$};
\node at (2.6,-1.2) {$\nu$};
\node at (1.9,-.5) {X};
\draw (-0.2,-.1) -- (-0.2,-.9);
\draw (1.2,-.1) -- (1.2,-.9);
\draw (2.6,-.1) -- (2.6,-.9);
\draw (0.1,0.2) --(.9,0.2);
\draw (1.5,0.2) --(2.3,0.2);
\draw (0.1,-1.2) --(.9,-1.2);
\draw (1.5,-1.2) --(2.3,-1.2);
\node at (1.2,-1.8) {Ib};
\end{scope}
\begin{scope}[xshift=8cm]
\node at (-0.2,0.2) {$\mu$};
\node at (1.2,0.2) {$\mu$};
\node at (2.6,0.2) {$\rho$};
\node at (-0.2,-1.15) {$\la$};
\node at (1.2,-1.2) {$\la_1$};
\node at (2.6,-1.2) {$\nu$};
\node at (.5,-.5) {X};
\draw (-0.2,-.1) -- (-0.2,-.9);
\draw (1.2,-.1) -- (1.2,-.9);
\draw (2.6,-.1) -- (2.6,-.9);
\draw (0.1,0.2) --(.9,0.2);
\draw (1.5,0.2) --(2.3,0.2);
\draw (0.1,-1.2) --(.9,-1.2);
\draw (1.5,-1.2) --(2.3,-1.2);
\node at (1.2,-1.8) {IIa};
\end{scope}
\begin{scope}[xshift=12cm]
\node at (-0.2,0.2) {$\mu$};
\node at (1.2,0.2) {$\mu$};
\node at (2.6,0.2) {$\rho$};
\node at (-0.2,-1.15) {$\la$};
\node at (1.2,-1.2) {$\la_2$};
\node at (2.6,-1.2) {$\nu$};
\node at (.5,-.5) {X};
\draw (-0.2,-.1) -- (-0.2,-.9);
\draw (1.2,-.1) -- (1.2,-.9);
\draw (2.6,-.1) -- (2.6,-.9);
\draw (0.1,0.2) --(.9,0.2);
\draw (1.5,0.2) --(2.3,0.2);
\draw (0.1,-1.2) --(.9,-1.2);
\draw (1.5,-1.2) --(2.3,-1.2);
\node at (1.2,-1.8) {IIb};
\end{scope}
\end{tikzpicture}
\end{center}
Denote by $x$ the inner cell $\rho/\mu$, by $x^N$ the cell directly above $x$ and by $s_i^W$ the cell on the left of the cell $s_i$ for $i=1,2$.
The according points for these cells satisfy
\[
I_x = \frac{t}{q}R_x, \qquad S_{x^N}=O_{x^N}=tR_x, \qquad S_{s_i^W}=O_{s_i^W}=q^{-1}S_{s_i}, 
\]
for $i=1,2$. Similar to Remark~\ref{rem:nongeneric case} we can assume that $x^N$ and $s_i^W$ are addable outer corners of $\la_i$ with respect to $(\la_i,\rho)$.

The probability to obtain $\nu$ in Ia is
\begin{multline}
\label{eq:Case2 Ia}
\mc{P}_{\la,\rho}(\mu \rightarrow \la_1) \mc{P}_{\la_1,\rho}(\rho \rightarrow\nu)\\
= p\Bigl( \{x\},[d],\{s_1\},\{ x^N \} \cup [0,d]\setminus\{s_1\} \Bigr) \\
\times
p\Bigl( \emptyset,[d] \cup\{x\},\{s_2\},\{ x^N s_1^W\} \cup [0,d]\setminus\{s_1,s_2\} \Bigr)\\
=\frac{(R_x - t R_x) \prod\limits_{i \in [d]}(S_{s_1}-I_i)\prod\limits_{j \in [0,d]\setminus\{ s_1\}}(R_x-O_j)}{(S_{s_1} - t R_x) \prod\limits_{j \in [0,d]\setminus\{ s_1\}}(S_{s_1}-O_j)\prod\limits_{i \in [d]}(R_x-I_i)} \\
\times
\frac{(S_{s_2}-\frac{t}{q}R_x)\prod\limits_{i \in [d]}(S_{s_2}-I_i)}{(S_{s_2}-tR_x)(S_{s_2}-q^{-1}S_{s_1})\prod\limits_{j \in [0,d]\setminus\{ s_1,s_2\}}(S_{s_2}-O_j)} \\
= F \frac{(R_x-O_{s_2})(S_{s_2}-\frac{t}{q}R_x)}{(S_{s_1}-O_{s_2})(S_{s_2}-q^{-1}S_{s_1})},
\end{multline}
where
\begin{multline*}
F= \frac{(R_x-t R_x)}{(S_{s_1}-t R_x)(S_{s_2}-t R_x)} \prod_{i \in[d]}\frac{(S_{s_1}-I_i)(S_{s_2}-I_i)}{(R_{x}-I_i)} \\ \times
\prod_{j \in [0,d]\setminus\{s_1,s_2\}} \frac{(R_x-O_j)}{(S_{s_1}-O_j)(S_{s_2}-O_j)}.
\end{multline*}
We obtain the corresponding probability of $\nu$ in configuration Ib by interchanging $s_1$ and $s_2$ in \eqref{eq:Case2 Ia}.

The probability of $\nu$ in configuration IIa is given by
\begin{multline}
\label{eq:Case2 IIa}
\mc{P}_{\la,\mu}(\mu \rightarrow \la_1)\mc{P}_{\la_1,\rho}(\mu \rightarrow\nu) \\
= p\Bigl(\emptyset , [d] , \{s_1\} , [0,d]\setminus \{s_1\} \Bigr)
p\Bigl( \{x\} , [d] , \{s_2\} , \{ x^N, s_1^W \} \cup [0,d] \setminus \{s_1,s_2\} \Bigr)\\
= \frac{\prod\limits_{i \in [d]}(S_{s_1}-I_i)}{\prod\limits_{j \in [0,d] \setminus \{s_1\}}(S_{s_1}-O_j)} 
\\ \times 
\frac{(R_x-tR_x)(R_x-q^{-1}S_{s_1})\prod\limits_{i \in [d]}(S_{s_2}-I_i)\prod\limits_{j \in [0,d] \setminus \{s_1,s_2\}}(R_x-O_j)}{(S_{s_2}-tR_x)(S_{s_2}-q^{-1}S_{s_1})\prod\limits_{j \in [0,d] \setminus \{s_1,s_2\}}(S_{s_2}-O_j)\prod\limits_{i \in [d]}(R_x-I_i)} \\
=F \cdot \frac{(R_x-q^{-1}S_{s_1})(S_{s_1}-t R_x)}{(S_{s_2}-q^{-1}S_{s_1})(S_{s_1}-S_{s_2})}.
\end{multline}
In order to obtain the probability for $\nu$ in IIb, we interchange $s_1$ and $s_2$ in \eqref{eq:Case2 IIa}.

Dividing the probability of obtaining $\nu$ in I by the probability of obtaining $\nu$ in II yields
\[
\frac{1}{q} + \frac{(1-q)(q S_{s_1}-S_{s_2})(q S_{s_2}-S_{s_1})}
{q\left( (1+q)(R_x^2 qt -S_{s_1}S_{s_2}) + q(S_{s_1}+S_{s_2}) (S_{s_1}+S_{s_2}-(q+t)R_x) \right)}.
\]
It is not difficult to check, that the second fraction in the above expression has Jack limit $0$, which proves this case.\medskip

\textbf{Case 2.}\\
In this case we assume again $|\{\pi: \la \lessdot \pi \lessdot \nu\}|=2$, but $\nu$ is now obtained by adding an outer corner with index $s_1 \in[0,d]$ and $x^N$ to $\la$. Denote by $\la_1$ (resp., $\la^N$) the partition obtained by adding $s_1$ (resp., $x^N$) to $\la$. The possible local configurations are shown next.

\begin{center}
\begin{tikzpicture}
\node at (-0.2,0.2) {$\mu$};
\node at (1.2,0.2) {$\rho$};
\node at (2.6,0.2) {$\rho$};
\node at (-0.2,-1.15) {$\la$};
\node at (1.2,-1.2) {$\la_1$};
\node at (2.6,-1.2) {$\nu$};
\node at (1.9,-.5) {X};
\draw (-0.2,-.1) -- (-0.2,-.9);
\draw (1.2,-.1) -- (1.2,-.9);
\draw (2.6,-.1) -- (2.6,-.9);
\draw (0.1,0.2) --(.9,0.2);
\draw (1.5,0.2) --(2.3,0.2);
\draw (0.1,-1.2) --(.9,-1.2);
\draw (1.5,-1.2) --(2.3,-1.2);
\node at (1.2,-1.8) {Ia};

\begin{scope}[xshift=4cm]
\node at (-0.2,0.2) {$\mu$};
\node at (1.2,0.2) {$\rho$};
\node at (2.6,0.2) {$\rho$};
\node at (-0.2,-1.15) {$\la$};
\node at (1.2,-1.2) {$\la^N$};
\node at (2.6,-1.2) {$\nu$};
\node at (1.9,-.5) {X};
\draw (-0.2,-.1) -- (-0.2,-.9);
\draw (1.2,-.1) -- (1.2,-.9);
\draw (2.6,-.1) -- (2.6,-.9);
\draw (0.1,0.2) --(.9,0.2);
\draw (1.5,0.2) --(2.3,0.2);
\draw (0.1,-1.2) --(.9,-1.2);
\draw (1.5,-1.2) --(2.3,-1.2);
\node at (1.2,-1.8) {Ib};
\end{scope}
\begin{scope}[xshift=9cm]
\node at (-0.2,0.2) {$\mu$};
\node at (1.2,0.2) {$\mu$};
\node at (2.6,0.2) {$\rho$};
\node at (-0.2,-1.15) {$\la$};
\node at (1.2,-1.2) {$\la_1$};
\node at (2.6,-1.2) {$\nu$};
\node at (.5,-.5) {X};
\draw (-0.2,-.1) -- (-0.2,-.9);
\draw (1.2,-.1) -- (1.2,-.9);
\draw (2.6,-.1) -- (2.6,-.9);
\draw (0.1,0.2) --(.9,0.2);
\draw (1.5,0.2) --(2.3,0.2);
\draw (0.1,-1.2) --(.9,-1.2);
\draw (1.5,-1.2) --(2.3,-1.2);
\node at (1.2,-1.8) {II};
\end{scope}
\end{tikzpicture}
\end{center}

The probability to obtain $\nu$ is in situation Ia 
\begin{multline}
\label{eq:Case2b Ia}
\mc{P}_{\la,\rho}(\mu \rightarrow \la_1) \mc{P}_{\la_1,\rho}(\rho \rightarrow\nu)\\
= p\Bigl( \{x\},[d],\{s_1\},\{ x^N \} \cup [0,d]\setminus\{s_1\} \Bigr)
p\Bigl( \emptyset,[d] \cup\{x\},\{x^N\},\{s_1^W\} \cup [0,d]\setminus\{s_1\} \Bigr)\\
=
\frac{(R_x-t R_x)\prod\limits_{i \in [d]}(S_{s_1}-I_i)\prod\limits_{j \in [0,d]\setminus\{s_1\}}(R_x-O_j)}
{(S_{s_1}-tR_x)\prod\limits_{j \in [0,d]\setminus\{s_1\}}(S_{s_1}-O_j)\prod\limits_{i \in [d]}(R_x-I_i)}
\\ \times
\frac{(tR_x-\frac{t}{q} R_x)\prod\limits_{i \in [d]}(tR_x-I_i)}
{(tR_x-q^{-1}S_{s_1})\prod\limits_{j \in [0,d]\setminus\{s_1\}}(t R_x-O_j)},
\end{multline}
in situation Ib
\begin{multline}
\label{eq:Case2b Ib}
\mc{P}_{\la,\rho}(\mu \rightarrow \la^N) \mc{P}_{\la^N,\rho}(\rho \rightarrow\nu)\\
= p\Bigl( \{x\},[d],\{x^N\},[0,d]\Bigr)
p\Bigl( \emptyset,[d],\{s_1\},[0,d]\setminus\{s_1\} \Bigr)\\
= \frac{\prod\limits_{i \in [d]}(tR_x-I_i)\prod\limits_{j \in [0,d]}(R_x-O_j)}
{\prod\limits_{j \in [0,d]}(tR_x-O_j)\prod\limits_{i \in [d]}(R_x-I_i)}
\cdot
\frac{\prod\limits_{i \in [d]}(S_{s_1}-I_i)}{\prod\limits_{j \in [0,d]\setminus\{s_1\}}(S_{s_1}-O_j)},
\end{multline}
and in situation II
\begin{multline}
\label{eq:Case2b II}
\mc{P}_{\la,\mu}(\mu \rightarrow \la_1) \mc{P}_{\la_1,\rho}(\mu \rightarrow\nu) \\
= p\Bigl( \emptyset,[d],\{s_1\}, [0,d]\setminus\{s_1\} \Bigr)
p\Bigl( \{x\},[d],\{x^N\},\{s_1^W\} \cup [0,d]\setminus\{s_1\} \Bigr)\\
=\frac{\prod\limits_{i \in [d]}(S_{s_1}-I_i)}{\prod\limits_{j \in [0,d]\setminus \{s_1\}}(S_{s_1}-O_j)}
\cdot
\frac{(R_x-q^{-1}S_{s_1})\prod\limits_{i \in [d]}(t R_x-I_i)\prod\limits_{j \in [0,d]\setminus \{s_1\}}(R_x-O_j)}
{(t R_x-q^{-1}S_{s_1})\prod\limits_{j \in [0,d]\setminus \{s_1\}}(tR_x-O_j)\prod\limits_{i \in [d]}(R_x-I_i)}.
\end{multline}
By adding the probabilities of $\nu$ in Ia and Ib and dividing by the probability of $\nu$ in II, we obtain
\[
1+\frac{(1-q)(1-t)}{q(1-\frac{S_{s_1}}{q R_x})},
\]
which yields $1$ in the Jack limit and hence proves the claim in this case. \medskip

\textbf{Case 3.}\\
In the last case we assume $|\{\pi: \la \lessdot \pi \lessdot \nu\}|=1$. This implies that there exists an outer corner $s_1\in [0,d]$ such that $\nu$ is obtained by adding $s_1$ and $s_1^W$ to $\la$. Denote again by $\la_1$ the partition obtained by adding $s_1$ to $\la$.
The two possible local configurations are shown next.
\begin{center}
\begin{tikzpicture}
\node at (-0.2,0.2) {$\mu$};
\node at (1.2,0.2) {$\rho$};
\node at (2.6,0.2) {$\rho$};
\node at (-0.2,-1.15) {$\la$};
\node at (1.2,-1.2) {$\la_1$};
\node at (2.6,-1.2) {$\nu$};
\node at (1.9,-.5) {X};
\draw (-0.2,-.1) -- (-0.2,-.9);
\draw (1.2,-.1) -- (1.2,-.9);
\draw (2.6,-.1) -- (2.6,-.9);
\draw (0.1,0.2) --(.9,0.2);
\draw (1.5,0.2) --(2.3,0.2);
\draw (0.1,-1.2) --(.9,-1.2);
\draw (1.5,-1.2) --(2.3,-1.2);
\node at (1.2,-1.8) {I};

\begin{scope}[xshift=5cm]
\node at (-0.2,0.2) {$\mu$};
\node at (1.2,0.2) {$\mu$};
\node at (2.6,0.2) {$\rho$};
\node at (-0.2,-1.15) {$\la$};
\node at (1.2,-1.2) {$\la_1$};
\node at (2.6,-1.2) {$\nu$};
\node at (.5,-.5) {X};
\draw (-0.2,-.1) -- (-0.2,-.9);
\draw (1.2,-.1) -- (1.2,-.9);
\draw (2.6,-.1) -- (2.6,-.9);
\draw (0.1,0.2) --(.9,0.2);
\draw (1.5,0.2) --(2.3,0.2);
\draw (0.1,-1.2) --(.9,-1.2);
\draw (1.5,-1.2) --(2.3,-1.2);
\node at (1.2,-1.8) {II};
\end{scope}
\end{tikzpicture}
\end{center}
The probability of $\nu$ in situation I is 
\begin{multline}
\label{eq:Case3 I}
\mc{P}_{\la,\rho}(\mu \rightarrow \la_1) \mc{P}_{\la_1,\rho}(\rho \rightarrow\nu)
\\=
p\Bigl( \{x\},[d],\{s_1\},\{x^N\} \cup [0,d]\setminus\{s_1\} \Bigr)
p\Bigl( \emptyset,[d]\cup \{x\},\{s_1^W\},\{x^N\} \cup  [0,d]\setminus\{s_1\} \Bigr)
\\=
\frac{(R_x-tR_x)\prod\limits_{i \in [d]}(S_{s_1}-I_i)\prod\limits_{j \in [0,d]\setminus\{s_1\}}(R_x-O_j)}
{(S_{s_1}-t R_x)\prod\limits_{j \in [0,d]\setminus\{s_1\}}(S_{s_1}-O_j)\prod\limits_{i \in [d]}(R_x-I_i)}
\\ \times
\frac{(q^{-1}S_{s_1}-\frac{t}{q}R_x)\prod\limits_{i \in [d]}(q^{-1}S_{s_1}-I_i)}
{(q^{-1}S_{s_1}-t R_x)\prod\limits_{j \in [0,d]\setminus\{s_1\}}(q^{-1}S_{s_1}-O_j)},
\end{multline}
and in situation II
\begin{multline}
\label{eq:Case3 II}
\mc{P}_{\la,\mu}(\mu \rightarrow \la_1) \mc{P}_{\la_1,\rho}(\mu \rightarrow\nu)
\\=
p\Bigl( \emptyset,[d],\{s_1\}, [0,d]\setminus\{s_1\} \Bigr)
p\Bigl( \{x\},[d],\{s_1^W\},\{x^N\} \cup  [0,d]\setminus\{s_1\} \Bigr)
\\=
\frac{\prod\limits_{i \in [d]}(S_{s_1}-I_i)}{\prod\limits_{j \in [0,d]\setminus\{s_1\}}(S_{s_1}-O_j)} 
\\ \times
\frac{(R_x-t R_x)\prod\limits_{i \in [d]}(q^{-1}S_{s_1}-I_i)\prod\limits_{j \in [0,d]\setminus\{s_1\}}(R_x-O_j)}
{(q^{-1}S_{s_1}-t R_x)\prod\limits_{j \in [0,d]\setminus\{s_1\}}(q^{-1}S_{s_1}-O_j)\prod\limits_{i \in [d]}(R_x-I_i)}.
\end{multline}
By dividing \eqref{eq:Case3 I} through \eqref{eq:Case3 II} we obtain $\frac{1}{q}$ which yields $1$ in the Jack limit.
\end{proof}

\end{appendix}

\bibliographystyle{abbrvurl}
\bibliography{dual_qtRSK}

\end{document}